\documentclass{article}

\usepackage{format}

\title{Rate-Optimal Regret for the Safe Learning-based Control of the Constrained Linear Quadratic Regulator\thanks{The authors are with the Department of Electrical and Computer Engineering, University of California Santa Barbara. This work was supported by NSF grant \#2330154. (email: shutchinson@ucsb.edu)}}
\author{Spencer Hutchinson \and Nanfei Jiang \and Mahnoosh Alizadeh}

\date{}
\begin{document}

\maketitle

\begin{abstract}
    We study the problem of adaptive control of the stochastic linear quadratic regulator (LQR) with constraints that must be satisfied at every time step.
    Prior work on the multidimensional problem has shown $\Octil(T^{2/3})$ regret and satisfaction of \emph{robust constraints}, leaving open the question of whether $\Octil(\sqrt{T})$ regret can be attained in the constrained LQR setting.
    We contribute to this problem by showing $\Octil(\sqrt{T})$ regret and satisfaction of \emph{chance constraints}.
    This type of constraints allow us to handle unbounded noise and also enable analytical techniques not directly applicable to robust constraints.
    Our proposed algorithm for this problem uses an SDP to select an optimistic policy, and then ``scales back'' this policy until it is verifiably-safe.
    Our theoretical analysis establishes regret and constraint guarantees via a key lemma that bounds the system covariance in terms of the chosen policy.
    This covariance-based analysis is in contrast with the cost-to-go based analysis that is typically used in adaptive LQR.
\end{abstract}


\section{Introduction}

We study stochastic linear-quadratic (LQ) control, in which the state vector $x_t$ evolves according to a linear system with input $u_t$ and stochastic disturbance $w_t$,
$$
x_{t+1} = A x_t + B u_t + w_t,
$$ and the performance is measured with a quadratic cost function $x_t^\top Q x_t + u_t^\top R u_t$.
In particular, we consider the \emph{adaptive control} setting, in which the system matrices $A, B$ are unknown and have to be learned during the control of the system, via observation of the state $x_t$. 
This fundamental problem has a long history in the control literature \citep{aastrom1973self,lai1986extended,lu2025almost} and has recently emerged as an important benchmark for reinforcement learning \citep{abbasi2011regret,dean2018regret}, with recent works giving algorithms and accompanying analysis that establish \emph{regret bounds} on their performance.
Notably, these regret bounds differ from the classic asymptotic results, in that they guarantee a certain level of suboptimality in the algorithm's cost over a \emph{finite} number of time steps. 
This line of research has culminated in efficient algorithms with matching upper and lower regret bounds of $\Octil(\sqrt{T})$ over $T$ time steps, hence establishing the optimality of this regret rate \citep{abbasi2011regret,dean2018regret, mania2019certainty,simchowitz2020naive,abeille2020efficient,ziemann2024regret}.

However, this standard LQ adaptive control setting does not readily allow for enforcing constraints on the state and input at every time step, which is often desirable in real-world systems.
For example, modern robotic systems are often operating near humans, and therefore must ensure that unsafe modes of operation are avoided \citep{brunke2022safe}.
Motivated by such real-world applications, several recent works have developed algorithms for LQ adaptive control that ensure that the state and input satisfy constraints at every time step \citep{dean2019safely,li2021safe}.
In particular, these works consider linear \emph{robust constraints} in which the disturbances $w_t$ belong to a bounded set $\Wc$, and then enforces state and input constraints for any realization of the disturbance sequence, i.e. $\alpha^\top x_t \leq \beta, \ \forall (w_\tau)_{\tau=1}^{t-1} \in \Wc$.
These existing works tackled this problem through the algorithmic frameworks of system-level synthesis \citep{dean2019safely} and disturbance action policy \citep{li2021safe}, ultimately establishing regret guarantees as strong as $\Octil(T^{2/3})$.\footnote{This regret bound is for the multidimensional setting. Stronger regret guarantees have been shown for the simpler $1$-dimensional setting. See Section \ref{sec:rel:const_adap}.}
Thus, the existing literature has left open the question of whether $\Octil(\sqrt{T})$ regret is possible in the constrained setting as is the case for the standard unconstrained setting.

We contribute to this gap in the literature by making a sharp departure from existing work in both problem formulation and algorithmic approach.
In particular, we show $\Octil(\sqrt{T})$ regret and constraint satisfaction for adaptive LQ control with (unbounded) Gaussian disturbances and linear \emph{chance-constraints}, which enforce constraint satisfaction with a minimum probability level, i.e. $\Pb(\alpha^\top x_t \leq \beta) \geq 1 - \delta$.
This choice of constraint enables meaningful safety guarantees even when the disturbance distribution is unbounded, in which case robust constraints cannot be satisfied. It is also analytically convenient because, as we show in the paper, these chance constraints can be expressed in terms of the system covariance.
Thus, constrained policies can be efficiently optimized with a covariance SDP, enabling us to build on the optimistic SDP-based approach for unconstrained adaptive LQ control from \cite{cohen2019learning}.
However, since \cite{cohen2019learning} is concerned with the \emph{aggregate} cost rather than \emph{per time-step} constraints, this approach alone does not provide guarantees on the system covariance at every time step as is needed for constraint satisfaction.
Thus, we develop a novel analysis framework that bounds the system covariance in terms of the chosen policy at every time step.
This \emph{covariance-based} analysis is in contrast to the \emph{cost-to-go-based} analysis typically used in adaptive LQ control \citep{abbasi2011regret,cohen2019learning,simchowitz2020naive}.
Furthermore, a purely-optimistic approach cannot natively enforce constraints, so we incorporate the ``scaled-back optimism'' technique from the stochastic bandit literature \citep{hutchinson2024directional}.
This approach combines optimism in cost with pessimistic constraint enforcement, and therefore allows for both efficiency and constraint satisfaction.

\subsection{Related Work}

Next, we discuss related work in the areas of \emph{constrained adaptive control}, \emph{covariance control} and \emph{constrained reinforcement learning}.

\paragraph{Constrained Adaptive Control}

\label{sec:rel:const_adap}

The problem of constrained adaptive control has traditionally been studied through the framework of model predictive control (MPC), which has shown to be highly successful in practice \citep{genceli1996new,aswani2013provably,mesbah2018stochastic,hewing2020learning}. 
However, the MPC literature has typically not provided regret guarantees, and instead has focused on asymptotic performance guarantees along with constraint satisfaction \citep{langson2004robust,hewing2020recursively,stamouli2022adaptive}.
In a different line of literature, several works have established finite horizon guarantees for the constrained adaptive control problem \citep{dean2019safely,li2021safe}.
In particular, \cite{dean2019safely} considered the problem of learning the controller that minimizes the expected LQ cost and satisfies robust linear constraints.
In this setting, they use the system-level synthesis framework to provide suboptimality bounds and constraint satisfaction guarantees for the controller that is learned after a finite number of time steps, while also ensuring constraint satisfaction during the learning process.
In the same setting of expected LQ cost and robust linear constraints, \cite{li2021safe} builds on the safe disturbance action policy framework from \cite{li2021online}, to give an algorithm with $\Octil(T^{2/3})$ regret with respect to the best linear controller that robustly satisfies the constraints.
Recently, \cite{schiffer2025foundations} studied the one-dimensional version of this setting (both the input and state are one-dimension) and gave $\Octil(\sqrt{T})$ regret.
Although this matches the regret lower bound, it is unclear whether their approach can be extended to the multidimensional setting.
Overall, our work differs from these prior works in that we consider chance-constraints instead of robust constraints, and that we guarantee $\Octil(\sqrt{T})$ regret in the multidimensional setting.
Furthermore, we use a covariance SDP approach as opposed to system-level synthesis \citep{dean2019safely} or disturbance-action policy \citep{li2021safe}.

\paragraph{Covariance Control}

Our proposed approach builds on the literature on covariance control, which aims to design a controller that drives system goes to a desirable covariance \citep{collins2003theory,grigoriadis1997minimum}.
Most relevantly, several works in this literature have incorporated chance-constraints in to the problem, and shown how the control sequence can be (approximately) optimized with an SDP, e.g. \citep{okamoto2018optimal,rapakoulias2023discrete}.
This idea serves as inspiration for our proposed approach.
However, different from these works, our algorithm optimizes the steady-state covariance and therefore the chance-constraint can be \emph{exactly} expressed as a linear constraint on the covariance.

\paragraph{Constrained Reinforcement Learning}

Much of the literature on constrained reinforcement learning has been focused on constrained MDPs, which enforce constraints on average over the entire horizon \citep{altman2021constrained}.
However, there is a growing body of literature that enforces constraints at every time step, e.g. \citep{berkenkamp2017safe,fisac2018general,cheng2019end,wachi2020safe,liu2021learning,yao2023constraint}.
Nonetheless, most existing regret guarantees for this setting apply to either finite state-space or the episodic setting, and are thus distinct from the infinite state-space and single trajectory setting that we consider.

\subsection{Notation and Definitions}

We use the notation $\Fc_t$ to denote the $\sigma$-algebra induced by the randomness in the first $t$ time steps. 
For matrices $X_1,...,X_m$, the notation $\diag(X_1,...,X_m)$ refers to the matrix with $X_1,...,X_m$ on the block diagonal.
We use $\Phi(x)$ to be the CDF of the standard normal, i.e. $\Phi(x) = \Pb_{X \sim \Nc(0,1)}(X \leq x)$.
Furthermore, we use $\Phi^{-1}(q)$ to be the inverse of $\Phi(x)$ (i.e. the quantile function).
Lastly, we use \emph{$(\kappa,\gamma)$-strongly stable} \citep{cohen2018online} to refer to linear policies $K$ such that there exists decomposition $A + BK = Q L Q^{-1}$ with $\| L \| \leq 1 - \gamma$ and $\| K \|, \| Q \|, \| Q^{-1} \| \leq \kappa$.

\section{Problem Setup}

We consider the adaptive control of the stochastic discrete-time linear system,
\begin{equation}
    \label{eq:dynamics}
x_{t+1} = A_\star x_{t} + B_\star u_t + w_t, \qquad w_t \sim \Nc(\bzero, W)
\end{equation}    
where $x_t \in \Rb^n$ is the state, $u_t \in \Rb^m$ is the input, and $w_t$ are i.i.d. Gaussian random disturbances with covariance matrix $W$.
In each time step $t$, a learner observes the current state $x_t$ and then chooses a control input $u_t$.
The system matrices $\Theta_\star = [A_\star\ B_\star]$ are unknown to the learner.

The goal of the learner in this setting is to minimize the cumulative cost over $T$ time steps, while satisfying constraints on the state and input at every time step.
More precisely, the cost is measured with a quadratic function of the state and input $\ell(x_t, u_t)$ where,
\begin{equation}
    \label{eq:cost_func}
    \ell(x,u) := x^\top Q x + u^\top R u,
\end{equation}    
for some $Q \succeq 0, R \succeq 0$.
At the same time, there are linear chance-constraints on the state and input that must be satisfied for all $t \in [T]$,
\begin{equation}
    \label{eq:chance_const}
    \Pb \left( \alpha_j^\top z_t \leq \beta \right) \geq 1 - \delta, \quad \forall j \in [J]
\end{equation}
where $z_t^\top = [x_t^\top \ u_t^\top]$ and $\delta \in (0,0.5)$.
We consider chance constraints on the input rather than deterministic constraints because the algorithm and regret benchmark both use linear controllers (which is standard in adaptive control, e.g. \cite{abbasi2011regret,li2021safe}), and therefore deterministic constraints on the input \emph{cannot} be satisfied given the unbounded disturbance distribution.


The performance of the learner is measured via \emph{regret},
\begin{equation}
    \label{eq:reg}
    \reg_T = \sum_{t=1}^{T} \ell(x_t, u_t) - J^\star,
\end{equation}
where $J^\star$ is the minimum expected cost attainable by stable linear controllers that satisfy the chance-constraints with full knowledge of the system, i.e.,
\begin{subequations}
\label{eq:opt_cost}
\begin{align}
    J^\star = \min_{K} & \ \Eb \sum_{t=1}^{T} \ell(x_t, u_t)\\
    \mathrm{s.t.}\ & \ x_{t+1} = A_\star x_{t} + B_\star u_t + w_t, \qquad w_t \sim \Nc(\bzero, W)\\
    & \ u_t = K x_t\\
    & \ K \ \mathrm{is}\ (\kappa,\gamma)\ \text{strongly-stable} \label{eq:opt_cost:ss}\\
    & \ \Pb ( \alpha_j^\top z_t \leq \beta ) \geq 1 - \delta, \quad \forall j \in [J] \label{eq:opt_cost:cc}
\end{align}
\end{subequations}
We emphasize that the constraints imposed on the benchmark policy in \eqref{eq:opt_cost:cc} are the \emph{exact same} as those imposed on the learner in \eqref{eq:chance_const}.

We conclude this section by introducing additional notation that will ease presentation.
In particular, we take the combined cost matrix $\mathbf{Q} := \diag(Q,R)$ and the outer product of the constraint vector $\balpha_j := \alpha_j \alpha_j^\top$.
Additionally, we use terms for the size of the disturbance $\bar{w} := \max(1,\tr(W))$, the constraint vector $D := \max(1,\| \alpha_1\|,...,\|\alpha_J\|)$, the cost matrix $R_Q := \max(1,\tr(\mathbf{Q}))$ and the initial state $R_1 := \max(1,\| x_1 \|)$.

\subsection{Assumptions}

Next, we state the assumptions that we use.
The first assumption (Assumption~\ref{ass:null_policy}) specifies that the zero policy $u_t = \bzero$ is stable and ensures that the chance-constraint is strictly satisfied.

\begin{assumption}[Zero Policy is Stable and Strictly-feasible]
    \label{ass:null_policy}
    The policy $u_t = \bzero$ is $(\kappa,\gamma)$-strongly stable and ensures that $\Pb(\alpha_j^\top z_t \leq \beta - \epsilon) \geq 1 - \delta$ for all $j \in [J], t \in [T]$, where $\epsilon \in (0,1]$.
\end{assumption}

If instead of Assumption \ref{ass:null_policy}, there exists a linear controller $K \neq \bzero$ that stabilizes the system and strictly satisfies the constraint, then the input can be augmented with this controller, i.e. $u_t = K x_t + \bar{u}_t$.
Thus, Assumption \ref{ass:null_policy} is satisfied for the augmented system with dynamics matrix $A + B K$ and input $\bar{u}_t$.
Note that the existence of a known stabilizing linear controller is a standard assumption in the learning-based adaptive control literature \citep{cohen2019learning,simchowitz2020naive}, and the additional requirement that such a controller is strictly-feasible is used in prior safe learning-based control work \citep{li2021safe}.

Next, Assumption \ref{ass:noise_cov} specifies that the covariance matrix of the disturbance is positive definite and Assumption \ref{ass:dyn} specifies that the system matrix $\Theta_\star = [A_\star\ B_\star]$ is bounded in norm by a known scalar $S$.
Assumption \ref{ass:noise_cov} and \ref{ass:dyn} are standard in the learning-based adaptive control literature \citep{cohen2019learning,plevrakis2020geometric,simchowitz2020naive}.

\begin{assumption}[Disturbance Covariance is Positive Definite]
    \label{ass:noise_cov}
    It holds that $W \succeq \sigma I$ where $\sigma \in (0,1]$.
\end{assumption}

\begin{assumption}[Bound on Dynamics Matrices]
    \label{ass:dyn}
    It holds that $\max(1,\| \Theta_\star \|) \leq S$.
\end{assumption}


\section{Algorithm}

In this section, we propose Algorithm~\ref{alg:main_alg} to address the problem at hand.
This algorithm specifies control policies by choosing a \emph{target covariance} matrix $\Sigma_t$ in each time step and then choosing a control input $u_t$ that will (approximately) induce this covariance in the system.
This covariance-based design approach is described in detail in Section \ref{sec:cov_des}.
At the beginning of the algorithm, the subroutine $\mathsf{Initialize}$ (Algorithm \ref{alg:init}) is called (line \ref{lne:init}), which drives the system with iid noise to determine an initial estimate of the system matrices and therefore identify a good starting covariance matrix $\bar{\Sigma}_0$.
A description of $\mathsf{Initialize}$ is given in Appendix \ref{sec:init}.
Then, for the remainder of the time steps, the algorithm operates over a number of \emph{phases} enumerated by $k$, which each have a \emph{phase covariance} $\bar{\Sigma}_k$.
This phase covariance is chosen using the subroutine $\mathsf{PhaseUpdate}_k$ (Algorithm \ref{alg:phase}) in line \ref{lne:phase_upd}, which operates by first calculating an ``optimistic covariance'' matrix with an SDP, and then shifting this matrix towards the covariance of the zero policy to ensure constraint satisfaction.
A detailed description of $\mathsf{PhaseUpdate}_k$ is given Section~\ref{sec:phase_upd2}.
The condition for a new phase to start is that the determinant of the estimation Gram matrix doubles (line \ref{lne:tstep_start}), which is commonly used in the learning-based control literature, e.g. \citep{abbasi2011regret}.
This ensures that a new phase starts only once a substantial amount of information has been gained about the system, resulting in a quantity of $\Octil(1)$ phases.
Then, in each time step within a given phase, the target covariance $\Sigma_t$ is moved towards the phase covariance $\bar{\Sigma}_k$ (line \ref{lne:mixing}).
This approach of slowly mixing each phase covariance in to the target covariance ensures that the policy changes slowly, which, as we will see in the analysis, helps to ensure that the target covariance effectively approximates the true covariance.

\begin{algorithm}[t]
    \caption{ }
    \label{alg:main_alg}
\begin{algorithmic}[1]
    \Require delay $\rho$, constraint threshold $\xi$,  covariance bound $\nu$, confidence radii $\eta, \mu$, variation $\zeta$, regularization $\lambda$, exploratory noise radius $c$, exploration duration $\tau_0$ 
    \State $\bar{\Sigma}_0, \tau_1 \leftarrow \mathsf{Initialize}$ \Comment{call Algorithm \ref{alg:init}} \label{lne:init}
    \State $\Sigma_{\tau_1} = \bar{\Sigma}_0, k = 0, \bar{V}_k = \sum_{s=1}^{\tau_1-\rho} z_s z_s^\top + \lambda I$
    \For{$t = \tau_1,...,T$}
        \State $V_{t-\rho} = \sum_{s=1}^{t-\rho} z_s z_s^\top + \lambda I$
        \If{$\det(V_{t-\rho}) \geq 2 \det(\bar{V}_k)$ \textbf{or} $t = \tau_1$} \label{lne:tstep_start}
            \State $k = k + 1, \bar{V}_k = V_{t - \rho}, \tau_k = t$. \Comment{initialize phase}
            \State $\bar{\Sigma}_k \leftarrow$ $\mathsf{Phase Update}_k$ \Comment{call Algorithm \ref{alg:phase}} \label{lne:phase_upd}
        \EndIf\
        \State $ \Sigma_t = \Sigma_{t-1} + (\bar{\Sigma}_k - \Sigma_{t-1}) \zeta$ \Comment{mix $\bar{\Sigma}_k$ in to $\Sigma_t$} \label{lne:mixing}
        \State $K_t = \Sigma_{t,ux} \Sigma_{t,xx}^{-1}, \ \ U_t = \Sigma_{t,uu} - K_t \Sigma_{t,xx} K_t^\top, \ \ \Sigma_t = \begin{bmatrix} \Sigma_{t,xx} & \Sigma_{t,xu}\\ \Sigma_{t,ux} & \Sigma_{t,uu} \end{bmatrix}$ \Comment{extract policy} \label{lne:extract}
        \State $u_t = K_t x_t + v_t, \ v_t \sim \Nc(\bzero, U_t)$ \Comment{implement policy} \label{lne:tstep_end}
    \EndFor
\end{algorithmic}
\end{algorithm}

\subsection{Covariance-based Policy Design}
\label{sec:cov_des}

In this section, we provide further details on the technique of specifying a control policy using the desired system covariance, which (to the best of our knowledge) was introduced to the learning-based control literature by \cite{cohen2018online,cohen2019learning}.
In particular, a control policy is specified using a \emph{target covariance} $\Sigma_t$, which represents the desired covariance of the state and input $\cov(z_t)$.
Once the algorithm has chosen a target covariance $\Sigma_t$, it chooses the input distribution according to line \ref{lne:extract} and \ref{lne:tstep_end} in Algorithm \ref{alg:main_alg}, which can be understood as (approximately) inducing a system covariance of $\Sigma_t$.
To gain intuition for how this control policy might induce the desired covariance in the system, we can consider the idealized fictional setting in which $\Sigma_t$ is deterministic and $\cov(x_t) = \Sigma_{t,xx}$,
\begin{equation}
\label{eq:naive1}
\textstyle
    \cov(z_t) = \begin{bmatrix} I \\ K_t \end{bmatrix} \cov(x_t) \begin{bmatrix} I \\ K_t \end{bmatrix}^\top + \begin{bmatrix} 0 & 0\\ 0 & U_t \end{bmatrix} = \begin{bmatrix} I \\ K_t \end{bmatrix} \Sigma_{t,xx} \begin{bmatrix} I \\ K_t \end{bmatrix}^\top + \begin{bmatrix} 0 & 0\\ 0 & U_t \end{bmatrix} = \Sigma_{t},  
\end{equation}
and therefore $\Sigma_t$ would be the true covariance of $z_t$.
Although this simple analysis provides intuition, the required assumptions ($\cov(x_t) = \Sigma_{t,xx}$ and deterministic $\Sigma_t$) are not generally satisfied and therefore our algorithm and analysis will require a more involved approach.

Notably, our approach for ensuring that the target covariance approximates the true covariance substantially differs from prior work \cite{cohen2018online,cohen2019learning}.
This is because these prior works only consider the unconstrained setting and therefore only need to ensure effective approximation of the system covariance in the \emph{aggregate} to ensure low cost, whereas the constraints in our setting apply \emph{per time-step} and therefore necessitate that the covariance is well-approximated in every time step.
In particular, our algorithm uses delayed information for estimating the system matrices (discussed in the next section) and only slowly-varies the target covariance (line \ref{lne:mixing}), which allows us to show that the target covariance approximates the system covariance at \emph{every time step}.
This also results in a substantially different analysis approach, as discussed in Section \ref{sec:ana}.

\subsection{Description of $\mathsf{PhaseUpdate}_k$}
\label{sec:phase_upd2}

\begin{algorithm}[t]
    \caption{$\mathsf{Phase Update}_k$}
    \label{alg:phase}
\begin{algorithmic}[1]
            \State estimate dynamics with delayed information: \label{lne:est}
            \begin{equation}
                \label{eq:sys_est}
                \hat{\Theta}_k = \argmin_{\Theta} \bigg( \sum_{s=1}^{\tau_k-\rho} \| \Theta z_s - x_{s+1} \|^2 + \lambda \| \Theta  - \hat{\Theta}_0 \|^2 \bigg)
            \end{equation}
            \State constrained optimistic SDP: \label{lne:sdp}
            \begin{subequations}
            \label{eq:opt_sdp}
            \begin{align}
                \Sigma_k^o = \argmin_{\Sigma =
        \left[\begin{smallmatrix}
        \Sigma_{xx} & \Sigma_{xu}\\
        \Sigma_{ux} & \Sigma_{uu}
        \end{smallmatrix}\right] \succeq 0} & \ \langle \bQ, \Sigma \rangle \label{eq:opt_sdp:cost}\\
                \text{s.t.} & \ \Sigma_{xx} \succeq \hat{\Theta}_k \Sigma \hat{\Theta}_k^\top + W - \eta \langle \bar{V}_k^{-1}, \Sigma \rangle I \label{eq:opt_sdp:stat}\\
                & \ \langle \balpha_j, \Sigma \rangle \leq \xi, \ \forall j \in [J] \label{eq:opt_sdp:const}\\
                & \ \tr(\Sigma) \leq \nu \label{eq:opt_sdp:bound}
            \end{align}            \end{subequations}
            \State estimate covariance of zero policy: \label{lne:cov_null}
            \begin{equation}
                \label{eq:safe_update}
                \Sigma_{k}^{\safe} = \begin{bmatrix}
                    \Sigma_{k,xx}^{\safe} & 0\\
                    0 & 0
                \end{bmatrix}
                \quad \mathrm{s.t.}
                \quad
                \Sigma_{k,xx}^{\safe} = \hat{\Theta}_k \Sigma_{k}^{\safe} \hat{\Theta}_k^\top + W
            \end{equation}
            \State construct pessimistic set: \label{lne:pess}
            $$\Ec_k^p = \{ \Sigma \succeq 0 : \langle \balpha_j, \Sigma \rangle + \mu \langle \bar{V}_{k-1}^{-1}, \Sigma \rangle \leq \xi, \forall j \in [J] \}$$
            \State scale optimistic covariance in to pessimistic set:\label{lne:scale}
            \begin{align*}
                \phi_k & = \max\{ \phi \in [0,1] : \phi \Sigma_k^o + (1 - \phi) \Sigma_{k}^{\safe} \in \Ec_k^p \},\\
                \bar{\Sigma}_k & = \phi_k \Sigma_k^o + (1 - \phi_k) \Sigma_{k}^{\safe}
            \end{align*}
            \State \Return $\bar{\Sigma}_k$
\end{algorithmic}
\end{algorithm}

In this section, we describe the subroutine $\mathsf{PhaseUpdate}_k$ given in Algorithm \ref{alg:phase}, which is used to choose the phase covariance $\bar{\Sigma}_k$ at the beginning of each phase.
In the following, we describe each of the steps in $\mathsf{PhaseUpdate}_k$.

\paragraph{Dynamics Estimation with Delayed Information (line \ref{lne:est})}

The dynamics $\Theta_\star = [A_\star\ B_\star]$ are estimated with least-squares estimation, using the information from time steps $[1,\tau_k - \rho]$ where $\tau_k$ is the time step at the beginning of the current phase.
The reason that the information is delayed by $\rho$ time steps before being used is because it ensures that the policy at a given time step $(K_t, U_t)$ is $\Fc_{t-\rho}$-measurable.
Intuitively, this ensures that the policy at time step $t$ is ``deterministic'' conditioned on $\Fc_{t-\rho}$, and therefore that the conditional covariance of the state $\cov(x_t|\Fc_{t-\rho})$ can be linearly related to $\cov(z_t|\Fc_{t-\rho})$ analogous to the idealized setting in \eqref{eq:naive1}, i.e. 
$$
\textstyle
\cov(z_t|\Fc_{t-\rho}) = \begin{bmatrix} I \\ K_t \end{bmatrix} \cov(x_t|\Fc_{t-\rho}) \begin{bmatrix} I \\ K_t \end{bmatrix}^\top + \begin{bmatrix} 0 & 0\\ 0 & U_t \end{bmatrix}
$$
This will then allow us to bound the difference between the conditional covariance $\cov(z_t|\Fc_{t-\rho})$ and the target covariance $\Sigma_t$, which is critical for our analysis.


\paragraph{Constrained Optimistic SDP (line \ref{lne:sdp})}


Then, using the estimate of the system matrices, an SDP \eqref{eq:opt_sdp} is used to choose the \emph{optimistic covariance} $\Sigma_k^o$.
The key novelty of this SDP formulation compared to prior work \cite{cohen2019learning} is that it uses a linear constraint to enforce the chance-constraint, which is critical for ensuring constraint satisfaction.
In particular, the SDP constraint \eqref{eq:opt_sdp:const} can be understood as representing a chance-constraint on the steady-state covariance, as it holds that,
\begin{equation}
    \label{eq:sdp_cc}
    \Pb(\alpha_j^\top z_t \leq \beta) \geq 1 - \delta \quad \iff \quad \ip{\balpha_j, \cov(z_t)} \leq \beta^2 / \Phi^{-1}(1 - \delta)^2,
\end{equation}
when the system is at steady-state and when $z_t$ is Gaussian (where $\balpha_j := \alpha_j \alpha_j^\top$).
The other pieces of the SDP follow from \cite{cohen2019learning}, which, for completeness, we discuss as follows.
Indeed, the SDP cost \eqref{eq:opt_sdp:cost} can be understood as representing the expected cost at steady-state, as it holds in this case that, $\Eb[\ell(x_t,u_t)] = \ip{\mathbf{Q}, \cov(z_t)}$ where $\mathbf{Q} := \diag(Q,R)$.
Also, the trace constraint \eqref{eq:opt_sdp:bound} ensures stability.
The linear constraint \eqref{eq:opt_sdp:stat} imposes an approximate steady-state condition. 
Indeed, from the system \eqref{eq:dynamics}, a steady-state covariance $\Sigma$ must satisfy the fixed-point condition $\Sigma_{xx} = \Theta_\star \Sigma \Theta_\star^\top + W$.
However, since $\Theta_\star$ is unknown, \eqref{eq:opt_sdp:stat} enforces a relaxed version of this condition via known bounds on the estimation error,
\begin{equation}
    \label{eq:dyn_error}
    \| \hat{\Theta}_k \Sigma \hat{\Theta}_k - \Theta_\star \Sigma \Theta_\star \| \leq \eta \langle \bar{V}_k^{-1}, \Sigma \rangle,
\end{equation}
which holds with high probability for an appropriately chosen $\eta$.
Thus, any steady-state covariance matrix for the true system will satisfy \eqref{eq:opt_sdp:stat} (with high probability).
As a result, the SDP is ``optimistic'' in that it uses the estimation error bounds \eqref{eq:dyn_error} to lower bound the optimal cost under the true system, and therefore we expect the optimized matrix $\Sigma_k^o$ to result in low regret given the well-known learning paradigm of ``optimism in the face of uncertainty''.
However, the optimistic covariance $\Sigma_k^o$ has no guarantees of constraint satisfaction.
We address this issue next.

\paragraph{Safe Scaling (lines \ref{lne:cov_null},\ref{lne:pess}, \ref{lne:scale})}

The remainder of Algorithm \ref{alg:phase} is dedicated to transforming $\Sigma_k^o$ to ensure constraint satisfaction.
It does so by using the ``scaled-back optimism'' technique proposed in the stochastic bandit literature by \cite{hutchinson2024directional} and further refined by \cite{gangrade2025constrained}.
This technique involves choosing the largest scaling on the optimistic decision that guarantees constraint satisfaction.
We adapt this approach to our setting by first estimating the covariance of the zero policy with $\Sigma_k^\safe$ (line \ref{lne:cov_null}) and then finding the convex combination of $\Sigma_k^o$ and $\Sigma_k^\safe$ such that the resulting covariance $\phi_k \Sigma_k^o + (1 - \phi_k) \Sigma_{k}^{\safe}$ is in the ``pessimistic set'' $\Ec_k^p$.
This set $\Ec_k^p$ (defined in line \ref{lne:pess}) is chosen to ensure constraint satisfaction given the error between $\Sigma_t$ and the true covariance.
Therefore the resulting phase covariance matrix $\bar{\Sigma}_k$ is both efficient (due to the optimistic SDP) and safe (since it is in the pessimistic set).


\section{Analysis}

\label{sec:ana}

In this section, we give the formal theoretical guarantees of the proposed algorithm.
In particular, Theorem \ref{thm:main} shows that Algorithm \ref{alg:main_alg} ensures that the chance-constraints are satisfied for all time steps, and that the regret is $\Octil(\sqrt{T})$ with high probability.

\begin{theorem}
    \label{thm:main}
    Let Assumptions \ref{ass:null_policy}, \ref{ass:noise_cov} and \ref{ass:dyn} hold. 
    Then, if $T \geq T_{\min}$, there exists an appropriate choice of algorithm parameters\footnote{The specific choice of algorithm parameters is given in the full version of the theorem in Appendix \ref{sec:main_proof}.} for Algorithm \ref{alg:main_alg} such that,
    \begin{align*}
        & \Pb(\alpha_j^\top z_t \leq \beta) \geq 1 - \delta, \quad \forall j \in [J], t \in [T]\\
        & \Pb \left(\reg_T \leq C_{\mathrm{reg}} \sqrt{T} \right) \geq 1 - \delta/T,
    \end{align*}
    where $T_{\min}, C_{\mathrm{reg}} = \poly \big(d,\log(T), R_Q, R_1, S, \bar{w}, D, \beta, \kappa, \gamma^{-1}, \sigma^{-1}, \epsilon^{-1}, \delta^{-1}, \Phi^{-1}(1-\delta)^{-1} \big)$.
\end{theorem}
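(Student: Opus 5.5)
The argument is built around one key lemma, a \emph{covariance tracking} lemma. Fix the confidence radii $\eta,\mu$ so that, on a good event $\Ec$ of probability at least $1-\delta/(2T)$, the least-squares confidence bound \eqref{eq:dyn_error} holds for every phase. This follows from the standard self-normalized martingale bound of \cite{abbasi2011regret}; the warm start from $\mathsf{Initialize}$ makes $\|\hat\Theta_0-\Theta_\star\|$ small and keeps $\bar V_k \succeq \lambda I$. On $\Ec$, the lemma should give for every $t\ge\tau_1$
\begin{equation*}
\big\| \cov(z_t\mid\Fc_{t-\rho}) - \Sigma_t \big\| \;\lesssim\; \mu\,\langle \bar V_{k(t)-1}^{-1},\Sigma_t\rangle + (1-\gamma')^{\rho}\,\mathrm{poly},
\end{equation*}
where $k(t)$ denotes the phase containing $t$.

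To prove it, first note that $(K_s,U_s)$ is $\Fc_{s-\rho}$-measurable. Hence, conditional on $\Fc_{t-\rho}$, the state $x_t$ is Gaussian and its covariance evolves over the last $\rho$ steps by the exact recursion $\cov(x_{s+1}|\Fc_{t-\rho}) = \Theta_\star\cov(z_s|\Fc_{t-\rho})\Theta_\star^\top+W$, with $\cov(z_s|\cdot)$ obtained from $\cov(x_s|\cdot)$ through \eqref{eq:naive1}. Meanwhile, $\Sigma_t$ satisfies the estimated fixed-point relation up to an $\eta\langle\bar V^{-1},\Sigma\rangle$ slack. Both $\Sigma^o_k$ (by \eqref{eq:opt_sdp:stat}) and $\Sigma^\safe_k$ satisfy it, so every convex combination does, and so does every mixture produced by line~\ref{lne:mixing}, up to an $O(\zeta)$ drift term. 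Subtracting the two recursions gives an error that propagates through $A_\star+B_\star K_s$. The trace bound $\tr(\Sigma)\le\nu$ together with $W\succeq\sigma I$ makes each $K_s$ $(\kappa',\gamma')$-strongly stable, as in \cite{cohen2019learning}. The slow mixing with small $\zeta$ keeps consecutive $K_s$ close, so the product of closed-loop matrices contracts geometrically. The error therefore stays within the injected terms, and the initial condition is forgotten after $\rho=\Theta(\log T/\gamma')$ steps.

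\textbf{Constraints.} Since $x_t$ given $\Fc_{t-\rho}$ is Gaussian with mean $m_t$, I first handle the mean. The zero-policy state and the conditional-mean dynamics are both stable, so $|\alpha_j^\top \Eb[z_t|\Fc_{t-\rho}]|$ is small after $\rho$ steps, up to an exponentially small event. The margin $\epsilon$ from Assumption~\ref{ass:null_policy} absorbs the mean and this failure probability. Choose $\xi=(\beta-\epsilon/2)^2/\Phi^{-1}(1-\delta)^2$, and set $\mu$ larger than the tracking error. Then $\bar\Sigma_k\in\Ec^p_k$, and hence every mixture $\Sigma_t$, which lies in $\Ec^p$ because that set is convex and is only shrunk slightly by the phase index shift, yields $\langle\balpha_j,\cov(z_t|\Fc_{t-\rho})\rangle\le$ the threshold. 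By \eqref{eq:sdp_cc} applied conditionally and then averaged, $\Pb(\alpha_j^\top z_t\le\beta)\ge1-\delta$. Feasibility of $\phi=0$ needs $\Sigma^\safe_k\in\Ec^p_k$, which follows from Assumption~\ref{ass:null_policy} and the $\epsilon$ margin once $\bar V_{k-1}$ is large, and that is guaranteed by $\mathsf{Initialize}$ and $T\ge T_{\min}$. During initialization, feasibility follows by taking the exploration radius $c$ small relative to $\epsilon$.

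\textbf{Regret.} Decompose $\sum_t\ell(z_t)-J^\star$ into three parts. (i) The martingale term $\ell(z_t)-\Eb[\ell(z_t)|\Fc_{t-\rho}]$ is handled with $\rho$ interleaved Hanson--Wright/Freedman bounds and is $\Octil(\sqrt{T})$. (ii) The tracking term $\langle\bQ,\cov(z_t|\cdot)-\Sigma_t\rangle$ is summed with the elliptical-potential lemma: $\sum_t\langle \bar V^{-1},\Sigma_t\rangle\lesssim\sum_t z_t^\top V^{-1}z_t$ plus martingale terms, giving $\Octil(\sqrt T)$ after Cauchy--Schwarz. The doubling rule bounds the $\Octil(1)$ phase changes, each costing $O(1/\zeta)$ mixing steps. (iii) The optimality term $\langle\bQ,\Sigma_t\rangle-J^\star/T$. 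Optimism gives $\langle\bQ,\Sigma^o_k\rangle\le$ the benchmark steady-state cost, since the benchmark covariance is feasible for \eqref{eq:opt_sdp} on $\Ec$ by \eqref{eq:dyn_error} and \eqref{eq:sdp_cc}. The scaling loss is $(1-\phi_k)\langle\bQ,\Sigma^o_k-\Sigma^\safe_k\rangle$. Following \cite{hutchinson2024directional}, $1-\phi_k\lesssim \epsilon^{-1}(\mu+\eta)\langle\bar V_{k-1}^{-1},\Sigma^o_k\rangle$, because $\Sigma^o_k$ violates the pessimistic constraint only by the width term while $\Sigma^\safe$ has slack proportional to $\epsilon$. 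Summing this with the elliptical potential again gives $\Octil(\sqrt T)$. The benchmark's transient, finite-$T$ versus steady-state gap is $O(1)$.

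The main obstacle is the tracking lemma, specifically showing per-step closeness rather than aggregate closeness under time-varying policies with random switching times. I would first try a sequential-strong-stability argument in the style of \cite{cohen2018online}, choosing $\zeta\sim\gamma/(\kappa^2\,\mathrm{poly})$ so that consecutive policies satisfy $\|K_{s+1}-K_s\|\lesssim\zeta$.
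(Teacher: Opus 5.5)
Your architecture (covariance tracking, scaling toward $\Sigma^{\safe}_k$ into a pessimistic set, optimism, elliptic potential) matches the paper's. The gap is in how you calibrate the constraint. You set $\xi=(\beta-\epsilon/2)^2/\Phi^{-1}(1-\delta)^2$, a $T$-independent tightening of the steady-state threshold $\beta^2/\Phi^{-1}(1-\delta)^2$, and let the zero policy's margin $\epsilon$ absorb the mean, the failure probability and the tracking error. Step (iii) then fails. By \eqref{eq:sdp_cc}, the steady-state covariance of the optimal constrained policy can have $\langle\balpha_j,\Sigma\rangle$ up to essentially $\beta^2/\Phi^{-1}(1-\delta)^2$, so it is not feasible for \eqref{eq:opt_sdp}. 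Generically, whenever a constraint binds, the tightened problem costs a constant more per step, which gives $\Theta(T)$ regret against $J^\star$.

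The tightening must vanish with $T$. The paper takes $\xi=\big((\beta-O(1/T))/\Phi^{-1}(1-\delta+\omega)\big)^2-D^2C_1$ with $\omega=\delta/(6T)$ and $C_1\lesssim 1/\sqrt{T}$. The failure probability is absorbed by shifting the quantile level, using the local Lipschitz bound of Lemma~\ref{lem:quant_sens}. The mean is $O(1/T)$ because $\rho\asymp\log T$. The margin $\epsilon$ enters only as the slack of $\Sigma^{\safe}$, in two places. First, in $1-\phi_k\le\mu\langle\bar V_{k-1}^{-1},\bar\Sigma_k\rangle/\epsilon_1$; this is stated for the played $\bar\Sigma_k$, not $\Sigma^o_k$ as you wrote, which is what makes it summable. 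Second, in Term III: the optimal policy's near-steady-state covariance is mixed with $\Sigma^{\safe}_\star$ to enter the tightened set at cost $O((D^2C_1+C_3)/\epsilon_1)$ per step (Lemma~\ref{lem:maxdist}).

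A vanishing margin needs a per-step tracking error of order $1/\sqrt{T}$ outside the $\langle\bar V_{k-1}^{-1},\Sigma_t\rangle$ term, and your tracking lemma does not deliver this.

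\emph{The drift term is missing.} Because the target moves, the error recursion is $\Delta_{s+1}\preceq A_s\Delta_sA_s^\top+(\Sigma_{s,xx}-\Sigma_{s+1,xx})+\bar\eta_s I$, with $A_s=A_\star+B_\star K_s$. This injects $\sum_{s=t-\rho}^{t}\|\Sigma_{s+1}-\Sigma_s\|=O(\rho\nu\zeta)$ at every step. Your only requirement on $\zeta$ is a $T$-independent stability bound, so this term is $\Theta(1)$. The paper takes $\zeta\asymp 1/\sqrt{T}$, balancing it against the $2\nu N/\zeta$ cost of tracking $\bar\Sigma_k$ (Term II.A).

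\emph{Mixtures across phases do not satisfy a common fixed-point relation.} Different phases satisfy \eqref{eq:opt_sdp:stat} for different $\hat\Theta_i$ and $\bar V_i$. You must pass to $\Theta_\star$ through \eqref{eq:dyn_error}, which doubles the slack (Lemma~\ref{lem:stat_cond}). You must also control the weight $\Sigma_t$ still places on phases older than $k(t)-1$. The paper forces phases to last longer than $H=\zeta^{-1}\log(hT)$ by taking $\lambda\gtrsim R_z^2(H+\rho)$, so that $\Sigma_t$ lies within $2\nu/(hT)$ of $\conv\{\bar\Sigma_{k-1},\bar\Sigma_k\}$ (Lemma~\ref{lem:phase_min}).

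\emph{The two-sided norm bound is false.} \eqref{eq:opt_sdp:stat} is only an inequality and $\bQ$ may be singular, so $\Sigma_{t,xx}$ can strictly exceed the true covariance. Only the one-sided bound $S_{t|t-\rho}\preceq\Sigma_t+(\cdots)I$ holds. That suffices for cost and constraints. The lower comparison needed for the elliptic-potential step must come separately, from $\Sigma_{t,xx}\preceq\nu I\preceq(\nu/\sigma)S^{xx}_{t|t-\rho}$.
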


To facilitate discussion, we note the choice of algorithm parameters used in proving Theorem \ref{thm:main} (where $\asymp$ hides polynomial dependence on the problem parameters and $\log(T)$):
    \begin{equation}
        \label{eq:alg_params}
        \textstyle
        \lambda \asymp \sqrt{T},\eta \asymp \sqrt{T},\mu \asymp \sqrt{T},\zeta \asymp \frac{1}{\sqrt{T}}, \tau_0 \asymp \sqrt{T}, \nu \asymp 1,\rho \asymp 1,c\asymp 1,\xi - \frac{\beta^2}{\Phi^{-1}(1 - \delta)^2} \asymp \frac{1}{\sqrt{T}},
\end{equation}
In the remainder of this section, we give a high-level discussion of the proof of Theorem \ref{thm:main}, and defer the complete proof to Appendix \ref{sec:main_proof}.
The central piece of this proof is characterizing the error between the target covariance $\Sigma_t$ and the actual (conditional) covariance $\cov(z_t | \Fc_{t-\rho})$, which is presented in Section \ref{sec:cov_approx}.
This analysis is then used for establishing the regret bound and constraint guarantees, as discussed in Section \ref{sec:reg_ana} and Section \ref{sec:const_ana}, respectively.

We emphasize that our analysis approach differs substantially from the prior optimistic approaches for learning to control \citep{abbasi2011regret,cohen2019learning}.
For one, these works analyze the regret in terms of the cost-to-go matrix $P$, whereas we analyze the regret and constraint in terms of the covariance matrix $\Sigma$.
We use this approach because the constraint is defined in terms of the covariance $\Sigma$ and therefore requires analysis in the $\Sigma$-domain.
In comparison to \cite{cohen2019learning}, an advantage of our approach is that we do not need to relate the $\Sigma$-SDP to the dual $P$-SDP, and therefore can use weaker assumptions.
For example, \cite{cohen2019learning} requires that both $Q$ and $R$ are positive definite, whereas we require that neither are.
Furthermore, \cite{abbasi2011regret} and \cite{cohen2019learning} are only interested in minimizing \emph{total} cost (and not constraint satisfaction), and therefore only need to bound the performance of the algorithm in the aggregate.
In contrast, the constraints in our setting apply every time-step, and therefore we indeed need guarantees that apply at every time step.
In our setting, this translates to bounds on the difference between the actual and target covariance at every time step, which we show in the following.

In this section, we denote the conditional mean as $m_{t|t-\rho} = \Eb[z_t | \Fc_{t-\rho}]$ and the conditional covariance as $S_{t|t-\rho} := \cov(z_t | \Fc_{t-\rho})$.
We also use $k_t$ to refer to the phase that time step $t$ belongs.
The lemmas in this section are proved in Appendix \ref{sec:ana_proofs}.

\subsection{Covariance Approximation}
\label{sec:cov_approx}

In this section, we bound the difference between the actual (conditional) covariance $S_{t|t-\rho}$ and the target covariance $\Sigma_t$.
This bound characterizes the algorithm's ability to steer the system covariance to a desirable value, and therefore is critical to characterizing the performance of the algorithm.
As a first step towards establishing this bound, we give a technical lemma (Lemma \ref{lem:cov_approx1}) that provides sufficient conditions for the error between the actual and target covariance to be bounded. 

\begin{lemma}
    \label{lem:cov_approx1}
    Suppose that the sequence of target covariances $(\Sigma_t)_{t \in [T]}$ satisfy the following:
    \begin{enumerate}
        \item $\Sigma_s$ are $\Fc_{t-\rho}$-measurable for all $s \in [t-\rho,t]$. \label{it:cov:meas}
        \item $\| \Sigma_{s+1} - \Sigma_s \| \leq \bar{\zeta}$ for all $s \in [t - \rho, t]$, \label{it:cov:vary}
        \item $\Sigma_{s,xx} \succeq \Theta_\star \Sigma_s \Theta_\star^\top + W - \bar{\eta}_s I$ for all $s \in [t-\rho,t]$, \label{it:cov:1}
        \item $0 \preceq \Sigma_t \preceq \nu$  for all $t$,
        \item $\| x_{t-\rho} \| \leq R_x$
    \end{enumerate}
    Furthermore, if $\bar{\zeta} \leq \frac{\bar{\sigma}^2}{2 \nu}$, then it holds that,
    \begin{align}
        \| m_{t|t-\rho} \| & \leq (1 + \tilde{\kappa}) \tilde{\kappa} \exp(- \tilde{\gamma} \rho / 2) R_x \label{eq:mean_app_error}\\
        S_{t|t - \rho} - \Sigma_t & \preceq (1 + \tilde{\kappa})^2 \tilde{\kappa}^2 \big(  \nu \exp( - \tilde{\gamma} \rho) +  \rho \bar{\zeta} +  {\textstyle \sum_{s=t-\rho}^{t}} \bar{\eta}_s \big)I, \label{eq:cov_app_error}
    \end{align}
    where $(\tilde{\kappa}, \tilde{\gamma}) = (\frac{\nu}{\bar{\sigma}}, \frac{\bar{\sigma}}{2 \nu})$ and $\bar{\sigma} = \sigma - \max_{s \in [t-\rho,r]}\bar{\eta}_s$.
\end{lemma}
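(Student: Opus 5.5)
The plan is to unroll the closed-loop dynamics from time $t-\rho$ to $t$ conditioned on $\Fc_{t-\rho}$, using item \ref{it:cov:meas} so that every policy $(K_s,U_s)$ for $s\in[t-\rho,t]$ is deterministic given $\Fc_{t-\rho}$. Write $L_s := A_\star + B_\star K_s$. Then $x_{s+1} = L_s x_s + B_\star v_s + w_s$, so
\[
m_{t|t-\rho} = \begin{bmatrix} I \\ K_t\end{bmatrix} L_{t-1}\cdots L_{t-\rho}\, x_{t-\rho}.
\]
Similarly, the conditional state covariance $P_s := \cov(x_s|\Fc_{t-\rho})$ obeys the exact recursion
\[
P_{s+1} = L_s P_s L_s^\top + B_\star U_s B_\star^\top + W = \Theta_\star \Big(\begin{bmatrix} I \\ K_s\end{bmatrix} P_s \begin{bmatrix} I \\ K_s\end{bmatrix}^\top + \begin{bmatrix} 0&0\\0&U_s\end{bmatrix}\Big)\Theta_\star^\top + W,
\]
with $P_{t-\rho}=0$. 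This is the Lyapunov-type map $P\mapsto \Theta_\star \Sigma(P)\Theta_\star^\top+W$, where $\Sigma(P)$ equals $\Sigma_s$ exactly when $P=\Sigma_{s,xx}$, by the computation in \eqref{eq:naive1}.

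\textbf{Step 1 (sequential strong stability).} Show that each $K_s$ is $(\tilde\kappa,\tilde\gamma)$-strongly stable, in the style of \cite{cohen2018online}. From item \ref{it:cov:1} and the identity $\Theta_\star\Sigma_s\Theta_\star^\top = L_s\Sigma_{s,xx}L_s^\top + B_\star U_sB_\star^\top$, I get $\Sigma_{s,xx}\succeq L_s\Sigma_{s,xx}L_s^\top + \bar\sigma I$. Set $H_s=\Sigma_{s,xx}^{-1/2}L_s\Sigma_{s,xx}^{1/2}$. Then $\|H_s\|^2\le 1-\bar\sigma/\nu$ and $\|\Sigma_{s,xx}^{\pm1/2}\|$, $\|K_s\|$ are controlled by $\nu$ and $\bar\sigma$, since $\Sigma_{s,xx}\succeq\bar\sigma I$. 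For products $L_{t-1}\cdots L_{t-\rho}$ I also need consecutive changes of basis to be close: $\|\Sigma_{s+1,xx}^{-1/2}\Sigma_{s,xx}^{1/2}\|^2\le 1+\bar\zeta/\bar\sigma$, using item \ref{it:cov:vary}. With $\bar\zeta\le\bar\sigma^2/(2\nu)$, each step contracts by at most $\sqrt{(1-\bar\sigma/\nu)(1+\bar\sigma/(2\nu))}\le 1-\bar\sigma/(4\nu)\le e^{-\tilde\gamma/2}$. Hence
\[
\|L_{t-1}\cdots L_{t-\rho}\|\le \tilde\kappa e^{-\tilde\gamma\rho/2}.
\]
Combined with $\|[I;K_t]\|\le 1+\tilde\kappa$ and item 5, this gives \eqref{eq:mean_app_error}.

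\textbf{Step 2 (covariance error recursion).} Let $E_s := P_s - \Sigma_{s,xx}$. Subtracting item \ref{it:cov:1} (written as $\Sigma_{s+1,xx}\succeq \Theta_\star\Sigma_{s+1}\Theta_\star^\top+W-\bar\eta_{s+1}I$) from the recursion, and using linearity of $P\mapsto\Sigma(P)$ in the $xx$ block, gives
\[
E_{s+1}\preceq L_sE_sL_s^\top + \Theta_\star(\Sigma_s-\Sigma_{s+1})\Theta_\star^\top + \bar\eta_{s+1}I \preceq L_sE_sL_s^\top + (S^2\bar\zeta+\bar\eta_{s+1})I .
\]
The inequality is one-sided, which matches the one-sided claim. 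The initial error is $E_{t-\rho}=-\Sigma_{t-\rho,xx}\preceq 0$; to match the stated $\nu e^{-\tilde\gamma\rho}$ term I will simply bound $\|E_{t-\rho}\|\le\nu$. Unrolling and using monotonicity of congruence together with Step 1 yields
\[
E_t\preceq \tilde\kappa^2\Big(\nu e^{-\tilde\gamma\rho}+\sum_{s}(\bar\zeta+\bar\eta_s)\Big)I .
\]
The $S^2$ factor is absorbed, or avoided by writing the perturbation through $L_s$ and $K_s$ instead of $\Theta_\star$. Finally,
\[
S_{t|t-\rho}-\Sigma_t=[I;K_t]E_t[I;K_t]^\top \preceq (1+\tilde\kappa)^2\|E_t\|^+ I,
\]
since the $U_t$ blocks cancel exactly. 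This gives \eqref{eq:cov_app_error}.

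\textbf{Main obstacle.} The hard part is Step 1: getting a uniform contraction for the time-varying product $L_{t-1}\cdots L_{t-\rho}$ when the Lyapunov certificates $\Sigma_{s,xx}$ change with $s$. This is exactly where the slow-variation condition $\bar\zeta\le\bar\sigma^2/(2\nu)$ is needed. I would follow the sequential-strong-stability argument of \cite{cohen2018online}, carefully tracking the basis-change factors. A second, minor difficulty is keeping the one-sided Loewner inequalities valid through the unrolling; congruence preserves $\preceq$, so this should go through.
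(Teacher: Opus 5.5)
Your plan matches the paper's proof. Lemma~\ref{lem:strong_stab} certifies sequential strong stability with the same similarity transform $\Sigma_{s,xx}^{1/2}$, and uses $\bar\zeta\le\bar\sigma^2/(2\nu)$ to control the changes of basis. Lemma~\ref{lem:cov_approx} and Lemma~\ref{lem:mean} then unroll a one-sided Lyapunov recursion for the $xx$-block error and for the conditional mean, and finish by conjugating with $[I;K_t]$ so that the $U_t$ blocks cancel. Your Step 1 and your mean bound go through as written.

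The one step that does not give the stated constant is your recursion in Step 2. You invoke item 3 at index $s+1$, so the drift enters as $\Theta_\star(\Sigma_s-\Sigma_{s+1})\Theta_\star^\top$ and costs $\|\Theta_\star\|^2\bar\zeta$. This factor cannot be absorbed. The lemma's hypotheses give only $\Theta_\star\Sigma_s\Theta_\star^\top\preceq\Sigma_{s,xx}\preceq\nu I$, which does not make that term $O(\bar\zeta)$. For a concrete counterexample take $n=m=1$, $A_\star=0$, $B_\star^2=\nu/\bar\zeta$, $\Sigma_s=\mathrm{diag}(\nu,\bar\zeta/2)$, $\Sigma_{s+1}=\mathrm{diag}(\nu,0)$, and $W=\sigma$ small. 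All hypotheses hold, yet the term equals $\nu/2$.

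Rewriting the term through $L_s$ and $K_s$ does not help either, since it still contains $B_\star(U_s-U_{s+1})B_\star^\top$.

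The fix, which is what the paper does, is to use item 3 at index $s$ instead:
\begin{itemize}
\item The identity $\Theta_\star\Sigma_s\Theta_\star^\top=L_s\Sigma_{s,xx}L_s^\top+B_\star U_sB_\star^\top$ involves the same $K_s,U_s$ as the recursion for $P_{s+1}$. Subtracting gives $P_{s+1}-\Sigma_{s,xx}\preceq L_s(P_s-\Sigma_{s,xx})L_s^\top+\bar\eta_sI$.
\item Adding $\Sigma_{s,xx}-\Sigma_{s+1,xx}$ to both sides yields $E_{s+1}\preceq L_sE_sL_s^\top+(\Sigma_{s,xx}-\Sigma_{s+1,xx})+\bar\eta_sI$.
\item The drift now sits outside $\Theta_\star$ and has norm at most $\bar\zeta$.
\item Unrolling over $s=t-\rho,\dots,t-1$ then gives exactly $\rho\bar\zeta+\sum_s\bar\eta_s$.
\end{itemize}

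As a side remark, your observation that $E_{t-\rho}=-\Sigma_{t-\rho,xx}\preceq 0$ is correct. It shows the $\nu e^{-\tilde\gamma\rho}$ term can be dropped entirely; the paper keeps it only because it bounds $\|E_{t-\rho}\|\le\nu$.
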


The conditions in Lemma \ref{lem:cov_approx1} can be interpreted as follows: condition 1 is that the past $\rho$ target covariances are $\Fc_{t-\rho}$-measurable, condition 2 is a bound the variation of the target covariances, condition 3 is a relaxed steady-state condition, condition 4 is that target covariances are psd and bounded, and condition 5 is a bound on the state at time step $t - \rho$.
Our algorithm is designed to ensure that all of these conditions are satisfied with high probability such that the bounds $\| m_{t|t-\rho} \|$ and $S_{t|t-\rho} - \Sigma_t$  in \eqref{eq:mean_app_error} and \eqref{eq:cov_app_error} are small.

The following discussion explains how the algorithm ensures that the conditions of Lemma \ref{lem:cov_approx1} are satisfied.
First, the proposed algorithm uses delayed state information when estimating the system matrices \eqref{eq:sys_est} with delay $\rho \asymp \log(T)$, which ensures that $\Sigma_s$ are $\Fc_{t-\rho}$-measurable for all $s \in [t-\rho,t]$ (i.e. condition 1) and that the term $\exp(-\tilde{\gamma}\rho)$ in \eqref{eq:cov_app_error} is $1/T$.
Furthermore, the algorithm slowly varies the target covariance with rate $\zeta \asymp 1/\sqrt{T}$, which ensures that $\| \Sigma_t - \Sigma_{t+1}\| \lesssim 1/\sqrt{T}$ (i.e. condition 2) and therefore that the term $\bar{\zeta}$ in \eqref{eq:cov_app_error} is $1/\sqrt{T}$.
Ensuring that the relaxed steady-state condition holds (condition 3) is much more difficult because the steady-state condition in the optimistic SDP in \eqref{eq:opt_sdp:stat} has a relaxation of $\eta \langle \bar{V}_k^{-1}, \Sigma \rangle $, which changes from phase to phase.
Therefore, it is challenging to ascertain whether the target covariance $\Sigma_t$ satisfies a steady-state condition, given that it only slowly tracks the phase covariance. 
We remedy this by showing that the phases can be made sufficiently long by choosing $\lambda$ appropriately, and therefore that the target covariance $\Sigma_t$ will get close to $\bar{\Sigma}_k$ by the end of the phase.
Therefore, we can show that the target covariance in the current phase is close to a mixture of the last phase covariance $\bar{\Sigma}_{k-1}$ and current phase covariance $\bar{\Sigma}_k$.
This allows for us to show a relaxed steady-state condition on $\Sigma_t$ where the relaxation $\bar{\eta}$ is approximately $\eta \langle \bar{V}_{k-1}^{-1}, \Sigma \rangle$, i.e. from the previous phase $k - 1$.
Thus, we show that condition 3 is satisfied with the $\bar{\eta}$ in \eqref{eq:cov_app_error} as $\eta \ip{\bar{V}_{k-1}^{-1}, \Sigma_t }$ with a shrinking error term.
Next, to show that $0 \preceq \Sigma_t \preceq \nu$ (condition 4) holds, we note that the target covariance is a convex combination of matrices that satisfy $0 \preceq \Sigma \preceq \nu$ since this condition is satisfied by the past optimistic covariances and zero policy covariances.
Thus, from convexity, it holds that $0 \preceq \Sigma_t \preceq \nu$.
Finally, to show the high probability boundedness of the states (condition 5), we use a fairly standard analysis approach similar to \cite{abbasi2011regret} and \cite{cohen2019learning}.

Ultimately, the aformentioned techniques result in a bound on the covariance approximation error of the form in Lemma \ref{lem:approx2}.
Note that we later show (in Lemma \ref{lem:ellip_pot}) that $\sum_{t=1}^{T} \ip{\bar{V}_{k-1}^{-1}, \Sigma_t} = \Octil(1)$, and therefore Lemma \ref{lem:approx2} implies that the aggregate covariance approximation error is $\Octil(\sqrt{T})$.

\begin{lemma}
    \label{lem:approx2}
    Assume the same as Theorem \ref{thm:main}.
    Then, it holds with high probability that,
    \begin{equation}
    \label{eq:cov_error}
    \textstyle
        \| m_{t|t-\rho} \| \leq \Octil \left( \frac{1}{T} \right), \quad S_{t|t-\rho} - \Sigma_t \preceq \Octil(1) \left( \frac{1}{\sqrt{T}} + \sqrt{T} \ip{\bar{V}_{k_t-1}^{-1}, \Sigma_t} \right) I .
    \end{equation}
\end{lemma}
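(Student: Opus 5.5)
The plan is to obtain Lemma~\ref{lem:approx2} from Lemma~\ref{lem:cov_approx1}. We verify its five conditions on a high-probability event with the parameter choices in \eqref{eq:alg_params}, namely $\rho \asymp \log T$, $\zeta \asymp 1/\sqrt{T}$ and $\lambda,\eta,\mu \asymp \sqrt{T}$, and then read off \eqref{eq:mean_app_error}--\eqref{eq:cov_app_error}. The good event $\Ec$ is the intersection of three events: the least-squares confidence bound \eqref{eq:dyn_error} holds for every phase (via the self-normalized bound of \cite{abbasi2011regret}, using $\lambda\asymp\sqrt{T}$ and the initial estimate $\hat\Theta_0$); the SDP \eqref{eq:opt_sdp} is feasible; and all states satisfy $\|x_t\|\le R_x = \Octil(1)$. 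Condition 1 holds by construction. $\hat\Theta_k$ only uses data up to $\tau_k-\rho$, and the phase-switch test uses $V_{t-\rho}$, so $\Sigma_s$ for $s\le t$ is determined by $\Fc_{t-\rho}$. For condition 2, line~\ref{lne:mixing} gives $\|\Sigma_{s+1}-\Sigma_s\| = \zeta\|\bar\Sigma_k-\Sigma_s\|\le 2\nu\zeta \asymp 1/\sqrt{T}$. Condition 4 holds because $\Sigma_t$ is a convex combination of $\bar\Sigma_0$, the $\Sigma^o_k$ (trace-bounded by \eqref{eq:opt_sdp:bound}) and the $\Sigma^\safe_k$. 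The latter are bounded once $\hat\Theta_k$ is close enough to $\Theta_\star$ that the zero policy stays strongly stable, which we obtain from Assumption~\ref{ass:null_policy} and $T\ge T_{\min}$.

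Condition 3 is the main obstacle. Each $\Sigma^o_k$ satisfies \eqref{eq:opt_sdp:stat}. Combining this with \eqref{eq:dyn_error} gives
\[
\Sigma^o_{k,xx}\succeq \Theta_\star\Sigma^o_k\Theta_\star^\top+W-2\eta\ip{\bar V_k^{-1},\Sigma^o_k}I .
\]
The safe covariance satisfies the same inequality with exact equality for $\hat\Theta_k$. These conditions are affine in $\Sigma$, so the convex combination $\bar\Sigma_k$ satisfies them with slack $2\eta\ip{\bar V_k^{-1},\bar\Sigma_k}$. The difficulty is that $\Sigma_t$ mixes $\bar\Sigma$'s from different phases, and each carries its own $\bar V$. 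I would first lower bound the phase length. Since $\det V$ must double and each step adds at most $\|z_s\|^2\le \Octil(1)$ to a matrix of scale $\lambda\asymp\sqrt T$, a phase lasts $\gtrsim \lambda/\Octil(1)$ steps. With a suitable constant this exceeds $\log(T)/\zeta$. Consequently, by the end of phase $k-1$, the contribution of all phases older than $k-1$ to $\Sigma_t$ has geometric weight $(1-\zeta)^{\text{len}}\le 1/T$. Within phase $k$, $\Sigma_t$ is therefore, up to an $\Octil(1/T)$ perturbation, a convex combination of $\bar\Sigma_{k-1}$ and $\bar\Sigma_k$. Because $\bar V_{k-1}\preceq \bar V_k$, both pieces satisfy the relaxed steady-state condition with slack $2\eta\ip{\bar V_{k-1}^{-1},\cdot}$. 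Linearity then gives condition 3 with
\[
\bar\eta_s = 2\eta\ip{\bar V_{k_s-1}^{-1},\Sigma_s}+\Octil(1/T).
\]

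The next step relates $\bar\eta_s$ for $s\in[t-\rho,t]$ to the time-$t$ quantity. We need $\ip{\bar V^{-1}_{k_s-1},\Sigma_s}\lesssim \ip{\bar V^{-1}_{k_t-1},\Sigma_t}$ up to an additive $\rho\zeta\|\bar V^{-1}\|\lesssim \Octil(1/T)$, using condition 2 and $\bar V\succeq\lambda I$. A window of length $\rho$ can cross a phase boundary. Here I would use that consecutive $\bar V$'s differ by a bounded determinant ratio. If that fails, I would fall back on the inequality $\bar V_{k-2}^{-1}\preceq \Octil(1)\bar V_{k-1}^{-1}$, which holds because the data added within a phase is $\Octil(\text{phase length})$. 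We also need $\bar\eta_s\le\sigma/2$ so that $\bar\sigma\ge\sigma/2$; this follows from $\eta\ip{\bar V^{-1},\Sigma}\le \eta\nu/\lambda=\Octil(1)$ small for suitable constants, since $\eta/\lambda\asymp 1$ can be tuned, with $T\ge T_{\min}$ absorbing the rest. Then $\tilde\kappa,\tilde\gamma^{-1}=\Octil(1)$, and choosing $\rho\asymp\log T/\tilde\gamma$ makes $\exp(-\tilde\gamma\rho)\le 1/T^2$. Plugging everything into \eqref{eq:mean_app_error}--\eqref{eq:cov_app_error} gives $\|m_{t|t-\rho}\|\le\Octil(1/T)$ and
\[
S_{t|t-\rho}-\Sigma_t\preceq \Octil(1)\big(1/\sqrt T+\rho\,\eta\ip{\bar V_{k_t-1}^{-1},\Sigma_t}\big)I,
\]
which is the claim.

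The state bound in condition 5 is circular, because bounded states are used to control the covariance errors. I would resolve this by induction on $t$. Assuming the states are bounded up to $t-\rho$, the mean and covariance bounds at $t$ show that $z_t$ is conditionally Gaussian with $\Octil(1)$ covariance and mean. A Gaussian tail bound plus a union bound over $T$ steps then gives $\|x_t\|\le R_x$ on an event of probability $1-\delta/T$, with $R_x$ chosen to be self-consistent, i.e.\ $R_x\ge$ the $\Octil(1)$ steady-state radius.
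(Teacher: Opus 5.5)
Your proposal is correct. For conditions 1--4 and the final plug-in it follows the paper:
\begin{itemize}
\item relaxed stationarity of $\bar\Sigma_k$ by convexity (Lemma~\ref{lem:stat_cond});
\item a lower bound on phase length from the doubling rule (Lemma~\ref{lem:phase_dur});
\item geometric forgetting that places $\Sigma_t$ within $\Octil(1/T)$ of $\conv\{\bar\Sigma_{k-1},\bar\Sigma_k\}$ (Lemmas~\ref{lem:phase_min} and~\ref{lem:real_stat});
\item then Lemma~\ref{lem:main_cov} with the parameters of Theorem~\ref{thm:main_apx}.
\end{itemize}

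The genuine difference is condition 5. The paper bounds the state pathwise (Lemma~\ref{lem:bounded_state}) by induction over \emph{phases}. It works on one global event: a uniform noise bound, a self-normalized confidence bound, and an accurate $\hat\Theta_0$. On that event:
\begin{itemize}
\item $F_k$ gives sequential strong stability of every controller up to $\tau_{k+1}-1$ (Lemma~\ref{lem:cond_s});
\item unrolling the closed loop against the bounded noise gives $\|z_s\|\le R_z$ (Lemma~\ref{lem:z_bound});
\item this yields the confidence bound for $\hat\Theta_{k+1}$ (Lemma~\ref{lem:conf_set}).
\end{itemize}

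You instead induct over \emph{time steps}. You bound $\|z_t\|$ with a conditional Gaussian tail built from the very mean and covariance bounds you are proving. This is valid: nothing at time $t$ depends on data after $t-\rho$, and $R_x$ can be fixed before $\rho$ is chosen. It also reuses the main lemma instead of a separate unrolling argument.

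It does cost bookkeeping that the paper avoids.
\begin{itemize}
\item To condition on $\Fc_{t-\rho}$, you must take the confidence event in its state-independent self-normalized form, restricted to data up to $t-\rho$. You then argue via the first time the state bound fails and union-bound over $t$.
\item The base case needs separate treatment. During initialization the exploration inputs are uniform on a sphere, so $z_t$ is not conditionally Gaussian there. You must unroll $A_\star$ directly, as the paper does.
\end{itemize}
The paper's route needs only sub-Gaussian noise bounds, and it treats initialization and the main phases uniformly.

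On windows $[t-\rho,t]$ that straddle a phase boundary, the paper needs no cross-phase comparison. Lemma~\ref{lem:real_stat} gives slack $2\eta\ip{\bar V_{k-1}^{-1},\Sigma_s}$ on the whole two-phase range $[\tau_{k-1}+H,\tau_{k+1}-1]$. Taking $\lambda\ge R_z^2(H+\rho)/\log 2$ forces every phase to last at least $H+\rho$ steps, so the entire window lies in that range with $k=k_t$.

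Your first option also works: the doubling rule plus $N\preceq\frac{\det N}{\det M}M$ gives $\bar V_k\preceq 3\bar V_{k-1}$, as in Lemma~\ref{lem:phase_det}. The justification you give for your fallback does not. The data added over a phase is not $O(\lambda)$, since late phases can last on the order of $T$ steps.
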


\subsection{Regret Analysis}
\label{sec:reg_ana}

In this section, we discuss the regret analysis used for Theorem \ref{thm:main}.
First, we introduce the $\tilde{\Sigma}$  as the ``benchmark policy'' that the algorithm aims to learn:
\begin{subequations}
    \label{eq:bench}
    \begin{align}
    \tilde{\Sigma} = \argmin_{\Sigma \succeq 0} & \ \langle \bQ, \Sigma \rangle\\
    \text{s.t.} & \ \Sigma_{xx} = \Theta_\star \Sigma \Theta_\star^\top + W \label{eq:bench_ss}\\
    & \ \langle \balpha_j, \Sigma \rangle \leq \xi, \ \forall j \in [J] \label{eq:bench_const} \\
    & \ \tr(\Sigma) \leq \nu 
\end{align}
\end{subequations}
Note that this SDP only differs from the optimistic SDP in \eqref{eq:opt_sdp} in that it uses the exact steady-state condition with the exact system \eqref{eq:bench_ss} instead of the relaxed one with the estimated system in \eqref{eq:opt_sdp:stat}.
Then, we decompose the regret:
\begin{equation}
\label{eq:reg_decomp}
    \reg_T = \underbrace{\sum_{t=\tau_1}^{T} \left( \ell(x_t, u_t) - \ip{\mathbf{Q}, \Sigma_t} \right)}_{\tone} + \underbrace{\sum_{t=\tau_1}^{T} \ip{\mathbf{Q}, \Sigma_t - \tilde{\Sigma}} }_{\ttwo} + \underbrace{T \ip{\mathbf{Q}, \tilde{\Sigma}} - J_\star}_{\tthree} + \underbrace{\sum_{t=1}^{\tau_1-1}(\ell(x_t, u_t) - \ip{\mathbf{Q}, \tilde{\Sigma}} )}_{\tfour}
\end{equation}
Intuitively, Term I measures how effectively the algorithm steers the actual cost to the target cost, Term II is the regret in learning the benchmark covariance, Term III is the error in using the benchmark covariance to approximate the optimal cost, and Term IV is the regret of the $\mathsf{Initialize}$ subroutine.
We discuss Term I, II and III in the following sections. 
The analysis of Term IV is straightforward and therefore discussion of it is omitted (the rigorous analysis of Term IV is given in the full proof in Appendix \ref{sec:main_proof}).

\subsubsection{Term I}

In order to handle $\tone$, we further decompose it as follows.
\begin{align*}
    & \tone = \\
    & \underbrace{\sum_{t=\tau_1}^{T} \left( \ell(x_t, u_t) - \Eb[\ell(x_t,u_t) |\Fc_{t-\rho}] \right)}_{\mathrm{Term\ I.A}} + \underbrace{\sum_{t=\tau_1}^{T} \left( \Eb[\ell(x_t,u_t) |\Fc_{t-\rho}] - \ip{\mathbf{Q}, S_{t|t-\rho}} \right)}_{\mathrm{Term\ I.B}} + \underbrace{\sum_{t=\tau_1}^{T} \ip{\mathbf{Q}, S_{t|t-\rho} - \Sigma_t}}_{\mathrm{Term\ I.C}}
\end{align*}
Notice that Term I.A is simply the difference between the realized cost and the conditional expectation of the cost, and therefore can be shown to be $\Octil(\sqrt{T})$ with high probability via an Azuma-type inequality.
Next, Term~I.B can be written as $\sum_{t=\tau_1}^{T} m_{t|t-\rho}^\top \mathbf{Q} m_{t|t-\rho}$, which is $\Octil(1)$ given \eqref{eq:mean_app_error} in Lemma \ref{lem:approx2}.
Finally, since Term I.C is a linear function of the covariance approximation error, we can bound it with \eqref{eq:cov_app_error} in Lemma \ref{lem:approx2}.
However, the term $\ip{\bar{V}_{k_t-1}^{-1}, \Sigma_t}$ shows up in \eqref{eq:cov_app_error}, and we have not yet established a bound on it.
This is remedied in the following lemma.

\begin{lemma}
    \label{lem:ellip_pot}
    Assume the same as Theorem \ref{thm:main}.
    Then, it holds that $\sum_{t=1}^{T} \ip{\bar{V}_{k-1}^{-1}, \Sigma_t} = \Octil(1)$ with high probability.
\end{lemma}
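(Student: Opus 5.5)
The plan is to reduce the statement to the standard elliptical potential argument, using the empirical Gram matrix to stand in for the target covariance. In phase $k$ (with $k = k_t$) we have $\bar{V}_{k-1} \succeq \lambda I$ and $\bar{V}_{k-1} \preceq V_{t-\rho}$ up to a bounded factor. Concretely, the determinant-doubling rule gives $\det V_{t-\rho} \le 2\det \bar V_{k}$, and $\bar V_{k}\succeq \bar V_{k-1}$. We will combine this with the fact that one phase back the matrices are comparable: over a phase the Gram matrix increases by at most roughly $(\text{phase length})\cdot \max\|z_s\|^2$, and $\lambda\asymp\sqrt T$ dominates. So the first step is to show $\ip{\bar V_{k-1}^{-1},\Sigma_t}\lesssim \ip{V_{t-\rho}^{-1},\Sigma_t}$ up to constants. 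The route is the standard inequality $\ip{A^{-1},\Sigma}\le \frac{\det B}{\det A}\ip{B^{-1},\Sigma}$ for $A\preceq B$, applied twice across the two phase boundaries. Together with the doubling rule, this costs a factor of at most $4$.

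Next we replace $\Sigma_t$ by realized outer products. By Lemma~\ref{lem:approx2}, $\Sigma_t \succeq S_{t|t-\rho} - \Octil(1)(T^{-1/2}+\sqrt T\ip{\bar V_{k_t-1}^{-1},\Sigma_t})I$, but this points the wrong way: we need a lower bound on $S_{t|t-\rho}$ in terms of $\Sigma_t$. So instead we aim directly for $S_{t|t-\rho}\succeq c\,\Sigma_t$ up to a small error. Since $\Sigma_t\preceq\nu$ and $S_{t|t-\rho}\succeq \diag(\text{noise lower bound})$, this should hold with $c\asymp \sigma\cdot\min\text{eig}$-type constants. Concretely, $\cov(x_t|\Fc_{t-\rho})\succeq W\succeq\sigma I$, and $U_t$ is bounded below because $\Sigma_t$ is a convex combination of SDP solutions, which have a strictly positive Schur complement. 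If that lower bound fails, we add small exploration, using the $\mathsf{Initialize}$ noise $c$. Alternatively, and more robustly, we bound $\ip{V^{-1},\Sigma_t}\le \frac{\nu}{\sigma'}\ip{V^{-1},S_{t|t-\rho}}$ whenever $S_{t|t-\rho}\succeq(\sigma'/\nu)\Sigma_t$.

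Then
$$\textstyle\sum_t \ip{V_{t-\rho}^{-1},S_{t|t-\rho}}=\sum_t \Eb[z_t^\top V_{t-\rho}^{-1}z_t\mid\Fc_{t-\rho}]-\sum_t m_{t|t-\rho}^\top V_{t-\rho}^{-1}m_{t|t-\rho}.$$
The conditional expectations are compared to the realized terms $z_t^\top V_{t-\rho}^{-1}z_t$ by a martingale concentration over $\rho$ interleaved subsequences, one for each residue mod $\rho$. Each summand is at most $\|z_t\|^2/\lambda=\Octil(1/\sqrt T)$ on the high-probability event $\|z_t\|\lesssim\sqrt{\log T}$. This keeps the deviation $\Octil(1)$, and also keeps each summand below $1$. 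The realized sum is handled by the elliptical potential lemma with delay. We use $V_{t-\rho}\succeq V_{t}-\sum_{s=t-\rho+1}^{t}z_sz_s^\top\succeq \tfrac12 V_t$, which holds because $\rho\,\max\|z_s\|^2\lesssim\lambda$. Hence $\sum_t z_t^\top V_{t}^{-1}z_t\le 2\log\frac{\det V_T}{\det\lambda I}=\Octil(d)$.

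The main obstacle is the lower bound $S_{t|t-\rho}\succeq c\,\Sigma_t$ in the input directions, which amounts to a uniform lower bound on $U_t$. Here I would try to exploit the fact that the term $\ip{\bar V^{-1},\Sigma}$ already includes the same $\Sigma_t$. Rather than relying on $U_t$, I would write $\Sigma_t=\begin{bmatrix}I\\K_t\end{bmatrix}\Sigma_{t,xx}\begin{bmatrix}I\\K_t\end{bmatrix}^\top+\diag(0,U_t)$. The first part is dominated by $S_{t|t-\rho}$ up to the factor $\nu/\sigma$, using $\cov(x_t|\Fc_{t-\rho})\succeq\sigma I$ and $\Sigma_{t,xx}\preceq\nu I$. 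The $U_t$ part is identical in $S_{t|t-\rho}$ by construction. Together these give $S_{t|t-\rho}\succeq \min(1,\sigma/\nu)\Sigma_t$ without any lower bound on $U_t$, which resolves the difficulty.
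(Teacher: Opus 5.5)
Your proposal is essentially the paper's proof of this lemma (Lemmas~\ref{lem:ell1}, \ref{lem:ell2} and \ref{lem:ellipt}). It uses the same ingredients: the determinant-doubling comparison of $\bar{V}_{k_t-1}$ with the current Gram matrix, the bound $\Sigma_t \preceq \max(1,\nu/\sigma)\, S_{t|t-\rho}$ from the $K_t$/$U_t$ decomposition with $\Sigma_{t,xx}\preceq \nu I \preceq (\nu/\sigma) S^{xx}_{t|t-\rho}$, concentration over $\rho$ interleaved subsequences, and the elliptical potential lemma. Keeping the $\Fc_{t-\rho}$-measurable $V_{t-\rho}$ inside the conditional expectation and passing to $V_t$ only afterwards (via $V_{t-\rho}\succeq \tfrac12 V_t$) is cleaner than the paper, which moves the non-measurable $V_t$ out of $\Eb_{t-\rho}$. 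One small correction: the determinant step costs $4(1+R_z^2/\lambda)$ rather than $4$, because the doubling rule controls $\det V_{\tau_k-\rho-1}$ and the boundary sample adds one more factor.
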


Thus, it follows from Lemma \ref{lem:approx2} and Lemma \ref{lem:ellip_pot} that Term I.C is $\Octil(\sqrt{T})$.
Putting together the bounds for each of the terms shows that Term I is $\Octil(\sqrt{T})$ as desired.

\subsubsection{Term II}

Term II is decomposed as follows.
$$
    \ttwo = \underbrace{\sum_{t=\tau_1}^{T} \ip{\mathbf{Q}, \Sigma_t - \bar{\Sigma}_{k_t}}}_{\mathrm{Term\ II.A}} + \underbrace{\sum_{t=\tau_1}^{T} \ip{\mathbf{Q}, \bar{\Sigma}_{k_t} - \Sigma_{k_t}^o}}_{\mathrm{Term\ II.B}} + \underbrace{\sum_{t=\tau_1}^{T} \ip{\mathbf{Q}, \Sigma_{k_t}^o - \tilde{\Sigma}}}_{\mathrm{Term\ II.C}}
$$
Term II.A represents how closely $\Sigma_t$ tracks the phase covariance $\bar{\Sigma}_k$.
Since the tracking rate is $\zeta \asymp 1/\sqrt{T}$, and there are only $N= \Octil(1)$ phases, it can be shown that $\sum_{t=\tau_1}^{T} \| \Sigma_t  - \bar{\Sigma}_{k_t} \| \lesssim N/\zeta = \Octil(\sqrt{T})$ and therefore Term II.A is $\Octil(\sqrt{T})$.
Term II.B represents the difference between the phase covariance $\bar{\Sigma}_k$ and the optimistic covariance $\Sigma_k^o$, and therefore is proporitional to the safe scaling term $1 - \phi_k$.
We bound $1 - \phi_k$ using an analysis similar to the one used by \cite{hutchinson2024directional,gangrade2025constrained} to analyze a related scaling-based constraint satisfaction approach in the safe stochastic bandit literature.
This yields a bound of the form $1 - \phi_k \lesssim \sqrt{T} \ip{\bar{V}_{k-1}^{-1}, \Sigma_t}$, which combined with Lemma \ref{lem:ellip_pot}, shows that Term II.B is $\Octil(\sqrt{T})$.
Finally, since the optimistic covariance $\Sigma_{k_t}^o$ is indeed ``optimistic,'' it holds that Term II.C is non-positive with high probability.
Thus, putting the bounds on all of the terms together, it holds that $\ttwo$ is $\Octil(\sqrt{T})$.

\subsubsection{Term III}

For the analysis of Term III, we introduce a covariance matrix that serve as an intermediate between the optimal cost and the benchmark covariance,
\begin{subequations}
    \label{eq:tilde}
    \begin{align}
    \Sigma' = \argmin_{\Sigma =
    \left[\begin{smallmatrix}
    \Sigma_{xx} & \Sigma_{xu}\\
    \Sigma_{ux} & \Sigma_{uu}
    \end{smallmatrix}\right]} & \ \langle \bQ, \Sigma \rangle\\
    \text{s.t.} & \ \Sigma_{xx} = \Theta_\star \Sigma \Theta_\star^\top + W \\
    & \ \langle \balpha_j, \Sigma \rangle \leq \left( \frac{\beta - C_1 \exp(-\gamma T)}{\Phi^{-1}(1 - \delta)} \right)^2 + C_2 \exp(-\gamma T) , \ \forall j \in [J] \label{eq:tilde_ss}\\
    & \ \tr(\Sigma) \leq \nu  \\
    & \ \Sigma \succeq 0 
\end{align}
\end{subequations}
Notice that this SDP only differs from the one for the benchmark covariance \eqref{eq:bench} in the constraint \eqref{eq:tilde_ss}, where the terms $C_1,C_2 > 0$ are polynomial in the problem constants.
Note that \eqref{eq:tilde} is designed to ensure that the steady-state covariance of the optimal policy is a feasible matrix.
To this end, the terms $C_1 \exp(-\gamma T)$ and $C_2 \exp(-\gamma T)$ account for the fact that optimal policy only needs to ensure constraint satisfaction on the transient system (i.e. not at steady-state). 

Then, using $\Sigma'$, we decompose Term III as,
$$
    \tthree = \underbrace{T \ip{\mathbf{Q}, \tilde{\Sigma} - \Sigma'}}_{\mathrm{Term\ III.A}} + \underbrace{T \ip{\mathbf{Q}, \Sigma'} - J_\star}_{\mathrm{Term\ III.B}}.
$$
We handle Term III.A by bounding the distance between the feasible sets of the SDPs for $\tilde{\Sigma}$ and $\Sigma'$.
As expected, this can be bound in terms of the difference between the righthand sides of the constraints \eqref{eq:bench_const} and \eqref{eq:tilde_ss}, which has a $\exp(-\gamma T)$ from \eqref{eq:tilde_ss} and a $1/\sqrt{T}$ term from $\xi$ as in \eqref{eq:alg_params}.
Thus, Term III.A is bounded by $\sqrt{T}$.
As for Term III.B, we use the fact that the steady-state covariance of the optimal policy is feasible for \eqref{eq:tilde}, and therefore $\ip{\mathbf{Q}, \Sigma'}$ is a lower bound on the cost of the steady-state covariance of the optimal policy.
Thus, Term III.B captures the transient part of the cost, and is therefore no larger than a constant.

\subsection{Constraint Analysis}
\label{sec:const_ana}

In this section, we give an overview of the analysis showing that the chance-constraints are satisfied.
In particular, this analysis relies on the following technical lemma, which provides conditions on the conditional mean and covariance of the state that guarantee satisfaction of the chance-constraints.

\begin{lemma}
\label{lem:const_bound}
    Fix $\omega \in [0,\delta)$.
    If $z_t|\Fc_{t-\rho} \sim \Nc(m_{t|t-\rho}, S_{t|t-\rho})$, and,
    \begin{equation}
        \label{eq:cov_const}
        \textstyle
        \Pb\Big(\ip{\balpha_j, S_{t|t-\rho}} \leq \big( \frac{\beta - \alpha_j^\top m_{t|t-\rho}}{\Phi^{-1}(1 - \delta + \omega)} \big)^2 , \alpha_j^\top m_{t|t-\rho} \leq \beta \Big) \geq 1 - \omega
    \end{equation}
    then it holds that $\Pb( \alpha_j^\top z_t \leq \beta) \geq 1 - \delta$.
\end{lemma}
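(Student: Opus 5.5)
The plan is to condition on $\Fc_{t-\rho}$ and use the tower property. Let $G$ denote the event inside the probability in \eqref{eq:cov_const}; it is $\Fc_{t-\rho}$-measurable because $m_{t|t-\rho}$ and $S_{t|t-\rho}$ are. We write
\[
\Pb(\alpha_j^\top z_t > \beta) = \Eb\big[\Pb(\alpha_j^\top z_t > \beta \mid \Fc_{t-\rho})\big] \leq \Pb(G^c) + \Eb\big[\mathbf{1}_G \,\Pb(\alpha_j^\top z_t > \beta \mid \Fc_{t-\rho})\big],
\]
bounding the conditional probability by $1$ on $G^c$. By hypothesis the first term is at most $\omega$. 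It therefore suffices to show that on $G$ the conditional violation probability is at most $\delta - \omega$, since then the total is at most $\omega + (\delta-\omega) = \delta$.

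On $G$, the conditional Gaussianity assumption gives $\alpha_j^\top z_t \mid \Fc_{t-\rho} \sim \Nc(\alpha_j^\top m_{t|t-\rho}, s^2)$ with $s^2 = \alpha_j^\top S_{t|t-\rho}\alpha_j = \ip{\balpha_j, S_{t|t-\rho}}$. Set $q := \Phi^{-1}(1-\delta+\omega)$. Since $\omega < \delta < 1/2$, we have $1-\delta+\omega \in (1/2,1)$, so $q > 0$. The event $G$ gives $s^2 \leq \big((\beta - \alpha_j^\top m)/q\big)^2$ and $\beta - \alpha_j^\top m \geq 0$. Taking nonnegative square roots yields $q\, s \leq \beta - \alpha_j^\top m$.

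We split into two cases. If $s > 0$, then $(\beta - \alpha_j^\top m)/s \geq q$, so
\[
\Pb(\alpha_j^\top z_t \le \beta \mid \Fc_{t-\rho}) = \Phi\big((\beta - \alpha_j^\top m)/s\big) \ge \Phi(q) = 1-\delta+\omega
\]
by monotonicity of $\Phi$. If $s = 0$, the conditional law is a point mass at $\alpha_j^\top m \le \beta$, so the conditional probability of satisfaction is $1$. In either case the conditional violation probability on $G$ is at most $\delta - \omega$, which completes the argument.

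There is no real obstacle here. The only care needed is the sign bookkeeping: the condition $\alpha_j^\top m \le \beta$ in $G$ and the positivity of $q$ (which comes from $\delta - \omega < 1/2$) are exactly what let us take square roots of the squared inequality without flipping it. The degenerate case $s=0$ must also be handled separately, because there $\Phi$ of a ratio is undefined.
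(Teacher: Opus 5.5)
Your proof is correct and matches the paper's argument: the paper obtains this lemma from Lemma~\ref{lem:cov_const} with $\omega' = \delta - \omega$. That lemma likewise conditions on $\Fc_{t-\rho}$, uses the Gaussian quantile reformulation (Lemma~\ref{lem:quant_form}) to bound the conditional violation probability by $\delta-\omega$ on the good event, and charges $\omega$ to its complement via the tower property. Your version is slightly more careful than the paper's: you handle the degenerate case $\alpha_j^\top S_{t|t-\rho}\alpha_j = 0$ separately, whereas the paper's reformulation implicitly divides by the conditional standard deviation, and you use non-strict inequalities where the paper writes strict ones.
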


We use Lemma \ref{lem:const_bound} to guarantee constraint satisfaction by showing that $z_t|\Fc_{t-\rho}$ is indeed Gaussian and that \eqref{eq:cov_const} is satisfied.
First, note that the use of delayed information for system estimation \eqref{eq:sys_est} ensures that the policies $K_t, U_t$ in the past $\rho$ time steps are $\Fc_{t-\rho}$-measurable and therefore that $z_t|\Fc_{t-\rho}$ is Gaussian.
Then, to show that \eqref{eq:cov_const} is satisfied, we use Lemma \ref{lem:approx2} to get that $\| m_{t|t-\rho} \| \lesssim 1/T$ and therefore, \eqref{eq:cov_const} can be approximated as,
\begin{equation}
    \label{eq:cond_cov_cons}
    \textstyle
    \ip{\balpha_j, S_{t|t-\rho}} \leq \left( \frac{\beta - C_3 \frac{1}{T}}{\Phi^{-1}(1 - \delta + \omega)} \right)^2,
\end{equation}
where $C_3$ depends on problem constants.
Our algorithm constrains $\Sigma_t$ to ensure that $S_{t|t-\rho}$ robustly satisfies \eqref{eq:cond_cov_cons} given the error between $S_{t|t-\rho}$ and $\Sigma_t$ in Lemma \ref{lem:approx2}.
The algorithm imposes this constraint by ensuring that the phase covariance $\bar{\Sigma}_k$ (and, by extension, the target covariance $\Sigma_t$) is in the pessimistic set $\Ec_{k}^p$ via the safe scaling step in line \ref{lne:scale} of Algorithm \ref{alg:phase}.
Thus, by appropriately choosing the algorithm parameters $\mu$ and $\xi$, the pessimistic set $\Ec_{k}^p$ can be designed such that \eqref{eq:cond_cov_cons} is satisfied given that $\Sigma_t \in \Ec_k^p$. 
Doing so requires \eqref{eq:cond_cov_cons} to be related to the steady-state covariance constraint $\ip{\balpha_j, \Sigma} \leq \beta^2 / \Phi^{-1}(1-\delta)^2$, which we do by taking the probability level $\omega \asymp 1/T$ and then deriving a local Lipschitz bound on the reciprocal quantile $1/\Phi^{-1}( \cdot)$.

\section{Conclusion}

In this work, we have shown $\Octil(\sqrt{T})$ regret and constraint satisfaction at every time step for the LQ adaptive control problem with chance-constraints.
Some interesting directions for future work are to consider more general disturbance distributions (e.g. using Cantelli's inequality), and to investigate whether certainty equivalence yields $\Octil(\sqrt{T})$ regret in the constrained setting (as has already been shown in the unconstrained setting).



\bibliographystyle{plainnat}
\bibliography{references}

\appendix

\newpage

\section{Description of $\mathsf{Initialize}$ Subroutine}

\label{sec:init}

The $\mathsf{Initialize}$ subroutine (Algorithm \ref{alg:init}) consists of $\tau_0$ time steps in which random noise is played to explore the system, and then an additional $\rho$ time steps of just playing the zero policy.
Then, it computes the least-squares estimator of the system $\hat{\Theta}_0$, and uses it to compute an estimate of the covariance of the zero policy $\bar{\Sigma}_0$.
It then returns $\bar{\Sigma}_0$ and the total number of time steps taken $\tau_1$.

\begin{algorithm}[H]
    \caption{$\mathsf{Initialize}$}
    \label{alg:init}
\begin{algorithmic}[1]
    \For{$t = 1,..., \tau_0-1$}
        \State $u_t \sim \Uc(c \Sb)$ \Comment{exploration policy} \label{lne:iid_noise}
    \EndFor
    \For{$t = \tau_0,...,\tau_0+\rho-1$}
        \State $u_t = \bzero$ \Comment{zero policy} \label{lne:settle}
    \EndFor
    \State $\hat{\Theta}_0 = \argmin_{\Theta} \sum_{s=1}^{\tau_0-1} \| \Theta z_s - x_{s+1} \|^2$ \Comment{initial estimate of dynamics}
    \State $\bar{\Sigma}_{0} = \begin{bmatrix}
            \bar{\Sigma}_{0,xx} & 0\\
            0 & 0
        \end{bmatrix}
        \ \mathrm{s.t.}
        \
        \bar{\Sigma}_{0,xx} = \hat{\Theta}_0 \bar{\Sigma}_{0} \hat{\Theta}_0^\top + W$ \Comment{estimate cov of zero policy}
    \State $\tau_1 = \tau_0 + \rho$
    \State \Return $\bar{\Sigma}_0$, $\tau_1$
\end{algorithmic}
\end{algorithm}

\section{Proof of Theorem \ref{thm:main}}
\label{sec:main_proof}

In this section, we give the complete proof of Theorem \ref{thm:main}.
First, we give the full version of Theorem~\ref{thm:main}.

\begin{theorem}[Full Version of Theorem \ref{thm:main}]
\label{thm:main_apx}
    Let Assumptions \ref{ass:null_policy}, \ref{ass:noise_cov} and \ref{ass:dyn} hold.
    Suppose that the algorithm parameters are chosen as follows:
    \begin{align*}
        \lambda & = \max\left( 2 C_4 r (1 + S R_z \sqrt{T}), 2 R_z^2(H + \rho), 4 \kappa^2 \gamma^{-2} r, 4 C_4^2 r^2 S^2 \right)\\
        \nu & = (1 + \kappa)^2 \gamma^{-1} \kappa^2 \bar{w}\\
        \eta & = r (1 + S \sqrt{\lambda + T R_z^2})\\
        \mu & = 2(1 + \tilde{\kappa})^2 \tilde{\kappa}^2 D^2 \eta\\
        \rho & = 2 \tilde{\gamma}^{-1} \log \left( \frac{\max(\beta,\epsilon/4) T}{(1+\tilde{\kappa}) \tilde{\kappa} R_z D} \right)\\
        \zeta & = \frac{\sigma^2}{16 \nu^2 \sqrt{T}}\\
        \xi & = \left( \frac{\beta - D (1 + \tilde{\kappa}) \tilde{\kappa} \left(1 - \tilde{\gamma}/2\right)^{\rho} R_z}{ \Phi^{-1}(1 - \delta + \omega)} \right)^2 - D^2 C_1\\
        \tau_0 & = 1 + 300^2 \bar{w}^2 d^2 \max(\sigma^{-1}, m/c^2) (n + d \log(306 \bar{\Gamma} \max(\sigma^{-1}, m/c^2) \omega^{-1})) \lambda \\
        c & = \frac{\gamma \epsilon}{D \kappa^{2} S^{2}} 
    \end{align*}
    The preceding definitions use the following quantities.
    \begin{align}
        h & = \frac{8 \nu (1 + S^2 + 2 (n+m))}{\sigma} \\
        H & = \frac{1}{\zeta} \log(h T)\\
        C_1 & = (1 + \tilde{\kappa})^2 \tilde{\kappa}^2  D^2 \Big(  \nu \exp( - \tilde{\gamma} \rho) +  2 \nu \zeta \rho +  2 \rho \nu T^{-1} h^{-1} (1 + S^2 + 2 \eta (n + m) \lambda^{-1}) + 2 \nu \zeta \eta (n + m) \lambda^{-1} \rho(\rho + 1) \Big) \label{eq:c1}\\
        R_z & = \max\left(\frac{2 \nu}{\sigma}+\frac{4 \nu^2}{\sigma^2},\kappa + \kappa^2\right)\left( R_1 + 4 \max(\frac{4 \nu}{\sigma}, \gamma^{-1}) \left( \left(\sqrt{\bar{w}} + S \sqrt{\nu} \right) \sqrt{2 n \log(12 n T /\omega)} + c \right) \right) \label{eq:rz}\\
        r & = 4 \bar{w} d^2 \log\left( 3 \left(d + T R_z^2 \right)/\omega \right) + 2\\
        \omega & = \frac{\delta}{6 T} \\
        C_4 & = \max \left( \frac{8 \nu}{\sigma}, 1, 8 \Phi^{-1} (1 - \delta) \nu (2(1 + \tilde{\kappa})^2 \tilde{\kappa}^2 D^2 + \kappa^2 \gamma^{-1} D^2) \right)\\
        (\tilde{\kappa}, \tilde{\gamma})  & = (\frac{2 \nu}{\sigma}, \frac{\sigma}{4 \nu}) \label{eq:tild_ss} \\
        \bar{\Gamma} & = \left( \kappa^2 R_1 + \kappa^2 \gamma^{-1} (\sqrt{2 n \bar{w} \log(2 n T/\omega)} + S c) + c \right)^2
    \end{align}
    Then, if $T \geq T_{\min}$, it holds that,
    \begin{gather*}
        \Pb(\alpha_j^\top z_t \leq \beta) \geq 1 - \delta, \quad \forall j \in [J], t \in [T]\\
        \Pb \left(\reg_T \leq C_{\mathrm{reg}} \sqrt{T} \right) \geq 1 - \delta/T,
    \end{gather*}
    where $T_{\min}, C_{\mathrm{reg}} = \poly \big(d,\log(T), R_Q, R_1, S, \bar{w}, D, \beta, \kappa, \gamma^{-1}, \sigma^{-1}, \epsilon^{-1}, \delta^{-1}, \Phi^{-1}(1-\delta)^{-1} \big)$.
\end{theorem}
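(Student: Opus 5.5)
The proof assembles the pieces sketched in Section~\ref{sec:ana}. I would first define a single good event $\Ec$ of probability at least $1-\delta/T$ (using $\omega=\delta/(6T)$ and a union bound over at most six failure sources). On $\Ec$ four things hold. The self-normalized confidence bound gives $\|\Theta_\star-\hat\Theta_k\|_{\bar V_k}^2\le r$, which yields \eqref{eq:dyn_error} with the stated $\eta$. The initialization estimate $\hat\Theta_0$ is accurate enough, given $\tau_0$, $c$ and $\lambda$, that $\bar\Sigma_0$ and every $\Sigma_k^{\safe}$ strictly satisfy the constraints with margin of order $\epsilon$. All states obey $\|z_t\|\le R_z$, by the Abbasi-Yadkori type stability argument, Gaussian tail bounds and $\tr(\Sigma)\le\nu$. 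Finally, the Azuma-type concentration for Term~I.A holds.

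On $\Ec$ I would verify, by induction over phases, the hypotheses of Lemma~\ref{lem:cov_approx1} with $\bar\zeta=2\nu\zeta$ and $(\tilde\kappa,\tilde\gamma)$ as in \eqref{eq:tild_ss}. Conditions 1, 2 and 4 follow directly from the delay $\rho$, the mixing rate $\zeta$, and convexity of $\{0\preceq\Sigma,\tr\Sigma\le\nu\}$. Condition~3 is the main obstacle. The mixing step makes $\Sigma_t$ a convex combination of $\bar\Sigma_{k-1}$ (after $H=\zeta^{-1}\log(hT)$ steps, the older phases contribute only weight $(1-\zeta)^H\le 1/(hT)$) and $\bar\Sigma_k$. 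The choice $\lambda\ge 2R_z^2(H+\rho)$ guarantees that every phase lasts at least $H+\rho$ steps, since $\det$ doubling needs that many rank-one updates of size at most $R_z^2$. Each $\bar\Sigma_k$ satisfies the relaxed steady-state condition with slack $\eta\langle\bar V_k^{-1},\cdot\rangle$: for $\Sigma_k^o$ this comes from \eqref{eq:opt_sdp:stat} plus \eqref{eq:dyn_error}, and for $\Sigma_k^{\safe}$ it holds because its slack is linear in its argument. Since $\bar V_k\succeq\bar V_{k-1}$, the slack of the mixture is bounded by $2\eta\langle\bar V_{k_t-1}^{-1},\Sigma_t\rangle$ plus the residual terms collected in $C_1$. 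Plugging this into Lemma~\ref{lem:cov_approx1}, together with $\rho$ chosen so that $\exp(-\tilde\gamma\rho)\lesssim1/T$, gives Lemma~\ref{lem:approx2}.

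For the constraints, $z_t\mid\Fc_{t-\rho}$ is Gaussian because the policies are $\Fc_{t-\rho}$-measurable. Since $\Sigma_t$ is a mixture of members of the convex set $\Ec_{k}^p$ (or of $\Ec_{k-1}^p\subseteq$ a looser version, since $\bar V$ grows), we have $\langle\balpha_j,\Sigma_t\rangle+\mu\langle\bar V_{k_t-1}^{-1},\Sigma_t\rangle\le\xi$. With $\mu=2(1+\tilde\kappa)^2\tilde\kappa^2D^2\eta$ this absorbs the $\sqrt T$ term of Lemma~\ref{lem:approx2}, and $D^2C_1$ absorbs the rest. Together with $\|m_{t|t-\rho}\|$ bounded by the $R_z$ term in $\xi$, this verifies \eqref{eq:cov_const} with the event holding with probability $\ge1-\omega$, and Lemma~\ref{lem:const_bound} concludes. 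During $\mathsf{Initialize}$, the bound $c=\gamma\epsilon/(D\kappa^2S^2)$ keeps the input perturbation within the $\epsilon$ margin of Assumption~\ref{ass:null_policy}.

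For the regret, I would use decomposition \eqref{eq:reg_decomp}.
\begin{itemize}
\item \textbf{Term~I.} Bound I.A by Azuma with $\|z_t\|\le R_z$, I.B by $\|m\|^2\lesssim T^{-2}$, and I.C by Lemma~\ref{lem:approx2} together with Lemma~\ref{lem:ellip_pot}.
\item \textbf{Term~II.} II.A is at most $R_Q\nu N/\zeta$ with $N=\Octil(1)$ phases by the determinant argument. For II.B, note that $1-\phi_k\lesssim\sqrt T\langle\bar V_{k-1}^{-1},\Sigma_k^o\rangle+1/\sqrt T$. This follows the scaled-optimism argument: $\tilde\Sigma$ satisfies the constraints, $\Sigma^{\safe}$ has margin of order $\epsilon$, and the pessimistic slack is linear. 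Summing and invoking Lemma~\ref{lem:ellip_pot} gives $\Octil(\sqrt T)$. II.C is nonpositive because $\tilde\Sigma$ is feasible for \eqref{eq:opt_sdp} on $\Ec$.
\item \textbf{Term~III.} For III.A, use SDP sensitivity: mixing $\tilde\Sigma$-candidates with the strictly feasible zero-policy covariance shows the optimal value moves at most $\nu R_Q/\epsilon$ times the right-hand-side gap $\Octil(1/\sqrt T)$. For III.B, the optimal linear policy's steady-state covariance is feasible for \eqref{eq:tilde}, and its transient cost differs by $\Octil(1)$ via $(\kappa,\gamma)$-stability.
\item \textbf{Term~IV.} This is at most $\tau_0 R_Q R_z^2=\Octil(\sqrt T)$.
\end{itemize}
Collecting the polynomial constants gives $C_{\mathrm{reg}}$, and $T_{\min}$ is chosen so that the requirements $\bar\zeta\le\bar\sigma^2/(2\nu)$, $\eta\langle\bar V^{-1},\cdot\rangle\le\sigma/2$ and $\xi>0$ hold.
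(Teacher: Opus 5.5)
Your outline follows the paper's proof closely. It uses the same good event with $6\omega=\delta/T$, the same phase-by-phase induction for bounded states and confidence sets, the same verification of the covariance-approximation lemma, the same pessimistic-set argument with Lemma~\ref{lem:const_bound} for the chance constraints, and the same four-term regret decomposition.

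The step that fails as written is Term~II.B. You bound $1-\phi_k\lesssim\sqrt T\,\langle\bar V_{k-1}^{-1},\Sigma_k^o\rangle$ in terms of the \emph{optimistic} covariance and then sum using Lemma~\ref{lem:ellip_pot}. That lemma only controls $\sum_t\langle\bar V_{k_t-1}^{-1},\Sigma_t\rangle$ for the \emph{played} target covariances. Its proof (Lemma~\ref{lem:ell1}) needs $\Sigma_t\preceq\max(\nu/\sigma,1)\,S_{t|t-\rho}$. This holds only because $(K_t,U_t)$ are extracted from $\Sigma_t$ and actually generate $z_t$, and the $z_t$ are what build $V_t$. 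The matrix $\Sigma_k^o$ is never played when $\phi_k$ is small, so the directions it charges (e.g.\ input directions that $\Sigma^{\safe}$ does not excite) need not be explored. The only a priori bound is then $\langle\bar V_{k-1}^{-1},\Sigma_k^o\rangle\le\nu/\lambda$. Since $\mu\asymp\lambda\asymp\sqrt T$, this gives only $1-\phi_k\lesssim\mu\nu/(\lambda\epsilon_1)=O(1)$, which does not decay, and summing over $T$ rounds makes Term~II.B linear. The appeal to $\tilde\Sigma$ and the extra $1/\sqrt T$ play no role here. What yields $\phi_k\ge\epsilon_1/(\mu\langle\bar V_{k-1}^{-1},\Sigma_k^o\rangle+\epsilon_1)$ is $\langle\balpha_j,\Sigma_k^o\rangle\le\xi$ together with the margin $\epsilon_1$ of $\Sigma_k^{\safe}$ inside $\Ec_k^p$ (Lemma~\ref{lem:safe_margin}).

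The missing idea is the scaled-back-optimism conversion to the played decision, which is Lemma~\ref{lem:safe_scal}. The lower bound on $\phi_k$ is equivalent to $\epsilon_1(1-\phi_k)\le\phi_k\,\mu\langle\bar V_{k-1}^{-1},\Sigma_k^o\rangle$. Because $\Sigma_k^{\safe}\succeq0$, the right-hand side is at most $\mu\langle\bar V_{k-1}^{-1},\bar\Sigma_k\rangle$. So the uncertainty driving the scaling is that of what is actually mixed into $\Sigma_t$.

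You must then still pass from $\bar\Sigma_{k_t}$ to $\Sigma_t$. The error is at most $\frac{\mu}{\epsilon_1}\sum_t\tr(\bar V_{k_t-1}^{-1})\|\bar\Sigma_{k_t}-\Sigma_t\|\le\frac{\mu(n+m)}{\epsilon_1\lambda}\cdot\frac{2\nu N}{\zeta}=\Octil(\sqrt T)$, by the tracking bound of Lemma~\ref{lem:slow_var} and $\mu/\lambda\lesssim1$. After that, Lemma~\ref{lem:ellip_pot} applies. With this repair the rest of your plan matches the paper.

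Two smaller notes:
\begin{enumerate}
\item The safety margins live on the covariance scale, where they are $\epsilon_1\approx\epsilon^2/(8\Phi^{-1}(1-\delta)^2)$ rather than of order $\epsilon$. This changes only polynomial constants, e.g.\ $\nu R_Q/\epsilon_1$ in III.A.
\item Lemma~\ref{lem:main_cov} carries the factor $2\rho\eta$, so the stated $\mu$ absorbs the covariance error only up to a factor $\rho$. The paper's own \eqref{eq:balph} drops this factor too. Rescaling $\mu$, and hence $\lambda$, by $\rho\lesssim\log T$ fixes it at the cost of log factors.
\end{enumerate}
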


In order to prove Theorem \ref{thm:main_apx}, we first give additional notation in Section \ref{sec:not}, some useful lemmas in Section \ref{sec:useful_lemmas}, bounds on the main regret terms in Section \ref{sec:reg_decomp_apx} and the constraint satisfaction guarantee in Section \ref{sec:const_sat}.
Finally, we complete the proof in Section \ref{sec:comp_proof}.

\subsection{Notation}
\label{sec:not}

In this section, we introduce the main notation.
First, we use $k_t$ to refer to the phase index in which time step $t$ belongs, and $N$ to be the total number of phases.
We also use $\Sigma_t = \bar{\Sigma}_0$ for all $t \in [\tau_0, \tau_0 + \rho-1]$.
Also, we define the conditional mean and covariance (for $t > \rho$) as $m_{t|\tau} := \Eb\left[ z_t | \Fc_{\tau} \right]$ and $S_{t|\tau} := \Eb\left[ (z_t - m_{t|\tau}) (z_t - m_{t|\tau})^\top  | \Fc_{\tau} \right]$.
In the following subsections, we introduce specific types of additional notation.



\subsubsection{Baseline Policies}

We define several sets of baseline policies in this section.
First, we define the set of $\Sigma$ which satisfies the stationarity condition and with trace bounded by $\nu$:
$$
    \bar{\Ec}_\nu := \{ \Sigma \succeq 0 : \Sigma\xx = \Theta_\star \Sigma \Theta_\star^\top + W,\ \tr(\Sigma) \leq \nu \}
$$
With this, we then define the subset of $\bar{\Ec}_\nu$ that satisfies the linear constraint in \eqref{eq:opt_sdp}, and the resulting $\Sigma$:
\begin{align*}
    \tilde{\Ec} & := \left\{ \Sigma \in \bar{\Ec}_\nu : \ip{\balpha_j, \Sigma} \leq \xi  \right\}\\
    \tilde{\Sigma} & := \argmin_{\Sigma \in \tilde{\Ec}}\ \ip{\mathbf{Q}, \Sigma}
\end{align*}
Also, let $\Sigma_{\star}^{\safe}$ be the true covariance of the state under zero input, i.e. 
$$
    \Sigma_{\star}^{\safe} = \begin{bmatrix} \Sigma_{\star,xx}^{\safe} & 0 \\ 0 & 0 \end{bmatrix} \quad \mathrm{s.t.} \quad \Sigma_{\star,xx}^{\safe} = \Theta_\star \Sigma_{\star}^{\safe} \Theta_\star^\top + W
$$
Lastly, we let $\Pi_\star$ be the set of linear controllers that are feasible for the optimal controller, i.e.
\begin{equation}
\label{eq:pistar}
    \Pi_\star := \big\{  (\kappa,\gamma)\ \text{strongly-stable}\ K : \Pb ( \alpha_j^\top z_t^K \leq \beta ) \geq 1 - \delta, \ \forall j \in [J] \big\}
\end{equation}

\subsubsection{Named Terms}

In addition to the terms defined in Theorem \ref{thm:main_apx}, we also introduce the following terms:
\begin{align}
    & \begin{aligned}
    C_2 & := 48 \max\left( \frac{\nu}{\sigma}, 1 \right) d \log(1 + \lambda^{-1} T) + 12 \max\left( \frac{\nu}{\sigma}, 1 \right) \lambda^{-1} R_z \rho \log(2 T / \omega )
    \end{aligned} \label{eq:c2}\\
    & \begin{aligned}
        C_3 & := \frac{\left( 2 \beta + \epsilon \right) D \left( (1 + \tilde{\kappa}) \tilde{\kappa} \left(1 - \tilde{\gamma}/2\right)^{\rho} R_z + D (1 + \kappa) \kappa \left(1 - \gamma\right)^{T} R_1 \right)}{ \Phi^{-1}(1 - \delta)^2}\\
        & \qquad + \frac{2 \Phi^{-1}(1-\delta/2) 4 \beta^2 \omega}{\phi(\Phi^{-1}(1-\delta/2)) \Phi^{-1}(1-\delta)^4} + D^2 (1 + \kappa)^2 \kappa^2 \bar{w} \gamma^{-1} (1 - \gamma)^{2 T}
    \end{aligned} \label{eq:c3}\\
    & \begin{aligned}
        \epsilon_1 & := \frac{\epsilon^2}{4 \Phi^{-1}(1 - \delta)^2}  - D^2 (1 + \kappa)^2 \kappa^2 \gamma^{-1} (1 - \gamma)^{2 T} - \frac{2 \Phi^{-1}(1-\delta/2)  \beta^2 \omega}{\phi(\Phi^{-1}(1-\delta/2)) \Phi^{-1}(1-\delta)^4} - C_1\\
        & \quad - \lambda^{-1} \nu (\mu + \kappa^2 \gamma^{-1} \eta D^2)
    \end{aligned} \label{eq:e1}
\end{align}

\subsubsection{High-probability Events}

We also use $F_k$ to refer to the event that the approximation bound holds for the first $k$ phases and the state is bounded up to the start of phase $k$:
\begin{align*}
    F_k = \Big\{ & \| \Theta_\star \Sigma \Theta_\star - \hat{\Theta}_i \Sigma \hat{\Theta}_i \| \leq \eta \ip{\bar{V}_i^{-1}, \Sigma} \ \forall \Sigma \succeq 0,\ \forall i \in \{0,1,...,k\}, \\
    & \| \Theta_\star - \hat{\Theta}_i \|  \leq \frac{\gamma}{2 \kappa}  \ \forall i \in \{0,1,...,k\}, \\
    & \| z_t \| \leq R_z, \ \forall t \in [\tau_0,\tau_{k}-1]\Big\}
\end{align*}
We also use $E$ to refer to the event that the state is always bounded, and the approximation bound always holds:
\begin{equation}
\label{eq:event_E}
    \begin{split}
        E := \Big\{ & \| \Theta_\star \Sigma \Theta_\star - \hat{\Theta}_k \Sigma \hat{\Theta}_k \| \leq \eta \ip{\bar{V}_k^{-1}, \Sigma} \ \forall \Sigma \succeq 0,\ \forall k \in \{0,...,N\}, \\
        & \| \Theta_\star - \hat{\Theta}_k \|  \leq \frac{\gamma}{2 \kappa}  \ \forall k \in \{0,...,N\}, \\
        & \| z_t \| \leq R_z, \ \forall t \in [T]\Big\}.
    \end{split}
\end{equation}
Note that $F_1 \subseteq \cdots \subseteq F_N \subseteq E$.
We also use $G_1$ for the event that the bound on the Gram weighted target covariance holds:
\begin{equation}
        G_1 := \left\{ \sum_{t=1}^{T} \ip{\bar{V}_{k_t-1}^{-1}, \Sigma_t} \leq C_2  \right\} \label{eq:g1}
\end{equation}
Next, we use $G_2$ for the event that the bound on Term I.A holds:
\begin{equation}
\label{eq:g2}
    G_2 := \left\{ \sum_{t=\tau_1}^{T} \left( \ell(x_t, u_t) - \ip{\mathbf{Q}, S_{t|t-\rho}} \right) \leq 2 R_Q R_z^2 \sqrt{\rho 2  T \log(\rho/\omega)} + R_Q (1 + \tilde{\kappa})^2 \tilde{\kappa}^2 R_z^2 \exp(-\tilde{\gamma} \rho/2) T\right\}
\end{equation}
Lastly, we use $G_3$ to be the event that the estimation error bound on the initial system estimate holds:
\begin{equation}
\label{eq:g3}
    G_3 := \left\{ \| \hat{\Theta}_0 - \Theta_\star \|_F \leq 300 \bar{w} d \sqrt{\frac{n + d \log(306 \bar{\Gamma} \min(\sigma, c^2/m)^{-1} \omega^{-1})}{(\tau_0 - 1)\min(\sigma, c^2/m)}} \right\}
\end{equation}

\subsection{Useful Lemmas}
\label{sec:useful_lemmas}
In this section, we give lemmas that will be useful for the analysis.
First, Lemma \ref{lem:slow_var} gives some properties of the slowly-varying policy method, which is proven in Section \ref{sec:slow_var_proof}.

\begin{lemma}[Properties of Slowly-varying Policy]
    \label{lem:slow_var}
    Assume that $\zeta \in (0,1]$. For $t \geq \tau_0$, it holds that,
    \begin{itemize}
        \item[1.] $\Sigma_t \in \conv \left\{ \Sigbar_{0},..., \Sigbar_{k_t} \right\}$.
    \end{itemize}
    If it additionally holds that $\nu \geq 2 \kappa^2 \tr(W) \gamma^{-1}$and event $E$ holds, then for all $t \geq \tau_0$:
    \begin{enumerate}
        \item[2.] $\sum_{t=1}^{T} \| \Sigbar_{k_t} - \Sigma_t \| \leq \frac{2 \nu N}{\zeta}$
        \item[3.] $\| \Sigma_{t+1} - \Sigma_{t} \| \leq 2 \nu \zeta$.
        \item[4.] $\| \bar{\Sigma}_{k_t} - \Sigma_t \| \leq 2 \nu (1 - \zeta)^{t - \tau_k}$.
    \end{enumerate}
\end{lemma}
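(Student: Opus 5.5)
The plan is to unroll the mixing recursion in line~\ref{lne:mixing} of Algorithm~\ref{alg:main_alg}. For $t$ in phase $k$, i.e.\ $t \in [\tau_k, \tau_{k+1}-1]$, we have
\[
\Sigma_t - \bar{\Sigma}_{k} = (1-\zeta)(\Sigma_{t-1} - \bar{\Sigma}_{k}),
\]
and hence $\Sigma_t - \bar{\Sigma}_{k} = (1-\zeta)^{t-\tau_k+1}(\Sigma_{\tau_k-1} - \bar{\Sigma}_{k})$.

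\emph{Item 1.} We argue by induction on $t$. The base case is $\Sigma_{\tau_0} = \bar{\Sigma}_0$, which holds by the notational convention for $t \in [\tau_0,\tau_0+\rho-1]$. The inductive step uses that $\Sigma_t = (1-\zeta)\Sigma_{t-1} + \zeta \bar{\Sigma}_{k_t}$ with $\zeta \in (0,1]$. This is a convex combination of a point in $\conv\{\bar{\Sigma}_0,\dots,\bar{\Sigma}_{k_{t-1}}\}$ and $\bar{\Sigma}_{k_t}$, and $k_{t-1} \le k_t$, so $\Sigma_t$ lies in the larger hull.

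\emph{Uniform bound.} Items 2--4 all reduce to $\|\Sigma - \Sigma'\| \le 2\nu$ for any two matrices in $\conv\{\bar{\Sigma}_0,\dots,\bar{\Sigma}_N\}$. It therefore suffices to show $\tr(\bar{\Sigma}_k) \le \nu$ for every $k$, since then $\|\Sigma\| \le \tr(\Sigma) \le \nu$ for PSD matrices in the hull, and the triangle inequality gives $2\nu$.

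Each $\bar{\Sigma}_k$ is a convex combination of $\Sigma_k^o$ and $\Sigma_k^{\safe}$. The SDP constraint \eqref{eq:opt_sdp:bound} gives $\tr(\Sigma_k^o) \le \nu$ directly. The main obstacle is bounding $\tr(\Sigma_k^{\safe})$ and $\tr(\bar{\Sigma}_0)$, since these come from Lyapunov equations with the \emph{estimated} matrix $\hat{A}_k$.

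For these I will use event $E$, which gives $\|\hat{\Theta}_k - \Theta_\star\| \le \gamma/(2\kappa)$. By Assumption~\ref{ass:null_policy}, $A_\star = QLQ^{-1}$ with $\|L\| \le 1-\gamma$. Then
\[
Q^{-1}\hat{A}_k Q = L + Q^{-1}(\hat{A}_k - A_\star)Q
\]
has norm at most $1-\gamma + \kappa^2\cdot\gamma/(2\kappa)$. This is exactly where the constants are delicate: with the decomposition as stated, the perturbation term is $\kappa\gamma/2$, so we must either track the precise factor in $E$ or re-decompose. The goal is to reach $1-\gamma/2$, so that $\hat{A}_k$ is $(\kappa,\gamma/2)$-strongly stable.

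Summing the Lyapunov series then gives
\[
\tr(\Sigma_{k,xx}^{\safe}) \le \kappa^2 \tr(W) \sum_{i \ge 0} (1-\gamma/2)^{2i} \le 2\kappa^2\tr(W)\gamma^{-1} \le \nu,
\]
using the hypothesis $\nu \ge 2\kappa^2\tr(W)\gamma^{-1}$. The same argument handles $\bar{\Sigma}_0$ via $k=0$ in $E$. Convexity then gives $\tr(\bar{\Sigma}_k) \le \nu$.

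\emph{Items 3 and 4.} Item 3 follows from $\Sigma_{t+1} - \Sigma_t = \zeta(\bar{\Sigma}_{k_{t+1}} - \Sigma_t)$ and the $2\nu$ bound. Item 4 follows from the unrolled recursion together with $\|\Sigma_{\tau_k-1} - \bar{\Sigma}_k\| \le 2\nu$.

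\emph{Item 2.} We sum item 4 over each phase, $\sum_{t \in \text{phase } k} 2\nu(1-\zeta)^{t-\tau_k} \le 2\nu/\zeta$, and then sum over the $N$ phases. The steps $t < \tau_0$ are handled by the convention that $\Sigma_t$ is only defined for $t \ge \tau_0$. For the initialization window $[\tau_0,\tau_1-1]$, $\Sigma_t = \bar{\Sigma}_0 = \bar{\Sigma}_{k_t}$, so those terms vanish.
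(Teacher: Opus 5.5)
Your proposal follows the paper's proof essentially step for step. You unroll the mixing recursion, get item 1 by convex-hull induction, and obtain the $2\nu$ diameter bound from $\tr(\bar{\Sigma}_k)\le\nu$ via approximate strong stability of $\hat{A}_k$. That is exactly what the paper does in Lemmas \ref{lem:approx_stable} and \ref{lem:nu}, and items 2--4 then follow from the unrolled recursion just as you describe.

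The constant you flag is a genuine loose end, and the paper resolves it differently from either of your suggestions. In Lemma \ref{lem:approx_stable} it silently uses the condition-number form $\|Q\|\,\|Q^{-1}\|\le\kappa$ of strong stability, as in \cite{cohen2018online}. That form gives $\|Q^{-1}(\hat{A}_k-A_\star)Q\|\le\kappa\cdot\gamma/(2\kappa)=\gamma/2$ immediately. It also supplies the bound $\|\hat{A}_k^s\|\le\kappa(1-\gamma/2)^s$ that your Lyapunov sum needs.

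Neither of your proposed fixes is available. You cannot tighten $E$, since it is a hypothesis of the lemma. Re-decomposing cannot help under the literal per-factor definition, because $\hat{A}_k$ can then be genuinely unstable. For example, take $A_\star=\left[\begin{smallmatrix} a & \kappa^2 b\\ 0 & a\end{smallmatrix}\right]$ with $Q=\diag(\kappa,\kappa^{-1})$, $a=1-\gamma-b$ and $b=\kappa\gamma/8$; here $\|Q\|,\|Q^{-1}\|\le\kappa$ and $\|Q^{-1}A_\star Q\|\le 1-\gamma$. Perturbing it by $\gamma/(2\kappa)$ in the $(2,1)$ entry produces an eigenvalue $1-\gamma+\kappa\gamma/8$, which exceeds $1$ once $\kappa>8$. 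So the product convention is the reading under which the statement, and your argument, go through.
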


Next, Lemma \ref{lem:bounded_state} shows that the state is bounded with high probability, which is proven in Section~\ref{sec:bounded_state_proof}.

\begin{lemma}[Bounded State and Estimation Bound]
    \label{lem:bounded_state}
    In addition to the assumptions of Lemma \ref{lem:safe_margin}, assume that:
    \begin{enumerate}
        \item $\lambda \geq \max \left(\frac{H R_z^2}{\log(2)},\frac{8 \eta \nu}{\sigma}, \eta, 4 \kappa^2 \gamma^{-2} \left( 4 \bar{w} d^2 \log\left( 3 \left(d + \lambda^{-1} T R_z^2 \right)/\omega \right) + 2 \right) \right)$
        \item $\Pb \left( \| \hat{\Theta}_0 - \Theta_\star \|_F \leq \lambda^{-1/2} \right) \geq 1 - \omega/3$
        \item $\eta \geq (4 \bar{w} d^2 \log\left( 3 \left(d + \lambda^{-1} T R_z^2 \right)/\omega \right) + 2) (1  + S \sqrt{\lambda + T R_z^2 })$
    \end{enumerate}
    Then, it holds that $\Pb(E) \geq 1 - \omega$ where $E$ is defined in \eqref{eq:event_E}.
\end{lemma}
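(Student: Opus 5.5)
The plan is to prove Lemma~\ref{lem:bounded_state} by induction over phases on the nested events $F_1 \subseteq F_2 \subseteq \cdots \subseteq F_N \subseteq E$, showing that each failure mode has small probability and then taking a union bound. Three kinds of bad events need to be ruled out:
\begin{itemize}
\item[(a)] the estimation confidence bound $\|\Theta_\star \Sigma \Theta_\star^\top - \hat\Theta_k \Sigma \hat\Theta_k^\top\| \le \eta \ip{\bar V_k^{-1},\Sigma}$ fails;
\item[(b)] the operator-norm bound $\|\Theta_\star - \hat\Theta_k\| \le \gamma/(2\kappa)$ fails;
\item[(c)] some $\|z_t\| > R_z$.
\end{itemize}
Each is allotted probability $\omega/3$.

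\textbf{Estimation events (a) and (b).} I would use the standard self-normalized martingale bound of Abbasi-Yadkori et al. for the regularized least-squares estimator \eqref{eq:sys_est} centered at $\hat\Theta_0$. This gives, for all $t$ simultaneously with probability $1-\omega/3$,
\[
\|(\hat\Theta_k - \Theta_\star)\bar V_k^{1/2}\|_F^2 \le r\bigl(1 + \sqrt{\lambda}\,\|\hat\Theta_0 - \Theta_\star\|_F\bigr)^2 .
\]
Here $r$ absorbs $\log\det(\bar V_k)/\det(\lambda I)$, and that log-determinant is at most $d\log(1 + \lambda^{-1} T R_z^2)$ on the event that the states are bounded. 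Condition 2 makes $\sqrt{\lambda}\,\|\hat\Theta_0 - \Theta_\star\|_F \le 1$. For (a), write $\Delta = \hat\Theta_k - \Theta_\star$ and expand
\[
\hat\Theta_k \Sigma \hat\Theta_k^\top - \Theta_\star \Sigma \Theta_\star^\top = \Delta\Sigma\Delta^\top + \Delta\Sigma\Theta_\star^\top + \Theta_\star\Sigma\Delta^\top .
\]
Each term is then bounded via $\|\Delta \Sigma^{1/2}\|^2 \le \|\Delta \bar V_k^{1/2}\|^2 \ip{\bar V_k^{-1},\Sigma}$ together with $\|\Theta_\star\| \le S$ and $\|\Sigma\| \le \ip{\bar V_k^{-1},\Sigma}\,\lambda_{\max}(\bar V_k)$. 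This is the step that produces the factor $(1 + S\sqrt{\lambda + T R_z^2})$ and hence matches condition 3 on $\eta$. For (b), use $\bar V_k \succeq \lambda I$ to get $\|\Delta\| \le \lambda^{-1/2}\sqrt{4r}$, which is at most $\gamma/(2\kappa)$ by condition 1.

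\textbf{Bounded states (c), given the estimation events up to phase $k$.} On $F_k$ I would show $\|z_t\| \le R_z$ on phase $k$, with probability of failure at most $\omega/3$ in total over all $t$ via a Gaussian maximal inequality (the $\sqrt{2n\log(12nT/\omega)}$ term). The initialization period is handled directly: the zero policy is $(\kappa,\gamma)$-strongly stable and the exploration inputs are bounded by $c$, which gives the $\kappa+\kappa^2$ and $\gamma^{-1}$ parts of $R_z$. For $t \ge \tau_1$ the closed loop is $x_{t+1} = (A_\star + B_\star K_t)x_t + B_\star v_t + w_t$. The main obstacle is showing that the time-varying sequence $A_\star + B_\star K_t$ is uniformly exponentially stable.

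To handle it, I would reuse the mechanism of Lemma~\ref{lem:cov_approx1}. By Lemma~\ref{lem:slow_var}, $\Sigma_t$ lies in the convex hull of phase covariances and varies by at most $2\nu\zeta$ per step. On $F_k$, each phase covariance satisfies the relaxed steady-state inequality with slack $\eta\ip{\bar V^{-1},\Sigma} \le \eta\nu/\lambda \le \sigma/8$, using $\lambda \ge 8\eta\nu/\sigma$. Consequently $\Sigma_{t,xx} \succeq (A_\star + B_\star K_t)\Sigma_{t,xx}(A_\star + B_\star K_t)^\top + (\sigma/2) I$. This is a Lyapunov inequality certifying $(\tilde\kappa,\tilde\gamma)$-strong stability of each $K_t$ with $\Sigma_{t,xx}^{-1/2}$ as the change of basis, as in Cohen et al. Slow variation then lets consecutive Lyapunov matrices be compared, giving sequential strong stability with decay $(1-\tilde\gamma/2)$.

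One subtlety: the slack is evaluated with $\bar V_{k-1}$, while $\Sigma_t$ mixes across phase boundaries. I would use $\lambda \ge H R_z^2/\log 2$ to guarantee that each phase lasts at least $H$ steps, so $\Sigma_t$ is essentially a mix of only two consecutive phase covariances by item 4 of Lemma~\ref{lem:slow_var}. Unrolling the recursion then bounds $\|x_t\|$ by $R_1$ plus a geometric sum of noise and input terms, where $\|K_t\| \le 2\nu/\sigma$ and $U_t \preceq \nu$. Adding the input contribution yields $R_z$. Since states are then bounded up to $\tau_{k+1}-1$, the Gram determinants are controlled, which closes the induction step to $F_{k+1}$. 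The union bound over (a)–(c) gives $\Pb(E) \ge 1-\omega$.
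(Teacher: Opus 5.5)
Your proposal is correct and follows the paper's proof essentially step for step: induction on $F_k$ with the base case from the strong stability of the zero policy, sequential strong stability of $(K_s)$ from the Lyapunov inequality encoded by $\Sigma_t$ together with slow variation and the phase-length argument (Lemmas~\ref{lem:strong_stab}, \ref{lem:real_stat}, \ref{lem:cond_s}), unrolling on the Gaussian noise event to bound $\|z_t\|$ (Lemma~\ref{lem:z_bound}), and closing the step with the self-normalized bound plus a perturbation bound on $\Theta\Sigma\Theta^\top$ (the paper cites Lemma~14 of \cite{cohen2019learning} in Lemma~\ref{lem:conf_set} rather than expanding by hand). What remains to tidy is bookkeeping and one missing justification: \begin{itemize} \item (a) and (b) both follow from the single confidence event, so the three $\omega/3$ budgets are that event, the noise event, and the initial-estimate event of condition~2. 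Use the bound in the additive form $\|(\hat\Theta_k-\Theta_\star)\bar V_k^{1/2}\|_F^2 \le 4\bar w d^2\log(\cdot) + 2\lambda\|\hat\Theta_0-\Theta_\star\|_F^2 \le r$; your $4r$ version only gives $\gamma/\kappa$ from condition~1. \item You should say explicitly that (b) is what makes $\hat A_k$ stable, so that $\Sigma_k^{\safe}$ exists with trace at most $\nu$ (Lemma~\ref{lem:approx_stable}). \item The hypotheses of Lemma~\ref{lem:safe_margin} are what guarantee $\Sigma_k^{\safe}\in\Ec_k^p$, so that the scaling step, and hence $\bar\Sigma_k$, is well defined. \end{itemize}
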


Then, we give a lemma that provides an ordering on the Gram matrices in the current phase and between phases. The proof is in Section \ref{sec:phase_upd}.

\begin{lemma}[Properties of Phase Update]
    \label{lem:phase_upd}
    Suppose that $\| z_t \| \leq R_z$ for all $t$, and that $\lambda \geq \max\left( \frac{R_z^2 \rho}{\log(2)}, R_z^2  \right)$.
    Then, the following hold:
    \begin{enumerate}
        \item $V_{t - \rho} \preceq 2 \bar{V}_{k_t}$ for all $t$,
        \item $\bar{V}_k \preceq 3 \bar{V}_{k-1}$ for all $k$,
        \item $V_t \preceq 2 V_{t - \rho}$ for all $t$,
        \item $N \leq 1 + 2 d \log \left( 1  + \frac{R_z^2 T}{d \lambda} \right)$,
        \item $\tau_{k+1} - \tau_k \geq \frac{\log(2)}{R_z^2} \lambda$ for all $k$.
    \end{enumerate}
\end{lemma}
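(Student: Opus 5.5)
We need to prove Lemma \ref{lem:phase_upd}, working throughout under the standing hypotheses $\|z_t\|\le R_z$ for all $t$ and $\lambda\ge\max(R_z^2\rho/\log 2,\,R_z^2)$. The main tool is a determinant-versus-Loewner comparison. If $V\preceq V'$ are positive definite and $\det V'\le c\det V$, then $V'\preceq c V$. To see this, write $V^{-1/2}V'V^{-1/2}=M\succeq I$. Every eigenvalue of $M$ is at least $1$, and their product is at most $c$, so each eigenvalue is at most $c$.

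\textbf{Item 1.} Within phase $k_t$ the doubling test on line \ref{lne:tstep_start} has not fired at time $t$ (or $t=\tau_{k_t}$, where equality holds). Hence $\det V_{t-\rho}<2\det\bar V_{k_t}$. Moreover $\bar V_{k_t}=V_{\tau_{k_t}-\rho}\preceq V_{t-\rho}$, since Gram matrices only grow. The comparison then gives $V_{t-\rho}\preceq 2\bar V_{k_t}$.

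\textbf{Item 3.} Write $V_t=V_{t-\rho}+\sum_{s=t-\rho+1}^{t}z_sz_s^\top$. Since $V_{t-\rho}\succeq\lambda I$, we have $\det V_t/\det V_{t-\rho}=\det(I+V_{t-\rho}^{-1/2}(\cdot)V_{t-\rho}^{-1/2})$. Adding one rank-one term $z_sz_s^\top$ multiplies the determinant by $1+z_s^\top V^{-1}z_s\le 1+R_z^2/\lambda$, where $V$ is the current Gram matrix. Over $\rho$ steps this gives
\[
\frac{\det V_t}{\det V_{t-\rho}}\le(1+R_z^2/\lambda)^\rho\le \exp(\rho R_z^2/\lambda)\le 2,
\]
using $\lambda\ge R_z^2\rho/\log 2$. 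The comparison then gives $V_t\preceq 2V_{t-\rho}$.

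\textbf{Item 2.} At time $\tau_k$ the determinant jumped from below $2\det\bar V_{k-1}$ to at least that value. The previous matrix $V_{\tau_k-1-\rho}$ satisfies $V_{\tau_k-1-\rho}\preceq 2\bar V_{k-1}$ by the item-1 argument. One more rank-one update multiplies the determinant by at most $1+R_z^2/\lambda\le 3/2$, using $\lambda\ge R_z^2$ together with the exponent bound. Hence $\det\bar V_k\le 3\det\bar V_{k-1}$. Since $\bar V_{k-1}\preceq\bar V_k$, the comparison yields $\bar V_k\preceq 3\bar V_{k-1}$. (Alternatively, one can bound $\bar V_k\preceq V_{\tau_k-1-\rho}+z z^\top\preceq 2\bar V_{k-1}+\bar V_{k-1}$ directly, using $zz^\top\preceq R_z^2 I\preceq\lambda I\preceq\bar V_{k-1}$.)

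\textbf{Item 4.} Every new phase except the first doubles the determinant. Therefore
\[
2^{N-1}\le \frac{\det V_T}{\det(\lambda I)}\le\Big(1+\frac{TR_z^2}{d\lambda}\Big)^d,
\]
where the last bound follows from AM--GM on the eigenvalues with trace at most $d\lambda+TR_z^2$. Taking logarithms gives $N\le 1+d\log_2(\cdot)\le 1+2d\log(\cdot)$.

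\textbf{Item 5.} The phase-length lower bound is the delicate step. Suppose $\tau_{k+1}-\tau_k=L$. Then $\det\bar V_{k+1}/\det\bar V_k\ge 2$. On the other hand, the same rank-one product bound as in item 3 gives $\det\bar V_{k+1}/\det\bar V_k\le(1+R_z^2/\lambda)^L\le\exp(LR_z^2/\lambda)$. Combining the two, $L\ge\lambda\log 2/R_z^2$. The only care needed throughout is the indexing convention that $\bar V_k=V_{\tau_k-\rho}$ and the special start $t=\tau_1$, which I expect to handle routinely.
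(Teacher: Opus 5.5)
Your proof is correct and follows essentially the same route as the paper: the determinant-to-Loewner comparison (Lemma \ref{lem:det_lown}) for items 1--3, determinant doubling plus AM--GM for item 4, and a log-det growth bound for item 5. The only cosmetic difference is that you bound $\det V_t/\det V_{t'}$ by iterating $\det(V+zz^\top)=\det(V)(1+z^\top V^{-1}z)$, whereas the paper applies AM--GM to $\tr(V_{t'}^{-1/2}V_tV_{t'}^{-1/2})$ in Lemma \ref{lem:logdet}.

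One small slip is in item 2. The bound $1+R_z^2/\lambda\le 3/2$ does not follow from the hypotheses when $\rho$ is small: for $\rho=1$ you only get $1+\log 2$, so your determinant route yields a constant larger than $3$. Rely instead on your parenthetical bound $\bar V_k\preceq 2\bar V_{k-1}+R_z^2 I\preceq 3\bar V_{k-1}$, which is exactly the paper's argument.
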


Next, we give a lemma that shows how conditional chance-constraints can be expressed in terms of the conditional mean and conditional covariance. The proof is in Section \ref{sec:quant_form}.

\begin{lemma}[Reformulation of Chance Constraints]
    \label{lem:quant_form}
    If $z_t | \Fc_\tau \sim \Nc(m_{t|\tau}, S_{t|\tau})$, then it holds that,
    $$
        \Pb( \alpha_j^\top z_t \leq \beta | \Fc_\tau) \geq 1 - \delta \quad \iff \quad \alpha_j^\top m_{t|\tau} + \sqrt{\ip{\balpha_j, S_{t | \tau}}} \Phi^{-1}(1 - \delta) \leq \beta,
    $$
    where $\Phi(\cdot)$ is the standard normal CDF.
\end{lemma}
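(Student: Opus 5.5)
Conditioned on $\Fc_\tau$, the scalar $y := \alpha_j^\top z_t$ is Gaussian with mean $\alpha_j^\top m_{t|\tau}$ and variance $s^2 := \alpha_j^\top S_{t|\tau}\alpha_j = \ip{\balpha_j, S_{t|\tau}}$. The proof will therefore reduce to a one-dimensional statement about the standard normal CDF.

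Suppose first that $s > 0$. Standardizing gives
\[
\Pb(\alpha_j^\top z_t \leq \beta \mid \Fc_\tau) = \Phi\Big(\frac{\beta - \alpha_j^\top m_{t|\tau}}{s}\Big).
\]
Since $\Phi$ is continuous and strictly increasing, with inverse $\Phi^{-1}$, the condition $\Phi\big((\beta - \alpha_j^\top m_{t|\tau})/s\big) \geq 1-\delta$ is equivalent to $(\beta - \alpha_j^\top m_{t|\tau})/s \geq \Phi^{-1}(1-\delta)$. Multiplying through by $s>0$, and writing $s = \sqrt{\ip{\balpha_j, S_{t|\tau}}}$, gives exactly the right-hand condition of the lemma.

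The only delicate point is the degenerate case $s = 0$, where the conditional law of $y$ is a point mass at $\alpha_j^\top m_{t|\tau}$. The probability is then $1$ if $\alpha_j^\top m_{t|\tau} \leq \beta$ and $0$ otherwise. Because $\delta \in (0,0.5)$, we have $1-\delta \in (0,1)$, so the probability is at least $1-\delta$ exactly when $\alpha_j^\top m_{t|\tau} \leq \beta$. This is the right-hand condition with $s=0$, so the equivalence holds in this case as well.

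Both directions of the equivalence follow in each case, which completes the argument.
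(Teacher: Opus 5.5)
Your proof is correct and follows the paper's argument: condition on $\Fc_\tau$ to get $\alpha_j^\top z_t \mid \Fc_\tau \sim \Nc(\alpha_j^\top m_{t|\tau}, \ip{\balpha_j, S_{t|\tau}})$, standardize, and invert the strictly increasing $\Phi$. Your separate handling of the degenerate case $\ip{\balpha_j, S_{t|\tau}} = 0$ adds rigor, since the paper's chain of equivalences divides by the standard deviation without noting that it must be positive.
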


Next, we give a lemma that provides a bound on the difference between the conditional covariance $\Sigma_{t|t-\rho}$ and the chosen covariance matrix $\Sigma_t$. The proof is in Section \ref{sec:main_cov}.

\begin{lemma}[State Approximation]
    \label{lem:main_cov}
    In addition to the assumptions of Theorem \ref{thm:main_apx}, also assume that:
    \begin{enumerate}
        \item $H = \zeta^{-1} \log(h T)$ \label{it:state:6}
        \item $\lambda \geq \max\left(\frac{R_z^2 (H + \rho)}{\log(2)}, \frac{8 \eta \nu}{\sigma}, \eta \right)$ \label{it:state:7}
        \item $h \geq \frac{8 \nu (1 + S^2 + 2 (n+m))}{\sigma}$ \label{it:state:8}
        \item $\zeta \leq \frac{\sigma^2}{16 \nu^2}$ \label{it:state:9}
        \item $\nu \geq \frac{2 \kappa^2 \tr(W)}{\gamma}$ \label{it:state:10}
    \end{enumerate}
    Then, under event $E$, it holds for all $t \in [\tau_1, T]$ that,
    \begin{align*}
        S_{t|t-\rho} & \preceq \Sigma_t + \left (C_1 + (1 + \tilde{\kappa})^2 \tilde{\kappa}^2 2 \rho \eta \ip{\bar{V}_{k-1}^{-1}, \Sigma_t}\right ) I \\
        \| m_{t|t - \rho} \| & \leq (1 + \tilde{\kappa}) \tilde{\kappa} \exp(-\tilde{\gamma} \rho /2) R_z,
    \end{align*}
    where $(\tilde{\kappa}, \tilde{\gamma})$ and $C_1$ are defined in \eqref{eq:tild_ss} and  \eqref{eq:c1} respectively.
\end{lemma}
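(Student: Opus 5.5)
The plan is to derive Lemma~\ref{lem:main_cov} from the generic covariance-approximation result, Lemma~\ref{lem:cov_approx1}. That means checking its five conditions under event $E$, with $\bar{\zeta} = 2\nu\zeta$ and a suitable relaxation sequence $\bar{\eta}_s$.

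Conditions 1, 2, 4 and 5 should be routine.
\begin{itemize}
\item[(1)] Measurability: the phase estimate $\hat{\Theta}_k$ uses data only up to $\tau_k-\rho$, and $\Sigma_s$ is obtained from $\bar{\Sigma}_{k_s}$ by deterministic mixing. Hence $\Sigma_s$ is $\mathcal{F}_{t-\rho}$-measurable for every $s\le t$.
\item[(2)] Slow variation: item 3 of Lemma~\ref{lem:slow_var} gives $\|\Sigma_{s+1}-\Sigma_s\|\le 2\nu\zeta$. The hypothesis $\zeta\le \sigma^2/(16\nu^2)$ then gives $\bar{\zeta}\le \bar{\sigma}^2/(2\nu)$, provided $\bar{\sigma}\ge\sigma/2$.
\item[(4)] Boundedness: item 1 of Lemma~\ref{lem:slow_var} writes $\Sigma_t$ as a convex combination of phase covariances. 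Each phase covariance is itself a convex combination of $\Sigma^o_k$, which has trace at most $\nu$, and $\Sigma^{\safe}_k$. Under $E$ we have $\|\hat\Theta_k-\Theta_\star\|\le\gamma/(2\kappa)$, so the zero policy stays $(\kappa,\gamma/2)$-strongly stable for $\hat\Theta_k$, and $\tr(\Sigma^{\safe}_k)\le 2\kappa^2\tr(W)/\gamma\le\nu$. Thus $0\preceq\Sigma_t\preceq\nu$.
\item[(5)] State bound: $E$ directly gives $\|x_{t-\rho}\|\le R_z$.
\end{itemize}
With these in place, the mean bound follows immediately from \eqref{eq:mean_app_error}, once we check that $\bar{\sigma}\ge\sigma/2$ turns $(\nu/\bar\sigma,\bar\sigma/(2\nu))$ into the constants $(\tilde\kappa,\tilde\gamma)$ of \eqref{eq:tild_ss}.

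The main work is condition 3, a relaxed stationarity condition on $\Sigma_s$ for $s\in[t-\rho,t]$. The optimistic matrix satisfies \eqref{eq:opt_sdp:stat}. Combining that with $E$ gives
\[
\Sigma^o_{k,xx}\succeq\Theta_\star\Sigma^o_k\Theta_\star^\top+W-2\eta\langle\bar V_k^{-1},\Sigma^o_k\rangle I .
\]
Similarly, $\Sigma^{\safe}_k$ satisfies the same inequality with relaxation $\eta\langle\bar V_k^{-1},\Sigma^{\safe}_k\rangle$. Both inequalities are linear in $\Sigma$, so $\bar\Sigma_k$ satisfies
\[
\bar\Sigma_{k,xx}\succeq \Theta_\star\bar\Sigma_k\Theta_\star^\top+W-2\eta\langle\bar V_k^{-1},\bar\Sigma_k\rangle I ,
\]
and, by item 2 of Lemma~\ref{lem:phase_upd}, the same holds with $\bar V_k^{-1}$ replaced by $3\bar V_{k-1}^{-1}$ after adjusting constants. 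For $s$ in phase $k$, write
\[
\Sigma_s=(1-\zeta)^{s-\tau_k+1}\Sigma_{\tau_k-1}+\bigl(1-(1-\zeta)^{s-\tau_k+1}\bigr)\bar\Sigma_k .
\]
By Lemma~\ref{lem:phase_upd}(5) and the choice $\lambda\ge R_z^2(H+\rho)/\log 2$, the previous phase lasted at least $H+\rho$ steps. Item 4 of Lemma~\ref{lem:slow_var} with $H=\zeta^{-1}\log(hT)$ then gives $\|\Sigma_{\tau_k-1}-\bar\Sigma_{k-1}\|\le 2\nu/(hT)$. So $\Sigma_s$ is within $O(1/(hT))$ of a convex combination of $\bar\Sigma_{k-1}$ and $\bar\Sigma_k$. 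By linearity, this mixture satisfies stationarity with relaxation about $2\eta\langle\bar V_{k-1}^{-1},\Sigma_s\rangle$; this uses $\bar V_{k}\succeq\bar V_{k-1}$, so the previous-phase Gram matrix dominates.

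The perturbation error is converted into a stationarity slack using $\|\Theta_\star\|\le S$ and $\langle \bar V^{-1},\cdot\rangle\le\lambda^{-1}\tr(\cdot)$. This produces the $2\nu T^{-1}h^{-1}(1+S^2+2\eta(n+m)\lambda^{-1})$ term of $C_1$. There is a further issue: the relaxation should be expressed in terms of $\Sigma_t$, not $\Sigma_s$. Since $\|\Sigma_s-\Sigma_t\|\le 2\nu\zeta(t-s)$, summing over $s\in[t-\rho,t]$ gives the $2\nu\zeta\eta(n+m)\lambda^{-1}\rho(\rho+1)$ term. Near a phase boundary, where $s$ and $t$ lie in different phases, I would bound the extra slack crudely by the same $\lambda^{-1}$ trace estimate. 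Finally, $\bar\eta_s\le \eta\nu\lambda^{-1}\cdot O(1)\le\sigma/2$ by $\lambda\ge 8\eta\nu/\sigma$, which secures $\bar\sigma\ge\sigma/2$.

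Plugging all of this into \eqref{eq:cov_app_error} gives the stated bound: the sum $\sum_s\bar\eta_s$ contributes $2\rho\eta\langle\bar V_{k-1}^{-1},\Sigma_t\rangle$ plus terms absorbed into $C_1$. The expected obstacle is the bookkeeping in condition 3, namely keeping every slack linear in $\Sigma_t$ and handling windows that straddle a phase change.
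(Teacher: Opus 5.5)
Your route is the paper's: check the hypotheses of the generic tracking result (Lemma~\ref{lem:cov_approx1}, i.e.\ Lemmas~\ref{lem:strong_stab}, \ref{lem:cov_approx}, \ref{lem:mean}) under $E$. As in the paper, the real work is the relaxed stationarity condition, which you get by placing $\Sigma_s$ within $2\nu/(hT)$ of $\conv\{\bar\Sigma_{k-1},\bar\Sigma_k\}\subseteq\Ec_{k-1}$.

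The gap is in the step you flagged yourself: windows $[t-\rho,t]$ that straddle $\tau_{k_t}$. For $s$ in phase $k_t-1$, your phase-$k$ argument shifted back one phase gives a relaxation in terms of $\bar V_{k_t-2}^{-1}$. You propose to pay for the switch to $\bar V_{k_t-1}^{-1}$ with $\langle \bar V^{-1},\cdot\rangle\le\lambda^{-1}\tr(\cdot)$. That adds up to $2\eta\nu/\lambda$ to $\bar\eta_s$. With the parameters of Theorem~\ref{thm:main_apx}, where $\eta$ and $\lambda$ are both of order $\sqrt T$, this does not decay with $T$. It is contained neither in $C_1=O(1/\sqrt T)$ nor in $2\rho\eta\langle\bar V_{k-1}^{-1},\Sigma_t\rangle$, so the stated inequality does not follow. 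An error of this size right after each phase change would also break the per-step constraint argument, since $\xi$ is calibrated against $C_1$. Using $\bar V_{k-1}\preceq 3\bar V_{k-2}$ instead keeps the order but gives up to $6\rho\eta$ rather than $2\rho\eta$.

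No extra slack is needed, and you already have the ingredient. Phase $k_t-1$ lasts at least $H+\rho$ steps, and the $+\rho$ is there for exactly this purpose. Any $s\in[t-\rho,t]$ with $s<\tau_{k_t}$ satisfies $s\ge\tau_{k_t}-\rho\ge\tau_{k_t-1}+H$. So by item 4 of Lemma~\ref{lem:slow_var}, $\Sigma_s$ is within $2\nu/(hT)$ of $\bar\Sigma_{k_t-1}$ alone. Since $\bar\Sigma_{k_t-1}\in\Ec_{k_t-1}$, it carries the relaxation $2\eta\langle\bar V_{k_t-1}^{-1},\cdot\rangle$, the same Gram matrix as in phase $k_t$.

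This is what Lemma~\ref{lem:real_stat} records: the $\bar V_{k-1}$-relaxation holds on all of $[\tau_{k-1}+H,\tau_{k+1}-1]$, which contains every window ending in phase $k\ge2$. For $k_t=1$, the earlier part of the window only sees $\Sigma_s=\bar\Sigma_0\in\Ec_0$ (Lemma~\ref{lem:phase_0}). With that, your perturbation bound and the $\Sigma_s\to\Sigma_t$ conversion give exactly the $C_1$ terms and the constant $2\rho\eta$.

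Two smaller slips:
\begin{itemize}
\item Item 2 of Lemma~\ref{lem:phase_upd} points the wrong way for replacing $\bar V_k^{-1}$ by a multiple of $\bar V_{k-1}^{-1}$. What you need, and later use, is $\bar V_k\succeq\bar V_{k-1}$, which gives $\bar V_k^{-1}\preceq\bar V_{k-1}^{-1}$ with no constant.
\item The bound $\bar\eta_s\le\sigma/2$ also needs the hypothesis on $h$ to control the $2\nu h^{-1}T^{-1}(1+S^2+2\eta(n+m)\lambda^{-1})$ part, not only $\lambda\ge 8\eta\nu/\sigma$.
\end{itemize}
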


Then, Lemma \ref{lem:safe_margin} show the conditions that are required for the initial safe policy to strictly satisfy the constraint in the optimistic SDP in the algorithm.
The proof is given in Section \ref{sec:safe_margin_proof}.

\begin{lemma}
    \label{lem:safe_margin}
    In addition to the assumptions of Theorem \ref{thm:main_apx}, also assume that:
    \begin{enumerate}
        \item $\nu \geq 2 \bar{w} \gamma^{-1} \kappa^2$
        \item $\rho \geq 2 \tilde{\gamma}^{-1} \log \left( \frac{\max(\beta,\epsilon/4)}{(1+\tilde{\kappa}) \tilde{\kappa} R_z D} \right)$
        \item $T \geq \gamma^{-1} \log \left( \frac{\max(\beta,\epsilon/4)}{(1+\kappa) \kappa R_1 D} \right)$
        \item $\omega \leq \delta/2$
        \item $\epsilon_1 > 0$ where $\epsilon_1$ is defined in \eqref{eq:e1},
    \end{enumerate}
    Then, under event $F_k$, it holds that,
    \begin{gather*}
        \ip{\balpha_j, \Sigma_k^{\safe}} + \mu \ip{\bar{V}_{k-1}^{-1}, \Sigma_{k}^{\safe}} \leq \xi - \epsilon_1\\
        \ip{\balpha_j, \Sigma_\star^{\safe}} \leq \xi - \epsilon_1
    \end{gather*}
\end{lemma}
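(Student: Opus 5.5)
We plan to prove the second inequality (for the true zero-policy covariance $\Sigma_\star^{\safe}$) first, using only Assumption~\ref{ass:null_policy}. We then obtain the first inequality by a perturbation argument, using the estimation bounds contained in $F_k$.

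\textbf{Step 1: from the transient chance constraint to a steady-state bound.} Under the zero policy, $z_t$ is Gaussian. Its mean is $(A_\star^{t-1}x_1, \bzero)$ and its covariance is $\Sigma^{0}_t = \diag(\sum_{s<t} A_\star^{s} W (A_\star^{s})^\top, 0)$. By Assumption~\ref{ass:null_policy}, $\Pb(\alpha_j^\top z_t\le \beta-\epsilon)\ge 1-\delta$ holds for every $t \in [T]$. Applying Lemma~\ref{lem:quant_form} with the trivial $\sigma$-algebra gives
\[
\alpha_j^\top m_t + \sqrt{\ip{\balpha_j,\Sigma^0_t}}\,\Phi^{-1}(1-\delta)\le \beta-\epsilon .
\]
We evaluate this at $t=T$. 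Strong stability gives $|\alpha_j^\top m_T|\le D\kappa^2(1-\gamma)^{T}R_1$, which assumption 3 makes small. It also gives $\ip{\balpha_j,\Sigma^{\safe}_\star-\Sigma^0_T}\le D^2\kappa^2\bar w\gamma^{-1}(1-\gamma)^{2T}$, using the geometric tail of the Lyapunov series. Rearranging, taking the square root bound, and squaring yields
\[
\ip{\balpha_j,\Sigma_\star^{\safe}}\le \Big(\tfrac{\beta-\epsilon+o(1)}{\Phi^{-1}(1-\delta)}\Big)^2 + D^2(1+\kappa)^2\kappa^2\gamma^{-1}(1-\gamma)^{2T}.
\]

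\textbf{Step 2: compare with $\xi$.} The parameter $\xi$ uses the slightly larger quantile $\Phi^{-1}(1-\delta+\omega)$, the numerator $\beta-(\text{small})$, and the offset $-D^2C_1$. We need a local Lipschitz bound on $q\mapsto 1/\Phi^{-1}(q)^2$ on $[1-\delta,1-\delta/2]$, which is valid because $\omega\le\delta/2$ (assumption 4). Since $(\Phi^{-1})'(q)=1/\phi(\Phi^{-1}(q))$ and $\phi$ is decreasing on the positive axis, this gives
\[
\frac{1}{\Phi^{-1}(1-\delta+\omega)^2}\ge \frac{1}{\Phi^{-1}(1-\delta)^2}-\frac{2\omega}{\phi(\Phi^{-1}(1-\delta/2))\,\Phi^{-1}(1-\delta)^3}.
\]
Multiplying by $\beta^2$ produces the $\phi$-term appearing in $\epsilon_1$. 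Assumption 2 on $\rho$ makes the mean-error term in $\xi$ at most $\epsilon/4$. Using $(\beta-\epsilon/4)^2-(\beta-3\epsilon/4)^2\ge \epsilon^2/4$ together with $\beta\ge\epsilon$, we obtain
\[
\xi-\ip{\balpha_j,\Sigma_\star^{\safe}}\ge \frac{\epsilon^2}{4\Phi^{-1}(1-\delta)^2}-(\text{the transient, quantile and }C_1\text{ terms}),
\]
and the right-hand side is at least $\epsilon_1$. This is the step I expect to be the main obstacle: the individual slacks (mean decay, covariance tail, quantile shift, $C_1$) must be matched exactly to the terms defining $\epsilon_1$ in \eqref{eq:e1}, and the signs must be handled with care when squaring (both numerators must stay nonnegative, which is where $\max(\beta,\epsilon/4)$ in assumptions 2 and 3 enters).

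\textbf{Step 3: the estimated zero-policy covariance.} On $F_k$ we have $\|\hat\Theta_k-\Theta_\star\|\le\gamma/(2\kappa)$. Hence $\hat A_k$ is $(\kappa,\gamma/2)$-strongly stable, so $\Sigma_k^{\safe}$ exists and satisfies $\tr(\Sigma_k^{\safe})\le 2\kappa^2\bar w\gamma^{-1}\le\nu$ by assumption 1. Let $\Delta=\Sigma_{k,xx}^{\safe}-\Sigma_{\star,xx}^{\safe}$. It solves
\[
\Delta=\hat A_k\Delta\hat A_k^\top+\big(\hat\Theta_k\Sigma_\star^{\safe}\hat\Theta_k^\top-\Theta_\star\Sigma_\star^{\safe}\Theta_\star^\top\big).
\]
The forcing term has norm at most $\eta\ip{\bar V_k^{-1},\Sigma_\star^{\safe}}\le\eta\nu\lambda^{-1}$ by $F_k$ and $\bar V_k\succeq\lambda I$. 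Summing the series gives $\ip{\balpha_j,\Delta}\le D^2\kappa^2\gamma^{-1}\eta\nu\lambda^{-1}$. Similarly, $\mu\ip{\bar V_{k-1}^{-1},\Sigma_k^{\safe}}\le\mu\nu\lambda^{-1}$. These are exactly the last terms subtracted in $\epsilon_1$. Adding them to the Step~2 bound, and noting that Step~2 was established with these terms already included in the slack, yields the first inequality.
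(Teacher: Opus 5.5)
Your proposal follows the paper's proof. Steps~1--2 reproduce Lemma~\ref{lem:sig_safe}: the zero policy's chance constraint at $t=T$, the Lyapunov tail, a Lipschitz bound on $1/\Phi^{-1}(\cdot)^2$ over $[1-\delta,1-\delta/2]$, and the squaring step, which together give $\xi-\epsilon_2$. Step~3 reproduces Lemma~\ref{lem:safe_dist} and the final combination $\epsilon_1=\epsilon_2-\lambda^{-1}\nu(\mu+\kappa^2\gamma^{-1}\eta D^2)$. Your direct mean-value bound is slightly sharper than Lemma~\ref{lem:quant_sens} and is dominated by the corresponding term in $\epsilon_1$.

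Two constant-level details need fixing:
\begin{enumerate}
\item Iterating the Lyapunov recursion with $\hat A_k$, which is only $(\kappa,\gamma/2)$-stable, gives the factor $\kappa^2/(\gamma-\gamma^2/4)$ rather than $\kappa^2\gamma^{-1}$. To land exactly on the term in $\epsilon_1$, iterate with $A_\star$ and evaluate the forcing term at $\Sigma_k^{\safe}$, as the paper does.
\item $\beta\ge\epsilon$ does not follow from Assumption~\ref{ass:null_policy}. The inequality you actually need, $\beta\ge 3\epsilon/4$, does follow, since $\beta-\epsilon-\alpha_j^\top m_T\ge 0$ and $|\alpha_j^\top m_T|\le\epsilon/4$.
\end{enumerate}
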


Then, we give a Lemma that bounds the cumulative of $\ip{\bar{V}_{k_t-1}^{-1}, \Sigma_t}$ via the event $G_1$, which is related to the estimation error of the parameters. The proof is in Section \ref{sec:ell_pot}.

\begin{lemma}[Cumulative Estimation Error]
    \label{lem:ell_pot}
    Suppose that the assumptions of Lemma \ref{lem:main_cov} hold.
    Then, if $\Pb(E) \geq 1 - \omega$, it holds that $\Pb(E \cap G_1) \geq 1 - 2 \omega$ where $G_1$ is defined in \eqref{eq:g1}.
\end{lemma}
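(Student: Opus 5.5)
Since $\Pb(E)\ge 1-\omega$ is given, it suffices to show that $G_1$ holds deterministically on $E$. Then $\Pb(E\cap G_1)\ge 1-\omega\ge 1-2\omega$, and the extra $\omega$ slack absorbs one martingale concentration step if a deterministic argument does not close. The quantity $\sum_t \ip{\bar V_{k_t-1}^{-1},\Sigma_t}$ is an elliptical-potential sum in which the realized outer products $z_tz_t^\top$ have been replaced by the target covariance $\Sigma_t$. The plan has three steps: pass from $\Sigma_t$ to the conditional second moment $\Eb[z_tz_t^\top\mid\Fc_{t-\rho}]$, then to the realized $z_tz_t^\top$, and finally apply the standard log-determinant potential bound.

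\textbf{Step 1 (from $\Sigma_t$ to $S_{t|t-\rho}$).} I would compare $\Sigma_t$ with $S_{t|t-\rho}$ from below. Lemma~\ref{lem:main_cov} only gives an upper bound $S_{t|t-\rho}\preceq\Sigma_t+\dots$, but the same delayed-measurability argument yields a lower bound. Because $K_s,U_s$ are $\Fc_{t-\rho}$-measurable and $W\succeq\sigma I$, one can show $\Sigma_t\preceq c\,S_{t|t-\rho}+(\text{small})I$ with $c=O(\nu/\sigma)$. A cruder route avoids this comparison altogether. Using $\tr(\Sigma_t)\le\nu$, we have $\ip{\bar V^{-1},\Sigma_t}\le \nu\|\bar V^{-1}\|$. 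Also $S_{t|t-\rho}\succeq \cov(x_t\mid\Fc_{t-\rho})$ lifted through $[I;K_t]$, plus $\diag(0,U_t)$, with $\cov(x_t\mid\Fc_{t-\rho})\succeq W\succeq\sigma I$. I expect either route to produce the factor $\max(\nu/\sigma,1)$ that appears in $C_2$.

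\textbf{Step 2 (from conditional expectations to realized outer products).} Write $\ip{\bar V_{k_t-1}^{-1},S_{t|t-\rho}}\le \Eb[\ip{\bar V_{k_t-1}^{-1},z_tz_t^\top}\mid\Fc_{t-\rho}]$. This is valid whenever $\bar V_{k_t-1}$ is $\Fc_{t-\rho}$-measurable, which holds because it is built from data up to $\tau_{k_t-1}-\rho\le t-\rho$. Under $E$ and $\lambda\ge R_z^2$, each summand $\ip{\bar V^{-1},z_tz_t^\top}$ lies in $[0,1]$. Splitting the indices into $\rho$ interleaved subsequences gives martingale differences, and applying a Freedman/multiplicative bound (in which the conditional expectation is at most twice the realized sum plus $\log$) yields the term $\lambda^{-1}R_z\rho\log(2T/\omega)$. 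This is where the second $\omega$ is spent.

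\textbf{Step 3 (elliptical potential).} By Lemma~\ref{lem:phase_upd}, $V_{t-\rho}\preceq 2\bar V_{k_t}\preceq 6\bar V_{k_t-1}$ and $V_t\preceq 2V_{t-\rho}$. Hence $\bar V_{k_t-1}^{-1}\preceq 12\,V_{t}^{-1}$, and
\[
\sum_t \ip{V_t^{-1},z_tz_t^\top}\le 2\log\frac{\det V_T}{\det\lambda I}\le 2d\log(1+T R_z^2/(d\lambda)).
\]
Combining the three steps gives a bound of order $48\max(\nu/\sigma,1)\,d\log(1+\lambda^{-1}T)$ plus the concentration term, which is $C_2$.

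The main obstacle is Step 1: obtaining a per-step lower bound of $S_{t|t-\rho}$ in terms of $\Sigma_t$ that also covers the input block. Here $U_t$ may be nearly singular while $\Sigma_{t,uu}$ is not. I would use the identity $\Sigma_t=[I;K_t]\Sigma_{t,xx}[I;K_t]^\top+\diag(0,U_t)$ together with $\Sigma_{t,xx}\preceq\nu I\preceq(\nu/\sigma)\cov(x_t\mid\Fc_{t-\rho})$, so that the $U_t$ blocks match exactly.
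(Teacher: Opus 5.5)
Your proposal is correct and follows the paper's proof. Your final resolution of Step 1 is the right one: the factorization $\Sigma_t=[I;K_t]\Sigma_{t,xx}[I;K_t]^\top+\diag(0,U_t)$ with $\Sigma_{t,xx}\preceq\nu I\preceq(\nu/\sigma)S^{xx}_{t|t-\rho}$ gives $\Sigma_t\preceq\max(\nu/\sigma,1)S_{t|t-\rho}$ with no additive slack; the ``cruder route'' would not work, since it fails in the input directions and only sums to $\nu T/\lambda$. That step, followed by $S_{t|t-\rho}\preceq\Eb_{t-\rho}[z_tz_t^\top]$, Bernstein over $\rho$ interleaved subsequences, and the elliptical potential bound with $\bar V_{k_t-1}^{-1}\preceq 12V_t^{-1}$, is exactly the argument of Lemmas \ref{lem:ell1}, \ref{lem:ell2} and \ref{lem:ellipt}.

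The one difference is the order of steps. You keep the $\Fc_{t-\rho}$-measurable $\bar V_{k_t-1}$ inside the conditional expectation and switch to $V_t$ only after concentration, where the Loewner comparison holds pathwise. This is cleaner than the paper's ordering, because $V_t$ is not $\Fc_{t-\rho}$-measurable and so cannot be pulled through $\Eb_{t-\rho}$ as the paper does.
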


Then, we give an error bound on the initial dynamics estimation.
The proof is in Section \ref{sec:exploration_proof}.

\begin{lemma}[Initialization Phase]
    \label{lem:exploration}
    In addition to the assumptions of Theorem \ref{thm:main_apx}, also assume that:
    \begin{enumerate}
        \item $\tau_0 \geq 1 + 134 d \log\left( \frac{34 R_1^2}{\omega \min(\sigma, c^2/m)} \right)$
        \item $c = \gamma \epsilon D^{-1} \kappa^{-2} S^{-1}$
    \end{enumerate}
    Then, it holds that $\Pb(\alpha_j^\top z_t \leq \beta) \geq 1 - \delta$ for all $t \in [\tau_1-1]$, and $\Pb\left( G_3 \right) \geq 1 - \omega/3$, where event $G_3$ is defined in \eqref{eq:g3}.
\end{lemma}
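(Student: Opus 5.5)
We need to prove Lemma \ref{lem:exploration}, which has two parts: constraint satisfaction during initialization for $t \le \tau_1 - 1$, and the estimation bound $\Pb(G_3) \ge 1-\omega/3$.

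\textbf{Constraint satisfaction.} During $t<\tau_0$ the input is $u_t = c\,s_t$ with $s_t \sim \Uc(\Sb)$ i.i.d., independent of the Gaussian disturbances. During $t\in[\tau_0,\tau_1-1]$ the input is zero. Unrolling the open-loop dynamics gives
\[
x_t = A_\star^{t-1} x_1 + \sum_{s<t} A_\star^{t-1-s} w_s + \sum_{s<\min(t,\tau_0)} A_\star^{t-1-s} B_\star u_s .
\]
The first two terms are exactly the state under the zero policy. By Assumption \ref{ass:null_policy}, that zero-policy state $x_t^0$ satisfies $\Pb(\alpha_j^\top z^0_t \le \beta-\epsilon)\ge 1-\delta$.

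The remaining term is a deterministic-magnitude perturbation. Strong stability gives $\|A_\star^k\| \le \kappa^2(1-\gamma)^k$, so
\[
\Big\|\sum_s A_\star^{t-1-s}B_\star u_s\Big\| \le \kappa^2 S c\,\gamma^{-1}.
\]
Including the input part of $z_t$ (when $t<\tau_0$), the total perturbation of $\alpha_j^\top z_t$ is at most
\[
D\big(\kappa^2 S c\gamma^{-1} + c\big) \le \epsilon
\]
under the choice $c=\gamma\epsilon D^{-1}\kappa^{-2}S^{-1}$, possibly after absorbing constants using $\kappa, S \ge 1$. I would check the exact constant here.

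Hence $\{\alpha_j^\top z^0_t \le \beta-\epsilon\} \subseteq \{\alpha_j^\top z_t \le \beta\}$, which gives the chance constraint pathwise and needs no Gaussianity of $z_t$.

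\textbf{Estimation bound.} This is a least-squares problem with covariates $z_s=(x_s,u_s)$ and Gaussian noise. I would invoke a standard finite-sample bound for least squares under a block-martingale small-ball condition (Simchowitz et al.\ style), which is clearly the origin of the constants $300$, $306$, $134$, $34$. The steps are as follows.

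\begin{enumerate}
\item \emph{Excitation.} Conditioned on $\Fc_{s-1}$, $z_s$ has $x_s$-component excited by $w_{s-1}$ (covariance $\succeq \sigma I$) and an independent $u_s$-component with covariance $(c^2/m)I$, as for the uniform sphere in $\Rb^m$. So each one-step conditional covariance is $\succeq \min(\sigma,c^2/m)I$, which yields a $(1,\min(\sigma,c^2/m)I,p)$ block-martingale small-ball condition.
\item \emph{Upper Gram bound.} With probability $1-\omega/6$, a Gaussian tail plus union bound over $t$ give $\|x_t\|^2\le\bar\Gamma$, hence $\sum z_sz_s^\top \preceq \tau_0\bar\Gamma I$. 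This is why $\bar\Gamma$ has exactly the form of the squared state bound $\kappa^2R_1+\kappa^2\gamma^{-1}(\sqrt{2n\bar w\log(2nT/\omega)}+Sc)+c$.
\item \emph{Apply the theorem.} With $\tau_0-1 \ge 134\,d\log(\cdot)$ (assumption 1), the lower Gram bound $\sum z_sz_s^\top\succeq c'(\tau_0-1)\min(\sigma,c^2/m)I$ holds. The self-normalized martingale bound for the noise, with $\|W\|\le\bar w$, then gives the stated Frobenius bound with probability $1-\omega/6$.
\item \emph{Union bound.} Combining steps 2 and 3 gives $\Pb(G_3)\ge1-\omega/3$.
\end{enumerate}

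\textbf{Main obstacle.} The hard part is matching the exact constants and log arguments of the off-the-shelf least-squares theorem. In particular, I must verify the small-ball condition and the state boundedness event used for $\bar\Gamma$, and translate operator-norm to Frobenius via the $d$ factor. The constraint part is routine.
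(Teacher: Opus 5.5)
Your route is the paper's: a pathwise comparison with the zero-input trajectory driven by the same disturbances for the constraint, and the block-martingale small-ball least-squares bound of Simchowitz et al.\ with the upper Gram level $\bar{\Gamma}$ for $G_3$. The gap is the constant you deferred, and no absorption can fix it. With $c=\gamma\epsilon D^{-1}\kappa^{-2}S^{-1}$, the state part alone is $D\kappa^{2}Sc\gamma^{-1}=\epsilon$. So for $2\le t\le\tau_0-1$, the only times where both the propagated input and the current input $u_t$ are present, your bound is $D(\kappa^2Sc\gamma^{-1}+c)=\epsilon(1+\gamma\kappa^{-2}S^{-1})>\epsilon$. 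The inclusion $\{\alpha_j^\top z_t^0\le\beta-\epsilon\}\subseteq\{\alpha_j^\top z_t\le\beta\}$ is therefore lost.

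The paper's proof reaches exactly $\epsilon$ only by writing $\|z_t-z_t^{\safe}\|=\|x_t-x_t^{\safe}\|$. That drops the input block of $z_t$, which is nonzero during exploration, so you were right to keep it. In fact the constraint claim is false for this $c$ whenever some $\alpha_j$ has an input component. Take $n=m=1$, $A_\star=0$, $B_\star=1$ (so $\kappa=\gamma=S=1$), $x_1=0$, $\alpha_1=(1,1)/\sqrt{2}$, $\epsilon=1$ and $\beta=1+\sqrt{W/2}\,\Phi^{-1}(1-\delta)$. Then Assumption~\ref{ass:null_policy} holds and $c=1$, i.e.\ $u_t=\pm1$. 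For $2\le t\le\tau_0-1$ we have $\alpha_1^\top z_t=(u_{t-1}+u_t+w_{t-1})/\sqrt{2}$. On $\{u_{t-1}=u_t=1\}$ this exceeds $\beta$ unless $w_{t-1}\le\sqrt{2}-2+\sqrt{W}\,\Phi^{-1}(1-\delta)$. Letting $W\to0$, the violation probability tends to $1/4$, which exceeds $\delta$ for any $\delta<1/4$.

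The repair is to shrink $c$ by a constant factor, e.g.\ $c=\gamma\epsilon/(2D\kappa^2S)$. Since $\gamma\le1\le\kappa^2S$, this gives $D(\kappa^2Sc\gamma^{-1}+c)\le\epsilon/2+\epsilon/2=\epsilon$. Your pathwise argument then goes through, and elsewhere only constants ($\bar{\Gamma}$, $R_z$, $\tau_0$) change.

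A smaller point concerns your step 1: a lower bound on the conditional covariance does not by itself give the small-ball property. You need anti-concentration of $a+v_x^\top w_{s-1}+v_u^\top u_s$ for an arbitrary $\Fc_{s-1}$-measurable shift $a$, and $u_s$ is uniform on a sphere, not Gaussian. The paper's own verification also treats this sum as Gaussian, which it is not. Two tools close this. Symmetry gives $\Pb(|a+X|\ge\tau)\ge\tfrac12\Pb(|X|\ge\tau)$. Paley--Zygmund, with $\tau$ a constant fraction of the standard deviation, then gives a constant $p$ at the price of a constant factor in $\Gamma_{sb}$. Like the rest of your step 3, this only moves numerical constants.
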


Lastly, we give a bound showing that the realized cost concentrates around the conditional expectation of the cost, i.e. a bound on Term I.A.
The proof is in Section \ref{sec:term1a}.

\begin{lemma}[Concentration of Cost]
    \label{lem:term1a}
    Suppose that the assumptions of Lemma \ref{lem:main_cov} hold.
    If $\Pb(E) \geq 1 - \omega$, then it holds that $\Pb(E \cap G_2) \geq 1 - 2 \omega$, where $G_2$ is defined in \eqref{eq:g2}.
\end{lemma}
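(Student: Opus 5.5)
The statement to prove is Lemma~\ref{lem:term1a}. On $E \cap G_2$ we need Term~I.A plus Term~I.B to be controlled. I will split the quantity in $G_2$ as
\[
\ell(x_t,u_t) - \ip{\mathbf{Q}, S_{t|t-\rho}} = \big(\ell(x_t,u_t) - \Eb[\ell(x_t,u_t)\mid\Fc_{t-\rho}]\big) + m_{t|t-\rho}^\top \mathbf{Q}\, m_{t|t-\rho},
\]
which uses $\Eb[z_t^\top \mathbf{Q} z_t \mid \Fc_{t-\rho}] = \ip{\mathbf{Q}, S_{t|t-\rho}} + m_{t|t-\rho}^\top \mathbf{Q} m_{t|t-\rho}$.

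For the mean term, work on $E$ and apply Lemma~\ref{lem:main_cov}, which gives $\|m_{t|t-\rho}\| \le (1+\tilde\kappa)\tilde\kappa \exp(-\tilde\gamma\rho/2) R_z$. Then
\[
m^\top \mathbf{Q} m \le R_Q (1+\tilde\kappa)^2\tilde\kappa^2 \exp(-\tilde\gamma\rho) R_z^2 \le R_Q (1+\tilde\kappa)^2\tilde\kappa^2 R_z^2 \exp(-\tilde\gamma\rho/2),
\]
using $\tilde\kappa,R_z \ge 1$. Summing over $t$ gives the second term of $G_2$.

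For the martingale term, write $Y_t := \ell(x_t,u_t) - \Eb[\ell(x_t,u_t)\mid\Fc_{t-\rho}]$. This is not a martingale difference sequence because of the $\rho$-step lag, so split the time indices into $\rho$ residue classes mod $\rho$. Within a class $\{t_0, t_0+\rho, t_0+2\rho,\dots\}$, $Y_t$ is $\Fc_t$-measurable and has zero conditional mean given $\Fc_{t-\rho}$, which is the previous element of the class. So each class is a martingale difference sequence with respect to the filtration $(\Fc_{t_0+i\rho})_i$.

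The obstacle is that $\ell$ is unbounded (Gaussian noise), so Azuma's inequality does not apply directly. I would truncate. Define $\tilde Y_t := \ell(x_t,u_t)\mathbb{1}\{\|z_t\|\le R_z\} - \Eb[\ell(x_t,u_t)\mathbb{1}\{\|z_t\|\le R_z\}\mid\Fc_{t-\rho}]$, which is bounded by $2R_Q R_z^2$. Azuma on each class (length at most $T/\rho$) with failure probability $\omega/\rho$, plus a union bound over the $\rho$ classes, gives
\[
\sum_t \tilde Y_t \le \rho \cdot 2R_QR_z^2\sqrt{2(T/\rho)\log(\rho/\omega)} = 2R_QR_z^2\sqrt{2\rho T\log(\rho/\omega)}
\]
with probability at least $1-\omega$.

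On $E$ the indicators equal $1$, so $\ell = \ell\mathbb{1}$. It remains to show that $\Eb[\ell\,\mathbb{1}\{\|z_t\|\le R_z\}\mid\Fc_{t-\rho}] \le \Eb[\ell\mid\Fc_{t-\rho}]$, which holds because $\ell \ge 0$. Hence $Y_t \le \tilde Y_t$ on $E$. This sign direction is exactly what the one-sided event $G_2$ requires, so the truncation error can be discarded.

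A union bound with $\Pb(E)\ge 1-\omega$ then gives $\Pb(E\cap G_2)\ge 1-2\omega$.
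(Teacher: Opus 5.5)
Your proof is correct and follows the paper's argument essentially step for step. It splits the quantity into the conditional-mean term $m_{t|t-\rho}^\top \mathbf{Q}\, m_{t|t-\rho}$ (bounded via Lemma~\ref{lem:main_cov}) and a $\rho$-lagged martingale part, truncates at $\|z_t\| \le R_z$, applies Azuma on the $\rho$ interleaved subsequences with a union bound, and finishes with a union bound against $E$. Your handling of the truncation remainder is more careful than the paper's: the paper asserts that the tail term vanishes on $E$, but $\Eb[\ell(x_t,u_t)\Ib_{\{\|z_t\|>R_z\}}\mid\Fc_{t-\rho}]$ is generally positive, whereas your observation that this term is nonnegative (so the remainder is $\le 0$ on $E$) is exactly what the one-sided event $G_2$ requires.
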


\subsection{Regret Analysis}
\label{sec:reg_decomp_apx}

Recall the regret decomposition from Section \ref{sec:reg_ana}:
\begin{align*}
    \reg_T & = \sum_{t=1}^{T} \ell(x_t, u_t) - J_\star\\
    & = \underbrace{\sum_{t=\tau_1}^{T} \left( \ell(x_t, u_t) - \ip{\mathbf{Q}, \Sigma_t} \right)}_{\tone} + \underbrace{\sum_{t=\tau_1}^{T} \ip{\mathbf{Q}, \Sigma_t - \tilde{\Sigma}} }_{\ttwo} + \underbrace{T \ip{\mathbf{Q}, \tilde{\Sigma}} - J_\star}_{\tthree} + \underbrace{\sum_{t=1}^{\tau_1-1}(\ell(x_t, u_t) - \ip{\mathbf{Q}, \tilde{\Sigma}} )}_{\tfour}
\end{align*}

Then, we give the bounds on each of the terms.
First, the bound on Term I is given in Lemma~\ref{lem:tone_first}, which is proven in Section \ref{sec:tone_proof}.

\begin{lemma}[Term I]
    \label{lem:tone_first}
    Suppose that the assumptions of Lemma \ref{lem:main_cov} hold.
    Then, under the intersection of events $E$, $G_1$ and $G_2$,
    \begin{align*}
        \mathrm{Term\ I} & \leq 2 R_Q R_z^2 \sqrt{\rho 2  T \log(\rho/\omega)} + R_Q (1 + \tilde{\kappa})^2 \tilde{\kappa}^2 R_z^2 \exp(-\tilde{\gamma} \rho/2) T + R_Q T C_1\\
        & \quad + 2 (1 + \tilde{\kappa})^2 \tilde{\kappa}^2 R_Q \eta \rho C_2
    \end{align*}
\end{lemma}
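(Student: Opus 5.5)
We bound Term I through the three-way split already introduced in Section~\ref{sec:reg_ana}, with the conditional-mean part and the concentration part combined:
\[
\mathrm{Term\ I} = \sum_{t=\tau_1}^{T} \big( \ell(x_t,u_t) - \ip{\mathbf{Q}, S_{t|t-\rho}} \big) + \sum_{t=\tau_1}^{T} \ip{\mathbf{Q}, S_{t|t-\rho} - \Sigma_t}.
\]
The first sum is exactly the quantity controlled by the event $G_2$ in \eqref{eq:g2}. On $G_2$ it is at most $2 R_Q R_z^2 \sqrt{2\rho T \log(\rho/\omega)} + R_Q (1+\tilde{\kappa})^2\tilde{\kappa}^2 R_z^2 \exp(-\tilde{\gamma}\rho/2) T$. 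This accounts for the first two terms of the claimed bound. No further work is needed for this piece, since Lemma~\ref{lem:term1a} already packages both the martingale (Azuma-type, over $\rho$ interleaved subsequences) part and the conditional-mean part $\sum m_{t|t-\rho}^\top \mathbf{Q}\, m_{t|t-\rho}$.

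For the second sum, we work under $E$. The hypotheses of Lemma~\ref{lem:main_cov} are assumed, so for every $t \in [\tau_1,T]$ we have
\[
S_{t|t-\rho} - \Sigma_t \preceq \big(C_1 + 2(1+\tilde{\kappa})^2\tilde{\kappa}^2 \rho\,\eta\, \ip{\bar{V}_{k_t-1}^{-1}, \Sigma_t}\big) I.
\]
Since $\mathbf{Q} \succeq 0$, the map $X \mapsto \ip{\mathbf{Q}, X}$ is monotone in the Loewner order. Hence
\[
\ip{\mathbf{Q}, S_{t|t-\rho} - \Sigma_t} \le \tr(\mathbf{Q})\big(C_1 + 2(1+\tilde{\kappa})^2\tilde{\kappa}^2\rho\eta \ip{\bar{V}_{k_t-1}^{-1},\Sigma_t}\big).
\]
We then use $\tr(\mathbf{Q}) \le R_Q$. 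This inequality is valid even though the bracketed scalar multiplies $I$: for $X \preceq cI$ with $c \ge 0$, we get $\ip{\mathbf{Q},X} \le c\,\tr(\mathbf{Q})$.

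Summing over at most $T$ time steps gives the term $R_Q T C_1$. For the remaining term, we first note that $\ip{\bar{V}_{k_t-1}^{-1},\Sigma_t} \ge 0$ (both matrices are PSD; $\Sigma_t \succeq 0$ by Lemma~\ref{lem:slow_var} as a convex combination of PSD matrices). This lets us extend the sum from $[\tau_1,T]$ to $[1,T]$. On $G_1$ the extended sum is at most $C_2$, which yields $2(1+\tilde{\kappa})^2\tilde{\kappa}^2 R_Q\eta\rho C_2$.

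Adding the two pieces gives the stated bound on $E\cap G_1\cap G_2$. There is no genuine obstacle here, since all the difficulty lives in Lemmas~\ref{lem:main_cov}, \ref{lem:ell_pot}, and \ref{lem:term1a}. The only care needed concerns bookkeeping. First, the index $k$ in Lemma~\ref{lem:main_cov} must be read as $k_t$. Second, $G_1$ must be applicable for $t<\tau_1$; where $\Sigma_t$ or $\bar{V}_{k_t-1}$ are undefined there, those summands can be treated as zero or simply dropped by nonnegativity.
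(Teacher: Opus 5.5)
Your proposal is correct and matches the paper's proof: you use the same split into the concentration term, bounded directly by the event $G_2$, and the covariance-approximation term, bounded via Lemma~\ref{lem:main_cov}, monotonicity of $\ip{\mathbf{Q},\cdot}$ with $\tr(\mathbf{Q}) \leq R_Q$, and the event $G_1$. Your remarks on nonnegativity when extending the sum to $[1,T]$, and on reading $k$ as $k_t$, simply make explicit what the paper leaves implicit.
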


Next, we give a bound on Term II, which is proven in Section \ref{sec:ttwo_proof}.

\begin{lemma}[Term II]
    \label{lem:ttwo_first}
    Suppose that the conditions of Lemma \ref{lem:safe_margin} hold.
    Then, under event $E$, it holds that,
    $$
        \mathrm{Term\ II} \leq \zeta^{-1} R_Q 2 \nu N + 2 \nu R_Q \mu C_2 \epsilon_1^{-1} + 4 (n + m) N \nu^2 R_Q \mu \epsilon_1^{-1} \lambda^{-1} \zeta^{-1} 
    $$
\end{lemma}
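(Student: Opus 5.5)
We prove Lemma~\ref{lem:ttwo_first} through the decomposition of Term II into Terms II.A, II.B and II.C from Section~\ref{sec:reg_ana}, bounding each term on event $E$.

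\emph{Term II.A.} By Cauchy--Schwarz, $\ip{\mathbf{Q}, \Sigma_t - \bar{\Sigma}_{k_t}} \leq \tr(\mathbf{Q}) \|\Sigma_t - \bar{\Sigma}_{k_t}\| \leq R_Q \|\Sigma_t - \bar{\Sigma}_{k_t}\|$. Summing over $t$ and invoking item 2 of Lemma~\ref{lem:slow_var} gives $R_Q \cdot 2\nu N/\zeta$, which is the first term. The hypothesis $\nu \geq 2\kappa^2 \tr(W)\gamma^{-1}$ needed there is assumption 1 of Lemma~\ref{lem:safe_margin}, since $\bar w \geq \tr(W)$.

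\emph{Term II.C.} We claim that under $E$ the benchmark $\tilde{\Sigma}$ is feasible for the optimistic SDP \eqref{eq:opt_sdp} in every phase $k$. It satisfies the exact stationarity condition, and $E$ gives $\|\hat\Theta_k\tilde\Sigma\hat\Theta_k^\top - \Theta_\star\tilde\Sigma\Theta_\star^\top\| \leq \eta\ip{\bar V_k^{-1},\tilde\Sigma}$. Hence $\tilde\Sigma_{xx} = \Theta_\star\tilde\Sigma\Theta_\star^\top + W \succeq \hat\Theta_k\tilde\Sigma\hat\Theta_k^\top + W - \eta\ip{\bar V_k^{-1},\tilde\Sigma}I$. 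The linear constraints and the trace bound coincide for the two SDPs. Therefore $\ip{\mathbf{Q},\Sigma_k^o} \leq \ip{\mathbf{Q},\tilde\Sigma}$, and Term II.C is at most $0$.

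\emph{Term II.B.} Since $\bar\Sigma_k - \Sigma_k^o = (1-\phi_k)(\Sigma_k^\safe - \Sigma_k^o)$ and both matrices have trace at most $\nu$ (the trace bound for $\Sigma_k^\safe$ follows from strong stability of the zero policy and the choice of $\nu$), each summand is at most $2\nu R_Q(1-\phi_k)$. The main step is to bound $1-\phi_k$, following the scaled-back optimism argument.
\begin{itemize}
\item By Lemma~\ref{lem:safe_margin}, $g_j(\Sigma_k^\safe) \leq \xi - \epsilon_1$, where $g_j(\Sigma) := \ip{\balpha_j,\Sigma} + \mu\ip{\bar V_{k-1}^{-1},\Sigma}$.
\item Since $\Sigma_k^o$ satisfies \eqref{eq:opt_sdp:const}, $g_j(\Sigma_k^o) \leq \xi + \mu\ip{\bar V_{k-1}^{-1},\Sigma_k^o}$.
\item By linearity of $g_j$, any $\phi$ with $\phi\,\mu\ip{\bar V_{k-1}^{-1},\Sigma_k^o} \leq (1-\phi)\epsilon_1$ is feasible. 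Hence $1-\phi_k \leq \mu\ip{\bar V_{k-1}^{-1},\Sigma_k^o}/\epsilon_1$.
\end{itemize}

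It remains to convert $\sum_t \ip{\bar V_{k_t-1}^{-1},\Sigma_{k_t}^o}$ into the quantity $\sum_t\ip{\bar V_{k_t-1}^{-1},\Sigma_t}$ controlled by $C_2$; this conversion is the delicate step. We bound $\ip{\bar V^{-1},\Sigma^o} \leq \ip{\bar V^{-1},\bar\Sigma_k}/\phi_k$, or else handle the deviation $\Sigma_t - \Sigma_{k_t}^o$ directly:
\[
\ip{\bar V_{k-1}^{-1},\Sigma_k^o - \Sigma_t} \leq \tr(\bar V_{k-1}^{-1})\,\|\Sigma_k^o - \Sigma_t\| \leq (n+m)\lambda^{-1}\|\Sigma_k^o-\Sigma_t\|.
\]
The plan is to combine this with the tracking bound in item 2 of Lemma~\ref{lem:slow_var}, where $\|\Sigma_t - \bar\Sigma_{k_t}\|$ contributes $2\nu N\zeta^{-1}$ in total. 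The remaining gap $\bar\Sigma_k$ versus $\Sigma_k^o$ is itself proportional to $1-\phi_k$, which we absorb with a crude bound. This yields
\[
\sum_t (1-\phi_{k_t}) \leq \mu\epsilon_1^{-1}\Big(C_2 + 2(n+m)\lambda^{-1}\nu N\zeta^{-1}\Big).
\]
Multiplying by $2\nu R_Q$ gives $2\nu R_Q\mu C_2\epsilon_1^{-1} + 4(n+m)N\nu^2R_Q\mu\epsilon_1^{-1}\lambda^{-1}\zeta^{-1}$, matching the last two terms of the lemma. The step I expect to be the main obstacle is this relation between the Gram-weighted optimistic covariance and the Gram-weighted target covariance.
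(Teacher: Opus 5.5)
\textbf{The gap is in the conversion step of Term II.B.} Your treatment of Terms II.A and II.C matches the paper. So does the feasibility argument giving $\phi_k \ge \epsilon_1/(\mu a_k+\epsilon_1)$ with $a_k := \langle \bar V_{k-1}^{-1},\Sigma_k^o\rangle$. The problem is the step you flagged as delicate: as described, it does not go through.

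Once you relax to $1-\phi_k\le \mu a_k/\epsilon_1$, you must control $\sum_t a_{k_t}$. Writing $\Sigma_k^o=\bar\Sigma_k+(1-\phi_k)(\Sigma_k^o-\Sigma_k^{\safe})$ leaves a residual
$\frac{\mu}{\epsilon_1}(1-\phi_k)\langle\bar V_{k-1}^{-1},\Sigma_k^o-\Sigma_k^{\safe}\rangle$,
which is at best bounded by $\frac{\mu\nu}{\epsilon_1\lambda}(1-\phi_k)$.
\begin{itemize}
\item Since $\mu$ and $\lambda$ are both of order $\eta$, the coefficient $\mu\nu/(\epsilon_1\lambda)$ is a problem-dependent constant. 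Nothing in the hypotheses forces it below $1$.
\item So you cannot absorb this term into the left-hand side without an extra assumption such as $\lambda\ge 2\mu\nu/\epsilon_1$, which is not among the hypotheses. Even with it, you would lose a factor of $2$ and miss the stated constants.
\item Using the crude bound $1-\phi_k\le 1$ instead leaves a residual of order $T\mu\nu/(\epsilon_1\lambda)$, i.e.\ linear in $T$.
\item Your other suggestion, $a_k\le\langle\bar V_{k-1}^{-1},\bar\Sigma_k\rangle/\phi_k$, only gives $\phi_k(1-\phi_k)\le\mu\langle\bar V_{k-1}^{-1},\bar\Sigma_k\rangle/\epsilon_1$. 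That is vacuous when $\phi_k$ is small.
\end{itemize}
So your displayed bound on $\sum_t(1-\phi_{k_t})$ is correct as a target but is not established by your argument.

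\textbf{The fix (the paper's Lemma~\ref{lem:safe_scal}) is to keep the denominator.} The inequality $\phi_k(\mu a_k+\epsilon_1)\ge\epsilon_1$ rearranges to
$(1-\phi_k)\epsilon_1\le\mu\,\phi_k a_k=\mu\langle\bar V_{k-1}^{-1},\phi_k\Sigma_k^o\rangle\le\mu\langle\bar V_{k-1}^{-1},\bar\Sigma_k\rangle$.
The last step uses $\bar\Sigma_k-\phi_k\Sigma_k^o=(1-\phi_k)\Sigma_k^{\safe}\succeq 0$. This bounds $1-\phi_k$ by the Gram-weighted \emph{phase} covariance, with no $\Sigma_k^o$ term left over.

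Then split
$\langle\bar V_{k_t-1}^{-1},\bar\Sigma_{k_t}\rangle=\langle\bar V_{k_t-1}^{-1},\Sigma_t\rangle+\langle\bar V_{k_t-1}^{-1},\bar\Sigma_{k_t}-\Sigma_t\rangle$.
\begin{itemize}
\item The first piece sums to at most $C_2$.
\item The second piece sums to at most $(n+m)\lambda^{-1}\cdot 2\nu N\zeta^{-1}$, by item 2 of Lemma~\ref{lem:slow_var}.
\end{itemize}
Multiplying by $2\nu R_Q\mu/\epsilon_1$ gives exactly the last two terms of the lemma. (As in the paper, invoking $C_2$ means you are really working on $E\cap G_1$, not on $E$ alone.)
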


Then, we give a bound on Term III, which is proven in Section \ref{sec:tthree_proof}.

\begin{lemma}[Term III]
    \label{lem:opt_approx_first}
    Assume that the following:
    \begin{enumerate}
        \item $\nu \geq (1 + \kappa)^2 \gamma^{-1} \kappa^2$
        \item $\rho \geq 2 \tilde{\gamma}^{-1} \log \left( \frac{\max(\beta,\epsilon/4)}{(1+\tilde{\kappa}) \tilde{\kappa} R_z D} \right)$
        \item $T \geq \gamma^{-1} \log \left( \frac{\max(\beta,\epsilon/4)}{(1+\kappa) \kappa R_1 D} \right)$
        \item $\omega \leq \delta/2$
        \item $\xi = \left( \frac{\beta - D (1 + \tilde{\kappa}) \tilde{\kappa} \left(1 - \tilde{\gamma}/2\right)^{\rho} R_z}{ \Phi^{-1}(1 - \delta + \omega)} \right)^2 - D^2 C_1$
        \item $\ip{\balpha_j, \Sigma_\star^{\safe}} \leq \xi - \epsilon_1$
    \end{enumerate}
    Then, it follows that,
    \begin{align*}
        \tthree \leq R_Q \frac{2 \nu}{\epsilon_1} (D^2 C_1 + C_3) T + R_Q (1 + \kappa)^2 \kappa^2 \gamma^{-2}\bar{w}.
    \end{align*}
\end{lemma}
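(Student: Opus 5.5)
We split Term III as in Section \ref{sec:reg_ana}, $\tthree = T\ip{\mathbf{Q},\tilde{\Sigma}-\Sigma'} + (T\ip{\mathbf{Q},\Sigma'} - J_\star)$. Here $\Sigma'$ is the minimizer of \eqref{eq:tilde}, with the slack in \eqref{eq:tilde_ss} chosen so that the steady-state covariance of every $K \in \Pi_\star$ is feasible.

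\textbf{Term III.B.} Let $K_\star$ attain $J_\star$ and let $\Sigma_{K_\star}$ be its steady-state covariance. Strong stability gives $\tr(\Sigma_{K_\star}) \le (1+\kappa)^2\kappa^2\gamma^{-1}\bar w \le \nu$ by assumption 1, and $\Sigma_{K_\star}$ satisfies the exact stationarity condition.

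For the chance constraint, write $z_t^{K_\star}\sim\Nc(m_t,S_t)$. Strong stability gives $\|m_t\|\le(1+\kappa)\kappa(1-\gamma)^{t}R_1$ and $\|S_t-\Sigma_{K_\star}\|\lesssim(1+\kappa)^2\kappa^2\bar w\gamma^{-1}(1-\gamma)^{2t}$. Applying Lemma \ref{lem:quant_form} at large $t$ (e.g.\ $t=T$, using assumption 3 to keep $\beta - \alpha_j^\top m_t>0$) converts $\Pb(\alpha_j^\top z_t\le\beta)\ge 1-\delta$ into exactly the relaxed constraint \eqref{eq:tilde_ss}. Hence $\ip{\mathbf{Q},\Sigma'}\le\ip{\mathbf{Q},\Sigma_{K_\star}}$.

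The transient cost is
\[
J_\star = \sum_t \big(\ip{\mathbf{Q},S_t} + m_t^\top\mathbf{Q}m_t\big),
\qquad
S_t=\Sigma_{K_\star} - \text{(PSD geometric remainder)}.
\]
Summing this difference geometrically gives $T\ip{\mathbf{Q},\Sigma_{K_\star}} - J_\star \le R_Q(1+\kappa)^2\kappa^2\gamma^{-2}\bar w$. This is the constant term in the claim.

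\textbf{Term III.A.} This is a sensitivity bound for the SDP in its right-hand side, done by convex interpolation toward a strictly feasible point. By assumption 6, $\Sigma^\safe_\star$ lies in $\bar\Ec_\nu$ with slack $\epsilon_1$ for $\ip{\balpha_j,\cdot}\le\xi$. Set $\Sigma_\theta=(1-\theta)\Sigma'+\theta\Sigma_\star^\safe$. It stays in $\bar\Ec_\nu$ because stationarity is affine and the trace bound is convex.

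Let $\Delta$ be the amount by which the right-hand side of \eqref{eq:tilde_ss} exceeds $\xi$. Then
\[
\ip{\balpha_j,\Sigma_\theta}\le(1-\theta)(\xi+\Delta)+\theta(\xi-\epsilon_1)\le\xi
\quad\text{once } \theta=\Delta/\epsilon_1 .
\]
So $\Sigma_\theta\in\tilde\Ec$, and
\[
\ip{\mathbf{Q},\tilde\Sigma-\Sigma'}\le\theta\,\ip{\mathbf{Q},\Sigma^\safe_\star-\Sigma'}\le 2\nu R_Q\,\Delta/\epsilon_1 .
\]
It remains to show $\Delta\le D^2C_1+C_3$. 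Plugging in assumption 5, $\xi$ differs from $(\beta-C_1e^{-\gamma T})^2/\Phi^{-1}(1-\delta)^2$ in three ways:
\begin{itemize}
\item the $-D^2C_1$ shift;
\item the mean-offset term inside the square, which is bounded via $(a^2-b^2)=(a+b)(a-b)$ with $a+b\le 2\beta+\epsilon$; this gives the first line of $C_3$;
\item the quantile change $\Phi^{-1}(1-\delta+\omega)$ versus $\Phi^{-1}(1-\delta)$.
\end{itemize}

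The main obstacle is the quantile step. It needs a local Lipschitz bound on $1/\Phi^{-1}(\cdot)^2$ over $[1-\delta,1-\delta+\omega]$. With $\omega\le\delta/2$ (assumption 4) the derivative of $\Phi^{-1}$ is at most $1/\phi(\Phi^{-1}(1-\delta/2))$. A mean-value argument then yields the term $\frac{2\Phi^{-1}(1-\delta/2)4\beta^2\omega}{\phi(\Phi^{-1}(1-\delta/2))\Phi^{-1}(1-\delta)^4}$. The remaining $(1-\gamma)^{2T}$ pieces come from the $C_2e^{-\gamma T}$ slack. Multiplying by $T$ gives the stated bound.
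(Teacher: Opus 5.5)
Your proposal is correct and is essentially the paper's proof. The feasible set of your $\Sigma'$-program plays the role of the paper's set $\Ec_a$. Your feasibility-plus-transient argument for $\Sigma_{K_\star}$ corresponds to Lemma~\ref{lem:E_a} together with the final lower bound on $J_\star$. Your interpolation toward $\Sigma_\star^{\safe}$, with the quantile-Lipschitz step, is Lemma~\ref{lem:maxdist} combined with Lemma~\ref{lem:quant_sens}.

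One correction: the relaxed constraint must \emph{enlarge} $\beta$, i.e.\ use $\big(\beta + D(1+\kappa)\kappa(1-\gamma)^T R_1\big)^2/\Phi^{-1}(1-\delta)^2$ as in $\Ec_a$, because $-\alpha_j^\top m_T$ may be positive. With the minus sign printed in \eqref{eq:tilde_ss}, $\Sigma_{K_\star}$ need not be feasible, so your claim that the chance constraint converts ``exactly'' into \eqref{eq:tilde_ss} does not hold as written. Your bound $\Delta \le D^2C_1 + C_3$ already corresponds to the plus sign: that is where the $R_1$ term in the first line of $C_3$ comes from. So the fix costs nothing.
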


The bound on Term IV is straightforward, so we leave the analysis to Section \ref{sec:comp_proof}.

\subsection{Constraint Satisfaction}
\label{sec:const_sat}

Next, we give the constraint satisfaction guarantees in the following lemma, which is proven in Section \ref{sec:const_sat_proof}.

\begin{lemma}[Constraint Satisfaction Guarantee]
    \label{lem:const_sat_first}
    In addition to the assumptions in Lemma \ref{lem:main_cov}, Lemma \ref{lem:safe_margin} and Lemma \ref{lem:exploration}, suppose that the following hold:
    \begin{enumerate}
        \item $\xi = \left( \frac{\beta - D (1 + \tilde{\kappa}) \tilde{\kappa} \left(1 - \tilde{\gamma}/2\right)^{\rho} R_z}{ \Phi^{-1}(1 - \delta + \omega)} \right)^2 - D^2 C_1$
        \item $\mu = 2 (1 + \tilde{\kappa})^2 \tilde{\kappa}^2  D^2 \eta$
        \item $\rho \geq 2 \tilde{\gamma}^{-1} \log \left( \frac{\beta}{(1+\tilde{\kappa}) \tilde{\kappa} R_z D} \right)$
        \item $\omega < \delta$
        \item $\Pb(E) \geq 1 - \omega$
    \end{enumerate}
    Then, it holds that $\Pb( \alpha_j^\top z_t \leq \beta) \geq 1 - \delta$ for all $t \in [T]$ and $j \in [J]$.
\end{lemma}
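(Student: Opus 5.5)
We prove Lemma~\ref{lem:const_sat_first} by splitting time into the initialization window and the adaptive phase. For $t \in [\tau_1 - 1]$, Lemma~\ref{lem:exploration} gives the constraint directly. For $t \in [\tau_1, T]$, we apply Lemma~\ref{lem:const_bound} with $\omega = \delta/(6T)$. Two facts are needed there.

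\textbf{Gaussianity.} Conditioned on $\Fc_{t-\rho}$, the variable $z_t$ is Gaussian. By the delayed estimation in \eqref{eq:sys_est}, the policies $(K_s,U_s)$ for $s \in [t-\rho, t]$ are $\Fc_{t-\rho}$-measurable. Hence $z_t$ is an affine function of $x_{t-\rho}$ plus independent Gaussian terms $w_s, v_s$.

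\textbf{Condition \eqref{eq:cov_const} on $E$.} We then show that \eqref{eq:cov_const} holds whenever $E$ holds, so that its probability is at least $\Pb(E) \geq 1-\omega$.

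On $E$, Lemma~\ref{lem:main_cov} gives
$$|\alpha_j^\top m_{t|t-\rho}| \leq D(1+\tilde\kappa)\tilde\kappa\exp(-\tilde\gamma\rho/2)R_z =: b.$$
By the choice of $\rho$, we have $b \leq \beta$, so $\alpha_j^\top m_{t|t-\rho} \leq \beta$. Next, we relate $\exp(-\tilde\gamma\rho/2)$ to $(1-\tilde\gamma/2)^\rho$, which is the form that appears in $\xi$. Note that $e^{-x} \le 1 - x/2$ fails in general, so I would expect to use whichever form the $\xi$ definition was designed to match. I treat this as a minor bookkeeping issue, provided the lemma's constants are consistent; otherwise the bound $e^{-\tilde\gamma\rho/2}\le(1-\tilde\gamma/2)^{\rho}$ must be checked against how $\xi$ was set.

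It therefore suffices to show
$$\ip{\balpha_j, S_{t|t-\rho}} \leq \left(\frac{\beta - b}{\Phi^{-1}(1-\delta+\omega)}\right)^2 = \xi + D^2 C_1.$$

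Pairing Lemma~\ref{lem:main_cov} with $\balpha_j \succeq 0$, and using $\tr(\balpha_j) \leq D^2$, gives
$$\ip{\balpha_j, S_{t|t-\rho}} \leq \ip{\balpha_j, \Sigma_t} + D^2 C_1 + 2(1+\tilde\kappa)^2\tilde\kappa^2 D^2\rho\,\eta\,\ip{\bar V_{k_t-1}^{-1}, \Sigma_t}.$$

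\textbf{Pessimistic bound on $\Sigma_t$.} The remaining task is to show
$$\ip{\balpha_j, \Sigma_t} + \mu' \ip{\bar V_{k_t-1}^{-1}, \Sigma_t} \leq \xi,$$
where $\mu'$ is the coefficient appearing in the previous display.

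By Lemma~\ref{lem:slow_var}(1), $\Sigma_t \in \conv\{\bar\Sigma_0, \dots, \bar\Sigma_{k_t}\}$. The pessimistic-set map $\Sigma \mapsto \ip{\balpha_j,\Sigma} + \mu\ip{\bar V^{-1},\Sigma}$ is linear. Each $\bar\Sigma_i \in \Ec_i^p$ by line~\ref{lne:scale}; the scaling is feasible because Lemma~\ref{lem:safe_margin} places $\Sigma_i^{\safe}$ strictly inside $\Ec_i^p$, so $\phi=0$ works.

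The difficulty is that $\bar\Sigma_i$ was certified with $\bar V_{i-1}^{-1}$, whereas we need $\bar V_{k_t-1}^{-1}$. Since $\bar V_k$ is monotonically increasing in $k$, we have $\bar V_{k_t-1}^{-1} \preceq \bar V_{i-1}^{-1}$ for $i \leq k_t$. The inequality goes the right way: weighting with the later, larger Gram matrix only shrinks the penalty.

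For $i = 0$, the term $\bar\Sigma_0$ is an estimated zero-policy covariance and needs separate handling. By $G_3$ and Lemma~\ref{lem:safe_margin}-type margin arguments, it satisfies the pessimistic inequality with slack $\epsilon_1$.

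\textbf{Main obstacle: the factor $\rho$.} The coefficient from Lemma~\ref{lem:main_cov} carries a factor $\rho$, while the stated $\mu = 2(1+\tilde\kappa)^2\tilde\kappa^2 D^2\eta$ has none. I expect this discrepancy to be the hardest step. I would first check whether the $\rho$ can be absorbed, either into $C_1$ or by exploiting the slack $\epsilon_1$. Using that slack, $\ip{\bar V^{-1}, \Sigma} \leq \nu/\lambda$ is small: $\rho\,\eta\,\nu/\lambda$ is $\Octil(1/\sqrt T)$. So the excess term can be paid from the strict margin $\epsilon_1$ of the safe component, or from the $\lambda^{-1}\nu\mu$ term already subtracted in $\epsilon_1$.

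\textbf{Conclusion.} Combining these bounds, \eqref{eq:cov_const} holds on $E$. Lemma~\ref{lem:const_bound} then yields $\Pb(\alpha_j^\top z_t \leq \beta) \geq 1-\delta$.
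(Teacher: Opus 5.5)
\textbf{Gap: the extra factor $\rho$.} Your skeleton is the paper's:
\begin{itemize}
\item Lemma~\ref{lem:exploration} on the initialization window;
\item Lemma~\ref{lem:const_bound} (the paper's Lemma~\ref{lem:cov_const}), with conditional Gaussianity from the delayed estimator;
\item $\Sigma_t\in\Ec^p_{k_t}$ from the convex-hull property and $\bar V_{k_t-1}^{-1}\preceq\bar V_{i-1}^{-1}$ (the paper's Lemma~\ref{lem:bar_to_t});
\item Lemma~\ref{lem:main_cov} to pass from $\Sigma_t$ to $S_{t|t-\rho}$.
\end{itemize}
The gap is exactly the factor $\rho$ you flagged, and your repair does not work. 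You need, at every single $t$, the inequality $\ip{\balpha_j,\Sigma_t}+\rho\mu\ip{\bar V_{k_t-1}^{-1},\Sigma_t}\le\xi$. The algorithm only certifies this with $\mu$ in place of $\rho\mu$, and there is no slack to pay the excess $(\rho-1)\mu\ip{\bar V_{k_t-1}^{-1},\Sigma_t}$:
\begin{itemize}
\item Whenever $\phi_k<1$, maximality of $\phi_k$ makes some constraint of $\Ec_k^p$ active at $\bar\Sigma_k$. During the phase, $\Sigma_t$ gets within $O(1/T)$ of $\bar\Sigma_k$ (Lemma~\ref{lem:phase_min}).
\item The margin $\epsilon_1$, including its $\lambda^{-1}\nu\mu$ part, is a margin of $\Sigma_k^{\safe}$ only. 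It is precisely what gets used up when $\phi_k$ is pushed to the boundary.
\item $D^2C_1$ is already spent on the $C_1$ term.
\end{itemize}
Your size estimate is also off. $\eta$ and $\lambda$ are both of order $\sqrt T$ up to logarithms: indeed $\eta\ge rSR_z\sqrt T$, while $\lambda\ge 8\eta\nu/\sigma$ is imposed. So $\rho\eta\nu/\lambda$ is $T$-independent up to logarithmic factors, not $\Octil(1/\sqrt T)$.

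\textbf{What the paper does and the fix.} The paper's proof (Lemma~\ref{lem:const_cond}) does not supply the missing idea either. It writes the consequence of Lemma~\ref{lem:main_cov} with coefficient $2(1+\tilde\kappa)^2\tilde\kappa^2D^2\eta$, silently dropping $\rho$, and then cites the choice of $\mu$. The fix is to strengthen hypothesis~2 to $\mu=2\rho(1+\tilde\kappa)^2\tilde\kappa^2D^2\eta$ (or with $\rho+1$, the number of summands in the proof of Lemma~\ref{lem:main_cov}). Then your chain closes verbatim, and since $\rho$ is logarithmic in $T$ this costs only log factors in $\lambda$ and in the regret. Shrinking $\xi$ instead would not do: the needed shrinkage $(\rho-1)\mu\nu/\lambda$ does not vanish with $T$, while Term~III needs $\xi$ within $O(1/\sqrt T)$ of $\beta^2/\Phi^{-1}(1-\delta)^2$.

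\textbf{Two smaller points.}
\begin{itemize}
\item Take the mean bound from Lemma~\ref{lem:mean}, which directly gives $(1+\tilde\kappa)\tilde\kappa(1-\tilde\gamma/2)^{\rho}R_z$, the form appearing in $\xi$. The $\exp$ form in Lemma~\ref{lem:main_cov} is a weakening (since $1-y\le e^{-y}$) and cannot be converted back.
\item Hypothesis~3 as written only yields $b\le\bigl((1+\tilde\kappa)\tilde\kappa R_zD\bigr)^2/\beta$, not $b\le\beta$. You need $b\le\beta$ both for $\alpha_j^\top m_{t|t-\rho}\le\beta$ and to square the inequality. The logarithm's argument should be inverted. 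It is the extra factor $T$ inside the logarithm in Theorem~\ref{thm:main_apx}'s choice of $\rho$ that actually delivers $b\le\beta$ once $T$ is large.
\end{itemize}
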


\subsection{Completing the Proof}
\label{sec:comp_proof}

Finally, combining the preceding results, we  give the proof of Theorem \ref{thm:main_apx}.

\begin{proof}[Proof of Theorem \ref{thm:main_apx}]
    To simplify arguments, we use $x \lesssim y$ and $x \asymp y$ to denote $x \leq C y$ and $x = C y$ respectively, where $C$ is polynomial in the problem parameters and $\log(T)$.
    We show the claims by applying the bounds on each of the regret terms, and showing the constraint satisfaction guarantees.

    \emph{Properties of Parameter Choices:}
    First, we discuss some properties of the choice of algorithm parameters in Theorem \ref{thm:main_apx}.
    The key property that we show is that the chosen $\lambda$ and $\eta$ satisfy the following conditions,
    \begin{equation}
        \label{eq:lam_thet}
        \begin{split}
        \lambda & \geq \max \left( \frac{8 \eta \nu}{\sigma}, \eta \right),\\
        \lambda & \geq 8 \Phi^{-1} (1 - \delta) \nu (\mu + \kappa^2 \gamma^{-1} \eta D^2),\\
        \eta & \geq r (1 + S\sqrt{\lambda + T R_z^2})        
    \end{split}
    \end{equation}
    First, it is immediate that the choice of $\lambda$ in Theorem \ref{thm:main_apx} satisfies,
    \begin{align*}
        & \lambda \geq 4 C_4^2 r^2 S^2\\ & \ \implies \ C_4 r S \sqrt{\lambda} \leq \lambda /2 \\ & \ \implies \  C_4 r (1 + S \sqrt{\lambda} + R_z \sqrt{T}) \leq \lambda/2 + C_4 r (1 + S R_z \sqrt{T})
    \end{align*}
    Since the choice of $\lambda$ also satisfies $\lambda \geq 2 C_4 r (1 + S R_z \sqrt{T})$, it follows that,
    \begin{align*}
        \lambda & = \lambda/2 + \lambda/2\\
        & \geq \lambda/2 + C_4 r (1 + S R_z \sqrt{T})\\
        & \geq C_4 r (1 + S \sqrt{\lambda} + R_z \sqrt{T})\\
        & \geq C_4 r (1 + S \sqrt{\lambda + T R_z^2})\\
        & \geq \max \left( \frac{8 \nu}{\sigma}, 1, 8 \Phi^{-1} (1 - \delta) \nu (2(1 + \tilde{\kappa})^2 \tilde{\kappa}^2 D^2 + \kappa^2 \gamma^{-1} D^2) \right) r (1 + S\sqrt{\lambda + T R_z^2}),
    \end{align*}
    which satisfies \eqref{eq:lam_thet}. The last line uses the definition of $C_4$.

    \emph{Bounds on Named Terms:} Next, we show bounds on the key terms $C_1, C_2, C_3, \epsilon_1$.
    First, it holds that,
    \begin{equation}
    \label{eq:c1_bound}
    \begin{split}
        C_1 & = (1 + \tilde{\kappa})^2 \tilde{\kappa}^2  D^2 \Big(  \nu \exp( - \tilde{\gamma} \rho) +  2 \nu \zeta \rho +  2 \rho \nu T^{-1} h^{-1} (1 + S^2 + 2 \eta (n + m) \lambda^{-1})\\
        & \qquad + 2 \nu \zeta \eta (n + m) \lambda^{-1} \rho(\rho + 1) \Big) \\
        & \lesssim \frac{1}{\sqrt{T}},
    \end{split}
    \end{equation}
    where we use that $\tilde{\kappa} \lesssim 1$, $\nu \lesssim 1$, $\exp( - \tilde{\gamma} \rho) \lesssim 1/T$, $\zeta \lesssim 1/\sqrt{T}$, $\rho \lesssim 1$, $h \asymp 1$, $\eta \lambda^{-1} \lesssim 1$.
    Next, it holds that,
    \begin{equation}
    \label{eq:c2_bound}
    \begin{split}
        C_2 & = 48 \max\left( \frac{\nu}{\sigma}, 1 \right) d \log(1 + \lambda^{-1} T) + 12 \max\left( \frac{\nu}{\sigma}, 1 \right) \lambda^{-1} R_z \rho \log(2 T / \omega )\\
        & \lesssim 1,
    \end{split}
    \end{equation}
    where we additionally use that $R_z \lesssim 1$.
    Next, it holds that,
    \begin{equation}
    \label{eq:c3_bound}
    \begin{split}
        C_3 & = \frac{\left( 2 \beta + \epsilon \right) D \left( (1 + \tilde{\kappa}) \tilde{\kappa} \left(1 - \tilde{\gamma}/2\right)^{\rho} R_z + D (1 + \kappa) \kappa \left(1 - \gamma\right)^{T} R_1 \right)}{ \Phi^{-1}(1 - \delta)^2}\\
        & \qquad + \frac{2 \Phi^{-1}(1-\delta/2) 4 \beta^2 \omega}{\phi(\Phi^{-1}(1-\delta/2)) \Phi^{-1}(1-\delta)^4} + D^2 (1 + \kappa)^2 \kappa^2 \gamma^{-1} (1 - \gamma)^{2 T}\\
        & \lesssim \frac{1}{T},
    \end{split}
    \end{equation}
    where we use that $(1 - \tilde{\gamma}/2)^\rho \leq \exp(-\tilde{\gamma} \rho /2) \lesssim 1/T$, $\omega \lesssim 1/T$, and $(1 - \gamma)^T \leq \exp(-\gamma T) \leq 1/T$ given that $T \geq T_{\min}$ and we can take $T_{\min} \geq \log(T)$.
    Lastly, since $T \geq T_{\min}$, it holds for a valid choice of $T_{\min}$,
    \begin{equation}
    \label{eq:e1_bound}
    \begin{split}
         \epsilon_1 & = \frac{\epsilon^2}{4 \Phi^{-1}(1 - \delta)^2}  - D^2 (1 + \kappa)^2 \kappa^2 \gamma^{-1} (1 - \gamma)^{2 T} - \frac{2 \Phi^{-1}(1-\delta/2)  \beta^2 \omega}{\phi(\Phi^{-1}(1-\delta/2)) \Phi^{-1}(1-\delta)^4} - C_1\\
        & \quad - \lambda^{-1} \nu (\mu + \kappa^2 \gamma^{-1} \eta D^2)\\
        & > 0,
    \end{split}
    \end{equation}
    where we use that \eqref{eq:lam_thet} implies that $\lambda^{-1} \nu (\mu + \kappa^2 \gamma^{-1} \eta D^2) \leq \frac{\epsilon^2}{8 \Phi^{-1}(1 - \delta)^2}$, and furthermore that $\omega \lesssim 1/T$, $C_1 \lesssim 1/\sqrt{T}$.

    \emph{High-probability Events:} Next, we bound the probability of the high probability events. The high probability events that need to hold for the analysis are $E$, $G_1$, $G_2$ and $G_3$ as defined in \eqref{eq:event_E}, \eqref{eq:g1}, \eqref{eq:g2} and \eqref{eq:g3} respectively.
    First, we know that $\Pb(G_3) \geq 1 - \omega/3$ from Lemma \ref{lem:exploration}, since the required conditions on $\tau_0$ and $c$ are satisfied by the specified choice in Theorem \ref{thm:main_apx}.
    Next, we show that $\Pb(E) \geq 1 - \omega$ via Lemma \ref{lem:bounded_state}.
    Lemma \ref{lem:bounded_state} requires the conditions listed in the lemma as well as the conditions of Lemma \ref{lem:safe_margin}.
    We note that the conditions on $\rho$, $\omega$, $\nu$, $\lambda$ and $\eta$ are satisfied by the chosen values in Theorem \ref{thm:main_apx}.
    The condition that $\Pb ( \| \hat{\Theta}_0 - \Theta_\star \|_F \leq \lambda^{-1/2} ) \geq 1 - \omega/3$ is satisfied given that $\Pb(G_3) \geq 1 - \omega/3$ (as we have already shown) and the choice of $\tau_0$.
    The requirement that $\epsilon_1 > 0$ is shown in \eqref{eq:e1_bound}.
    Thus, we have shown that $\Pb(E) \geq 1 - \omega$.
    Next, we show that $\Pb(G_1) \geq 1 - 2 \omega$ via Lemma \ref{lem:ell_pot}.
    Lemma \ref{lem:ell_pot} requires the conditions of Lemma \ref{lem:main_cov}, which are satisfied by the algorithm parameter choices in Theorem \ref{thm:main_apx}.
    Thus, applying Lemma \ref{lem:ell_pot} and using that $\Pb(E) \geq 1 - \omega$ (as we have already shown) ensures that $\Pb(G_1) \geq 1 - 2 \omega$.
    Then, to bound the probability of $G_2$ we can apply Lemma \ref{lem:term1a} to get that $\Pb(G_2) \geq 1 - 2 \omega$ knowing that $\Pb(E) \geq 1 - \omega$ and the conditions of Lemma \ref{lem:main_cov} hold (as we have already shown).
    Finally, taking the union bound shows that, $$
    \Pb(E \cap G_1 \cap G_2 \cap G_3) \geq 1 - 6 \omega = 1 - \delta/T,
    $$
    where we use the choice of $\omega$ in Theorem \ref{thm:main_apx}.
    The following regret analysis will work under the event $E \cap G_1 \cap G_2 \cap G_3$.

    \emph{Term I:}
    Next, we bound Term I via Lemma \ref{lem:tone_first}.
    Lemma \ref{lem:tone_first} requires the conditions of Lemma \ref{lem:main_cov}, which have already shown to hold.
    Furthermore, we have already bounded the probability of the intersection of $E$, $G_1$ and $G_2$.
    Thus, it holds that,
    \begin{align*}
        \mathrm{Term\ I} & \leq 2 R_Q R_z^2 \sqrt{\rho 2  T \log(\rho/\omega)} + R_Q (1 + \tilde{\kappa})^2 \tilde{\kappa}^2 R_z^2 \exp(-\tilde{\gamma} \rho/2) T + R_Q T C_1\\
        & \quad + 2 (1 + \tilde{\kappa})^2 \tilde{\kappa}^2 R_Q \eta \rho C_2\\
        & \lesssim \sqrt{T},
    \end{align*}
    where we use that $C_1 \lesssim 1/\sqrt{T}$ due to \eqref{eq:c1_bound}, and $\eta \lesssim \sqrt{T}$, with the other terms being $\lesssim 1$.

    \emph{Term II:} Next, we bound Term II via Lemma \ref{lem:ttwo_first}.
    This lemma requires the conditions of Lemma \ref{lem:safe_margin}, which we have already shown to be satisfied.
    Thus, it holds that,
    $$
        \mathrm{Term\ II} \leq \zeta^{-1} R_Q 2 \nu N + 2 \nu R_Q \mu C_2 \epsilon_1^{-1} + 4 (n + m) N \nu^2 R_Q \mu \epsilon_1^{-1} \lambda^{-1} \zeta^{-1} \lesssim \sqrt{T},
    $$
    where we use that $\zeta^{-1} \lesssim \sqrt{T}$, $N \lesssim 1$ (Lemma \ref{lem:phase_upd}), $\mu \lesssim \sqrt{T}$, $C_2 \lesssim 1$ due to \eqref{eq:c2_bound}, and $\mu \lambda^{-1} \lesssim 1$.

    \emph{Term III:} Next, we bound Term III via Lemma \ref{lem:opt_approx_first}.
    This lemma requires that $\ip{\balpha_j, \Sigma_\star^{\safe}} \leq \xi - \epsilon_1$, which holds given Lemma \ref{lem:safe_margin} (which we have already shown that the conditions of are satisfied) and event $E$.
    Thus, it holds that,
    $$
    \tthree \leq R_Q \frac{2 \nu}{\epsilon_1} (D^2 C_1 + C_3) T + R_Q (1 + \kappa)^2 \kappa^2 \gamma^{-2}\bar{w} \lesssim \sqrt{T},
    $$
    where $C_1 \lesssim 1/\sqrt{T}$ due to \eqref{eq:c1_bound} and $C_3 \lesssim 1/T$ due to \eqref{eq:c3_bound}.

    \emph{Term IV:} Finally, we look at Term IV.
    Under event $E$, it holds that,
    $$
        \tfour = \sum_{t=1}^{\tau_1-1}(\ell(x_t, u_t) - \ip{\mathbf{Q}, \tilde{\Sigma}} ) = \sum_{t=1}^{\tau_1-1}\ip{\mathbf{Q}, z_t z_t^\top - \tilde{\Sigma}} \leq (R_z^2 + \nu) R_Q (\tau_0 + \rho),
    $$
    where we use that $\| z_t \| \leq R_z$ under $E$, $\| \tilde{\Sigma} \| \leq \nu$ by definition, $\tr(\mathbf{Q}) \leq R_Q$ by assumption, and $\tau_1 = \tau_0 + \rho$ by definition.
    Since $\tau_0 \lesssim \lambda \lesssim \sqrt{T}$ and $\rho \lesssim 1$, it holds that $\tfour \lesssim \sqrt{T}$.

    \emph{Constraint Satisfaction:}
    Next, we show the constraint satisfaction guarantees.
    We have already shown that the conditions of Lemma \ref{lem:main_cov}, Lemma \ref{lem:safe_margin} and that  $\Pb(E) \geq 1 - \omega$.
    Also, the conditions of Lemma \ref{lem:exploration} are satisfied directly from the choice of algorithm parameters.
    Thus, we can apply Lemma \ref{lem:const_sat_first} to get the constraint satisfaction guarantees.
\end{proof}

\section{Missing Proofs}

\subsection{Proof of Lemma \ref{lem:tone_first} (Term I)}

\label{sec:tone_proof}

In this section, we prove Lemma \ref{lem:tone_first}, which bounds Term I.
This analysis relies on a decomposition of Term I as follows:
$$
    \tone = \underbrace{\sum_{t=\tau_1}^{T} \left( \ell(x_t, u_t) - \ip{\mathbf{Q}, S_{t|t-\rho}} \right)}_{\mathrm{Term\ I.A}} + \underbrace{\sum_{t=\tau_1}^{T} \ip{\mathbf{Q}, S_{t|t-\rho} - \Sigma_t}}_{\mathrm{Term\ I.B}}
$$
In the following, we prove the lemma, where Term I.A is bounded with martingale concentration in Lemma \ref{lem:term1a} and
Term I.B is bounded directly the covariance approximation bound in Lemma \ref{lem:main_cov}.

\begin{proof}[Proof of Lemma \ref{lem:tone_first}]
    Under event $G_1$, it follows immediately from Lemma \ref{lem:main_cov} that (using that $\mathbf{Q} \succeq 0$),
    \begin{align*}
        \mathrm{Term\ I.B} & = \sum_{t=\tau_1}^{T} \ip{\mathbf{Q}, S_{t|t-\rho} - \Sigma_t}\\
        & \leq\tr(\mathbf{Q}) T C_1 + \tr(\mathbf{Q}) (1 + \tilde{\kappa})^2 \tilde{\kappa}^2 2 \rho \eta \sum_{t=\tau_1}^{T} \ip{\bar{V}_{k_t-1}^{-1}, \Sigma_t}\\
        & \leq R_Q T C_1 + 2 R_Q (1 + \tilde{\kappa})^2 \tilde{\kappa}^2 \rho \eta C_2
    \end{align*}
    Then, combining Term I.A (Lemma \ref{lem:term1a}) and Term I.B, the claim folllows.
\end{proof}

\subsection{Proof of Lemma \ref{lem:ttwo_first} (Term II)}

\label{sec:ttwo_proof}

In this section, we prove Lemma \ref{lem:ttwo_first}, which bounds Term II.
The key piece of this analysis is the following lemma, which bounds the safe scaling $\phi_k$ using an approach similar to the stochastic bandit work \cite{hutchinson2024directional}.

\begin{lemma}
    \label{lem:safe_scal}
    Assume that $\ip{\balpha_j, \Sigma_k^{\safe}} + \mu \ip{\bar{V}_{k-1}^{-1}, \Sigma_{k}^{\safe}} \leq \xi - \epsilon_1$.
    Then, it holds that,
    $$
        1 - \phi_k \leq \frac{\mu\ip{\bar{V}_{k-1}^{-1}, \bar{\Sigma}_k}}{\epsilon_1}
    $$
\end{lemma}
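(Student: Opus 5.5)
The plan is to exploit the fact that every constraint defining $\Ec_k^p$ is affine in $\Sigma$. Along the segment $\phi \mapsto \phi \Sigma_k^o + (1-\phi)\Sigma_k^{\safe}$ each constraint is therefore an affine function of $\phi$. The maximal feasible $\phi_k$ is then either $1$ or makes some constraint tight. In the tight case we compare the tight constraint against the slack $\epsilon_1$ available at $\Sigma_k^{\safe}$ and the feasibility of $\Sigma_k^o$ in the optimistic SDP \eqref{eq:opt_sdp}. Throughout I take $\epsilon_1>0$, as established in \eqref{eq:e1_bound}; the bound is only meaningful in that case.

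\emph{Step 1 (well-posedness).} By hypothesis, $\phi=0$ is feasible in the maximization defining $\phi_k$. Since $\epsilon_1>0$, we have $\langle \balpha_j,\Sigma_k^{\safe}\rangle+\mu\langle \bar V_{k-1}^{-1},\Sigma_k^{\safe}\rangle \le \xi$. Positive semidefiniteness along the segment is automatic, because $\Sigma_k^o \succeq 0$ (it is feasible for \eqref{eq:opt_sdp}) and $\Sigma_k^{\safe}\succeq 0$ (it is the solution of a Lyapunov-type fixed-point equation with $W\succ0$). Hence the feasible set of $\phi$ is a closed subinterval of $[0,1]$ containing $0$, and $\phi_k$ is well defined.

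\emph{Step 2 (case split).} If $\phi_k=1$, the claim is trivial, since the right-hand side is nonnegative ($\bar V_{k-1}^{-1}\succ0$ and $\bar\Sigma_k\succeq0$). Otherwise $\phi_k<1$. By maximality and continuity of the affine constraints, some $j$ is active:
\[
\langle \balpha_j,\bar\Sigma_k\rangle+\mu\langle \bar V_{k-1}^{-1},\bar\Sigma_k\rangle=\xi .
\]

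\emph{Step 3 (key inequality).} Expand only the $\balpha_j$ term by linearity, $\langle \balpha_j,\bar\Sigma_k\rangle=\phi_k\langle\balpha_j,\Sigma_k^o\rangle+(1-\phi_k)\langle\balpha_j,\Sigma_k^{\safe}\rangle$. Use $\langle\balpha_j,\Sigma_k^o\rangle\le\xi$ from \eqref{eq:opt_sdp:const}, and $\langle\balpha_j,\Sigma_k^{\safe}\rangle\le \xi-\epsilon_1-\mu\langle \bar V_{k-1}^{-1},\Sigma_k^{\safe}\rangle\le\xi-\epsilon_1$ from the hypothesis. This gives
\[
\xi \le \phi_k\xi+(1-\phi_k)(\xi-\epsilon_1)+\mu\langle \bar V_{k-1}^{-1},\bar\Sigma_k\rangle
= \xi-(1-\phi_k)\epsilon_1+\mu\langle \bar V_{k-1}^{-1},\bar\Sigma_k\rangle .
\]
Rearranging and dividing by $\epsilon_1>0$ yields the claim.

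The only delicate point is Step 2: making sure that $\phi_k<1$ really forces an active constraint at $\bar\Sigma_k$ itself. This holds because the feasible set of $\phi$ is an intersection of finitely many closed half-lines (one per $j$) with $[0,1]$, and the psd requirement imposes no additional restriction. The remaining steps are one-line linearity manipulations.
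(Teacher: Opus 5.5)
Your proof is correct, but it takes a different route from the paper. The paper never argues that a constraint is active at $\phi_k$. Instead it exhibits an explicit feasible witness $\phi^\circ = \epsilon_1/(\mu\ip{\bar{V}_{k-1}^{-1},\Sigma_k^o}+\epsilon_1)$. Using the full hypothesis (including the $\mu$-term on $\Sigma_k^{\safe}$) together with $\ip{\balpha_j,\Sigma_k^o}\le\xi$, it checks that $\phi^\circ\Sigma_k^o+(1-\phi^\circ)\Sigma_k^{\safe}\in\Ec_k^p$. This gives $\phi_k\ge\phi^\circ$, i.e.\ $\epsilon_1\le\phi_k(\mu\ip{\bar{V}_{k-1}^{-1},\Sigma_k^o}+\epsilon_1)$. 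The paper then converts the $\Sigma_k^o$-weighted term into one in $\bar{\Sigma}_k$, writing $\phi_k\ip{\bar{V}_{k-1}^{-1},\Sigma_k^o}=\ip{\bar{V}_{k-1}^{-1},\bar{\Sigma}_k}-(1-\phi_k)\ip{\bar{V}_{k-1}^{-1},\Sigma_k^{\safe}}$ and discarding the last, nonnegative term.

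Your tightness argument skips that conversion step, because you never expand the $\mu$-term of $\bar{\Sigma}_k$. It also needs the $\epsilon_1$-margin only on $\ip{\balpha_j,\Sigma_k^{\safe}}$; the $\mu$-part of the hypothesis serves only to make $\phi=0$ feasible. The price is the case split and the small argument that, when $\phi_k<1$, finitely many affine constraints force an active one, which you handle correctly. The paper's route, in exchange, needs no case analysis and no reasoning about the maximizer. As a by-product it yields the explicit lower bound $\phi_k\ge\epsilon_1/(\epsilon_1+\mu\ip{\bar{V}_{k-1}^{-1},\Sigma_k^o})$.

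One small remark: $\Sigma_k^{\safe}\succeq 0$ does not follow from $W\succ 0$ alone. It needs $\hat{A}_k$ to be stable, which holds under $F_k$ by Lemma~\ref{lem:approx_stable}. The paper's proof relies on the same fact implicitly when it drops $(1-\phi_k)\mu\ip{\bar{V}_{k-1}^{-1},\Sigma_k^{\safe}}$.
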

\begin{proof}
    Let $\Sigma = \phi \Sigma_k^o + (1 - \phi) \Sigma_k^\safe$ for $\phi = \frac{\epsilon_1}{\mu\ip{\bar{V}_{k-1}^{-1}, \Sigma_k^o} + \epsilon_1}$.
    Then, it holds that,
    \begin{align*}
        \ip{\balpha_j, \Sigma} + \mu \ip{\bar{V}_{k-1}^{-1}, \Sigma} & = \phi(\ip{\balpha_j, \Sigma_k^o} + \mu \ip{\bar{V}_{k-1}^{-1}, \Sigma_k^o}) + (1 - \phi) (\ip{\balpha_j, \Sigma_k^\safe} + \mu \ip{\bar{V}_{k-1}^{-1}, \Sigma_k^\safe})\\
        & \leq \phi(\xi + \mu \ip{\bar{V}_{k-1}^{-1}, \Sigma_k^o}) + (1 - \phi) (\xi - \epsilon_1)\\
        & \leq \xi + \phi \mu \ip{\bar{V}_{k-1}^{-1}, \Sigma_k^o} - (1 - \phi) \epsilon_1\\
        & = \xi.
    \end{align*}
    Therefore, $\phi_k \geq \phi$. Also, since $\bar{\Sigma}_k = \phi_k \Sigma_k^o + (1 - \phi_k) \Sigma_k^\safe$,
    \begin{align*}
        \epsilon_1 & \leq \phi_k \mu\ip{\bar{V}_{k-1}^{-1}, \Sigma_k^o} + \phi_k \epsilon_1 \\
        & = \mu\ip{\bar{V}_{k-1}^{-1}, \bar{\Sigma}_k} - (1 - \phi_k) \mu\ip{\bar{V}_{k-1}^{-1}, \Sigma_k^{\safe}} + \phi_k \epsilon_1\\
        & \leq \mu\ip{\bar{V}_{k-1}^{-1}, \bar{\Sigma}_k} + \phi_k \epsilon_1,
    \end{align*}
    where we use the fact that $\ip{\bar{V}_{k-1}^{-1}, \Sigma_k^{\safe}} \geq 0$ since  $\bar{V}_{k-1}^{-1} \succeq 0$ and $\Sigma_k^o \succeq 0$.
    The lemma follows from rearranging.
\end{proof}


With this, we then give the proof of Lemma \ref{lem:ttwo_first}.

\begin{proof}[Proof of Lemma \ref{lem:ttwo_first}]
We use the following decomposition:
$$
    \sum_{t=\tau_1}^{T} \ip{\mathbf{Q}, \Sigma_t - \tilde{\Sigma}} = \underbrace{\sum_{t=\tau_1}^{T} \ip{\mathbf{Q}, \Sigma_t - \bar{\Sigma}_{k_t}}}_{\mathrm{Term\ II.A}} + \underbrace{\sum_{t=\tau_1}^{T} \ip{\mathbf{Q}, \bar{\Sigma}_{k_t} - \Sigma_k^o}}_{\mathrm{Term\ II.B}} + \underbrace{\sum_{t=\tau_1}^{T} \ip{\mathbf{Q}, \Sigma_k^o - \tilde{\Sigma}}}_{\mathrm{Term\ II.C}}
$$
First, it holds that,
\begin{align*}
    \mathrm{Term\ II.A} & \leq \sum_{t=\tau_1}^{T} \ip{\mathbf{Q}, \Sigma_t - \bar{\Sigma}_{k_t}}\\
    & \leq R_Q \sum_{t=\tau_1}^{T} \| \Sigma_t - \bar{\Sigma}_{k_t} \| \\
    & \leq \zeta^{-1} R_Q 2 \nu N,
\end{align*}
where the last line uses Lemma \ref{lem:slow_var}.
Then, we bound Term II.B using Lemma \ref{lem:safe_scal},
\begin{align*}
    \mathrm{Term\ II.B} & = \sum_{t=\tau_1}^{T} \ip{\mathbf{Q}, \bar{\Sigma}_{k_t} - \Sigma_k^o}\\
    & = \sum_{t=\tau_1}^{T} (1 - \phi_{k_t}) \ip{\mathbf{Q},\Sigma_{k_t}^{\safe} -  \Sigma_k^o}\\
    & \leq 2 \nu R_Q \sum_{t=\tau_1}^{T} (1 - \phi_{k_t})\\
    & \leq \frac{2 \nu R_Q \mu}{\epsilon_1} \sum_{t=\tau_1}^{T} \ip{\bar{V}_{k_t-1}^{-1}, \bar{\Sigma}_{k_t}}\\
    & = \frac{2 \nu R_Q \mu}{\epsilon_1} \sum_{t=\tau_1}^{T} \ip{\bar{V}_{k_t-1}^{-1}, \Sigma_t} + \frac{2 \nu R_Q \mu}{\epsilon_1} \sum_{t=\tau_1}^{T} \ip{\bar{V}_{k_t-1}^{-1}, \bar{\Sigma}_{k_t} - \Sigma_t}\\
    & \leq \frac{2 \nu R_Q \mu}{\epsilon_1} \sum_{t=\tau_1}^{T} \ip{\bar{V}_{k_t-1}^{-1}, \Sigma_t} + \frac{2 \nu R_Q \mu}{\epsilon_1} \sum_{t=\tau_1}^{T} \tr (\bar{V}_{k_t-1}^{-1} ) \| \bar{\Sigma}_{k_t} - \Sigma_t \| \\
    & \leq \frac{2 \nu R_Q \mu}{\epsilon_1} \sum_{t=\tau_1}^{T} \ip{\bar{V}_{k_t-1}^{-1}, \Sigma_t} + \frac{4 (n + m) \nu^2 R_Q \mu N}{\epsilon_1 \lambda \zeta}\\
    & \leq \frac{2 \nu R_Q \mu C_2}{\epsilon_1} + \frac{4 (n + m) \nu^2 R_Q \mu N}{\epsilon_1 \lambda \zeta}
\end{align*}
where the last line uses Lemma \ref{lem:slow_var} and the fact that $\tr (\bar{V}_{k-1}^{-1} ) \leq (n + m) \| \bar{V}_{k-1}^{-1} \| \leq (n + m) \lambda^{-1}$

Lastly, we show that Term II.C is non-positive by showing that, under event $E$, it holds that $\tilde{\Ec}$ is a subset of the feasible set in the optimistic SDP in the algorithm \eqref{eq:opt_sdp}.
Indeed, it holds under $E$, for all $\Sigma \in \tilde{\Ec}$ that $\tr(\Sigma) \leq \nu$, $\ip{\balpha_j, \Sigma} \leq \xi$ and,
\begin{align*}
    \Sigma\xx & = \Theta_\star \Sigma \Theta_\star^\top + W \\
    & \succeq \hat{\Theta}_k \Sigma \hat{\Theta}_k^\top + W - \| \Theta_\star \Sigma \Theta_\star^\top - \hat{\Theta}_k \Sigma \hat{\Theta}_k^\top\|I\\
    & \succeq \hat{\Theta}_k \Sigma \hat{\Theta}_k^\top + W - \eta \ip{\bar{V}_k^{-1}, \Sigma} I,
\end{align*}
where we use that $E$ holds.
Therefore, under event $E$, it holds that $\tilde{\Ec}$ is a subset of the feasible set of \eqref{eq:opt_sdp}.
It follows that $\mathrm{Term\ II.C} \leq 0$.
\end{proof}

\subsection{Proof of Lemma \ref{lem:opt_approx_first} (Term III)}
\label{sec:tthree_proof}

In this section, we give the proof of Lemma \ref{lem:opt_approx_first}, which bounds Term III.
First, we introduce the following notation to refer to the set of steady-state covariance matrices that result from the optimal policy set $\Pi_\star$ (defined in \eqref{eq:pistar}):
$$
    \Ec_\star := \left\{ \Sigma \in \bar{\Ec}_{\infty} : \Sigma = \begin{bmatrix} I \\ K \end{bmatrix} \Sigma\xx \begin{bmatrix} I \\ K \end{bmatrix}^\top, K \in \Pi_\star \right\} 
$$
Also, we define the ``intermediate'' set $\Ec_a$ as follows:
$$
\Ec_a := \left\{ \Sigma \in \bar{\Ec}_\nu : \ip{\balpha_j, \Sigma} \leq \left( \frac{\beta + D (1 + \kappa) \kappa \left(1 - \gamma\right)^{T} R_1}{ \Phi^{-1}(1 - \delta)} \right)^2 + D^2 (1 + \kappa)^2 \kappa^2 \bar{w} \gamma^{-1} (1 - \gamma)^{2 T} \right\}
$$
The key idea behind the analysis in this section, is that we will show that $\Ec_\star \subseteq \Ec_a$ (Lemma \ref{lem:E_a}) and then bound the distance between $\Ec_a$ and $\tilde{\Ec}$ (Lemma \ref{lem:maxdist}) via a local Lipschitz bound on the reciprocal of the Gaussian quantile function (Lemma \ref{lem:quant_sens}).
Thus, the minimum over $\Ec_a$ is less than the minimum over $\Ec_\star$, and therefore we can relate the cost of the best policy in $\Ec_\star$ to a policy in $\tilde{\Ec}$, establishing the bound.

\begin{lemma}
    \label{lem:E_a}
    Suppose that $\nu \geq (1 + \kappa)^2 \gamma^{-1} \kappa^2$.
    Then it holds that $\Ec_\star \subseteq \Ec_a$.
\end{lemma}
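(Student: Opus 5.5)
The plan is to take an arbitrary $\Sigma \in \Ec_\star$, generated by some $K \in \Pi_\star$, so that $\Sigma = \left[\begin{smallmatrix} I \\ K\end{smallmatrix}\right]\Sigma\xx\left[\begin{smallmatrix} I \\ K\end{smallmatrix}\right]^\top$ with $\Sigma\xx = L\Sigma\xx L^\top + W$, where $L := A_\star + B_\star K$. I will verify the three defining properties of $\Ec_a$ in turn: stationarity, the trace bound $\tr(\Sigma)\le\nu$, and the relaxed linear constraint. The idea is to compare the steady-state covariance with the \emph{transient} covariance at the final time $T$. At time $T$ the chance constraint is known to hold because $K\in\Pi_\star$, and the gap between the two covariances decays like $(1-\gamma)^{2T}$.

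\emph{Stationarity and trace.} Stationarity, $\Sigma\xx = \Theta_\star\Sigma\Theta_\star^\top + W$, is immediate from the definition of $\Ec_\star$, since $\Theta_\star \Sigma \Theta_\star^\top = L\Sigma\xx L^\top$. For the trace, I will unroll the Lyapunov equation as $\Sigma\xx = \sum_{s\ge 0} L^s W (L^s)^\top$. Using the strong-stability decomposition $L = \bar Q \bar L \bar Q^{-1}$ with $\|\bar L\|\le 1-\gamma$ and $\|\bar Q\|,\|\bar Q^{-1}\| \le \kappa$, this gives
\[
\tr(\Sigma\xx)\le \kappa^2\tr(W)\sum_{s \ge 0}(1-\gamma)^{2s}\le \kappa^2\bar w\gamma^{-1}.
\]
Then $\tr(\Sigma) = \tr(\Sigma\xx) + \tr(K\Sigma\xx K^\top) \le (1+\kappa^2)\kappa^2\bar w\gamma^{-1} \le (1+\kappa)^2\kappa^2\gamma^{-1}\bar w$. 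This is exactly the choice of $\nu$ in Theorem \ref{thm:main_apx}. The lemma's hypothesis omits the factor $\bar w$, so I would either read the hypothesis as including it or use $\bar w\ge 1$ together with the stated choice of $\nu$.

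\emph{Linear constraint.} Under $u_t = Kx_t$ with deterministic $x_1$, the vector $z_T$ is Gaussian with mean $m_T = \left[\begin{smallmatrix} I \\ K\end{smallmatrix}\right]L^{T-1}x_1$. Its covariance is $S_T = \left[\begin{smallmatrix} I \\ K\end{smallmatrix}\right]\big(\Sigma\xx - L^{T-1}\Sigma\xx (L^{T-1})^\top\big)\left[\begin{smallmatrix} I \\ K\end{smallmatrix}\right]^\top$, which follows by comparing the finite and infinite sums of $L^sW(L^s)^\top$.

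Since $K\in\Pi_\star$, the constraint $\Pb(\alpha_j^\top z_T\le\beta)\ge 1-\delta$ holds. Lemma \ref{lem:quant_form} (with trivial conditioning) then gives $\sqrt{\ip{\balpha_j,S_T}}\,\Phi^{-1}(1-\delta)\le \beta - \alpha_j^\top m_T$, and the right-hand side is at most $\beta + D(1+\kappa)\kappa(1-\gamma)^{T}R_1$ after bounding $\|L^{T-1}\|\le\kappa^2(1-\gamma)^{T-1}$. Because $\delta<1/2$, we have $\Phi^{-1}(1-\delta)>0$, so squaring is legitimate; the right-hand side is nonnegative, since it is at least the nonnegative left-hand side. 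Finally,
\[
\ip{\balpha_j,\Sigma} = \ip{\balpha_j, S_T} + \ip{\balpha_j, \left[\begin{smallmatrix} I \\ K\end{smallmatrix}\right]L^{T-1}\Sigma\xx(L^{T-1})^\top\left[\begin{smallmatrix} I \\ K\end{smallmatrix}\right]^\top},
\]
and the second term is at most $D^2(1+\kappa)^2\kappa^2\bar w\gamma^{-1}(1-\gamma)^{2T}$ up to the same decay bookkeeping. This yields exactly the defining inequality of $\Ec_a$.

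The main obstacle is purely bookkeeping: making the powers of $\kappa$ and the exponents of $(1-\gamma)$ (namely $T$ versus $T-1$, and whether the time index starts at $1$) match the constants in the definition of $\Ec_a$. The conceptual steps are standard. Where a power is off by one, I would absorb the extra factor using $\kappa\ge1$ and $(1-\gamma)^{-1}\le$ a constant, or evaluate the constraint at time $T+1$ if the horizon convention permits.
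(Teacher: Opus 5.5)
Your strategy is the paper's. You apply Lemma~\ref{lem:quant_form} to the time-$T$ chance constraint of $K\in\Pi_\star$, compare the transient $S_T$ with the stationary $\Sigma$ through the tail $\sum_{s\ge T-1}L^sW(L^s)^\top$, and bound the trace by the Lyapunov series, where the paper cites Lemma~3.3 of Cohen et al. You are also right that the hypothesis on $\nu$ is missing $\bar w$, and you index $S_T$ correctly; the paper's $\sum_{s=0}^{t-1}$ is the covariance of $x_{t+1}$, not $x_t$.

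The gap is the constant matching you defer, and your remedy runs the wrong way. Membership in $\Ec_a$ needs an upper bound by its \emph{fixed} right-hand side. An extra factor $\kappa\ge 1$ or $(1-\gamma)^{-1}\ge 1$ makes your bound larger, so it cannot be absorbed. Evaluating at $t=T+1$ is also unavailable, because the constraints only hold for $t\in[T]$. Concretely, $\|L^{T-1}\|\le\kappa^2(1-\gamma)^{T-1}$ gives $|\alpha_j^\top m_T|\le D(1+\kappa)\kappa^2(1-\gamma)^{T-1}R_1$. That does not imply the $D(1+\kappa)\kappa(1-\gamma)^{T}R_1$ you claim. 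Your trace step fails in the same way under the convention you state. From $\|\bar Q\|,\|\bar Q^{-1}\|\le\kappa$ you only get $\|L^s\|^2\le\kappa^4(1-\gamma)^{2s}$, hence $\tr(\Sigma)\le(1+\kappa^2)\kappa^4\bar w/\gamma$, which $\nu$ does not cover in general. The bound you wrote needs $\|\bar Q\|\|\bar Q^{-1}\|\le\kappa$, as in Definition~\ref{def:seq_str_stab}.

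The consistent repair has two parts.
\begin{enumerate}
\item Use $\|\bar Q\|\|\bar Q^{-1}\|\le\kappa$ throughout; the paper's proof implicitly does this.
\item Bound the tail as the series $\sum_{s\ge T-1}\|L^s\|^2\|W\|$, not via $\|L^{T-1}\|^2\|\Sigma_{xx}\|$, which costs another $\kappa^2$.
\end{enumerate}
With these, the trace bound holds exactly. The mean and tail terms, however, carry $(1-\gamma)^{T-1}$ and $(1-\gamma)^{2(T-1)}/\gamma$ instead of $(1-\gamma)^{T}$ and $(1-\gamma)^{2T}/\gamma$. The paper's proof has the same off-by-one: it asserts $(1-\gamma)^t$ for $L^{t-1}x_1$.

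So what this argument really proves is inclusion in a version of $\Ec_a$ with those shifted exponents. That is all Lemma~\ref{lem:maxdist} and Lemma~\ref{lem:opt_approx_first} need. These terms only enter through $C_3$ and the lower bound on $T$, and they stay $O(1/T)$ for $T\ge T_{\min}$. State the lemma in that weaker form rather than claiming the extra factors can be absorbed.
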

\begin{proof}
    First, Lemma \ref{lem:quant_form} tells us that all $K \in \Pi_\star$ satisfy,
    \begin{equation}
        \label{eq:gauss_form}
        \alpha_j^\top m_{t}^K + \sqrt{\ip{\balpha_j, S_{t}^K}} \Phi^{-1}(1 - \delta) \leq \beta \quad \forall j \in [J], t \in [T],
    \end{equation}
    where $S_t^K = \begin{bmatrix} I \\ K \end{bmatrix} S_{t,xx}^K \begin{bmatrix} I \\ K \end{bmatrix}^\top$ with,
    $$
        S_{t,xx}^K = \sum_{s=0}^{t-1} (A_\star + B_\star K)^s W ((A_\star + B_\star K)^s)^\top,
    $$
    and $m_t^K = \begin{bmatrix} I \\ K \end{bmatrix}(A_\star+B_\star K)^{t-1}x_1.
$.
    First, since $1 - \delta \geq 0.5$ and $\balpha_j, S_t^K \succeq 0$, it must be that $\beta - \alpha_j^\top m_{t}^K \geq 0$.
    Therefore, we can equivalently write \eqref{eq:gauss_form} as,
    $$
        \ip{\balpha_j, S_{t}^K} \leq \left( \frac{\beta - \alpha_j^\top m_{t}^K}{ \Phi^{-1}(1 - \delta)} \right)^2 \quad \forall j \in [J], t \in [T],
    $$
    Since $K$ is $(\kappa,\gamma)$-strongly stable, it holds that,
    $$
        | \alpha_j^\top m_{t} | \leq D (1 + \kappa) \kappa \left(1 - \gamma\right)^{t} R_1,
    $$
    Then, we bound the distance between $S_t^K$ and $\Sigma^K$, the (unique) matrix satisfying $\Sigma\xx^K = \Theta_\star \Sigma^K \Theta_\star^\top + W$,
    \begin{align*}
        \| \Sigma^K - S_{t}^K \| & = \left\| \begin{bmatrix} I \\ K \end{bmatrix} \sum_{s=t}^{\infty} (A_\star + B_\star K)^s W ((A_\star + B_\star K)^s)^\top \begin{bmatrix} I \\ K \end{bmatrix}^\top \right\| \\
        & \leq (1 + \kappa)^2 \kappa^2 \bar{w} \gamma^{-1} (1 - \gamma)^{2 t}
    \end{align*}
    Therefore, using the case of $t = T$, it holds for all $\Sigma \in \Ec_\star$ that,
    \begin{align*}
        \ip{\balpha_j, \Sigma} & \leq \ip{\balpha_j, S_T^K} + D^2 \| \Sigma - S_T^K \| \\
        & \leq \left( \frac{\beta - \alpha_j^\top m_{T}^K}{ \Phi^{-1}(1 - \delta)} \right)^2 + D^2 (1 + \kappa)^2 \kappa^2 \bar{w} \gamma^{-1} (1 - \gamma)^{2 T} \\
        & \leq \left( \frac{\beta + D (1 + \kappa) \kappa \left(1 - \gamma\right)^{T} R_1}{ \Phi^{-1}(1 - \delta)} \right)^2 + D^2 (1 + \kappa)^2 \kappa^2 \bar{w}  \gamma^{-1} (1 - \gamma)^{2 T}.
    \end{align*}
    Also, from Lemma 3.3 in \cite{cohen2018online}, we know that $\tr(\Sigma^K\xx) \leq \gamma^{-1} \kappa^2$ and therefore,
    $$
        \tr(\Sigma) = \tr\left( \begin{bmatrix} I \\ K \end{bmatrix} \Sigma^K\xx \begin{bmatrix} I \\ K \end{bmatrix}^\top  \right) \leq \left\| \begin{bmatrix} I \\ K \end{bmatrix} \right\|^2 \tr(\Sigma\xx) \leq (1 + \kappa)^2 \gamma^{-1} \kappa^2 \leq \nu.
    $$
    Applying the previous two inequalities implies that $\Sigma \in \Ec_a$, and thus, $\Ec_\star \subseteq \Ec_a$.
\end{proof}

Next, we give a technical lemma that gives Lipschitz bounds on the reciprocal quantile squared.
This will allow us to do a sensitivity analysis on the probability level.

\begin{lemma}
    \label{lem:quant_sens}
    Consider $x,y$ such that $0.5 < x < y \leq \bar{y} < 1$. Then it holds that,
    $$
        \frac{1}{\Phi^{-1}(x)^2} - \frac{1}{\Phi^{-1}(y)^2} \leq \frac{2 \Phi^{-1}(\bar{y})}{\phi(\Phi^{-1}(\bar{y})) \Phi^{-1}(x)^4 }(y-x),
    $$
    where $\phi$ is the standard normal pdf and $\Phi$ is the standard normal cdf.
\end{lemma}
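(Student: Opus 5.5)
The plan is to write the left-hand side as $g(x)-g(y)$ with $g(p) := \Phi^{-1}(p)^{-2}$ and bound it by the mean value theorem. Since $\Phi^{-1}$ is smooth, strictly increasing, and positive on $(0.5,1)$, the function $g$ is differentiable on $[x,y]\subset(0.5,1)$. Hence there is some $c\in(x,y)$ with
\[
g(x)-g(y) = -g'(c)(y-x).
\]
So it suffices to bound $-g'(c)$ uniformly over $c\in[x,\bar y]$ by the stated constant.

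The derivative of $g$ follows from the inverse function rule $(\Phi^{-1})'(p) = 1/\phi(\Phi^{-1}(p))$. This gives
\[
-g'(c) = \frac{2}{\Phi^{-1}(c)^3\,\phi(\Phi^{-1}(c))}.
\]
We want to compare this with $\frac{2\Phi^{-1}(\bar y)}{\phi(\Phi^{-1}(\bar y))\Phi^{-1}(x)^4}$. Write $q_c := \Phi^{-1}(c)$, so that $\Phi^{-1}(x) < q_c < \Phi^{-1}(\bar y)$. Then
\[
\frac{1}{q_c^3} = \frac{q_c}{q_c^4} \le \frac{\Phi^{-1}(\bar y)}{\Phi^{-1}(x)^4},
\]
using $q_c \le \Phi^{-1}(\bar y)$ in the numerator and $q_c \ge \Phi^{-1}(x) > 0$ in the denominator. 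Multiplying a quantity by $q_c/q_c$ and then bounding numerator and denominator separately is what produces the somewhat unusual form of the stated constant.

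The remaining factor is $1/\phi(q_c)$. Because $\phi$ is decreasing on $[0,\infty)$ and $0 < q_c \le \Phi^{-1}(\bar y)$, we have $\phi(q_c) \ge \phi(\Phi^{-1}(\bar y))$, and therefore
\[
\frac{1}{\phi(q_c)} \le \frac{1}{\phi(\Phi^{-1}(\bar y))}.
\]
Multiplying the two bounds and the factor $2$ gives the claimed bound on $-g'(c)$, and the lemma follows.

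No step is a real obstacle. The only care needed is tracking signs: all quantiles are positive because the probabilities exceed $0.5$, and this positivity is what justifies dividing by $\Phi^{-1}(x)$ and using the monotonicity of $\phi$ on the positive half-line.
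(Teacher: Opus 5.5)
Your proof is correct, but it takes a somewhat different route from the paper's. The paper never differentiates the reciprocal. It works with $f(p)=\Phi^{-1}(p)^2$ and computes $f''(p)=2(1+\Phi^{-1}(p)^2)/\phi(\Phi^{-1}(p))^2>0$ to establish convexity. From convexity it gets the tangent-line bound $f(y)-f(x)\le f'(y)(y-x)\le f'(\bar y)(y-x)$, using that $f'$ is increasing. It then passes to reciprocals through the identity $\frac{1}{f(x)}-\frac{1}{f(y)}=\frac{f(y)-f(x)}{f(x)f(y)}$ and the bound $f(y)\ge f(x)$.

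In that route the shape of the constant arises naturally. The factor $f'(\bar y)=2\Phi^{-1}(\bar y)/\phi(\Phi^{-1}(\bar y))$ supplies the numerator, and $f(x)^2$ supplies $\Phi^{-1}(x)^4$.

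You instead apply the mean value theorem directly to $g=1/f$. You bound $-g'(c)=2/(\Phi^{-1}(c)^3\phi(\Phi^{-1}(c)))$ pointwise using only that $\Phi^{-1}$ is increasing and $\phi$ is decreasing on $[0,\infty)$. This avoids the second-derivative and convexity step entirely, so it is the more elementary argument. The cost, as you noticed, is that you must deliberately loosen $1/q_c^3$ to $q_c/q_c^4$ to reproduce the stated form.

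Both routes in fact prove something slightly stronger than the lemma states. Yours gives the constant $\frac{2}{\phi(\Phi^{-1}(\bar y))\Phi^{-1}(x)^3}$, which is smaller than the stated one by the factor $\Phi^{-1}(\bar y)/\Phi^{-1}(x)>1$. The paper's gives $\frac{2\Phi^{-1}(\bar y)}{\phi(\Phi^{-1}(\bar y))\Phi^{-1}(x)^2\Phi^{-1}(y)^2}$.

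Either version is more than enough for how the lemma is used later, with $x=1-\delta$, $y=1-\delta+\omega$ and $\bar y=1-\delta/2$.
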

\begin{proof}
    First, we compute the first and second derivative of $f(x) = \Phi^{-1}(x)^2$,
    $$
    f'(x) = \frac{2 \Phi^{-1}(x)}{\phi(\Phi^{-1}(x))}, \qquad f''(x) = \frac{2(1 + \Phi^{-1}(x)^2)}{\phi(\Phi^{-1}(x))^2}
    $$
    Therefore, for $x \geq 0.5$, $f$ is convex and thus,
    $$
        f(y) - f(x) \leq f'(y)(y-x) \leq f'(\bar{y})(y - x) = \frac{2 \Phi^{-1}(\bar{y})}{\phi(\Phi^{-1}(\bar{y}))} (y - x),
    $$
    where we use the fact that $f'(y)$ is increasing, and  $x < y \leq \bar{y}$.
    Then, since $x,y > 0.5$
    \begin{align*}
        \frac{1}{\Phi^{-1}(x)^2} - \frac{1}{\Phi^{-1}(y)^2} & = \frac{1}{f(x)} - \frac{1}{f(y)}\\
        & = \frac{1}{f(y)}\left( \frac{f(y)}{f(x)} - 1 \right)\\
        & \leq \frac{1}{f(y)}\left( \frac{f(x) + f'(\bar{y}) (y - x)}{f(x)} - 1 \right)\\
        & = \frac{2 \Phi^{-1}(\bar{y})}{\phi(\Phi^{-1}(\bar{y})) \Phi^{-1}(x)^2 \Phi^{-1}(y)^2 }(y-x)\\
        & \leq \frac{2 \Phi^{-1}(\bar{y})}{\phi(\Phi^{-1}(\bar{y})) \Phi^{-1}(x)^4 }(y-x),
    \end{align*}
    where the last line uses that $\Phi^{-1}(y) \geq \Phi^{-1}(x)$.
\end{proof}

Next, we establish a bound on the distance between $\Ec_a$ and $\tilde{\Ec}$.

\begin{lemma}
    \label{lem:maxdist}
    Assume the following:
    \begin{enumerate}
        \item $\rho \geq 2 \tilde{\gamma}^{-1} \log \left( \frac{\max(\beta,\epsilon/4)}{(1+\tilde{\kappa}) \tilde{\kappa} R_z D} \right)$
        \item $T \geq \gamma^{-1} \log \left( \frac{\max(\beta,\epsilon/4)}{(1+\kappa) \kappa R_1 D} \right)$
        \item $\omega \leq \delta/2$
        \item $\xi = \left( \frac{\beta - D (1 + \tilde{\kappa}) \tilde{\kappa} \left(1 - \tilde{\gamma}/2\right)^{\rho} R_z}{ \Phi^{-1}(1 - \delta + \omega)} \right)^2 - D^2 C_1$
        \item $\ip{\balpha_j, \Sigma_\star^{\safe}} \leq \xi - \epsilon_1$
    \end{enumerate}
    Then, it holds that,
    $$
        \max_{\Sigma \in \Ec_a}\dist(\Sigma, \tilde{\Ec}) \leq \frac{2 \nu}{\epsilon_1} (D^2 C_1 + C_3),
    $$
    where $C_3$ is defined in \eqref{eq:c3}.
\end{lemma}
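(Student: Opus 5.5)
The plan is to pull every $\Sigma \in \Ec_a$ toward the strictly feasible point $\Sigma_\star^{\safe}$ along a segment, stopping at a point that lies in $\tilde{\Ec}$. This mirrors the scaling argument of Lemma \ref{lem:safe_scal}. Both $\Ec_a$ and $\tilde{\Ec}$ are subsets of $\bar{\Ec}_\nu$, which is convex. The stationarity constraint is affine in $\Sigma$, and the trace and psd constraints are convex. Moreover $\Sigma_\star^{\safe} \in \bar{\Ec}_\nu$, because $\tr(\Sigma_\star^{\safe}) \le \kappa^2\gamma^{-1}\bar{w} \le \nu$ by strong stability of the zero policy (Assumption \ref{ass:null_policy}). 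Hence for any $\theta\in[0,1]$ the point $\Sigma_\theta := (1-\theta)\Sigma + \theta \Sigma_\star^{\safe}$ lies in $\bar{\Ec}_\nu$. The only constraint left to verify is $\ip{\balpha_j,\Sigma_\theta}\le \xi$, which is linear in $\theta$.

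\textbf{Step 1: the slack.} Write $b_a$ for the right-hand side in the definition of $\Ec_a$, and set $\Delta := \max(0, b_a - \xi)$. For $\Sigma\in\Ec_a$, using assumption 5 we get
\[
\ip{\balpha_j,\Sigma_\theta} \le (1-\theta)(\xi+\Delta) + \theta(\xi-\epsilon_1) = \xi + (1-\theta)\Delta - \theta\epsilon_1 .
\]
Choosing $\theta = \Delta/(\Delta+\epsilon_1) \le \Delta/\epsilon_1$ puts $\Sigma_\theta \in \tilde{\Ec}$. The distance is then bounded by
\[
\dist(\Sigma,\tilde{\Ec}) \le \theta\,\|\Sigma - \Sigma_\star^{\safe}\| \le \theta\,(\tr\Sigma + \tr\Sigma_\star^{\safe}) \le \frac{2\nu}{\epsilon_1}\Delta .
\]
It therefore suffices to show $b_a - \xi \le D^2 C_1 + C_3$.

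\textbf{Step 2: bounding $b_a - \xi$.} This is the main obstacle, since it requires comparing two squared quotients with different numerators and different quantile levels. Let $a := D(1+\tilde\kappa)\tilde\kappa(1-\tilde\gamma/2)^\rho R_z$ and $b := D(1+\kappa)\kappa(1-\gamma)^T R_1$. Then
\[
\xi + D^2 C_1 = \frac{(\beta-a)^2}{\Phi^{-1}(1-\delta+\omega)^2}, \qquad b_a = \frac{(\beta+b)^2}{\Phi^{-1}(1-\delta)^2} + D^2(1+\kappa)^2\kappa^2\bar w\gamma^{-1}(1-\gamma)^{2T}.
\]
I would split the difference as
\[
\frac{(\beta+b)^2 - (\beta-a)^2}{\Phi^{-1}(1-\delta)^2} + (\beta-a)^2\Big(\frac{1}{\Phi^{-1}(1-\delta)^2} - \frac{1}{\Phi^{-1}(1-\delta+\omega)^2}\Big) + D^2(1+\kappa)^2\kappa^2\bar w\gamma^{-1}(1-\gamma)^{2T}.
\]
For the first term, factor $(\beta+b)^2-(\beta-a)^2 = (a+b)(2\beta + b - a)$. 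Assumptions 1 and 2 ensure $a, b \le \max(\beta,\epsilon/4)$, and I will argue that this gives $2\beta+b-a \le 2\beta+\epsilon$, which is the first part of $C_3$. For the second term, apply Lemma \ref{lem:quant_sens} with $x = 1-\delta$, $y = 1-\delta+\omega$ and $\bar y = 1-\delta/2$; this is valid because $\omega\le\delta/2$ (assumption 3) and $\delta<1/2$. Together with $(\beta-a)^2\le 4\beta^2$, this yields the $\omega$-term of $C_3$. The third term is exactly the last term of $C_3$.

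Summing the three bounds gives $b_a - \xi \le D^2 C_1 + C_3$. Combining this with Step 1 gives the claim. The delicate point in this step is the sign bookkeeping: one must make sure $\beta - a \ge 0$, which follows from assumption 1, so that the squares are handled monotonically.
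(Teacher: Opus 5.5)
This is essentially the paper's argument: you pull each $\Sigma\in\Ec_a$ toward $\Sigma_\star^{\safe}$ with weight $\epsilon_1/(\epsilon_1+\Delta)$, and you bound the slack $\Delta$ by $D^2C_1+C_3$ using the same three pieces (numerator change, Lemma~\ref{lem:quant_sens} with $\bar y=1-\delta/2$, and the $(1-\gamma)^{2T}$ tail); your ordering simply makes the paper's monotonicity step and its bound $(\beta+b)^2\le 4\beta^2$ unnecessary. One caution: $b-a\le\epsilon$ does not follow from $a,b\le\max(\beta,\epsilon/4)$ when $\beta>\epsilon$, so, like the paper (which needs the stronger $a+b\le\epsilon$), you must read assumptions 1--2 as giving $a,b\le\min(\beta,\epsilon/4)$.
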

\begin{proof}
    First note that,
    \begin{equation}
    \label{eq:dist_split}
    \begin{split}
        & \left( \beta + D (1 + \kappa) \kappa \left(1 - \gamma\right)^{T} R_1\right)^2\\
        &  = \left( \beta - D (1 + \tilde{\kappa}) \tilde{\kappa} \left(1 - \tilde{\gamma}/2\right)^{\rho} R_z + D \left( (1 + \tilde{\kappa}) \tilde{\kappa} \left(1 - \tilde{\gamma}/2\right)^{\rho} R_z + (1 + \kappa) \kappa \left(1 - \gamma\right)^{T} R_1 \right)\right)^2\\
        & \leq (\beta - D (1 + \tilde{\kappa}) \tilde{\kappa} \left(1 - \tilde{\gamma}/2\right)^{\rho} R_z)^2 + \left( 2 \beta + \epsilon \right) D \left( (1 + \tilde{\kappa}) \tilde{\kappa} \left(1 - \tilde{\gamma}/2\right)^{\rho} R_z + (1 + \kappa) \kappa \left(1 - \gamma\right)^{T} R_1 \right),
    \end{split}
    \end{equation}
    where the inequality uses that it holds that $(x + y)^2 = x^2 + 2 x y + y^2 \leq x^2 + (2 \bar{x} + \bar{y}) y$ for $0 \leq x \leq \bar{x}$, $0 \leq y \leq \bar{y}$, that $\beta \geq D (1 + \tilde{\kappa}) \tilde{\kappa} \left(1 - \tilde{\gamma}/2\right)^{\rho} R_z$ from the assumption on $\rho$.
    Then,
    \begin{align*}
         & \left( \frac{\beta + D (1 + \kappa) \kappa \left(1 - \gamma\right)^{T} R_1}{ \Phi^{-1}(1 - \delta)} \right)^2\\
          & \leq \left( \frac{\beta + D (1 + \kappa) \kappa \left(1 - \gamma\right)^{T} R_1}{ \Phi^{-1}(1 - \delta + \omega)} \right)^2 + \frac{2 \Phi^{-1}(1-\delta/2) \left( \beta + D (1 + \kappa) \kappa \left(1 - \gamma\right)^{T} R_1 \right)^2 \omega}{\phi(\Phi^{-1}(1-\delta/2)) \Phi^{-1}(1-\delta)^4} \tag{a} \label{eq:dist:a}\\
          & \leq \left( \frac{\beta + D (1 + \kappa) \kappa \left(1 - \gamma\right)^{T} R_1}{ \Phi^{-1}(1 - \delta + \omega)} \right)^2 + \frac{2 \Phi^{-1}(1-\delta/2) 4 \beta^2 \omega}{\phi(\Phi^{-1}(1-\delta/2)) \Phi^{-1}(1-\delta)^4}\tag{b} \label{eq:dist:b}\\
          & \leq \left( \frac{\beta - D (1 + \tilde{\kappa}) \tilde{\kappa} \left(1 - \tilde{\gamma}/2\right)^{\rho} R_z}{ \Phi^{-1}(1 - \delta + \omega)} \right)^2 + \frac{\left( 2 \beta + \epsilon \right) \left( D (1 + \tilde{\kappa}) \tilde{\kappa} \left(1 - \tilde{\gamma}/2\right)^{\rho} R_z + D (1 + \kappa) \kappa \left(1 - \gamma\right)^{T} R_1 \right)}{ \Phi^{-1}(1 - \delta + \omega)^2} \tag{c} \label{eq:dist:c}\\
          & \qquad + \frac{2 \Phi^{-1}(1-\delta/2) 4 \beta^2 \omega}{\phi(\Phi^{-1}(1-\delta/2)) \Phi^{-1}(1-\delta)^4} \\
          & \leq \left( \frac{\beta - D (1 + \tilde{\kappa}) \tilde{\kappa} \left(1 - \tilde{\gamma}/2\right)^{\rho} R_z}{ \Phi^{-1}(1 - \delta + \omega)} \right)^2 + \frac{\left( 2 \beta + \epsilon \right) \left(  D (1 + \tilde{\kappa}) \tilde{\kappa} \left(1 - \tilde{\gamma}/2\right)^{\rho} R_z + D (1 + \kappa) \kappa \left(1 - \gamma\right)^{T} R_1 \right)}{ \Phi^{-1}(1 - \delta)^2} \tag{d} \label{eq:dist:d}\\
          & \qquad + \frac{2 \Phi^{-1}(1-\delta/2) 4 \beta^2 \omega}{\phi(\Phi^{-1}(1-\delta/2)) \Phi^{-1}(1-\delta)^4},
    \end{align*}
    where \eqref{eq:dist:a} uses Lemma \ref{lem:quant_sens}, \eqref{eq:dist:b} uses that $\beta \geq D (1 + \tilde{\kappa}) \tilde{\kappa} \left(1 - \tilde{\gamma}/2\right)^{\rho} R_z$ from the assumption on $\rho$, \eqref{eq:dist:c} uses \eqref{eq:dist_split}, and \eqref{eq:dist:d} uses that $(\Phi^{-1}(x))^2$ is increasing for $x \geq 0.5$ and therefore $(\Phi^{-1}(x))^{-2}$ is decreasing.
    Therefore, for any $\Sigma_a \in \Ec_a$, it holds that,
    \begin{align*}
        \ip{\balpha_j, \Sigma_a} & \leq \left( \frac{\beta + D (1 + \kappa) \kappa \left(1 - \gamma\right)^{T} R_1}{ \Phi^{-1}(1 - \delta)} \right)^2 + D^2 (1 + \kappa)^2 \kappa^2 \bar{w} \gamma^{-1} (1 - \gamma)^{2 T} \\
        & \leq \left( \frac{\beta - D (1 + \tilde{\kappa}) \tilde{\kappa} \left(1 - \tilde{\gamma}/2\right)^{\rho} R_z}{ \Phi^{-1}(1 - \delta + \omega)} \right)^2 + C_3\\
        & = \xi + D^2 C_1 + C_3
    \end{align*}
    Then, let $\Sigma_\zeta = \zeta\Sigma_a + (1 - \zeta) \Sigma_\star^\safe$ and $\zeta = \frac{\epsilon_1}{\epsilon_1 + D^2 C_1 + C_3}$.
    Since $\ip{\balpha_j, \Sigma_\star^{\safe}} \leq \xi - \epsilon_1$ by assumption,
    \begin{align*}
        \ip{\balpha_j, \Sigma_\zeta} & = \zeta \ip{\balpha_j, \Sigma_a} + (1 - \zeta) \ip{\balpha_j, \Sigma_{\star}^\safe}\\
        & \leq \zeta \left( \xi + D^2 C_1 + C_3 \right) + (1 - \zeta) (\xi - \epsilon_1)\\
        & = \xi + \zeta (D^2 C_1 + C_3) - (1 - \zeta) \epsilon_1\\
        & = \xi,
    \end{align*}
    and therefore $\Sigma_\zeta \in \tilde{\Ec}$.
    Thus,
    $$
        \dist(\Sigma_a, \tilde{\Ec}) \leq \|\Sigma_a -  \Sigma_\zeta \| = (1 - \zeta) \| \Sigma_a - \Sigma^\safe \| \leq \frac{2 \nu(D^2 C_1 + C_3)}{\epsilon_1 + D^2 C_1 + C_3} \leq \frac{2 \nu}{\epsilon_1} (D^2 C_1 + C_3)
    $$
\end{proof}

Finally, we give the proof of Lemma \ref{lem:opt_approx_first}.

\begin{proof}[Proof of Lemma \ref{lem:opt_approx_first}]
    First, note that,
    \begin{align*}
        \Eb \left[z_t^{\star,\bw} (z_t^{\star,\bw})^\top \right] & = S_t^\star + m_t m_t^\top\\
        & \succeq S_t^\star\\
        & \succeq \Sigma^\star - (1 + \kappa)^2 \kappa^2 \bar{w} \gamma^{-1} (1 - \gamma)^{2 t} I
    \end{align*}
    Furthermore, let $\Sigma_a \in \argmin_{\Sigma \in \Ec_a} \left \ip{ \Sigma, \mathbf{Q} \right }$.
    Then, it holds that,
    \begin{align*}
        J_\star & = \Eb \sum_{t=1}^{T} \| z_t^{K^\star,\bw} \|_{\mathbf{Q}}^2\\
        & = \sum_{t=1}^{T} \left \ip{ \Eb \left[z_t^{K^\star,\bw} (z_t^{K^\star,\bw})^\top \right], \mathbf{Q} \right }\\
        & \geq \left \ip{ \Sigma^\star, \mathbf{Q} \right } T - R_Q (1 + \kappa)^2 \kappa^2 \gamma^{-2}\bar{w},\\
        & \geq \left \ip{ \Sigma_a, \mathbf{Q} \right } T - R_Q (1 + \kappa)^2 \kappa^2 \gamma^{-2}\bar{w},
    \end{align*}
    where the last line uses that $\Ec_\star \subseteq \Ec_a$ from Lemma \ref{lem:E_a}.
    Then, from Lemma \ref{lem:maxdist}, there exists $\tilde{\Sigma} \in \tilde{\Ec}$ such that $\| \Sigma_a - \tilde{\Sigma} \| \leq \frac{2 \nu}{\epsilon_1} (D^2 C_1 + C_3)$,
    \begin{align*}
        \ip{ \Sigma_a, \mathbf{Q} } \geq \ip{ \tilde{\Sigma}, \mathbf{Q} } - R_Q \frac{2 \nu}{\epsilon_1} (D^2 C_1 + C_3) \geq \min_{\Sigma \in \tilde{\Ec}} \ip{\mathbf{Q},\Sigma} - R_Q \frac{2 \nu}{\epsilon_1} (D^2 C_1 + C_3)
    \end{align*}
\end{proof}

\subsection{Proof of Lemma \ref{lem:const_sat_first}}
\label{sec:const_sat_proof}

In this section, we give the proof of Lemma \ref{lem:const_sat_first}, which establishes the constraint satisfaction guarantee.
In order to do so, we give technical lemmas that provides sufficient conditions for constraint satisfaction (Lemma \ref{lem:cov_const}) and show that the target covariance is in the pessimistic set (Lemma \ref{lem:bar_to_t}).
Thus, using the definition of the pessimistic set, Lemma \ref{lem:const_cond} shows that the constraint is satisfied during the main portion of the algorithm (i.e. $t \geq \tau_1$).
Finally, to get the constraint satisfaction guarantees, we combine Lemma \ref{lem:const_cond} with the constraint satisfaction guarantees during the initialization phase given in Lemma \ref{lem:exploration}.

We introduce some additional notation for the pessimistic set in the zeroth phase (even though it is not used in the algorithm):
$$
    \Ec_0^p := \{ \Sigma \succeq 0 : \langle \balpha_j, \Sigma \rangle + \mu \langle \bar{V}_{0}^{-1}, \Sigma \rangle \leq \xi, \forall j \in [J] \} 
$$
Next, we give a technical lemma that provides conditions for ensuring that the chance-constraint is satisfied.

\begin{lemma}
    \label{lem:cov_const}
    Suppose that $z_t|\Fc_\tau \sim \Nc(m_{t|\tau}, S_{t|\tau})$.
    Then, if $\omega' \in (0,0.5)$, then,
    $$
        \Pb\left(\ip{\balpha_j, S_{t|\tau}} \leq \left( \frac{\beta - \alpha_j^\top m_{t|\tau}}{\Phi^{-1}(1 - \omega')} \right)^2 , \alpha_j^\top m_{t|\tau} \leq \beta \right) \geq 1 - \omega, \quad \implies \quad \Pb( \alpha_j^\top z_t \leq \beta) \geq 1 - \omega' - \omega
    $$
\end{lemma}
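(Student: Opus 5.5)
Let $G$ denote the event inside the probability in the hypothesis, i.e.\ the event that $\ip{\balpha_j, S_{t|\tau}} \leq \big( \frac{\beta - \alpha_j^\top m_{t|\tau}}{\Phi^{-1}(1 - \omega')} \big)^2$ and $\alpha_j^\top m_{t|\tau} \leq \beta$. The plan is to condition on $\Fc_\tau$, show the conditional constraint holds on $G$, and then remove the conditioning with the tower property. The key observation is that $G$ is $\Fc_\tau$-measurable, because $m_{t|\tau}$ and $S_{t|\tau}$ are $\Fc_\tau$-measurable.

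First, I would show that on $G$ we have $\Pb(\alpha_j^\top z_t \leq \beta \mid \Fc_\tau) \geq 1 - \omega'$.
\begin{itemize}
\item Since $\omega' \in (0,0.5)$, we have $\Phi^{-1}(1-\omega') > 0$.
\item On $G$ we also have $\beta - \alpha_j^\top m_{t|\tau} \geq 0$. Taking square roots of the first inequality in $G$ is therefore legitimate and gives $\sqrt{\ip{\balpha_j, S_{t|\tau}}}\,\Phi^{-1}(1-\omega') \leq \beta - \alpha_j^\top m_{t|\tau}$.
\item The conditional distribution is Gaussian by hypothesis, so Lemma~\ref{lem:quant_form} applies with $\delta$ replaced by $\omega'$. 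This yields $\Pb(\alpha_j^\top z_t \leq \beta \mid \Fc_\tau) \geq 1-\omega'$ on $G$.
\end{itemize}
If one prefers not to invoke Lemma~\ref{lem:quant_form}, a direct check works. Conditionally, $\alpha_j^\top z_t \sim \Nc(\alpha_j^\top m_{t|\tau}, \ip{\balpha_j, S_{t|\tau}})$, and the claim reduces to monotonicity of $\Phi$.

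The degenerate case $\ip{\balpha_j, S_{t|\tau}}=0$ needs a separate remark. Then $\alpha_j^\top z_t = \alpha_j^\top m_{t|\tau} \leq \beta$ almost surely, so the conditional probability equals $1$.

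Second, I would integrate:
\[
\Pb(\alpha_j^\top z_t \leq \beta) = \Eb\big[\Pb(\alpha_j^\top z_t \leq \beta \mid \Fc_\tau)\big] \geq \Eb\big[\mathbf{1}_G\,(1-\omega')\big] \geq (1-\omega)(1-\omega') \geq 1 - \omega - \omega'.
\]
The first inequality drops the nonnegative contribution from $G^c$. The second uses $\Pb(G)\geq 1-\omega$ together with the $\Fc_\tau$-measurability of $G$. The last step uses $(1-\omega)(1-\omega') = 1-\omega-\omega'+\omega\omega'$.

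The main obstacle is the measurability bookkeeping. The conditional probability bound holds only on $G$, and the tower-property step requires $G \in \Fc_\tau$. This follows from the definition of $m_{t|\tau}$ and $S_{t|\tau}$ as conditional moments, but it must be stated explicitly. The sign of $\beta - \alpha_j^\top m_{t|\tau}$ also matters: without the second condition in $G$, squaring the inequality would lose the direction. That condition is included in $G$ precisely to make the square-root step valid.
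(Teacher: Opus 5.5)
Your proof is correct and follows the same route as the paper. On the $\Fc_\tau$-measurable good event you apply Lemma~\ref{lem:quant_form} with $\omega'$ in place of $\delta$, then integrate with the tower property; the only differences are cosmetic. You lower-bound the success probability by $(1-\omega')(1-\omega)$ where the paper upper-bounds the failure probability by $\omega' + \Pb(G^c)$, and your separate zero-variance case (the proof of Lemma~\ref{lem:quant_form} divides by $\sqrt{\alpha_j^\top S_{t|\tau}\alpha_j}$) and non-strict inequalities are slightly cleaner than the paper's.
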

\begin{proof}
    Let $F$ be the event that $\ip{\balpha_j, S_{t|\tau}} \leq \left( \frac{\beta - \alpha_j^\top m_{t|\tau}}{\Phi^{-1}(1 - \omega')} \right)^2$ and $\alpha_j^\top m_{t|\tau} \leq \beta$.
    First, since we assume that $\omega' \in (0,0.5]$, it holds under $F$ that,
    $$
        \alpha_j^\top m_{t|\tau} + \sqrt{\ip{\balpha_j, S_{t|\tau}}} \Phi^{-1}(1 - \omega') \leq \beta
    $$
    Therefore, due to Lemma \ref{lem:quant_form}, it follows that,
    $$
        \Pb( \alpha_j^\top z_t > \beta | \Fc_\tau) \Ib_{F}  < \omega'
    $$
    Finally,
    \begin{align*}
        \Pb( \alpha_j^\top z_t > \beta ) & = \Eb\left[ \Pb( \alpha_j^\top z_t > \beta | \Fc_\tau) \right]\\
        & = \Eb\left[ \Pb( \alpha_j^\top z_t > \beta | \Fc_\tau)\Ib_{F} +  \Pb( \alpha_j^\top z_t > \beta | \Fc_\tau)\Ib_{F^c}\right]\\
        & < \omega' + \Eb\left[ \Pb( \alpha_j^\top z_t > \beta | \Fc_\tau)\Ib_{F^c}\right] \\
        & \leq \omega' + \Pb(F^c)\\
        & < \omega' + \omega
    \end{align*}
\end{proof}

Next, we give  a lemma establishing that $\Sigma_t$ is within the pessimistic set of the current phase.

\begin{lemma}
    \label{lem:bar_to_t}
    If $\Sigbar_{k'} \in \Ec_{k'}^p$ for all $k' \in [0,k]$, then it follows that $\Sigma_t \in \Ec_k^p$ for all $t \in [\tau_k, \tau_{k+1}]$ and $k \geq 1$.
\end{lemma}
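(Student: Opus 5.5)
The plan is to combine three facts: the convex-hull property of the target covariances (Lemma~\ref{lem:slow_var}, item 1), convexity of each pessimistic set, and the fact that the pessimistic sets grow from phase to phase. Together they give
\[
\Sigma_t \in \conv\{\Sigbar_0,\dots,\Sigbar_{k_t}\} \subseteq \conv\Big(\textstyle\bigcup_{k'\le k}\Ec_{k'}^p\Big) \subseteq \Ec_k^p .
\]

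\emph{Step 1: convexity.} For each $k$, $\Ec_k^p$ is the intersection of the PSD cone with the finitely many half-spaces
\[
\{\Sigma : \ip{\balpha_j + \mu \bar{V}_{k-1}^{-1}, \Sigma} \le \xi\}, \qquad j \in [J].
\]
It is therefore convex. The same holds for $\Ec_0^p$, which uses $\bar{V}_0$.

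\emph{Step 2: nestedness.} I will show $\Ec_{k'}^p \subseteq \Ec_k^p$ for all $0 \le k' \le k$.
\begin{itemize}
\item The Gram matrices $V_s = \sum_{r \le s} z_r z_r^\top + \lambda I$ are nondecreasing in $s$ in the PSD order.
\item We have $\bar{V}_k = V_{\tau_k - \rho}$, the phase start times $\tau_k$ increase, and $\bar{V}_0 = V_{\tau_1-\rho}$. Hence $\bar{V}_0 \preceq \bar{V}_1 \preceq \cdots$.
\item Matrix inversion is operator-antitone on positive definite matrices. So for $1 \le k' \le k$ we get $\bar{V}_{k-1}^{-1} \preceq \bar{V}_{k'-1}^{-1}$. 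For $k'=0$ we get $\bar{V}_{k-1}^{-1} \preceq \bar{V}_0^{-1}$, using $k \ge 1$.
\item For $\Sigma \succeq 0$ and $A \preceq B$ we have $\ip{A,\Sigma} \le \ip{B,\Sigma}$. Therefore $\ip{\balpha_j,\Sigma} + \mu\ip{\bar{V}_{k-1}^{-1},\Sigma} \le \ip{\balpha_j,\Sigma} + \mu\ip{\bar{V}_{k'-1}^{-1},\Sigma} \le \xi$ for every $\Sigma \in \Ec_{k'}^p$ (with $\bar{V}_0$ in place of $\bar{V}_{k'-1}$ when $k'=0$), using $\mu \ge 0$.
\end{itemize}

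\emph{Step 3: conclusion.} By hypothesis and Step 2, every $\Sigbar_{k'}$ with $k' \le k$ lies in $\Ec_k^p$. By Lemma~\ref{lem:slow_var}, item 1, $\Sigma_t \in \conv\{\Sigbar_0,\dots,\Sigbar_{k_t}\}$. For $t$ in phase $k$ we have $k_t = k$, so $\Sigma_t \in \Ec_k^p$ by Step 1.

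The main obstacle is the right endpoint $t = \tau_{k+1}$. There, line~\ref{lne:mixing} has already mixed $\Sigbar_{k+1}$ into $\Sigma_t$, and the hypothesis says nothing about $\Sigbar_{k+1}$ relative to $\Ec_k^p$.
\begin{itemize}
\item I would first read the interval as the set of $t$ with $k_t = k$, i.e.\ $[\tau_k, \tau_{k+1}-1]$, which is the range actually needed downstream.
\item If the closed endpoint is really required, I would instead use $\Sigbar_{k+1} \in \Ec_{k+1}^p$. That set is defined with $\bar{V}_k^{-1} \preceq \bar{V}_{k-1}^{-1}$, so it is larger than $\Ec_k^p$, and this does not directly place $\Sigbar_{k+1}$ in $\Ec_k^p$. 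So the endpoint case genuinely relies on assigning $\tau_{k+1}$ to phase $k+1$, where the argument above applies with $k+1$ in place of $k$.
\end{itemize}
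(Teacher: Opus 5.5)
Your proof is correct and follows the paper's argument: you get nestedness $\mathcal{E}^p_{k'}\subseteq\mathcal{E}^p_k$ from the monotonicity of the $\bar{V}$'s, then combine the convex-hull property of Lemma~\ref{lem:slow_var} with convexity of $\mathcal{E}^p_k$. Your concern about the endpoint $t=\tau_{k+1}$ is well founded: the paper's proof also only covers $t$ with $k_t=k$, i.e.\ $[\tau_k,\tau_{k+1}-1]$, which is exactly the range used later in Lemma~\ref{lem:const_cond}.
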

\begin{proof}
    First, we show that $\Ec_{k}^p \subseteq \Ec_{k+1}^p$.
    Indeed, it holds for all $\Sigma \in \Ec_{k}^p$ that,
    $$
        \ip{\balpha_j, \Sigma} + \mu \ip{\bar{V}_{k}^{-1}, \Sigma} = \ip{\balpha_j, \Sigma} + \mu\ \tr(\Sigma^{1/2} \bar{V}_{k}^{-1}  \Sigma^{1/2}) \leq \ip{\balpha_j, \Sigma} + \mu \ip{\bar{V}_{k-1}^{-1}, \Sigma} \leq \xi,
    $$
    where we use the fact that $\Sigma \succeq 0$ and $\bar{V}_k \succeq \bar{V}_{k-1}$.
    Therefore, $\Sigma \in \Ec^p_{k+1}$ and, in turn, $\Ec^p_{k} \subseteq \Ec^p_{k+1}$.

    Therefore, it holds for all $k' \leq k$,
    $$
        \Sigbar_{k'} \in \Ec_{k'}^p \subseteq \Ec_k^p.
    $$
    Then, from Lemma \ref{lem:slow_var},
    $$
        \Sigma_t \in \conv\{\Sigbar_0,...,\Sigbar_k\} \subseteq \Ec_k^p,
    $$
    where we use the fact that $\Ec_k^p$ is convex.
\end{proof}

Next, we show that the constraint is satisfied after the exploration phase.

\begin{lemma}
    \label{lem:const_cond}
    In addition to the assumptions in Lemma \ref{lem:main_cov} and Lemma \ref{lem:safe_margin}, suppose that the following hold:
    \begin{enumerate}
        \item $\xi = \left( \frac{\beta - D (1 + \tilde{\kappa}) \tilde{\kappa} \left(1 - \tilde{\gamma}/2\right)^{\rho} R_z}{ \Phi^{-1}(1 - \delta + \omega)} \right)^2 - D^2 C_1$
        \item $\mu = 2 (1 + \tilde{\kappa})^2 \tilde{\kappa}^2  D^2 \eta$
        \item $\rho \geq 2 \tilde{\gamma}^{-1} \log \left( \frac{\beta}{(1+\tilde{\kappa}) \tilde{\kappa} R_z D} \right)$
        \item $\omega < \delta$
        \item $\Pb(E) \geq 1 - \omega$
    \end{enumerate}
    Then, for all $t \geq \tau_0$, it holds that $\Pb( \alpha_j^\top z_t \leq \beta) \geq 1 - \delta$ for all $j \in [J]$. 
\end{lemma}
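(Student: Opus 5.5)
Fix $j\in[J]$ and $t\ge\tau_0$. The plan is to apply Lemma~\ref{lem:cov_const} with $\tau=t-\rho$, $\omega'=\delta-\omega\in(0,0.5)$ and the exceptional event $E^c$, which has probability at most $\omega$. Its conclusion then reads $\Pb(\alpha_j^\top z_t\le\beta)\ge 1-\delta$.

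\textbf{Conditional Gaussianity.} First I would check that $z_t|\Fc_{t-\rho}$ is Gaussian. Because of the delayed estimation, $(K_s,U_s)$ is $\Fc_{t-\rho}$-measurable for every $s\in[t-\rho,t]$. Then $z_t$ is an affine function of $x_{t-\rho}$ plus independent Gaussian terms ($w_s$, $v_s$) whose coefficients are $\Fc_{t-\rho}$-measurable. For $t$ in $[\tau_0,\tau_1-1]$ the zero policy is played and $\Sigma_t=\bar\Sigma_0$, so the same reasoning applies.

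\textbf{Verifying the event on $E$.} Under $E$, Lemma~\ref{lem:main_cov} gives $\|m_{t|t-\rho}\|\le(1+\tilde\kappa)\tilde\kappa e^{-\tilde\gamma\rho/2}R_z$. Hence $|\alpha_j^\top m_{t|t-\rho}|\le D(1+\tilde\kappa)\tilde\kappa(1-\tilde\gamma/2)^\rho R_z=:b$, possibly after adjusting the base of the exponential, and $b\le\beta$ by the assumption on $\rho$. Lemma~\ref{lem:main_cov} also gives
\[
\ip{\balpha_j,S_{t|t-\rho}}\le \ip{\balpha_j,\Sigma_t}+D^2C_1+2(1+\tilde\kappa)^2\tilde\kappa^2D^2\rho\eta\ip{\bar V_{k-1}^{-1},\Sigma_t}.
\]
Next I would show $\Sigma_t\in\Ec_{k_t}^p$. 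By the choice of $\phi_k$ in line~\ref{lne:scale}, $\bar\Sigma_k\in\Ec_k^p$ for every $k\ge1$, provided $\phi=0$ is feasible. That feasibility is exactly Lemma~\ref{lem:safe_margin}: on $F_k\supseteq E$, $\Sigma_k^\safe\in\Ec_k^p$. For $\bar\Sigma_0$ the same bound holds via the second claim of Lemma~\ref{lem:safe_margin}, or the analogous bound for $\hat\Theta_0$. Lemma~\ref{lem:bar_to_t} then gives $\Sigma_t\in\Ec_{k_t}^p$, so
\[
\ip{\balpha_j,\Sigma_t}+\mu\ip{\bar V_{k-1}^{-1},\Sigma_t}\le\xi .
\]
Since $\mu=2(1+\tilde\kappa)^2\tilde\kappa^2D^2\eta$, the $\eta$-term above is absorbed by the $\mu$-term, up to the factor $\rho$. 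This is the point I expect to be delicate: either $\mu$ must effectively carry $\rho$ (which is logarithmic and can be folded into the constants), or Lemma~\ref{lem:main_cov}'s $\rho$-factor is already accounted for in $C_1$. I would check the constants here first. Together with $\xi+D^2C_1=\big((\beta-b)/\Phi^{-1}(1-\delta+\omega)\big)^2$, this yields
\[
\ip{\balpha_j,S_{t|t-\rho}}\le \Big(\tfrac{\beta-b}{\Phi^{-1}(1-\delta+\omega)}\Big)^2\le\Big(\tfrac{\beta-\alpha_j^\top m_{t|t-\rho}}{\Phi^{-1}(1-\omega')}\Big)^2 .
\]
The last step uses $\beta-\alpha_j^\top m\ge\beta-b\ge0$.

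\textbf{Measurability and conclusion.} The event in Lemma~\ref{lem:cov_const} must hold with probability at least $1-\omega$, and $\Pb(E)\ge1-\omega$. Since the event is $\Fc_{t-\rho}$-measurable and contains $E$ up to null sets, its probability is also at least $1-\omega$. A cleaner route is to use the $\Fc_{t-\rho}$-measurable part of $E$, namely the estimation bounds for phases up to $k_t$ and the state bound up to $t-\rho$, which is all Lemma~\ref{lem:main_cov} actually uses. Lemma~\ref{lem:cov_const} then gives the claim for every $t\ge\tau_0$ and $j\in[J]$.
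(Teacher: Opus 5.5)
Your route is the paper's: Lemma~\ref{lem:cov_const} with $\omega'=\delta-\omega$, $\bar\Sigma_k\in\Ec_k^p$ via feasibility of $\phi=0$ (Lemma~\ref{lem:safe_margin}), $\Sigma_t\in\Ec^p_{k_t}$ via Lemma~\ref{lem:bar_to_t}, then Lemma~\ref{lem:main_cov} together with the definition of $\xi$. The step you flagged, however, is a genuine gap, and neither of your two escapes closes it.

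\emph{Why the escapes fail.} Hypothesis~2 fixes $\mu=2(1+\tilde\kappa)^2\tilde\kappa^2D^2\eta$, so $\mu$ cannot be made to carry $\rho$ inside this lemma. And $C_1$ cannot absorb the term either. It is a deterministic constant, and its $\rho$-dependent pieces come from the transient $\nu e^{-\tilde\gamma\rho}$, the variation and drift of $\Sigma_s$ over the window, and the $1/(hT)$ slack. None of them is proportional to the random quantity $\ip{\bar V_{k_t-1}^{-1},\Sigma_t}$.

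\emph{What is left over.} Membership in $\Ec^p_{k_t}$ controls only $\mu\ip{\bar V_{k_t-1}^{-1},\Sigma_t}$. An excess $(\rho-1)\mu\ip{\bar V_{k_t-1}^{-1},\Sigma_t}$ therefore remains. It is bounded only by $(\rho-1)\mu\nu/\lambda$, which is of constant order under the paper's parameters. Meanwhile $\xi+D^2C_1$ equals $\big((\beta-b)/\Phi^{-1}(1-\delta+\omega)\big)^2$ exactly, so there is no slack to absorb it.

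\emph{The paper has the same hole.} Its first inequality in \eqref{eq:balph} is written with $2(1+\tilde\kappa)^2\tilde\kappa^2D^2\eta$, silently dropping the factor $\rho$ that Lemma~\ref{lem:main_cov} carries.

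\emph{Repair.} Set $\mu=2(\rho+1)(1+\tilde\kappa)^2\tilde\kappa^2D^2\eta$; the factor is $\rho+1$ because the sum of $\bar\eta_s$ in the proof of Lemma~\ref{lem:main_cov} has $\rho+1$ terms. Enlarge $\lambda$ by the same factor through $C_4$ so that $\epsilon_1>0$ is preserved. Since $\rho$ does not depend on $\lambda$ and $\rho\asymp\log T$, this costs only a logarithmic factor in Term~II and keeps the $\Octil(\sqrt{T})$ rate.

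Three smaller points.
\begin{enumerate}
\item For $t\in[\tau_0,\tau_1-1]$ the ``same reasoning'' does not apply. When $t\le\tau_0+\rho-2$, the window $[t-\rho+1,t-1]$ contains exploration inputs $u_s\sim\Uc(c\Sb)$, so $z_t\mid\Fc_{t-\rho}$ is in general not Gaussian. Also, Lemma~\ref{lem:main_cov} is stated only for $t\ge\tau_1$. These times should come from Lemma~\ref{lem:exploration}, whose almost-sure comparison with the zero-input trajectory covers all $t\le\tau_1-1$.
\item Hypothesis~3 has its ratio inverted. With $A:=(1+\tilde\kappa)\tilde\kappa R_zD$, it yields $e^{-\tilde\gamma\rho/2}\le A/\beta$, hence only $b\le A^2/\beta$. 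Getting $b\le\beta$ needs $\rho\ge 2\tilde\gamma^{-1}\log(A/\beta)$. The paper makes the same slip; the theorem's choice of $\rho$, with its extra factor $T$, still works once $T$ is large.
\item No ``adjusting the base'' is needed for $b$. Lemma~\ref{lem:mean} already gives the $(1-\tilde\gamma/2)^{\rho}$ form that the definition of $\xi$ uses.
\end{enumerate}
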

\begin{proof}
    In this proof, we show that if $E$ holds, then $\Sigma_t \in \Ec_{k_t}^p$ for all $t \geq \tau_1$, and then show that this implies the constraint satisfaction guarantee via Lemma \ref{lem:cov_const}, given that $\Pb(E) \geq 1 - \omega$ by assumption.
    
    In order to show that $\Sigma_t \in \Ec_{k_t}^p$, we first show that $\bar{\Sigma}_k \in \Ec_k^p$ for all $k \in [0,N]$.
    Indeed, under $E$, $\Sigma^\safe_k \in \Ec_k^p$ for all $k \in \{0,...,N\}$, and therefore the safe scaling step (line \ref{lne:scale}) is well-defined in the sense that there exists $\phi = 0$ such that $\phi \Sigma_k^o + (1 - \phi) \Sigma_k^\safe \in \Ec_k^p$.
    It follows that $\bar{\Sigma}_k \in \Ec_{k}^p$ for all $k \in [0,N]$.
    Therefore, for any given $t \geq \tau_0$, it holds that $\bar{\Sigma}_{k'} \in \Ec_{k_t}^p$ for all $k' \in \{0,..., k_t\}$.
    Thus, Lemma \ref{lem:bar_to_t} says that $\Sigma_t \in \Ec_{k_t}^p$ for all $t \geq \tau_0$, if $E$ holds.
    

    Under $E$, it follows that,
    \begin{equation}
        \label{eq:balph}
    \begin{split}
        \ip{\balpha_j, S_{t|t-\rho}} & \leq \ip{\balpha_j,\Sigma_t} + 2 (1 + \tilde{\kappa})^2 \tilde{\kappa}^2  D^2 \eta \ip{\bar{V}_{k-1}^{-1}, \Sigma_t} + D^2 C_1 \\
        & \leq \xi + D^2 C_1 \\
        & = \left( \frac{\beta - D (1 + \tilde{\kappa}) \tilde{\kappa} \left(1 - \tilde{\gamma}/2\right)^{\rho} R_z}{ \Phi^{-1}(1 - \delta + \omega)} \right)^2\\
        & \leq \left( \frac{\beta - \alpha_j^\top m_{t|t-\rho}}{\Phi^{-1}(1 - \delta + \omega)} \right)^2
    \end{split}
    \end{equation}
    where the first inequality uses Lemma \ref{lem:main_cov}, the second inequality uses the choice of $\mu$, and third inequality uses Lemma \ref{lem:main_cov} and the condition on $\rho$, i.e. 
    $$
        \alpha_j^\top m_{t|t-\rho} \leq D (1 + \tilde{\kappa}) \tilde{\kappa} \left(1 - \tilde{\gamma}/2\right)^{\rho} R_z \leq \beta,
    $$

    Then, we show that the conditions of Lemma \ref{lem:cov_const} hold in order to get the constraint satisfaction guarantee.
    First, since $K_t, U_t$ are $\Fc_{t-\rho}$-measurable (due to the use of delayed information for estimation), it holds that $z_t|\Fc_{t - \rho} \sim \Nc(m_{t|t-\rho}, S_{t|t-\rho})$.
    Also, from \eqref{eq:balph}, and the assumption that $\Pb(E) \geq 1 - \omega$ it holds that
    $$
        \Pb \left( \ip{\balpha_j, S_{t|t-\rho}} \leq \left( \frac{\beta - \alpha_j^\top m_{t|t-\rho}}{\Phi^{-1}(1 - \delta + \omega)} \right)^2, \alpha_j^\top m_{t|t-\rho} \leq \beta \right) \geq 1 - \omega
    $$
    Therefore, we have verified the conditions of Lemma \ref{lem:cov_const} and thus,
    $$
        \Pb( \alpha_j^\top z_t \leq \beta) \geq 1 - \delta, \ \forall j \in [J]
    $$
\end{proof}


Finally, we wrap everything up by combining Lemma \ref{lem:const_cond} with Lemma \ref{lem:exploration}.

\begin{proof}[Proof of Lemma \ref{lem:const_sat_first}]
    Lemma \ref{lem:exploration} gives the guarantee for $t \leq \tau_0 - 1$ and Lemma \ref{lem:const_cond} gives the guarantee for $t \geq \tau_0$.
\end{proof}

\subsection{Proof of Lemma \ref{lem:main_cov}}
\label{sec:main_cov}

In this section, we give the proof of Lemma \ref{lem:main_cov}, which bounds the conditional covariance in terms of the policy.
We first give technical lemmas establishing conditions under which the closed-loop system is sequentially strongly-stable (Lemma \ref{lem:strong_stab}), a bound on the conditional covariance holds (Lemma \ref{lem:cov_approx}) and a bound on the conditional mean holds (Lemma \ref{lem:mean}).
Then, we show how the algorithm ensures that the conditions of these lemmas are satisfied.
In particular, we show that the phase covariance satisfies an approximate steady-state condition (Lemma \ref{lem:stat_cond}), that the target covariance gets close to the phase covariance when the phase is sufficiently long (Lemma \ref{lem:phase_min}), and therefore that the target covariance satisfies an approximate steady-state condition (Lemma \ref{lem:real_stat}).
Next, we show that $\Sigma^{\safe}_k$ (the estimated covariance of the zero policy) is bounded (Lemma \ref{lem:approx_stable}), and therefore that the phase covariance and target covariance is bounded (Lemma \ref{lem:nu}).
This ultimately yields the proof of Lemma \ref{lem:main_cov}.

First, we introduce some notation.
The block notation for $S_{t|\tau}$ is as specified,
$$
    S_{t|\tau} = \begin{bmatrix}
        S_{t|\tau}^{xx} & S_{t|\tau}^{xu}\\
        S_{t|\tau}^{ux} & S_{t|\tau}^{uu}
    \end{bmatrix}
$$
We also use the notation $A_t = (A + B K_t)$.
We also give the definition of sequential strong stability from \cite{cohen2018online}.

\begin{definition}[Sequential Strong Stability, \cite{cohen2018online}]
\label{def:seq_str_stab}
    We say that a sequence of linear controllers $(K_t)_{t \in [T]}$ is $(\kappa,\gamma)$-\emph{sequentially strongly-stable} for system $(A,B)$ if there exists sequences $(H_t)_{t \in [T]}$ and $(L_t)_{t \in [T]}$ such that $A + BK_t = H_t L_t H_t^{-1}$ and:
    \begin{enumerate}
        \item $\| L_t \| \leq 1 - \gamma$ and $\| K_t \| \leq \kappa$,
        \item $\| H_t \| \| H_t^{-1} \| \leq \kappa$,
        \item $\| H_{t+1}^{-1} H_t \| \leq 1 + \gamma/2$
    \end{enumerate}
\end{definition}

Next, we give conditions on the target covariance in order for the system to be sequentially strongly stable.
This will be critical for showing that the actual covariance goes to the approximate steady-state covariance.

\begin{lemma}
    \label{lem:strong_stab}
    Consider an interval of natural numbers $\Tc$.
    Suppose that the following holds for all $t \in \Tc$:
    \begin{enumerate}
        \item $\Sigma_t \succeq 0$
        \item $\tr (\Sigma_t) \leq \nu$
        \item $\Sigma_{t,xx} \succeq \Theta \Sigma_t \Theta^\top + W - \bar{\eta} I$ with $\bar{\eta} < \sigma$ where $\sigma I \preceq W$
        \item $\| \Sigma_{t+1} - \Sigma_t \| \leq \frac{(\sigma - \bar{\eta})^2}{2 \nu}$
    \end{enumerate}
    Then, consider the resulting controllers $K_t = \Sigma_{t,ux} \Sigma_{t,xx}^{-1}$.
    It holds that the sequence of controllers $(K_t)_{t\in\Tc}$ are $(\kappa, \gamma)$-sequentially-strongly stable with $(\kappa, \gamma) = (\frac{\nu}{\sigma - \bar{\eta}}, \frac{\sigma - \bar{\eta}}{2 \nu})$.
\end{lemma}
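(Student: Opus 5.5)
We follow the template of Cohen et al.\ (2018, Lemma 4.3/sequential strong stability), working with $H_t := \Sigma_{t,xx}^{1/2}$ and $L_t := H_t^{-1}(A+BK_t)H_t$. By construction $A + BK_t = H_t L_t H_t^{-1}$. The proof has four steps.

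\emph{Lower bound on $\Sigma_{t,xx}$.} Condition 3 together with $\Theta\Sigma_t\Theta^\top \succeq 0$ gives
\[
\Sigma_{t,xx} \succeq W - \bar{\eta} I \succeq (\sigma - \bar{\eta}) I .
\]
So $\Sigma_{t,xx}$ is invertible and $K_t$ is well defined. Conditions 1--2 give $\Sigma_{t,xx} \preceq \nu I$. Hence
\[
\|H_t\|\,\|H_t^{-1}\| \leq \sqrt{\nu/(\sigma-\bar\eta)} \leq \nu/(\sigma - \bar{\eta}) = \kappa,
\]
where the last step uses $\nu \geq \sigma - \bar\eta$, which we may assume (otherwise enlarge $\nu$).

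\emph{Bounding $\|K_t\|$.} Since $\Sigma_t \succeq 0$, the Schur complement gives $U_t = \Sigma_{t,uu} - K_t\Sigma_{t,xx}K_t^\top \succeq 0$. Therefore
\[
K_t \Sigma_{t,xx} K_t^\top \preceq \Sigma_{t,uu} \preceq \nu I,
\qquad\text{so}\qquad
\|K_t\|^2 \leq \nu/(\sigma-\bar\eta) \leq \kappa^2 .
\]

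\emph{Contraction of $L_t$.} Write $\Sigma_t = \begin{bmatrix} I\\ K_t\end{bmatrix}\Sigma_{t,xx}\begin{bmatrix} I\\ K_t\end{bmatrix}^\top + \diag(0,U_t)$. Then
\[
\Theta \Sigma_t \Theta^\top \succeq (A+BK_t)\Sigma_{t,xx}(A+BK_t)^\top .
\]
Combining with condition 3,
\[
\Sigma_{t,xx} \succeq (A+BK_t)\Sigma_{t,xx}(A+BK_t)^\top + (\sigma-\bar\eta) I .
\]
Conjugating by $H_t^{-1}$ yields
\[
I \succeq L_t L_t^\top + (\sigma - \bar\eta)\Sigma_{t,xx}^{-1} \succeq L_tL_t^\top + \tfrac{\sigma-\bar\eta}{\nu} I .
\]
So $\|L_t\|^2 \leq 1 - 2\gamma$ with $\gamma = \frac{\sigma-\bar\eta}{2\nu}$. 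Since $\sqrt{1-2\gamma}\leq 1-\gamma$, this gives $\|L_t\| \leq 1-\gamma$.

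\emph{Slow variation of $H_t$.} We need $\|H_{t+1}^{-1}H_t\| \leq 1+\gamma/2$. Note that
\[
\|H_{t+1}^{-1}H_t\|^2 = \|H_{t+1}^{-1}\Sigma_{t,xx}H_{t+1}^{-1}\|
\leq 1 + \|H_{t+1}^{-1}(\Sigma_{t,xx}-\Sigma_{t+1,xx})H_{t+1}^{-1}\|
\leq 1 + \frac{\|\Sigma_{t}-\Sigma_{t+1}\|}{\sigma-\bar\eta}.
\]
By condition 4 this is at most $1 + \frac{\sigma-\bar\eta}{2\nu} = 1+\gamma$. Then $\sqrt{1+\gamma}\leq 1+\gamma/2$ gives the claim.

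This last step is the delicate one, because the constant in condition 4 must match $\gamma$ exactly. If the bookkeeping is off by a factor, I would instead use the cruder bound $\sqrt{1+\gamma} \le 1+\gamma/2$ with a slightly tightened $\gamma$, noting the stated constants appear chosen precisely so that this works. All four requirements of Definition \ref{def:seq_str_stab} then hold with $(\kappa,\gamma) = (\frac{\nu}{\sigma-\bar\eta}, \frac{\sigma-\bar\eta}{2\nu})$.
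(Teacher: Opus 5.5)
Your proof is correct and follows essentially the same route as the paper: the same similarity transform $H_t=\Sigma_{t,xx}^{1/2}$, the same Schur-complement argument giving $\|L_t\|^2\le 1-2\gamma$, and the same slow-variation estimate for $\|H_{t+1}^{-1}H_t\|$; the only cosmetic difference is that you bound $\|K_t\|$ via the Loewner inequality $(\sigma-\bar\eta)K_tK_t^\top\preceq K_t\Sigma_{t,xx}K_t^\top\preceq\Sigma_{t,uu}$ rather than through traces. Two small fixes. First, $\nu\ge\sigma-\bar\eta$ need not be assumed, and ``enlarging $\nu$'' would not be legitimate, since it tightens condition 4 and changes the constants; the inequality already follows from $\nu\ge\tr(\Sigma_{t,xx})\ge n(\sigma-\bar\eta)$. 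Second, your closing hedge is unnecessary, because condition 4 yields exactly $1+\gamma$.
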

\begin{proof}
    This proof generalizes Theorem 4.2 and Lemma 4.3 in \cite{cohen2018online} to the case of approximate stationarity condition.
    First, note that under the conditions of the lemma, for all $t \in [T]$,
    $$ 
        \Sigma_{t,xx} \succeq \Theta \Sigma_t \Theta^\top + W - \bar{\eta} I \succeq \bsig I \succ 0,
    $$
    where we use $\bsig = \sigma - \bar{\eta}$.
    Then, it follows that,
    $$
    \Sigma_t \succeq \begin{bmatrix}\Sigma_{t,xx} & \Sigma_{t,xu} \\ \Sigma_{t,ux} & \Sigma_{t,ux} \Sigma_{t,xx}^{-1} \Sigma_{t,xu} \end{bmatrix} = \begin{bmatrix} I\\ K_t \end{bmatrix} \Sigma_{t,xx} \begin{bmatrix} I\\ K_t \end{bmatrix}^\top =: \Sigma_t',
    $$
    since the condition $\Sigma_t \succeq 0$ and $\Sigma_{t,xx} \succ 0$ implies that the Schur complement of $\Sigma_t$ is psd.
    It follows that,
    \begin{equation}
    \label{eq:pd_sig}
    \begin{split}
        \Sigma_{t,xx} & \succeq \Theta \Sigma_t \Theta^\top + W - \bar{\eta} I\\
        & \succeq \Theta \Sigma_t' \Theta^\top + W - \bar{\eta} I\\
        & = (A + B K) \Sigma_{t,xx} (A + B K_t)^\top + W - \bar{\eta} I\\
        & \succeq (A + B K) \Sigma_{t,xx} (A + B K_t)^\top + \bsig I
    \end{split}
    \end{equation}
    Then, we show that the sequence of $K_t$ is $(\kappa, \gamma) = (\frac{\nu}{\bsig}, \frac{\bsig}{2 \nu})$ sequentially strongly-stable.
    We choose the similarity transform $A + B K_t = H_t L_t H_t^{-1}$, where $L_t = \Sigma_{t,xx}^{-1/2} (A + B K_t) \Sigma_{t,xx}^{1/2}$ and $H_t = \Sigma_{t,xx}^{1/2}$.
    Multiplying \eqref{eq:pd_sig} by $\Sigma_{t,xx}^{-1/2}$ on both sides yields,
    \begin{align*}
        I & \succeq \Sigma_{t,xx}^{-1/2} (A + B K_t) \Sigma_{t,xx} (A + B K_t)^\top \Sigma_{t,xx}^{-1/2} + \bsig \Sigma_{t,xx}^{-1}\\
        & \succeq \Sigma_{t,xx}^{-1/2} (A + B K_t) \Sigma_{t,xx} (A + B K_t)^\top \Sigma_{t,xx}^{-1/2} +  I \bsig /\nu,
    \end{align*}
    It follows that,
    $$
        L_t L_t^\top \preceq (1 - \bsig/ \nu) I \ \iff \ \| L_t \|^2 \leq 1 - \bsig/ \nu \ \implies \ \| L_t \| \leq 1 - \bsig/ (2 \nu) = 1 - \gamma.
    $$
    Also, $\| H_t \| \| H_t^{-1} \| \leq \sqrt{\nu/\bsig} \leq \nu/\bsig = \kappa$ (given that $\nu \geq \bsig$).
    Then, to bound $K_t$, note that,
    $$
        \| K_t \|^2_F = \tr(K_t K_t^\top) \leq \bsig^{-1} \tr(K_t \Sigma_{t,xx} K^\top) = \bsig^{-1} \tr(\Sigma_{t,ux} \Sigma_{t,xx}^{-1} \Sigma_{t,xu}) \leq \bsig^{-1}\tr(\Sigma_{t,uu}) \leq \nu /\bsig = \kappa.
    $$
    Therefore, since $\kappa \geq 1$, it holds that $\| K_t \| \leq \| K_t \|_F \leq \sqrt{\kappa} \leq \kappa$.
    Finally, it holds that,
    \begin{align*}
        \| H_{t+1}^{-1} H_t \|^2 & = \| \Sigma_{t+1,xx}^{-1/2} \Sigma_{t,xx}^{1/2} \|^2 \\
        & = \| \Sigma_{t+1,xx}^{-1/2} \Sigma_{t,xx} \Sigma_{t+1,xx}^{-1/2} \| \\
        & = \| \Sigma_{t+1,xx}^{-1/2} \Sigma_{t+1,xx} \Sigma_{t+1,xx}^{-1/2} \| + \| \Sigma_{t+1,xx}^{-1/2} (\Sigma_{t,xx} - \Sigma_{t+1,xx})\Sigma_{t+1,xx}^{-1/2} \|\\
        & \leq 1 + \| \Sigma_{t+1,xx}^{-1} \| \| \Sigma_{t,xx} - \Sigma_{t+1,xx} \|\\
        & \leq 1 + \frac{1}{\bar{\sigma}} \frac{\bar{\sigma}^2}{2 \nu} = 1 + \gamma.
    \end{align*}
    It follows that,
    $$
        \| H_{t+1}^{-1} H_t \| \leq \sqrt{1 + \gamma} \leq 1 + \gamma/2
    $$
\end{proof}

Next, we give a key technical lemma (Lemma \ref{lem:cov_approx}), which gives conditions for the conditional covariance to be bounded in terms of the target covariance.

\begin{lemma}
    \label{lem:cov_approx}
    Suppose that the following hold:
    \begin{enumerate}
        \item $\Sigma_{s}^{xx} \succeq \Theta \Sigma_s \Theta^\top + W - \bar{\eta}_s I$ for all $s \in [\tau,t]$,
        \item $\| \Sigma_t \| \leq \nu$  for all $t$,
        \item $K_\tau,K_{\tau + 1},...,K_t$ are sequentially strongly-stable,
        \item $K_s, U_s$ are $\Fc_\tau$-measurable for all $s \in [\tau,t]$.
    \end{enumerate}
    Then, it holds that,
    \begin{align*}
        S_{t|\tau} \preceq \Sigma_t + (1 + \kappa)^2 \kappa^2 \left(  \nu \exp( - \gamma (t - \tau)) +  \sum_{s=\tau}^{t} \| \Sigma_{s}^{xx} - \Sigma_{s+1}^{xx} \| +  \sum_{s=\tau}^{t} \bar{\eta}_s \right)I,
    \end{align*}
\end{lemma}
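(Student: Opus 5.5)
Because $K_s,U_s$ are $\Fc_\tau$-measurable for $s\in[\tau,t]$, the conditional state covariance obeys an exact linear recursion. Writing $A_s = A+BK_s$ and $P_s := S^{xx}_{s|\tau}$, we have $P_\tau = 0$ (since $x_\tau$ is $\Fc_\tau$-measurable) and
\[
P_{s+1} = A_s P_s A_s^\top + B U_s B^\top + W = \Theta \Big( \begin{bmatrix} I\\ K_s\end{bmatrix} P_s \begin{bmatrix} I\\ K_s\end{bmatrix}^\top + \begin{bmatrix} 0&0\\0&U_s\end{bmatrix}\Big)\Theta^\top + W .
\]
Similarly, $S_{t|\tau} = \begin{bmatrix} I\\ K_t\end{bmatrix} P_t \begin{bmatrix} I\\ K_t\end{bmatrix}^\top + \diag(0,U_t)$. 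The target satisfies the analogous identity exactly: $\Sigma_s = \begin{bmatrix} I\\ K_s\end{bmatrix}\Sigma_{s,xx}\begin{bmatrix} I\\ K_s\end{bmatrix}^\top + \diag(0,U_s)$, by the definitions of $K_s$ and $U_s$ in line \ref{lne:extract}. Subtracting gives
\[
S_{t|\tau} - \Sigma_t = \begin{bmatrix} I\\ K_t\end{bmatrix} (P_t - \Sigma_{t,xx}) \begin{bmatrix} I\\ K_t\end{bmatrix}^\top \preceq (1+\kappa)^2 \lambda_{\max}(P_t-\Sigma_{t,xx})^+ I .
\]
Hence it suffices to upper bound $E_s := P_s - \Sigma_{s,xx}$ in the psd order.

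Using condition 1 and the same identity, $\Theta \Sigma_s\Theta^\top + W \preceq \Sigma_{s,xx} + \bar\eta_s I$, where $\Theta\Sigma_s\Theta^\top = A_s\Sigma_{s,xx}A_s^\top + BU_sB^\top$. Plugging this into the recursion gives
\[
E_{s+1} \preceq A_s E_s A_s^\top + (\Sigma_{s,xx} - \Sigma_{s+1,xx}) + \bar\eta_s I \preceq A_s E_s A_s^\top + \big(\|\Sigma_{s,xx}-\Sigma_{s+1,xx}\| + \bar\eta_s\big) I .
\]
Unrolling from $s=\tau$ yields
\[
E_t \preceq \Psi_{t,\tau} E_\tau \Psi_{t,\tau}^\top + \sum_{s=\tau}^{t-1} \big(\|\Sigma_{s,xx}-\Sigma_{s+1,xx}\|+\bar\eta_s\big)\, \Psi_{t,s+1}\Psi_{t,s+1}^\top ,
\]
where $\Psi_{t,s} = A_{t-1}\cdots A_s$ and $E_\tau = -\Sigma_{\tau,xx} \preceq 0$. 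We may simply drop the initial term since it is negative semidefinite. If we prefer to keep the form in the statement, we bound it in norm by $\|\Psi_{t,\tau}\|^2\nu$, which produces the $\nu\exp(-\gamma(t-\tau))$ term.

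Sequential strong stability (condition 3, via Definition \ref{def:seq_str_stab}) gives
\[
\|\Psi_{t,s}\| \le \|H_t\|\,\|H_s^{-1}\| \prod (1-\gamma)(1+\gamma/2) \le \kappa (1-\gamma/2)^{t-s},
\]
using $\|H_{r+1}^{-1}H_r\|\le 1+\gamma/2$ to telescope. Therefore each $\Psi\Psi^\top \preceq \kappa^2 I$, and the sum is at most $\kappa^2\sum_s(\|\Sigma_{s,xx}-\Sigma_{s+1,xx}\|+\bar\eta_s)$. The initial term is at most $\kappa^2\nu(1-\gamma/2)^{2(t-\tau)} \le \kappa^2\nu e^{-\gamma(t-\tau)}$. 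Multiplying by $(1+\kappa)^2$ from the first step gives the claimed bound.

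The main obstacle is the telescoping of the time-varying products. The $H_t$ bounds give $\|H_t\|\|H_t^{-1}\|\le\kappa$ only for the same index, so bounding $\|H_t\|\,\|H_s^{-1}\|$ needs care: one writes $\Psi_{t,s} = H_t L_{t-1}(H_{t-1}^{-1}H_{t-1})\cdots$, i.e. interleaves the $H_{r+1}^{-1}H_r$ factors, and then uses $\|H_t\|\|H_t^{-1}\|\le\kappa$ at the endpoint. The minor off-by-one in the index ranges must also be checked.
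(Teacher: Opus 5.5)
Your argument follows the paper's proof step for step. It uses the exact recursion for $S^{xx}_{s|\tau}$ coming from $\Fc_\tau$-measurability of $(K_s,U_s)$, and the one-step inequality $\Delta_{s+1}\preceq A_s\Delta_sA_s^\top+(\Sigma_{s,xx}-\Sigma_{s+1,xx})+\bar\eta_sI$ from condition~1. It then unrolls with a bound on the transition products and lifts by $\|[I;K_t]\|^2\le(1+\kappa)^2$. Your observation that $E_\tau=-\Sigma_{\tau,xx}\preceq 0$, so the initial term can simply be dropped, is a small sharpening; the paper keeps that term and bounds it by $\kappa^2\nu e^{-\gamma(t-\tau)}$.

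The step you flag as the main obstacle, however, is not fixed by interleaving. However the factors are grouped, $A_{t-1}\cdots A_s = H_{t-1}\,[L_{t-1}(H_{t-1}^{-1}H_{t-2})L_{t-2}\cdots(H_{s+1}^{-1}H_s)L_s]\,H_s^{-1}$. The outer factor is therefore always the cross-index product $\|H_{t-1}\|\,\|H_s^{-1}\|$. Replacing $H_s^{-1}$ by $H_{t-1}^{-1}$ would need the reversed factors $H_r^{-1}H_{r+1}$, which condition~3 of Definition~\ref{def:seq_str_stab} does not control.

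In fact the per-index bound $\|H_r\|\|H_r^{-1}\|\le\kappa$ cannot suffice. The other conditions are invariant under $H_r\mapsto c_rH_r$, and letting $c_r$ grow fast enough makes condition~3 hold for any sequence. So the definition as literally written only says that each $A+BK_r$ is individually strongly stable. For example, alternating $\left[\begin{smallmatrix}1/2&2\\0&1/2\end{smallmatrix}\right]$ with its transpose gives a two-step product with spectral radius above $4$, even though each matrix is conjugated to norm about $0.64$ by a diagonal matrix of condition number $8$.

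What is actually needed is a uniform bound $\|H_r\|\le B_0$, $\|H_r^{-1}\|\le 1/b_0$ with $B_0/b_0\le\kappa$, which is how sequential strong stability is defined in \cite{cohen2018online}. This holds wherever the lemma is used: Lemma~\ref{lem:strong_stab} takes $H_r=\Sigma_{r,xx}^{1/2}$ with $\bar\sigma I\preceq\Sigma_{r,xx}\preceq\nu I$, giving $\|H_{t-1}\|\,\|H_s^{-1}\|\le\sqrt{\nu/\bar\sigma}\le\kappa$. The paper's proof makes the same leap ($\|H_t\|\|H_\tau^{-1}\|\le\kappa$) without comment. You should close the step by invoking this uniform bound rather than by interleaving.
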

\begin{proof}
First, note that the state update is of the form,
$$
    x_{s+1} = A x_s + B (K_s x_s + v_s) + w_s = A_s x_s + B v_s + w_s, \quad v_s \sim \Nc(0,U_t), w_s \sim \Nc(0, W)
$$
Then, since $K_s$ and $U_s$ are $\Fc_\tau$-measurable for all $s \in [\tau,T]$:
\begin{align}
    & S_{s | \tau} = \begin{bmatrix} I\\ K_s \end{bmatrix} S_{s|\tau}^{xx} \begin{bmatrix} I\\ K_s \end{bmatrix}^\top + \begin{bmatrix} 0 & 0 \\ 0 & U_s \end{bmatrix}\\
    & \implies S_{s+1 | \tau}^{xx} = \Theta S_{s | \tau} \Theta^\top + W = A_s S_{s|\tau}^{xx} A_s^\top + B U_s B^\top + W. \label{eq:cond_sig}
\end{align}
Also, note that,
\begin{align*}
    \Sigma_s & = \begin{bmatrix} \Sigma_s^{xx} & \Sigma_s^{xu} \\ \Sigma_s^{ux} & \Sigma_s^{uu} \end{bmatrix}\\
    & =  \begin{bmatrix} \Sigma_s^{xx} & \Sigma_s^{xu} \\ \Sigma_s^{ux} & \Sigma_s^{ux}(\Sigma_s^{xx})^{-1} \Sigma_s^{xu} \end{bmatrix} + \begin{bmatrix}0 & 0\\ 0 & \Sigma_s^{uu} - \Sigma_s^{ux}(\Sigma_s^{xx})^{-1} \Sigma_s^{xu} \end{bmatrix}\\
    & = \begin{bmatrix} I\\ K_s \end{bmatrix} \Sigma_s^{xx} \begin{bmatrix} I\\ K_s \end{bmatrix}^\top + \begin{bmatrix} 0 & 0 \\ 0 & U_s \end{bmatrix}
\end{align*}
Therefore, the condition in the lemma implies that, 
\begin{align}
    \Sigma_s^{xx} & \succeq \Theta \Sigma_s \Theta^\top + W - \bar{\eta}_s I \\
    & = A_s \Sigma_s^{xx} A_s^\top + B U_s B^\top + W - \bar{\eta}_s I \label{eq:sig_fixed}
\end{align}
Then, take $\Delta_t = S_{t|\tau}^{xx} - \Sigma_{t}^{xx}$.
Therefore, subtracting \eqref{eq:sig_fixed} from \eqref{eq:cond_sig} and adding $\Sigma_{t}^{xx} - \Sigma_{t+1}^{xx}$ to both sides:
\begin{align*}
    \Delta_{t+1} &  \preceq A_t \Delta_t A_t^\top + (\Sigma_{t}^{xx} - \Sigma_{t+1}^{xx}) + \bar{\eta}_{t} I\\
    & \preceq A_t \Delta_t A_t^\top + \| \Sigma_{t}^{xx} - \Sigma_{t+1}^{xx} \| I + \bar{\eta}_t I\\
    & \ \vdots\\
    & \preceq (\Pi_{i=t}^{\tau} A_i) \Delta_\tau (\Pi_{i=t}^{\tau} A_i)^\top + \sum_{s=\tau}^{t} \left( \| \Sigma_{s}^{xx} - \Sigma_{s+1}^{xx} \| + \bar{\eta}_s \right) (\Pi_{i=t-1}^{s} A_i)  (\Pi_{i=t-1}^{s} A_i)^\top
\end{align*}
Then, if $K_t$ is sequentially-strongly stable (as defined in \cite{cohen2018online}),
\begin{equation}
    \label{eq:a_prod}
\begin{split}
    \| \Pi_{i=t}^{\tau} A_i \| & = \| H_t L_t H_{t}^{-1} H_{t-1} L_{t-1} H_{t-1}^{-1} \ \hdots \ H_{\tau} L_{\tau} H_{\tau}^{-1} \|\\
    & \leq \| H_t \| \| H_\tau^{-1} \| \left (\Pi_{i = \tau}^{t-1} \|H_{i+1}^{-1} H_{i} \| \right ) \left (\Pi_{i = \tau}^{t} \|L_i \| \right )\\
    & \leq \kappa (1 + \gamma/2)^{t - \tau} (1 - \gamma)^{t - \tau + 1}\\
    & \leq \kappa \left( (1 + \gamma/2) (1 - \gamma) \right)^{t - \tau}\\
    & \leq \kappa \left(1 - \gamma/2 \right)^{t - \tau}
\end{split}
\end{equation}
It follows that,
\begin{align*}
    & \left\| (\Pi_{i=t}^{\tau} A_i) \Delta_\tau (\Pi_{i=t}^{\tau} A_i)^\top + \sum_{s=\tau}^{t} (\Pi_{i=t-1}^{s} A_i) \left( \| \Sigma_{s}^{xx} - \Sigma_{s+1}^{xx} \| + \bar{\eta}_s \right) (\Pi_{i=t-1}^{s} A_i)^\top \right\|\\
    & \leq \left \| \Pi_{i=t}^{\tau} A_i \right \|^2 \| \Delta_\tau \| + \sum_{s=\tau}^{t} \left(\| \Sigma_{s}^{xx} - \Sigma_{s+1}^{xx} \| + \bar{\eta}_s \right)\| \Pi_{i=t-1}^{s} A_i \|^2\\
    & \leq \kappa^2 \left(1 - \gamma/2 \right)^{2(t - \tau)} \| \Sigma_\tau \| + \kappa^2 \sum_{s=\tau}^{t} \left(\| \Sigma_{s}^{xx} - \Sigma_{s+1}^{xx} \| + \bar{\eta}_s \right)\left(1 - \gamma/2 \right)^{2(t-s + 1)}\\
    & \leq \nu \kappa^2 \exp( - \gamma (t - \tau)) + \kappa^2 \sum_{s=\tau}^{t} \left(\| \Sigma_{s}^{xx} - \Sigma_{s+1}^{xx} \| + \bar{\eta}_s \right),
\end{align*}
where we used the fact that $S_{\tau|\tau} = 0$ and $\| \Sigma_\tau \| \leq \nu$.
Finally, putting everything together:
\begin{align*}
    S_{t|\tau} & = \begin{bmatrix} I\\ K_t \end{bmatrix} S_{t|\tau}^{xx} \begin{bmatrix} I\\ K_t \end{bmatrix}^\top + \begin{bmatrix} 0 & 0 \\ 0 & U_t \end{bmatrix}\\
    & \preceq \begin{bmatrix} I\\ K_t \end{bmatrix} \Sigma_{t}^{xx} \begin{bmatrix} I\\ K_t \end{bmatrix}^\top + \begin{bmatrix} 0 & 0 \\ 0 & U_t \end{bmatrix}\\
    & \qquad + (1 + \kappa)^2 \kappa^2 \left(  \nu \exp( - \gamma (t - \tau)) + \sum_{s=\tau}^{t} \left(\| \Sigma_{s}^{xx} - \Sigma_{s+1}^{xx} \| + \bar{\eta}_s \right) \right)I\\
    & = \Sigma_t + (1 + \kappa)^2 \kappa^2 \left(  \nu \exp( - \gamma (t - \tau)) + \sum_{s=\tau}^{t} \left(\| \Sigma_{s}^{xx} - \Sigma_{s+1}^{xx} \| + \bar{\eta}_s \right) \right)I
\end{align*}
\end{proof}

Next, we give a lemma that provides conditions for the conditional expectation to be bounded.

\begin{lemma}
    \label{lem:mean}
    Suppose the following hold:
    \begin{enumerate}
        \item $K_1,K_2,...,K_T$ are sequentially strongly-stable,
        \item $K_s, U_s$ are $\Fc_\tau$-measurable for all $s \in [\tau,t]$,
        \item $\| x_\tau \| \leq R_x$.
    \end{enumerate}
    Then, it holds that
    $$
        \| m_{t|\tau} \| \leq (1 + \kappa) \kappa \left(1 - \gamma/2 \right)^{t - \tau} R_x
    $$
\end{lemma}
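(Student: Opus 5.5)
We will unroll the conditional mean dynamics, exploiting that the policy is $\Fc_\tau$-measurable on $[\tau,t]$. Since $u_s = K_s x_s + v_s$ with $v_s \sim \Nc(0,U_s)$ independent of the past, and $K_s$ is $\Fc_\tau$-measurable for $s\in[\tau,t]$, taking conditional expectations gives
\begin{equation*}
\Eb[x_{s+1}|\Fc_\tau] = (A_\star + B_\star K_s)\Eb[x_s|\Fc_\tau] = A_s \Eb[x_s|\Fc_\tau],
\end{equation*}
because $\Eb[v_s|\Fc_\tau]=0$ and $\Eb[w_s|\Fc_\tau]=0$ for $s\ge\tau$. (For $v_s$ this uses the tower property: $\Eb[v_s|\Fc_s]=0$ and $\Fc_\tau\subseteq\Fc_s$.) The base case is $\Eb[x_\tau|\Fc_\tau]=x_\tau$. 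Hence $\Eb[x_t|\Fc_\tau] = (\Pi_{i=t-1}^{\tau} A_i) x_\tau$. We then have $m_{t|\tau} = \begin{bmatrix} I\\ K_t\end{bmatrix}\Eb[x_t|\Fc_\tau]$, again using $\Eb[v_t|\Fc_\tau]=0$ and measurability of $K_t$.

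Next we bound the product of closed-loop matrices by reusing the computation in \eqref{eq:a_prod} from the proof of Lemma \ref{lem:cov_approx}. Write $A_i = H_iL_iH_i^{-1}$ and telescope; with the three conditions of Definition \ref{def:seq_str_stab} this gives
\begin{equation*}
\|\Pi_{i=t-1}^{\tau}A_i\| \le \|H_{t-1}\|\|H_\tau^{-1}\|\,(1+\gamma/2)^{t-\tau-1}(1-\gamma)^{t-\tau} \le \kappa(1-\gamma/2)^{t-\tau},
\end{equation*}
where we use $(1+\gamma/2)(1-\gamma)\le 1-\gamma/2$. One point needs care. Definition \ref{def:seq_str_stab} bounds $\|H_t\|\|H_t^{-1}\|\le\kappa$ for each single $t$, but the telescoping needs $\|H_{t-1}\|\|H_\tau^{-1}\|$ across different times. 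We will handle this the same way the paper does in \eqref{eq:a_prod}. If that step is questionable, the alternative is to absorb the mismatch via $\|H_{i+1}^{-1}H_i\|$ factors: in the construction of Lemma \ref{lem:strong_stab}, $H_i=\Sigma_{i,xx}^{1/2}$ satisfy uniform bounds $\|H_i\|\le\sqrt\nu$ and $\|H_i^{-1}\|\le \bsig^{-1/2}$, so the cross-time product is also at most $\kappa$. We also need $t=\tau$ to be consistent, where the bound is $\|x_\tau\|\le\kappa R_x$ since $\kappa\ge1$.

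Finally, $\|[I;K_t]\|\le 1+\|K_t\|\le 1+\kappa$, so
\begin{equation*}
\|m_{t|\tau}\|\le(1+\kappa)\kappa(1-\gamma/2)^{t-\tau}R_x.
\end{equation*}
The main (mild) obstacle is justifying the measurability and zero-mean step for the injected noise $v_s$, whose covariance $U_s$ is random but $\Fc_\tau$-measurable, together with the cross-time $H$ norm bound. Everything else is routine.
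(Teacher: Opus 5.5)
Your argument is correct and is the same as the paper's proof. You unroll $\Eb[x_t\mid\Fc_\tau]=(\Pi_{i=t-1}^{\tau}A_i)x_\tau$ using the $\Fc_\tau$-measurability of the policies and the zero conditional mean of $v_s,w_s$, bound the product via \eqref{eq:a_prod}, and multiply by $\|[I;K_t]\|\le 1+\kappa$. The cross-time issue you flag is genuine. Definition~\ref{def:seq_str_stab} only bounds $\|H_s\|\|H_s^{-1}\|$ at a single time, and together with $\|H_{s+1}^{-1}H_s\|\le 1+\gamma/2$ this does not control $\|H_{t-1}\|\|H_\tau^{-1}\|$; one can build sequences satisfying the definition as written whose closed-loop products grow. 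So \eqref{eq:a_prod} tacitly needs the uniform bounds $\|H_s\|\le\beta$, $\|H_s^{-1}\|\le 1/\alpha$ with $\kappa=\beta/\alpha$ from the definition in \cite{cohen2018online}. You should commit to your fallback ($\|H_s\|\le\sqrt{\nu}$ and $\|H_s^{-1}\|\le(\sigma-\bar{\eta})^{-1/2}$ uniformly, for the construction of Lemma~\ref{lem:strong_stab}) rather than defer to the paper's step.
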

\begin{proof}
    It holds that,
    \begin{align*}
        \Eb \left[ x_t \ | \ \Fc_\tau \right] & = \Eb \left[ A x_{t-1} + B (K_{t-1} x_{t-1} + v_{t-1}) + w_{t-1} \ | \ \Fc_\tau \right]\\
        & = A_{t-1} \Eb \left[ x_{t-1} \ | \ \Fc_\tau \right]\\
        & \ \vdots\\
        & = (\Pi_{i=t}^{\tau} A_i) x_\tau
    \end{align*}
    Then,
    $$
        m_{t|\tau} = \Eb \left[ z_t \ | \ \Fc_\tau \right] = \Eb \left[ \begin{bmatrix}I \\ K_t  \end{bmatrix} x_t \ | \ \Fc_\tau \right] = \begin{bmatrix}I \\ K_t  \end{bmatrix} \Eb \left[ x_t \ | \ \Fc_\tau \right] = \begin{bmatrix}I \\ K_t  \end{bmatrix} (\Pi_{i=t}^{\tau} A_i) x_\tau
    $$
    Therefore, using \eqref{eq:a_prod}, it holds that,
    $$
        \| m_{t|\tau} \| \leq \left \| \begin{bmatrix}I \\ K_t  \end{bmatrix} \right \| \| \Pi_{i=t}^{\tau} A_i \| \| x_\tau \| \leq (1 + \kappa) \kappa \left(1 - \gamma/2 \right)^{t - \tau} R_x
    $$
\end{proof}

The remainder of this section will be focused on ensuring that the conditions of Lemma \ref{lem:cov_approx} and Lemma \ref{lem:mean} are satisfied.
To that end, we first give a lemma showing that the phase covariance $\bar{\Sigma}_k$ satisfies an approximate stationarity condition in terms of the true system $\Theta_\star$.


\begin{lemma}
    \label{lem:stat_cond}
    It holds for all $k \in \{0,1,...,N\}$ that if event $F_k$ holds, then it follows that $\bar{\Sigma}_k \in \Ec_k$ where,
    $$
        \Ec_k := \{ \Sigma \succeq 0 : \Sigma_{xx} \succeq \Theta_\star \Sigma \Theta_\star^\top + W - 2 \eta \langle \bar{V}_k^{-1}, \Sigma \rangle I \}.
    $$
    Furthermore, it holds for all $k \in \{0,1,...,N\}$ that $\Ec_k \supseteq \Ec_{k+1}$.
\end{lemma}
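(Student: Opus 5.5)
The plan is to verify the two claims directly. First I show that each of the two building blocks of $\bar{\Sigma}_k$ lies in $\Ec_k$. Then I use convexity of $\Ec_k$. The nesting claim follows from monotonicity of the Gram matrices.

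\emph{Convexity.} The defining inequality of $\Ec_k$ is
\[
\Sigma_{xx} - \Theta_\star \Sigma \Theta_\star^\top - W + 2\eta \langle \bar{V}_k^{-1}, \Sigma\rangle I \succeq 0 .
\]
Its left-hand side is affine in $\Sigma$, and the psd cone is convex. Together with $\Sigma \succeq 0$, this makes $\Ec_k$ a convex set. Since $\bar{\Sigma}_k = \phi_k \Sigma_k^o + (1-\phi_k)\Sigma_k^{\safe}$ with $\phi_k \in [0,1]$, it suffices to show $\Sigma_k^o \in \Ec_k$ and $\Sigma_k^{\safe} \in \Ec_k$. For $k=0$ one has $\bar{\Sigma}_0 = \Sigma_0^{\safe}$ computed from $\hat{\Theta}_0$, and only the safe case is needed.

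\emph{Optimistic covariance.} $\Sigma_k^o$ is feasible for \eqref{eq:opt_sdp}, so $\Sigma_k^o \succeq 0$ and
\[
\Sigma^o_{k,xx} \succeq \hat{\Theta}_k \Sigma_k^o \hat{\Theta}_k^\top + W - \eta\langle \bar{V}_k^{-1}, \Sigma_k^o\rangle I .
\]
Under $F_k$ we have $\|\Theta_\star \Sigma \Theta_\star^\top - \hat{\Theta}_k \Sigma \hat{\Theta}_k^\top\| \le \eta \langle \bar{V}_k^{-1},\Sigma\rangle$ for every $\Sigma \succeq 0$. This gives
\[
\hat{\Theta}_k \Sigma_k^o \hat{\Theta}_k^\top \succeq \Theta_\star \Sigma_k^o \Theta_\star^\top - \eta\langle \bar{V}_k^{-1},\Sigma_k^o\rangle I .
\]
Substituting yields the required $2\eta$ relaxation.

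\emph{Safe covariance.} $\Sigma_k^{\safe}$ satisfies the fixed point with equality: $\Sigma^{\safe}_{k,xx} = \hat{\Theta}_k \Sigma_k^{\safe}\hat{\Theta}_k^\top + W$. The same perturbation bound then gives a relaxation of only $\eta\langle \bar{V}_k^{-1},\Sigma_k^{\safe}\rangle$, which is at most the $2\eta$ one.

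The main obstacle is showing that $\Sigma_k^{\safe}$ is well defined and psd. The equation $\Sigma^{\safe}_{k,xx} = \hat{A}_k \Sigma^{\safe}_{k,xx} \hat{A}_k^\top + W$ is a Lyapunov equation. It has a unique psd solution $\sum_{s\ge 0} \hat{A}_k^s W (\hat{A}_k^s)^\top$ provided $\hat{A}_k$ is stable. Here I would use the second part of $F_k$, namely $\|\Theta_\star - \hat{\Theta}_k\| \le \gamma/(2\kappa)$, together with Assumption~\ref{ass:null_policy}. Assumption~\ref{ass:null_policy} says that $A_\star = Q L Q^{-1}$ with $\|L\| \le 1-\gamma$ and $\|Q\|, \|Q^{-1}\| \le \kappa$. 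Then
\[
\hat{A}_k = Q\bigl(L + Q^{-1}(\hat{A}_k - A_\star) Q\bigr)Q^{-1}.
\]
I would bound the middle factor by $1-\gamma + \|Q^{-1}\|\,\|Q\|\,\gamma/(2\kappa)$. This is below $1$ only if $\|Q\|\|Q^{-1}\|$ is controlled suitably (for instance, at most $\kappa$ in the strong-stability definition). I would check this bookkeeping and adjust constants if needed. In any case $\hat{A}_k$ has spectral radius below $1$, so the series converges and $\Sigma_k^{\safe} \succeq 0$.

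\emph{Nesting.} The sequence $V_{t-\rho}$ is nondecreasing in $t$ in the psd order, because each step adds a term $z_s z_s^\top \succeq 0$. Hence $\bar{V}_{k+1} \succeq \bar{V}_k$, so $\bar{V}_{k+1}^{-1} \preceq \bar{V}_k^{-1}$. For $\Sigma \succeq 0$ this gives $\langle \bar{V}_{k+1}^{-1},\Sigma\rangle \le \langle \bar{V}_k^{-1},\Sigma\rangle$, for example via $\tr(\Sigma^{1/2}\,\cdot\,\Sigma^{1/2})$ as in Lemma~\ref{lem:bar_to_t}. Therefore any $\Sigma \in \Ec_{k+1}$ satisfies
\[
\Sigma_{xx} \succeq \Theta_\star\Sigma\Theta_\star^\top + W - 2\eta\langle \bar{V}_{k+1}^{-1},\Sigma\rangle I \succeq \Theta_\star\Sigma\Theta_\star^\top + W - 2\eta\langle \bar{V}_{k}^{-1},\Sigma\rangle I ,
\]
so $\Sigma \in \Ec_k$, i.e. $\Ec_{k+1} \subseteq \Ec_k$.
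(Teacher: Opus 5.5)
Your proof is correct and follows the paper's route. Both arguments place $\Sigma_k^{\safe}$ and $\Sigma_k^o$ in $\Ec_k$, use convexity of $\Ec_k$ (treating $\bar{\Sigma}_0=\Sigma_0^{\safe}$ separately), and get the nesting from $\bar{V}_{k+1}\succeq\bar{V}_k$. Your explicit perturbation step for $\Sigma_k^o$ is what actually justifies the factor $2\eta$, which the paper compresses into ``by definition''; and your stability bookkeeping concern is resolved as in the paper's Lemma~\ref{lem:approx_stable}. That lemma uses the convention $\|Q\|\|Q^{-1}\|\le\kappa$, which gives $1-\gamma/2$. Under the notation-section convention ($\|Q\|,\|Q^{-1}\|\le\kappa$ separately), your ``in any case'' is not automatic, and you would need radius $\gamma/(2\kappa^2)$ in $F_k$.
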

\begin{proof}
    First, let $\hat{\Theta}_0 := \hat{\Theta}_0$.
    From the definition of $\Sigma_k^{\safe}$ it holds that,
    \begin{align*}
        \Sigma_{k,xx}^{\safe} & = \hat{\Theta}_k \Sigma_{k}^{\safe} \hat{\Theta}_k^\top + W\\
        & = \Theta_\star \Sigma_{k}^{\safe} \Theta_\star^\top + W + \hat{\Theta}_k \Sigma_{k}^{\safe} \hat{\Theta}_k^\top - \Theta_\star \Sigma_{k}^{\safe} \Theta_\star^\top\\
        & \succeq \Theta_\star \Sigma_{k}^{\safe} \Theta_\star^\top + W - \| \hat{\Theta}_k \Sigma_{k}^{\safe} \hat{\Theta}_k^\top - \Theta_\star \Sigma_{k}^{\safe} \Theta_\star^\top \| I \\
        & \succeq \Theta_\star \Sigma_{k}^{\safe} \Theta_\star^\top + W - \eta \ip{\Sigma_{k}^{\safe}, \bar{V}_k^{-1}} I,
    \end{align*}
    where the last line uses the condition that event $F_k$ holds.
    Therefore, $\Sigma_k^{\safe} \in \Ec_k$.
    Also, note that $\Sigma_k^o \in \Ec_k$ by definition.
    Then, because $\Ec_k$ is convex, it holds for $k \geq 1$ that,
    $$
        \bar{\Sigma}_k \in \conv\{ \Sigma_k^{\safe},  \Sigma_k^o \} \subseteq \Ec_k.
    $$
    Also, for $k = 0$, it holds immediately that $\bar{\Sigma}_0 = \Sigma_0^{\safe} \in \Ec_k$.
    Lastly, we show that $\Ec_k \supseteq \Ec_{k+1}$.
    Indeed, if $\Sigma \in \Ec_{k+1}$, then,
    \begin{align*}
        \Sigma_{xx} & \succeq \Theta_\star \Sigma \Theta_\star^\top + W - 2 \eta \ip{ \bar{V}_{k+1}^{-1}, \Sigma} I\\
        & \succeq \Theta_\star \Sigma \Theta_\star^\top + W - 2 \eta \ip{ \bar{V}_{k}^{-1}, \Sigma} I,
    \end{align*}
    where we use the fact that,
    $$
        \ip{ \bar{V}_{k+1}^{-1}, \Sigma} = \tr(\Sigma^{1/2} \bar{V}_{k+1}^{-1} \Sigma^{1/2}) \leq \tr(\Sigma^{1/2} \bar{V}_{k}^{-1} \Sigma^{1/2}) = \ip{ \bar{V}_{k}^{-1}, \Sigma},
    $$
    given that $\bar{V}_{k+1} \succeq \bar{V}_{k}$.
\end{proof}

Next, we show that if the phases are sufficiently long, then the target covariance gets close to the phase covariance.

\begin{lemma}
    \label{lem:phase_min}
    Let $H := \frac{1}{\zeta} \log(h T)$ for some $h > 0$.
    For $k \geq 2$, it holds for all $t$ such that $t \in [\tau_{k-1} + H, \tau_k - 1]$ that,
    \begin{equation}
        \label{eq:barsigdiff}
        \| \bar{\Sigma}_{k-1} - \Sigma_{t} \| \leq \frac{2 \nu}{h T}.
    \end{equation}
    It follows that, if $\tau_k \geq \tau_{k-1} + H$, it holds for all $t \in [\tau_{k-1} + H,\tau_{k+1} - 1]$,
    \begin{equation}
        \Sigma_t \in \conv\{ \bar{\Sigma}_{k-1}, \bar{\Sigma}_{k} \} \oplus \left(\frac{2 \nu}{h T} \Bb \right)
    \end{equation}
\end{lemma}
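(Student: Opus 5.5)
The approach is to unroll the mixing recursion in line~\ref{lne:mixing} of Algorithm~\ref{alg:main_alg} and exploit the geometric decay of the weight on the pre-phase target covariance. Fix $k \geq 2$. Within phase $k-1$, i.e.\ for $t \in [\tau_{k-1}, \tau_k - 1]$, the phase covariance is the constant $\bar{\Sigma}_{k-1}$. The update is $\Sigma_t = (1-\zeta)\Sigma_{t-1} + \zeta \bar{\Sigma}_{k-1}$. Subtracting $\bar{\Sigma}_{k-1}$ gives the exact identity
\[
\Sigma_t - \bar{\Sigma}_{k-1} = (1-\zeta)^{t - \tau_{k-1} + 1}\big(\Sigma_{\tau_{k-1}-1} - \bar{\Sigma}_{k-1}\big).
\]

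To bound the initial gap, I would use item~1 of Lemma~\ref{lem:slow_var}: both $\Sigma_{\tau_{k-1}-1}$ and $\bar{\Sigma}_{k-1}$ lie in the convex hull of $\bar{\Sigma}_0,\dots,\bar{\Sigma}_{k-1}$. Each of these is psd with trace at most $\nu$. For $\Sigma^o$ this is the SDP constraint; for the zero-policy covariances it holds under the standing boundedness used in Lemma~\ref{lem:slow_var}, which is the same fact behind its items 2–4. Hence the norm of the difference is at most $2\nu$.

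Then $(1-\zeta)^{t-\tau_{k-1}+1} \le \exp(-\zeta (t-\tau_{k-1})) \le \exp(-\zeta H) = 1/(hT)$ whenever $t \ge \tau_{k-1}+H$. This gives \eqref{eq:barsigdiff}, which is essentially item~4 of Lemma~\ref{lem:slow_var} specialized to $t - \tau_{k-1} \ge H$.

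For the second claim, assume $\tau_k \ge \tau_{k-1}+H$ and take $t \in [\tau_{k-1}+H, \tau_{k+1}-1]$. If $t \le \tau_k - 1$, the first part already places $\Sigma_t$ within $2\nu/(hT)$ of $\bar{\Sigma}_{k-1}$, which lies in the hull. If $t \in [\tau_k, \tau_{k+1}-1]$, I would unroll from $\tau_k - 1$ with the phase covariance now $\bar{\Sigma}_k$:
\[
\Sigma_t = (1-\zeta)^{t-\tau_k+1}\Sigma_{\tau_k-1} + \big(1-(1-\zeta)^{t-\tau_k+1}\big)\bar{\Sigma}_k .
\]
Next write $\Sigma_{\tau_k-1} = \bar{\Sigma}_{k-1} + E$ with $\|E\| \le 2\nu/(hT)$. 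The index $\tau_k - 1 \ge \tau_{k-1}+H$ is valid by the assumption on $\tau_k$. Then $\Sigma_t$ equals a convex combination of $\bar{\Sigma}_{k-1}$ and $\bar{\Sigma}_k$ plus $(1-\zeta)^{t-\tau_k+1}E$, whose norm is at most $2\nu/(hT)$. This is exactly the Minkowski-sum membership $\conv\{\bar{\Sigma}_{k-1},\bar{\Sigma}_k\} \oplus \frac{2\nu}{hT}\Bb$.

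The only delicate point is bookkeeping. First, the indices must line up: the update at $t=\tau_k$ already uses $\bar{\Sigma}_k$ because the phase switch happens before line~\ref{lne:mixing}. Second, the $2\nu$ diameter bound on the hull must be justified, relying on the trace bounds for $\Sigma_k^\safe$ that Lemma~\ref{lem:slow_var} assumes. Everything else is a geometric series.
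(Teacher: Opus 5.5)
Your proposal is correct and follows essentially the same route as the paper: unroll the mixing recursion, bound the initial gap by $2\nu$, apply $(1-\zeta)^n \le e^{-\zeta n}$, and then unroll once more into phase $k$, carrying the residual from $\bar{\Sigma}_{k-1}$ as a perturbation that only shrinks. Your exponent $t-\tau_{k-1}+1$ is the exact one and slightly sharper than the paper's, and your trace-bound justification of the $2\nu$ diameter fills in what the paper merely asserts. One small slip: $\tau_k \ge \tau_{k-1}+H$ does not imply $\tau_k - 1 \ge \tau_{k-1}+H$. You do not need it, though, because your identity at $t=\tau_k-1$ has exponent $\tau_k-\tau_{k-1} \ge H$ and still gives $\|\Sigma_{\tau_k-1}-\bar{\Sigma}_{k-1}\| \le 2\nu/(hT)$.
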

\begin{proof}
    First, it holds for all $t \in [\tau_{k-1} + H, \tau_k - 1]$ that,
    \begin{align*}
        \| \bar{\Sigma}_{k-1} - \Sigma_{t} \| & = (1 - \zeta)^{t - \tau_{k-1}} \| \bar{\Sigma}_{k-1} - \Sigma_{\tau_{k-1}-1} \| \\
        & \leq 2 \nu (1 - \zeta)^{t - \tau_{k-1}}\\
        & \leq 2 \nu \exp(- \zeta (t - \tau_{k-1}))\\
        & \leq 2 \nu \exp(- \zeta H)\\
        & = \frac{2 \nu}{h T}
    \end{align*}
    Therefore, it follows for all $t \in [\tau_k,\tau_{k+1}]$ that,
    \begin{align*}
        \Sigma_t & = (1 - \zeta) \Sigma_{t-1} + \zeta \bar{\Sigma}_k\\
        & = (1 - \zeta)^{t - \tau_k} \Sigma_{\tau_k} + \zeta \bar{\Sigma}_k \sum_{s=0}^{t - \tau_k-1} (1 - \zeta)^{s}\\
        & \in (1 - \zeta)^{t - \tau_k} \bar{\Sigma}_{k-1} + \zeta \bar{\Sigma}_k \sum_{s=0}^{t - \tau_k-1} (1 - \zeta)^{s} + \frac{2 \nu (1 - \zeta)^{t - \tau_k}}{h T} \Bb \\
        & \subseteq (1 - \zeta)^{t - \tau_k} \bar{\Sigma}_{k-1} + \zeta \bar{\Sigma}_k \sum_{s=0}^{t - \tau_k-1} (1 - \zeta)^{s} + \frac{2 \nu}{h T} \Bb \\
        & \subseteq \conv\{ \bar{\Sigma}_{k-1},\bar{\Sigma}_k\} \oplus \left(\frac{2 \nu}{h T} \Bb \right).
    \end{align*}
    For $t \in [\tau_{k-1} + H, \tau_k]$,
    \begin{align*}
        \Sigma_t & = (1 - \zeta)^{t - \tau_k} \Sigma_{(\tau_{k-1} + H)} + \zeta \bar{\Sigma}_{k-1} \sum_{s=0}^{t - \tau_k-1} (1 - \zeta)^{s}\\
        & \in (1 - \zeta)^{t - \tau_k} \Sigbar_{k-1} + \zeta \bar{\Sigma}_{k-1} \sum_{s=0}^{t - \tau_k-1} (1 - \zeta)^{s} + \frac{2 \nu}{h T} \Bb \\
        & = \Sigbar_{k-1} + \frac{2 \nu}{h T} \Bb \\
    \end{align*}
\end{proof}

The following lemma then shows that the target covariance satisfies an approximate steady-state condition.

\begin{lemma}
    \label{lem:real_stat}
    Suppose that $\| \Theta \| \leq S$, $\| \Sigma_t \| \leq \nu$ for all $t \leq \tau_{k+1}-1$, and $\lambda \geq \frac{H R_z^2}{\log(2)}$.
    The following holds for all $k \geq 2$.
    If $F_k$ holds, then for all $t \in [\tau_{k-1}+H, \tau_{k+1}-1]$,
    $$
        \Sigma_{t,xx} \succeq \Theta \Sigma_t \Theta^\top + W - 2 \eta \langle \bar{V}_{k-1}^{-1}, \Sigma_t \rangle I - 2 \nu h^{-1} T^{-1} (1 + S^2 + 2 \eta (n + m) \lambda^{-1}) I
    $$
\end{lemma}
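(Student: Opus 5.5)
We combine Lemma~\ref{lem:phase_min} with Lemma~\ref{lem:stat_cond}, and then pay for the small perturbation by exploiting the fact that the relaxed stationarity condition is affine in $\Sigma$.

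\emph{Phase length.} First we check that the mixing window fits inside a phase. By Lemma~\ref{lem:phase_upd} (item 5), $\tau_k - \tau_{k-1} \geq \log(2)\lambda/R_z^2 \geq H$ under the assumption $\lambda \geq H R_z^2/\log(2)$. The bound $\|z_t\|\le R_z$ needed there is supplied by $F_k$, since the relevant Gram matrices only involve times before $\tau_k$. Hence Lemma~\ref{lem:phase_min} applies, and for every $t \in [\tau_{k-1}+H, \tau_{k+1}-1]$ we can write
\[
\Sigma_t = \Sigma' + \Delta, \qquad \Sigma' \in \conv\{\bar\Sigma_{k-1},\bar\Sigma_k\}, \qquad \|\Delta\| \le \frac{2\nu}{hT}.
\]

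\emph{Condition on $\Sigma'$.} Under $F_k$ (which contains $F_{k-1}$), Lemma~\ref{lem:stat_cond} gives $\bar\Sigma_k \in \Ec_k \subseteq \Ec_{k-1}$ and $\bar\Sigma_{k-1}\in\Ec_{k-1}$. Since $\Ec_{k-1}$ is defined by a constraint that is linear in $\Sigma$ (together with $\Sigma\succeq0$), it is convex. Therefore $\Sigma' \in \Ec_{k-1}$, that is,
\[
\Sigma'_{xx} \succeq \Theta\Sigma'\Theta^\top + W - 2\eta\ip{\bar V_{k-1}^{-1},\Sigma'} I.
\]
If $\Sigma'$ falls outside the region $\Sigma\succeq 0$ this causes no trouble, because only the linear inequality is used.

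\emph{Transfer to $\Sigma_t$.} We substitute $\Sigma' = \Sigma_t - \Delta$ into each term and bound the three error contributions as follows.
\begin{itemize}
\item $\Sigma'_{xx} \preceq \Sigma_{t,xx} + \|\Delta\| I$.
\item $\Theta\Sigma'\Theta^\top \succeq \Theta\Sigma_t\Theta^\top - S^2\|\Delta\| I$.
\item $\ip{\bar V_{k-1}^{-1},\Sigma'} \le \ip{\bar V_{k-1}^{-1},\Sigma_t} + \tr(\bar V_{k-1}^{-1})\|\Delta\|$, with $\tr(\bar V_{k-1}^{-1}) \le (n+m)\lambda^{-1}$.
\end{itemize}
Collecting terms gives an error of $\|\Delta\|\bigl(1 + S^2 + 2\eta(n+m)\lambda^{-1}\bigr)$, and with $\|\Delta\|\le 2\nu/(hT)$ this is exactly the stated bound.

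\emph{Main obstacle.} The delicate step is applying Lemma~\ref{lem:phase_min} over the full range up to $\tau_{k+1}-1$ and for $k\ge2$. This requires $\tau_k \ge \tau_{k-1}+H$, so the phase-length argument must use only information that $F_k$ controls. The mixture representation also has to be handled separately on the two subintervals $[\tau_{k-1}+H,\tau_k]$ and $[\tau_k,\tau_{k+1}-1]$, as is done in that lemma's proof. A further subtlety is that the $\Delta$-ball is not psd-constrained, so we must check that the trace/norm bounds above hold for an arbitrary symmetric $\Delta$. They do, since each one uses only $|\ip{M,\Delta}| \le \tr(M)\|\Delta\|$ for $M\succeq0$ and $-\|\Delta\|I \preceq \Delta \preceq \|\Delta\|I$.
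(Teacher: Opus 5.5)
Your proposal is correct and follows the paper's proof essentially step for step. The paper likewise bounds the phase length under $F_k$ via Lemma~\ref{lem:phase_dur}, writes $\Sigma_t\in\conv\{\bar\Sigma_{k-1},\bar\Sigma_k\}\oplus\frac{2\nu}{hT}\Bb$ using Lemma~\ref{lem:phase_min}, places the hull in $\Ec_{k-1}$ via Lemma~\ref{lem:stat_cond} (nestedness plus convexity), and then uses the same three perturbation estimates to get the factor $1+S^2+2\eta(n+m)\lambda^{-1}$. Your extra checks — restricting the $\|z_t\|\le R_z$ requirement to times before $\tau_k$, and verifying the bounds for an arbitrary symmetric $\Delta$ — are valid refinements of details the paper leaves implicit.
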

\begin{proof}
    First, we apply Lemma \ref{lem:phase_dur} to show that the duration of each phase is greater than $H$.
    Event $F_k$ guarantees that $\| z_t \| \leq R_z$ for $t \leq \tau_k-1$.
    Therefore, from Lemma \ref{lem:phase_dur} and the condition on $\lambda$,
    $$
        \tau_k - \tau_{k-1} \geq \frac{\log(2)}{R_z^2} \lambda \geq H
    $$
    Therefore, we can apply Lemma \ref{lem:phase_min} and Lemma \ref{lem:stat_cond} to get that for all $t \in [\tau_{k-1}+H, \tau_{k+1}-1]$ with $k \geq 2$,
    $$
        \Sigma_t \in \conv \left\{ \bar{\Sigma}_{k-1}, \bar{\Sigma}_{k} \right\} \oplus \left(\frac{2 \nu}{h T} \Bb \right) \subseteq \Ec_{k-1} \oplus \left( \frac{2 \nu}{h T} \Bb \right),
    $$
    where we use the fact that $\bar{\Sigma}_{k} \in \Ec_k \subseteq \Ec_{k-1}$.
    Therefore, there exists $\Sigma \in \Ec_{k-1}$ such that $\| \Sigma - \Sigma_t\| \leq \frac{2 \nu}{h T}$.
    It follows that,
    \begin{align*}
        \Sigma_{t,xx} & \succeq \Sigma\xx - 2 \nu h^{-1} T^{-1} I \\
        & \succeq \Theta \Sigma \Theta^\top + W - 2 \eta \langle \bar{V}_{k-1}^{-1}, \Sigma \rangle I - 2 \nu h^{-1} T^{-1} I\\
        & \succeq \Theta \Sigma_t \Theta^\top + W - 2 \eta \langle \bar{V}_{k-1}^{-1}, \Sigma_t \rangle I - 2 \nu h^{-1} T^{-1} (1 + S^2 + 2 (n + m) \eta \lambda^{-1}) I,
    \end{align*}
    where use that,
    $$
        \| \Theta (\Sigma - \Sigma_t) \Theta^\top \| \leq \| \Theta \|^2 \| \Sigma - \Sigma_t \| \leq S^2 2 \nu h^{-1} T^{-1},
    $$
    and that,
    $$
        2 \eta \ip{ \bar{V}_{k-1}^{-1}, \Sigma  - \Sigma_t} \leq 2 \eta \tr(\bar{V}_{k-1}^{-1}) \| \Sigma  - \Sigma_t \| \leq 2 \eta (n + m) \lambda^{-1} 2 \nu h^{-1} T^{-1}.
    $$
\end{proof}

The following lemma handles the initial phase separately.

\begin{lemma}
    \label{lem:phase_0}
    Suppose that $F_1$ hold, and $\| \Theta \| \leq S$.
    Then, it holds for all $t \in [\tau_{2}-1]$ that $\Sigma_t \in\Ec_0 $.
\end{lemma}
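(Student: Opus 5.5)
We need to show that for all $t \in [\tau_2-1]$, $\Sigma_t \in \Ec_0$, where $\Ec_0 = \{\Sigma \succeq 0 : \Sigma_{xx} \succeq \Theta_\star \Sigma \Theta_\star^\top + W - 2\eta\ip{\bar{V}_0^{-1},\Sigma} I\}$. The plan is to combine the convex-hull property of the slowly-varying policy (Lemma~\ref{lem:slow_var}, item 1) with the phase-wise membership from Lemma~\ref{lem:stat_cond}, together with the nesting $\Ec_0 \supseteq \Ec_1$.

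First I would establish that $\Ec_0$ is convex. The constraint is linear in $\Sigma$: both sides of $\Sigma_{xx} - \Theta_\star\Sigma\Theta_\star^\top - W + 2\eta\ip{\bar V_0^{-1},\Sigma}I \succeq 0$ are affine in $\Sigma$, and the psd cone is convex. Under $F_1$, which contains the estimation bounds for $i\in\{0,1\}$, Lemma~\ref{lem:stat_cond} gives $\bar\Sigma_0 \in \Ec_0$ and $\bar\Sigma_1 \in \Ec_1 \subseteq \Ec_0$. For $k=0$ this means checking that $\bar\Sigma_0 = \Sigma_0^{\safe}$ satisfies the approximate fixed point with the $\hat\Theta_0$ error bound, which is part of $F_1$.

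Next, for $t \in [\tau_0,\tau_2-1]$ the phase index satisfies $k_t \le 1$, so by Lemma~\ref{lem:slow_var}(1), $\Sigma_t \in \conv\{\bar\Sigma_0,\bar\Sigma_1\}$. Concretely, $\Sigma_t = \bar\Sigma_0$ on $[\tau_0,\tau_1]$ by the notational convention $\Sigma_t=\bar\Sigma_0$ and the initialization $\Sigma_{\tau_1}=\bar\Sigma_0$. In phase 1 the mixing step gives $\Sigma_t = (1-\zeta)^{t-\tau_1}\bar\Sigma_0 + (1-(1-\zeta)^{t-\tau_1})\bar\Sigma_1$, up to the convention at $\tau_1$ itself. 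Convexity of $\Ec_0$ then finishes the argument for these $t$.

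The main obstacle is the range $t<\tau_0$ (the exploration steps), where $\Sigma_t$ is not defined by the algorithm. I would handle it by adopting the same convention $\Sigma_t := \bar\Sigma_0$ there, which is consistent with how the lemma is used: the covariance bounds are only applied for $t\ge\tau_1$ with lookback $\rho$, so only $t\ge\tau_0$ is needed. If that convention is unavailable, I would restrict the statement to $t\ge\tau_0$.

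A minor point is the boundary $t=\tau_1$, where the algorithm sets $\Sigma_{\tau_1}=\bar\Sigma_0$ and then mixes once. Either way the result is a convex combination of $\bar\Sigma_0$ and $\bar\Sigma_1$, so it lies in $\Ec_0$. The hypothesis $\|\Theta\|\le S$ is not needed beyond ensuring that the objects are well defined.
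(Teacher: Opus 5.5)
Your proposal is correct and follows essentially the paper's argument. Both use Lemma~\ref{lem:stat_cond} to get $\bar\Sigma_0\in\Ec_0$ and $\bar\Sigma_1\in\Ec_1\subseteq\Ec_0$, adopt the convention $\Sigma_t=\bar\Sigma_0$ for $t\le\tau_1-1$, and conclude by convexity of $\Ec_0$, since $\Sigma_t$ is a convex combination of $\bar\Sigma_0$ and $\bar\Sigma_1$ for $t<\tau_2$. The only difference is that the paper reads this convex-combination fact off the mixing step, via $\Sigma_t\in\conv\{\Sigma_{\tau_1-1},\bar\Sigma_1\}$, rather than citing Lemma~\ref{lem:slow_var}(1).
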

\begin{proof}
    First, note that $\Sigma_t = \bar{\Sigma}_0 = \Sigma_k^{\safe}$ for $t \in [\tau_1-1]$, and therefore $\Sigma_t \in \Ec_0$ due to Lemma \ref{lem:stat_cond}.
    Therefore, for $t \in [\tau_{1}, \tau_{2}-1]$, 
    $$
        \Sigma_t \in \conv\{\Sigma_{\tau_1-1}, \bar{\Sigma}_1 \} \subseteq \conv(\Ec_0 \cup \Ec_1) = \Ec_0,
    $$
    where the last inclusion uses that $\Ec_0 \supseteq \Ec_1$ from Lemma \ref{lem:stat_cond} and that $\Ec_0$ is convex.
\end{proof}

Then, the following lemma establishes the stability of the estimate of the zero policy.

\begin{lemma}
    \label{lem:approx_stable}
    Suppose that $A_\star$ is $(\kappa, \gamma)$-strongly stable.
    Then, if $F_k$ holds, it follows that $\hat{A}_i$ is $(\kappa, \gamma/2)$-strongly stable and that $\tr(\Sigma_{i}^{\safe}) \leq \frac{2 \kappa^2 \tr(W)}{\gamma}$ for all $i \in \{0,1,...,k \}$.
\end{lemma}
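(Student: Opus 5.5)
The natural approach is a perturbation argument on the strongly-stable decomposition of $A_\star$, followed by a Lyapunov-series bound on the estimated zero-policy covariance.

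\emph{Stability of $\hat{A}_i$.} By Assumption~\ref{ass:null_policy}, the zero policy is $(\kappa,\gamma)$-strongly stable, so $A_\star = Q L Q^{-1}$ with $\|L\|\le 1-\gamma$ and $\|Q\|,\|Q^{-1}\|\le\kappa$. Under $F_k$ we have $\|\hat{\Theta}_i - \Theta_\star\|\le \gamma/(2\kappa)$ for all $i\le k$. Since $\hat{A}_i$ is a submatrix of $\hat{\Theta}_i$, this gives $\|\hat{A}_i - A_\star\|\le \gamma/(2\kappa)$. Keep the same $Q$ and write
\[
\hat{A}_i = Q\big(L + Q^{-1}(\hat{A}_i - A_\star)Q\big)Q^{-1} =: Q\hat{L}_iQ^{-1}.
\]
Then
\[
\|\hat{L}_i\| \le 1-\gamma + \|Q^{-1}\|\,\|Q\|\,\|\hat{A}_i - A_\star\| \le 1-\gamma + \kappa^2\gamma/(2\kappa).
\]
This produces a $\kappa\gamma/2$ term rather than $\gamma/2$, which is the main obstacle. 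I would handle it in one of two ways. One is to read the decomposition so that $\|Q\|\|Q^{-1}\|\le\kappa$, as in Definition~\ref{def:seq_str_stab}; then $\|\hat{L}_i\|\le 1-\gamma+\gamma/2 = 1-\gamma/2$. The other, if the $\|Q\|,\|Q^{-1}\|\le\kappa$ convention is forced, is to note that the event $F_k$ and the choice of $\lambda$ leave room to tighten the estimation radius to $\gamma/(2\kappa^2)$. Either way the same $Q$ works, $\|K\| = 0\le\kappa$, and so $\hat{A}_i$ is $(\kappa,\gamma/2)$-strongly stable.

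\emph{Trace bound.} Since $\Sigma_i^{\safe}$ has zero input blocks, the defining equation reduces to the discrete Lyapunov equation $\Sigma^{\safe}_{i,xx} = \hat{A}_i\Sigma^{\safe}_{i,xx}\hat{A}_i^\top + W$. Because $\hat{A}_i$ is strongly stable, hence Schur, this equation has the unique solution
\[
\Sigma^{\safe}_{i,xx} = \sum_{s\ge0}\hat{A}_i^s W (\hat{A}_i^s)^\top,
\]
and this is the matrix the algorithm computes. Taking traces,
\[
\tr(\Sigma_i^{\safe}) \le \sum_{s\ge0}\|\hat{A}_i^s\|^2\tr(W) \le \sum_{s\ge0}\kappa^2(1-\gamma/2)^{2s}\tr(W),
\]
using $\|\hat{A}_i^s\| = \|Q\hat{L}_i^sQ^{-1}\|\le\kappa(1-\gamma/2)^s$ under the product-norm convention. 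The geometric sum equals $\kappa^2\tr(W)/(1-(1-\gamma/2)^2)$. Since $1-(1-\gamma/2)^2 = \gamma-\gamma^2/4 \ge \gamma/2$ for $\gamma\le 1$, this is at most $2\kappa^2\tr(W)/\gamma$, as claimed. This is the same computation as Lemma 3.3 of \cite{cohen2018online} with $\gamma$ replaced by $\gamma/2$.

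\emph{Scope of $i$.} The case $i=0$ is identical, using $\hat{\Theta}_0$ and the fact that $F_k$ covers indices $0,\dots,k$.
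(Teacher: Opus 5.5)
Your argument is the paper's proof. It keeps the similarity matrix of $A_\star$ and bounds $\|L + Q^{-1}(\hat{A}_i - A_\star)Q\| \le 1-\gamma/2$ by implicitly using $\|Q\|\,\|Q^{-1}\| \le \kappa$, which is exactly your first resolution; it then sums the Lyapunov series $\sum_{s\ge0}\hat{A}_i^s W (\hat{A}_i^s)^\top$ in the same way. Your second fallback is not available: $F_k$ fixes the estimation radius at $\gamma/(2\kappa)$, and under the separate-norm convention the trace bound would pick up $\kappa^4$ anyway. Your step $\gamma - \gamma^2/4 \ge \gamma/2$ is slightly more careful than the paper, which writes that bound as an equality.
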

\begin{proof}
    Let $\Delta_i = \hat{A}_i - A_\star$ and note that $\| \Delta_i \| \leq \frac{\gamma}{2 \kappa}$ for all $i \in \{0,1,...,k \}$, due to $F_k$ holding.
    Then, it follows that,
    $$
        \hat{A}_i = A_\star + \Delta_i = H L H^{-1} + \Delta_i = H (L + H^{-1} \Delta_i H) H^{-1}.
    $$
    Then,
    $$
        \| L + H^{-1} \Delta_i H \| \leq \| L \| + \|H^{-1} \| \| H \| \| \Delta_i \| \leq 1 - \gamma + \gamma/2 = 1 - \gamma/2 
    $$
    Then, from \eqref{eq:safe_update}, it holds that,
    $$
        \Sigma_{i,xx}^{\safe} = \hat{A}_i \Sigma_{i,xx}^{\safe} \hat{A}_i^\top + W \quad \implies \quad \Sigma_{i,xx}^{\safe} = \sum_{s=0}^{\infty} \hat{A}_i^s W (\hat{A}_i^s)^\top.
    $$
    Therefore, 
    \begin{align*}
        \tr(\Sigma_{i,xx}^{\safe}) & = \sum_{s=0}^{\infty} \tr(\hat{A}_i^s W (\hat{A}_i^s)^\top)\\
        & = \sum_{s=0}^{\infty} \tr(W^{1/2} (\hat{A}_i^s)^\top \hat{A}_i^s W^{1/2})\\
        & \leq \tr(W) \sum_{s=0}^{\infty} \kappa^2 (1 - \gamma/2)^{2 s} \\
        & = \frac{2 \kappa^2 \tr(W)}{\gamma},
    \end{align*}
    where we use the fact that $\|\hat{A}_i^s \| \leq \kappa (1 - \gamma/2)^s$.
    The proof is complete by noting that $\tr(\Sigma_{i}^{\safe}) = \tr(\Sigma_{i,xx}^{\safe})$.
\end{proof}

Next, the following lemma provides conditions for which the phase covariance and target covariance are bounded.

\begin{lemma}
    \label{lem:nu}
    Suppose that $\nu \geq \frac{2 \kappa^2 \tr(W)}{\gamma}$.
    Then, if $F_k$ holds, it follows that $\tr(\bar{\Sigma}_i) \leq \nu$ for all $i \leq k$ and $\tr(\Sigma_{s}) \leq \nu$ for all $s \in [\tau_{k+1}]$.
\end{lemma}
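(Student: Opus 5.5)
The plan is to show, under $F_k$, that every matrix whose convex hull contains $\Sigma_s$ has trace at most $\nu$, and then use linearity of the trace.

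First, the zero-policy estimates. Under $F_k$, Lemma \ref{lem:approx_stable} gives $\tr(\Sigma_i^{\safe}) \leq 2\kappa^2\tr(W)/\gamma \leq \nu$ for all $i \in \{0,\dots,k\}$. In particular $\bar{\Sigma}_0 = \Sigma_0^{\safe}$ has trace at most $\nu$. This is exactly the object built in $\mathsf{Initialize}$, since $\hat{\Theta}_0$ is the $i=0$ estimate covered by $F_k$.

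Second, the optimistic covariances. For $1 \leq i \leq k$, $\Sigma_i^o$ is feasible for the SDP \eqref{eq:opt_sdp}, so constraint \eqref{eq:opt_sdp:bound} gives $\tr(\Sigma_i^o) \leq \nu$ directly. We only need the SDP to be feasible so that $\Sigma_i^o$ is defined. Under $F_k$ this follows from the argument used for Term II.C: $\tilde{\Ec}$, or at least $\Sigma_\star^{\safe}$ scaled appropriately, lies in the feasible set. I treat this as implicit in the algorithm being well defined. Then $\bar{\Sigma}_i = \phi_i \Sigma_i^o + (1-\phi_i)\Sigma_i^{\safe}$ with $\phi_i \in [0,1]$. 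The scaling is well defined because $\phi=0$ is admissible whenever $\Sigma_i^{\safe} \in \Ec_i^p$. Even if that fails, $\bar{\Sigma}_i$ is still a convex combination, provided we read the max over a set containing $0$. Linearity of the trace then gives $\tr(\bar{\Sigma}_i) \leq \nu$ for all $i \leq k$.

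Third, the target covariances. Lemma \ref{lem:slow_var}, item 1, gives $\Sigma_s \in \conv\{\bar{\Sigma}_0,\dots,\bar{\Sigma}_{k_s}\}$ for $s \geq \tau_0$. For $s \in [\tau_{k+1}]$ we have $k_s \leq k$, since the phase update producing $\bar{\Sigma}_{k+1}$ happens at $\tau_{k+1}$. For $s \leq \tau_1 - 1$, the notation convention sets $\Sigma_s = \bar{\Sigma}_0$. Hence $\tr(\Sigma_s) \leq \nu$.

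The one delicate point is the endpoint $s = \tau_{k+1}$. There $\Sigma_{\tau_{k+1}}$ mixes in $\bar{\Sigma}_{k+1}$, which depends on $\hat{\Theta}_{k+1}$, and that estimate is not controlled by $F_k$. However, $\Sigma_{k+1}^o$ is bounded by $\nu$ through the SDP constraint regardless of any event. So the only issue is $\Sigma_{k+1}^{\safe}$. I would first try to handle it by interpreting $[\tau_{k+1}]$ as ending at $\tau_{k+1}-1$, consistent with Lemma \ref{lem:real_stat}. Failing that, I would note that $F_k$ controls $\|z_t\|$ only up to $\tau_k - 1$, and that $\hat{\Theta}_{k+1}$ uses data up to $\tau_{k+1}-\rho$. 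I expect this indexing to be the main obstacle, and I would resolve it by restricting to $s \leq \tau_{k+1}-1$.
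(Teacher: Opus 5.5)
Your proposal follows the paper's proof: Lemma \ref{lem:approx_stable} bounds $\tr(\Sigma_i^{\safe})$, the SDP trace constraint bounds $\tr(\Sigma_i^o)$, convexity handles $\bar{\Sigma}_i$, and the convex-hull recursion handles $\Sigma_s$ (you cite item 1 of Lemma \ref{lem:slow_var}, which does not depend on this lemma; the paper re-derives the same recursion inline). Your restriction to $s \le \tau_{k+1}-1$ is also right. The paper's own argument only reaches $s \in [\tau_{k+1}-1]$, which is all that Lemmas \ref{lem:real_stat} and \ref{lem:cond_s} use. As you note, $\Sigma_{\tau_{k+1}}$ mixes in $\Sigma_{k+1}^{\safe}$, which is built from $\hat{\Theta}_{k+1}$ and is not controlled by $F_k$, so the endpoint $s=\tau_{k+1}$ in the statement is an off-by-one.
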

\begin{proof}
    From Lemma \ref{lem:approx_stable}, it holds that $\tr(\Sigma_{i}^{\safe}) \leq \nu$ for all $i \leq k$.
    Then, with $\Ec_\nu := \{ \Sigma : \tr(\Sigma) \leq \nu \}$ (which is convex), it holds that,
    \begin{equation}
        \label{eq:sigbar_bound}
        \Sigbar_i \in \conv\{ \Sigma_i^o, \Sigma_{i}^{\safe} \} \subseteq \Ec_\nu.
    \end{equation}
    Then, we use the fact that for all $i$ and $t \in [\tau_i, \tau_{i+1}-1]$,
    $$
        \Sigma_t \in \conv\{\Sigma_{\tau_i-1}, \bar{\Sigma}_i \},
    $$
    For one, this implies that $\Sigma_{\tau_{i+1}-1} \in \conv\{\Sigma_{\tau_i-1}, \bar{\Sigma}_i \}$. Applying this recursively implies that,
    $$
        \Sigma_s \in \conv\{\bar{\Sigma}_0, \bar{\Sigma}_1, ..., \bar{\Sigma}_i \},
    $$
    for $s \in [\tau_{i+1}-1]$. Since $\Sigbar_i \in \Ec_\nu$ for all $i \leq k$, it holds that $\Sigma_s \in \Ec_\nu$.
\end{proof}

Finally, we give the proof of Lemma \ref{lem:main_cov}.

\begin{proof}[Proof of Lemma \ref{lem:main_cov}]
    We show that the conditions of Lemma \ref{lem:cov_approx} are satisfied for $\tau = t - \rho$ as described in each of the numbered sections.
    Note that, since $E$ is assumed to hold, it follows that $F_k$ holds for all $k \in \{0,...,N\}$.

    \emph{Condition \#1:}
    First note that the conditions of Lemma \ref{lem:real_stat} hold, as Lemma \ref{lem:nu} shows that the $\tr(\Sigma_t)\leq \nu$ for all $t$.
    Then, from condition \ref{it:state:7} $\lambda \geq \frac{R_z^2 (H + \rho)}{\log(2)}$, and therefore Lemma \ref{lem:phase_dur} says that $\tau_{k} - \tau_{k-1} \geq H + \rho$ for all $k \geq 1$.
    It follows for $t \geq \tau_2$, that $[t-\rho,t] \subseteq [\tau_{k_t-1} + H, \tau_{k_t+1}]$, and therefore, Lemma \ref{lem:real_stat} tells us that for all $s \in [t-\rho,t]$,
    $$
        \Sigma_{s,xx} \succeq \Theta \Sigma_s \Theta^\top + W - \underbrace{\left( 2 \eta \ip{\bar{V}_{k-1}^{-1}, \Sigma_s} + 2 \nu T^{-1} h^{-1} (1 + S^2 + 2 (n + m) \eta \lambda^{-1}) \right)}_{\bar{\eta}_s} I,
    $$
    Then, we show that this holds for $t \in [\tau_1, \tau_2-1]$ as well.
    Indeed, from Lemma \ref{lem:phase_0}, it holds that for all $s \in [t-\rho,t]$ (when $t \in [\tau_1, \tau_2-1]$),
    $$
        \Sigma_{s,xx} \succeq \Theta \Sigma_s \Theta^\top + W - \eta \ip{\bar{V}_{k-1}^{-1}, \Sigma_s} I \succeq \Theta \Sigma_s \Theta^\top + W - \bar{\eta}_s I
    $$
    This confirms condition \#1 in Lemma \ref{lem:cov_approx}.

    \emph{Condition \#2:}
    We have already shown that $\tr(\Sigma_t) \leq \nu$, and therefore it holds that  $\| \Sigma_t \| \leq \tr(\Sigma_t) \leq \nu$ given that $\Sigma_t \succeq 0$.

    \emph{Condition \#3:} Next, we apply Lemma \ref{lem:strong_stab} by showing each of the conditions in that lemma are satisfied.
    We have already shown conditions \#1-2 in Lemma \ref{lem:strong_stab}, i.e. that $\Sigma_t \succeq 0$ and $\tr(\Sigma_t) \leq \nu$.
    Then, to establish condition \#3 in Lemma \ref{lem:strong_stab}, we note that,
    \begin{align*}
        \bar{\eta}_s & \leq 2 \eta \lambda^{-1} \nu + 2 \nu h^{-1} T^{-1} (1 + S^2 + 2 (n + m) \eta \lambda^{-1}) \\
        & \leq 2 \eta \lambda^{-1} \nu + 2 \nu h^{-1} (1 + S^2 + 2 (n + m) )\\
        & \leq \sigma/2,
    \end{align*}
    where we use the fact that $T \geq 1, \lambda \geq \eta$ in the second line, and then, the fact that $\lambda \geq \frac{8 \eta \nu}{\sigma}$, $h \geq \frac{8 \nu (1 + S^2 + 2 (n+m))}{\sigma}$ in the third line.
    Then, condition \#4 in Lemma \ref{lem:strong_stab} can be satisfied with the choice $\zeta \leq \frac{\sigma^2}{16 \nu^2}$, which ensures that $\| \Sigma_{t+1} - \Sigma_t \| \leq \frac{(\sigma/2)^2}{2 \nu}$ due to Lemma \ref{lem:slow_var}, as desired.
    Therefore, we have shown that the sequence of controllers $(K_s)_{s \in [\tau,t]}$ are $(\tilde{\kappa}, \tilde{\gamma}) = (\frac{2 \nu}{\sigma}, \frac{\sigma}{4 \nu})$ strongly-stable.

    \emph{Condition \#4:} To show this condition, we need to argue that $K_s, U_s$ are $\Fc_{t-\rho}$-measurable for all $s \in [t,t-\rho]$.
    This holds because the update at each phase only uses information from time steps earlier than $\tau_k - \rho$.
    Therefore, for all time steps $t$ in phase $k$, it holds that $\Sigma_t$ is fully determined by the randomness in time steps earlier than $t - \rho$.
    Thus, $K_s, U_s$ are $\Fc_{t-\rho}$-measurable for all $s \in [t,t-\rho]$.

    Then, to show the bound in the lemma, we note that,
    \begin{align*}
        \sum_{s=t-\rho}^{t} \bar{\eta}_s & = 2 \eta \sum_{s=t-\rho}^{t} \ip{\bar{V}_{k-1}^{-1}, \Sigma_s} + \rho 2 \nu T^{-1} h^{-1} (1 + S^2 + 2 (n + m) \eta \lambda^{-1})\\
        & = 2 \rho \eta \ip{\bar{V}_{k-1}^{-1}, \Sigma_t} + 2 \eta \sum_{s=t-\rho}^{t} \ip{\bar{V}_{k-1}^{-1}, \Sigma_s - \Sigma_t} + \rho 2 \nu T^{-1} h^{-1} (1 + S^2 + 2 (n + m) \eta \lambda^{-1})\\
        & \leq 2 \rho \eta \ip{\bar{V}_{k-1}^{-1}, \Sigma_t} + 2 \eta \tr(\bar{V}_{k-1}^{-1}) \sum_{s=t-\rho}^{t} \| \Sigma_s - \Sigma_t \| + \rho 2 \nu T^{-1} h^{-1} (1 + S^2 + 2 (n + m) \eta \lambda^{-1})\\
        & \leq 2 \rho \eta \ip{\bar{V}_{k-1}^{-1}, \Sigma_t} + 4 \nu \zeta \eta (n + m) \lambda^{-1} \sum_{s=t-\rho}^{t} (t - s) + \rho 2 \nu T^{-1} h^{-1} (1 + S^2 + 2 (n + m) \eta \lambda^{-1})\\
        & = 2 \rho \eta \ip{\bar{V}_{k-1}^{-1}, \Sigma_t} + 2 \nu \zeta \eta (n + m) \lambda^{-1} \rho(\rho + 1) + \rho 2 \nu T^{-1} h^{-1} (1 + S^2 + 2 (n + m) \eta \lambda^{-1})
    \end{align*}

    Finally, we show the bound on $m_{t|t-\rho}$ via Lemma \ref{lem:mean}. Indeed, the conditions \#1 and \#2 in Lemma \ref{lem:mean} are the same as conditions \#3 and \#4 in Lemma \ref{lem:cov_approx} as shown above.
    Then, condition \#3 in Lemma \ref{lem:mean} follows immediately from event $E$.
\end{proof}

\subsection{Proof of Lemma \ref{lem:bounded_state}}
\label{sec:bounded_state_proof}

In this section, we prove Lemma \ref{lem:bounded_state}, which shows that event $E$ holds with high probability and hence the estimation error is bounded and the state is bounded.
This proof uses an induction inspired by the comparable lemmas in \cite{abbasi2011regret} and \cite{cohen2019learning}.
In particular, we show that if the estimation error is bounded in a given phase, the phase update will be well-defined and therefore the resulting controller will be stable.
The controller being stable ensures that the state is bounded and therefore that the estimation error is bounded.
This type of inductive argument thus implies both bounded state and bounded estimation error.
The events $F_k$ are used to isolate each step of this induction, i.e. we show that if $F_k$ holds then $F_{k+1}$ holds.

For the purposes of this section, we take $K_s = 0$ and $\Sigma_s = \bar{\Sigma}_0$ for $s \in [\tau_1-1]$.
We first give a lemma mostly drawn from Lemma 6 in \cite{cohen2019learning} (which itself draws heavily from \cite{abbasi2011regret}) that provides a high probability bound on the estimation error of the dynamics.

\begin{lemma}[Dynamics Estimation Bound]
    Suppose that $\| \hat{\Theta}_0 - \Theta_\star \|_F \leq \Delta$ and $\| W \| \leq \bar{w}$.
    Then, let $\Econf$ be the event that, for all $k \in \{0,...,N\}$,
    \begin{equation}
        \label{eq:est_bound}
        \tr \left( (\Theta_\star - \hat{\Theta}_k) \bar{V}_k (\Theta_\star - \hat{\Theta}_k)^\top \right) \leq 4 \bar{w} d^2 \log\left( \omega_1^{-1}\left(d + \lambda^{-1} \sum_{s=1}^{\tau_k - \rho} \| z_s \|^2 \right) \right) + 2 \lambda \Delta^2.
    \end{equation}
    It holds that $\Pb(\Econf) \geq 1 - \omega_1$.
\end{lemma}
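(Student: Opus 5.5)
This is the standard self-normalized least-squares confidence bound (Abbasi-Yadkori et al.; Lemma 6 in \cite{cohen2019learning}). The plan is to apply it once to the regularized, delayed estimator of Algorithm~\ref{alg:phase} and then take a union over the (random) phase start times.

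\textbf{Step 1: closed form of the error.} Write the noise as $x_{s+1} = \Theta_\star z_s + w_s$. The estimator \eqref{eq:sys_est} has the explicit form
$$\hat{\Theta}_k = \Big(\textstyle\sum_{s=1}^{\tau_k-\rho} x_{s+1} z_s^\top + \lambda \hat{\Theta}_0\Big)\bar{V}_k^{-1}.$$
Substituting the dynamics gives
$$\hat{\Theta}_k - \Theta_\star = \big(M_{\tau_k-\rho} + \lambda(\hat{\Theta}_0 - \Theta_\star)\big)\bar{V}_k^{-1},$$
where $M_t := \sum_{s\le t} w_s z_s^\top$. Then $\tr\big((\Theta_\star-\hat\Theta_k)\bar V_k(\Theta_\star-\hat\Theta_k)^\top\big) = \|(M + \lambda\Delta_0)\bar V_k^{-1/2}\|_F^2$. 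Using $(a+b)^2\le 2a^2+2b^2$, this is at most
$$2\|M\bar V_k^{-1/2}\|_F^2 + 2\lambda^2\|\Delta_0 \bar V_k^{-1/2}\|_F^2 \le 2\|M\bar V_k^{-1/2}\|_F^2 + 2\lambda\Delta^2,$$
since $\bar V_k \succeq \lambda I$. One point needs care: $\hat\Theta_0$ depends on data from $[1,\tau_0]$, which is itself part of the regression. I would either treat the conditioning as in \cite{cohen2019learning}, noting that the self-normalized bound holds uniformly regardless of the regularization center because the martingale term $M$ does not involve $\hat\Theta_0$, or simply accept the decomposition above, since the bias term is deterministic given $\Delta$.

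\textbf{Step 2: martingale bound.} Each $z_s$ is $\Fc_{s}$-measurable (before $w_s$ is drawn), and each row of $w_s$ is conditionally Gaussian with variance at most $\|W\|\le\bar w$, hence sub-Gaussian. Apply the vector self-normalized inequality row by row, for each of the $n$ coordinates of $w$, each with confidence $\omega_1/n$. This gives, simultaneously for all $t\ge 1$,
$$\|M_t V_t^{-1/2}\|_F^2 \le 2\bar w\, n\,\log\!\big(n\det(V_t)^{1/2}\det(\lambda I)^{-1/2}/\omega_1\big).$$
The key point is that this holds uniformly in $t$ (it comes from a stopping-time/supermartingale argument), so plugging in the random times $t=\tau_k-\rho$ costs no further union bound over $k$.

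\textbf{Step 3: bounding the log-determinant.} Bound $\log\det(V_t/\lambda) \le d\log\big(1 + \lambda^{-1}\sum_s\|z_s\|^2/d\big)$ by AM-GM on the trace. Combining with the $n$ factor and using $n\le d$ gives a bound of the form $4\bar w d^2\log\big(\omega_1^{-1}(d+\lambda^{-1}\sum\|z_s\|^2)\big)$; the generous constant $d^2$ absorbs these losses.

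The main obstacle is Step 1's measurability subtlety together with absorbing constants. Once the error decomposition is fixed and the anytime property of the self-normalized bound is invoked, the remaining steps are routine.
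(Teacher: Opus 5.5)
Your proposal is correct and follows the paper's route: the paper invokes Lemma 6 of Cohen et al.\ as a black box, and that lemma's proof is exactly your split into a bias term and a martingale term with the anytime self-normalized bound (which produces the $2\lambda\Delta^2$ term); the paper then bounds the $\log\det$ by AM--GM on the trace, as in your Step 3. The one case you leave implicit is $k=0$, where your closed form does not apply because $\hat{\Theta}_0$ is the unregularized estimate from $\mathsf{Initialize}$; the paper handles it in one line via $\lambda\|\Theta_\star-\hat{\Theta}_0\|_F^2\le\lambda\Delta^2$, which implicitly takes $\bar{V}_0=\lambda I$ rather than the Gram matrix set in Algorithm~\ref{alg:main_alg}.
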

\begin{proof}
    First, we note that for $k = 0$, it holds that,
    $$
        \tr \left( (\Theta_\star - \hat{\Theta}_k) \bar{V}_k (\Theta_\star - \hat{\Theta}_k)^\top \right) = \lambda \| \Theta_\star - \hat{\Theta}_k \|_F^2 \leq \lambda \Delta^2,
    $$
    and therefore \eqref{eq:est_bound} is satisfied almost surely for this case.

    Then, we handle the case of $k \geq 1$.
    To do so, we apply Lemma 6 in \cite{cohen2018online}.
    This lemma requires that each element of $w_t$ is subgaussian.
    This holds as the $i$th element of $w_t$ can be written as $[w_t]_i = \mathbf{e}_i^\top w_t$ and therefore,
    $$
        \mathrm{var}([w_t]_i) = \mathbf{e}_i^\top W \mathbf{e}_i \leq \| W^{1/2} \mathbf{e}_i \|^2 \leq \| W^{1/2} \|^2 \| e_i \|^2 = \| W \|
    $$
    Therefore, Lemma 6 in \cite{cohen2018online} (with $\beta = 1$) tells us that,
    \begin{align*}
        \tr \left( (\Theta_\star - \hat{\Theta}_k) \bar{V}_k (\Theta_\star - \hat{\Theta}_k)^\top \right) & \leq 4 \sigma^2 d \log\left(\frac{d \det(\bar{V}_k)}{\omega \det(\lambda I)} \right) + 2 \lambda \Delta^2.
    \end{align*}
    Then,
    \begin{align*}
        \log\left(\frac{d \det(\bar{V}_k)}{\omega \det(\lambda I)} \right) & = \log\left(d/\omega \right) + \log\left(\det(\bar{V}_k) \lambda^{-d} \right)\\
        & \leq \log\left(d/\omega \right) + \log\left((d^{-1} \tr(\bar{V}_k))^d \lambda^{-d} \right)\\
        & = \log\left(d/\omega \right) + \log\left(\left(d \lambda + \sum_{s=1}^{\tau_k - \rho} \| z_s \|^2 \right)^d d^{-d} \lambda^{-d} \right)\\
        & = \log\left(d/\omega \right) + d \log\left( 1 + \lambda^{-1} d^{-1} \sum_{s=1}^{\tau_k - \rho} \| z_s \|^2  \right)\\
        & \leq d \log\left( \omega^{-1}\left(d + \lambda^{-1} \sum_{s=1}^{\tau_k - \rho} \| z_s \|^2 \right) \right)
    \end{align*}
\end{proof}

Next, we give a bound on the non-linear part of the closed-loop system.
This consists of both the random part of the input, as well as the disturbance.
Since both of these are Gaussian, we use Gaussian tail bounds to establish this bound.

\begin{lemma}[Noise Bound]
    Suppose that $\| W \| \leq \bar{w}$ and $\| \Theta_\star \| \leq S$.
    Let $\Enois$ be the event that, for all $t \in [T]$,
    $$
        \| x_{t+1} - (A + B K_t) x_t \| \leq  2 \left(\sqrt{\bar{w}} + S \sqrt{\tr(\Sigma_{t,uu})} \right) \sqrt{2 n \log(4 n T /\omega_2)} + S c.
    $$
    Then, it holds that $\Pb(\Enois) \geq 1 - \omega_2$
\end{lemma}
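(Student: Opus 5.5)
We need to prove the Noise Bound lemma. We have $x_{t+1} - (A+BK_t)x_t = B v_t + w_t$ with $v_t \sim \Nc(0,U_t)$ and $w_t \sim \Nc(0,W)$. The term $Sc$ accounts for the initialization phase, where $u_t \sim \Uc(c\Sb)$ and $K_t=0$, so that $\|B u_t\|\le S c$ deterministically. The plan is to bound $\|w_t\|$ and $\|B v_t\|$ separately with Gaussian tail bounds, and then take a union bound over $t\in[T]$ and over coordinates.

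\emph{Disturbance term.} For each coordinate, $[w_t]_i$ is Gaussian with variance at most $\|W\|\le \bar{w}$, exactly as computed in the preceding lemma. The standard tail bound gives $|[w_t]_i| \le \sqrt{2\bar{w}\log(4nT/\omega_2)}$ with probability at least $1-\omega_2/(2nT)$. A union bound over $i\in[n]$ and $t\in[T]$ then yields
\[
\|w_t\| \le \sqrt{n}\max_i |[w_t]_i| \le \sqrt{\bar{w}}\sqrt{2n\log(4nT/\omega_2)}
\]
for all $t$, with probability at least $1-\omega_2/2$.

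\emph{Input noise term.} Here $U_t$ is random, so we condition on $\Fc_{t-1}$ (or $\Fc_{t-\rho}$), under which $U_t$ is determined and $v_t$ is conditionally Gaussian. Since $U_t = \Sigma_{t,uu} - K_t\Sigma_{t,xx}K_t^\top \preceq \Sigma_{t,uu}$ (the Schur complement), each coordinate of $Bv_t$ is conditionally Gaussian with variance at most $\|B\|^2\|U_t\|\le S^2\tr(\Sigma_{t,uu})$. Applying the conditional tail bound and then integrating removes the conditioning, because the failure probability bound does not depend on the realization. After a union bound over coordinates and times, with probability at least $1-\omega_2/2$,
\[
\|Bv_t\|\le S\sqrt{\tr(\Sigma_{t,uu})}\sqrt{2n\log(4nT/\omega_2)}.
\]
Note that $Bv_t\in\Rb^n$, so $n$ coordinates suffice.

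\emph{Combining.} Summing the two bounds, adding the $Sc$ term for exploration steps, and applying a final union bound gives the claim; the leading factor of $2$ is slack. The main subtlety is handling the random, adapted variance $U_t$, which the conditioning argument resolves. During initialization the noise is uniform and bounded rather than Gaussian, so there we use the deterministic bound on that part of the input instead.
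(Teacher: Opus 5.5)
Your proof is correct and takes essentially the same route as the paper. Like the paper, you split the residual into three pieces: $w_t$, the term $Bv_t$ (conditionally Gaussian given $\Fc_{t-1}$, with variance controlled by $S^2\tr(\Sigma_{t,uu})$ via $U_t \preceq \Sigma_{t,uu}$), and the deterministic $Sc$ exploration term, and then apply coordinatewise Gaussian tail bounds with union bounds over coordinates and time. Your explicit $\omega_2/2$ allocation also makes clear that the leading factor of $2$ is slack.
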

\begin{proof}
    Note that,
    $$
    x_{t+1} - (A + B K_t) x_t = \begin{cases} 
        B u_t + w_t & t \leq \tau_0 - 1\\
        w_t & \tau_0 \leq t \leq \tau_1\\
        B v_t + w_t & t \geq \tau_1 \end{cases}
    $$

    We first bound $w_t$ for all $t$, since it appears at every time step.
    Since $[w_t]_i$ is $\bar{w}$-subgaussian, it holds for all $i \in [n]$ that,
    $$
        \Pb\left(| [w_t]_i | \leq \sqrt{2 \bar{w} \log(2/\delta)} \right) \geq 1 - \delta
    $$
    Therefore, applying the union bound for each element $i \in [n]$ and time step $t \in [T]$, and the fact that $\| \cdot \| \leq \sqrt{n} \| \cdot \|_\infty$,
    $$
        \Pb\left( \| w_t \| \leq \sqrt{2 n \bar{w} \log(2 n T /\delta)} \right) \geq 1 - \delta.
    $$
    Then, we look at the term $B v_t$ for $t \geq \tau_1$. First, note that,
    $$
        \| B U_t B^\top \| \leq \tr(B \Sigma_{t,uu} B^\top) - \tr(B \Sigma_{t,ux} \Sigma_{t,xx}^{-1} \Sigma_{t,xu} B^\top) \leq \tr(B \Sigma_{t,uu} B^\top) \leq S^2 \tr(\Sigma_{t,uu})
    $$
    Therefore, $B v_t$ is $S^2 \tr(\Sigma_{t,uu})$-subgaussian conditioned on $\Fc_{t-1}$. Then, applying the (conditional) subgaussian tail bound and taking the union bound over each $i \in [n]$,
    \begin{align*}
        \Pb\left( \| B v_t \|_\infty \leq S \sqrt{2 \tr(\Sigma_{t,uu}) \log(2 n /\delta)} \right) & = \Eb \left[ \Pb\left( \| B v_t \|_\infty \leq S \sqrt{2 \tr(\Sigma_{t,uu}) \log(2 n /\delta)} \ | \ \Fc_{t-1} \right) \right]\\
        & \geq 1 - \delta
    \end{align*}
    Taking the union bound for each $t \geq \tau_1$,
    \begin{align*}
        & \Pb\left( \| B v_t \| \leq S \sqrt{2 n \tr(\Sigma_{t,uu}) \log(2 n T /\delta)}, \forall t \in [T] \right) \geq 1 - \delta.
    \end{align*}

    Next, we note that for $t \leq \tau_0 - 1$, it holds almost surely that $\| u_t \| \leq c$.

    Finally, applying the union bound over the bounds for $v_t$ and $w_t$, it holds with probability at least $1 - \omega_2$ that, for all $t \in [T]$,
    $$
        \| x_{t+1} - (A + B K_t) x_t \| \leq  2 \left(\sqrt{\bar{w}} + S \sqrt{\tr(\Sigma_{t,uu})} \right) \sqrt{2 n \log(4 n T /\omega_2)} + S c,
    $$
    where the $+ S c$ accounts for $u_t$ for $t \leq \tau_0 - 1$.
\end{proof}

The next lemma shows that if $F_k$ holds, then the closed loop system is stable up through the end of the $k$th phase. 

\begin{lemma}[Conditionally Strongly-Stable]
    \label{lem:cond_s}
    In addition to the assumptions of Lemma \ref{lem:safe_margin}, assume that:
    \begin{enumerate}
        \item $W \succeq \sigma I$,
        \item  $A_\star$ is $(\kappa, \gamma)$-strongly stable,
        \item $\| \Theta_\star \| \leq S$
        \item $\nu \geq \frac{2 \kappa^2 \tr(W)}{\gamma}$
        \item $\lambda \geq \max \left(\frac{H R_z^2}{\log(2)},\frac{8 \eta \nu}{\sigma}, \eta \right)$
    \end{enumerate}
    Then, the following holds for all $k \geq 1$. If $F_k$ holds, then the sequence $(K_s)_{s=1}^{\tau_{k+1}-1}$ is $(\bar{\kappa}, \bar{\gamma}) = (\frac{2 \nu}{\sigma}, \frac{\sigma}{4 \nu})$ strongly stable, and $\tr(\Sigma_{s}) \leq \nu$ for all $s \in [\tau_{k+1}-1]$.
\end{lemma}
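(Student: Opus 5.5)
The plan is to establish, for each $k \geq 1$ under $F_k$, the hypotheses of Lemma \ref{lem:strong_stab} on the interval $\Tc = [1, \tau_{k+1}-1]$. That lemma then yields $(\bar{\kappa},\bar{\gamma}) = (\frac{2\nu}{\sigma}, \frac{\sigma}{4\nu})$ sequential strong stability, once we show $\bar{\eta}_s \leq \sigma/2$ for all $s$ in the interval.

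\emph{Trace bound.} The trace bound $\tr(\Sigma_s) \leq \nu$ for $s \in [\tau_{k+1}-1]$, together with $\Sigma_s \succeq 0$, is exactly Lemma \ref{lem:nu}. It applies because $\nu \geq 2\kappa^2 \tr(W)/\gamma$ and $F_k$ holds. Positive semidefiniteness follows because $\Sigma_s$ is a convex combination of the psd matrices $\bar{\Sigma}_i$ (Lemma \ref{lem:slow_var}, item 1). Each $\bar{\Sigma}_i$ is psd because it is a convex combination of $\Sigma_i^o \succeq 0$ and $\Sigma_i^{\safe} \succeq 0$. Conditions 1 and 2 of Lemma \ref{lem:strong_stab} therefore hold.

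\emph{Slow variation.} Condition 4 follows from the recursion $\Sigma_{t+1} - \Sigma_t = \zeta(\bar{\Sigma}_{k_{t+1}} - \Sigma_t)$. Both matrices are psd with trace at most $\nu$, so $\|\Sigma_{t+1} - \Sigma_t\| \leq 2\nu\zeta$. Choosing $\zeta \leq \sigma^2/(16\nu^2)$ makes this at most $(\sigma/2)^2/(2\nu)$. I will need to check that this restriction on $\zeta$ is among the standing assumptions; if it is not, I will add it, since Theorem \ref{thm:main_apx} chooses $\zeta$ this way.

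\emph{Approximate stationarity (main step).} Condition 3 is the main obstacle, and I will split it by phase.
\begin{itemize}
\item For $s \leq \tau_2 - 1$, Lemma \ref{lem:phase_0} gives $\Sigma_s \in \Ec_0$ under $F_1 \subseteq F_k$. Hence $\bar{\eta}_s = 2\eta\ip{\bar{V}_0^{-1},\Sigma_s} \leq 2\eta\lambda^{-1}\nu \leq \sigma/4$, using $\lambda \geq 8\eta\nu/\sigma$.
\item For $s \in [\tau_{j-1}+H, \tau_{j+1}-1]$ with $2 \leq j \leq k$, Lemma \ref{lem:real_stat} applies under $F_j \subseteq F_k$, using $\lambda \geq HR_z^2/\log 2$ and the trace bound. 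It gives
\[
\bar{\eta}_s = 2\eta\ip{\bar{V}_{j-1}^{-1},\Sigma_s} + 2\nu h^{-1}T^{-1}\bigl(1+S^2+2(n+m)\eta\lambda^{-1}\bigr),
\]
which is at most $\sigma/4 + \sigma/4$. The first term is bounded by $\lambda \geq 8\eta\nu/\sigma$, and the second by $h \geq 8\nu(1+S^2+2(n+m))/\sigma$ together with $\lambda \geq \eta$.
\end{itemize}

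\emph{Covering the interval.} The delicate point is that the windows $[\tau_{j-1}+H,\tau_{j+1}-1]$, for $j = 2,\dots,k$, together with $[1,\tau_2-1]$, must cover $[1,\tau_{k+1}-1]$. This needs phase lengths at least $H$, which comes from Lemma \ref{lem:phase_upd} (item 5) and the bound $\|z_t\| \leq R_z$ guaranteed by $F_k$ up to $\tau_k - 1$. There is a subtlety for the final phase $k$: the state bound is only known up to $\tau_k - 1$, but the phase-length argument concerns $\tau_k - \tau_{k-1}$, which only uses earlier states, so this should suffice. I also need $h$ to satisfy the stated lower bound. If it is not a standing assumption, I will take $h$ as defined in Theorem \ref{thm:main_apx}, which meets it.

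With $\bar{\eta}_s \leq \sigma/2$ established on the whole interval, Lemma \ref{lem:strong_stab} yields sequential strong stability with $(\frac{\nu}{\sigma/2}, \frac{\sigma/2}{2\nu}) = (\bar{\kappa},\bar{\gamma})$. It also requires $\|\Sigma_{t+1}-\Sigma_t\| \leq (\sigma/2)^2/(2\nu)$, which was verified above.
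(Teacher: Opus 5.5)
Your proposal is correct and follows essentially the same route as the paper. You verify the hypotheses of Lemma~\ref{lem:strong_stab} on $[1,\tau_{k+1}-1]$ with uniform relaxation $\bar{\eta} = \sigma/2$, using Lemma~\ref{lem:nu} for the trace bound, Lemma~\ref{lem:phase_0} for $s \le \tau_2 - 1$, Lemma~\ref{lem:real_stat} phase by phase thereafter, and the choices of $\zeta$ and $h$ from Theorem~\ref{thm:main_apx}, which are indeed standing assumptions through Lemma~\ref{lem:safe_margin}. The paper additionally checks that each safe-scaling step is well defined, i.e.\ that $\Sigma_i^{\mathrm{safe}} \in \mathcal{E}_i^p$ for $i \le k$ by Lemma~\ref{lem:safe_margin} under $F_k$, so that $\bar{\Sigma}_i$ really is a convex combination of $\Sigma_i^o$ and $\Sigma_i^{\mathrm{safe}}$; you take this for granted and should state it.
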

\begin{proof}
    We show that the conditions of Lemma \ref{lem:strong_stab} hold for all $s\in [\tau_{k+1} - 1]$ when $F_k$ holds, as this implies that $K_s$ are strongly stable.

    First, we show that the update at phase $i \leq k$ of the algorithm is well-defined.
    Indeed, conditioned on $F_k$, Lemma \ref{lem:safe_margin} says that $\Sigma_i^\safe \in \Ec_i^p$, and it follows that the safe scaling step in the algorithm (line \ref{lne:scale}) is well-defined in the sense that there exists $\phi = 0$ such that $\phi \Sigma_i^o + (1 - \phi) \Sigma_i^\safe \in \Ec_i^p$ for all $i \leq k$.

    Then, we show that $\tr(\Sigma_{s}) \leq \nu$ for all $s \in [\tau_{k+1}-1]$.
    Indeed, the conditions for Lemma \ref{lem:nu} are satisfied since $F_k$ is assumed to hold, and due to the assumption on $\nu$ and the assumption that $A_\star$ is strongly stable.
    Therefore Lemma \ref{lem:nu} says that $\tr(\Sigma_{s}) \leq \nu$ for all $s \in [\tau_{k+1}-1]$, and $\tr(\Sigbar_i) \leq \nu$ for all $i \leq k$.
    This verifies condition \#2 in Lemma \ref{lem:strong_stab} for $s \in [\tau_{k+1} - 1]$.

    Next, we show that all $\Sigma_s$ satisfies the approximate stationarity condition (\#3 in Lemma \ref{lem:strong_stab}) for all $s\in [\tau_{k+1} - 1]$.
    We do so by applying Lemma \ref{lem:real_stat} for each phase $i \leq k$.
    Note that since $F_i$ are monotone, our assumption that $F_k$ holds implies that $F_i$ holds for $i \leq k$.
    The other conditions of Lemma \ref{lem:real_stat} hold as we assume that $\| \Theta_\star \| \leq S$ and $\lambda \geq \frac{H R_z^2}{\log(2)}$, and have shown that $\tr(\Sigma_{s}) \leq \nu$ for all $s \in [\tau_{k+1}-1]$ which.
    Therefore, applying Lemma \ref{lem:real_stat} to each phase $i \geq 2$, it holds for all $s \in [\tau_2,\tau_{k+1}-1]$,
    \begin{equation}
        \label{eq:approx_stat}
    \begin{split}
        \Sigma_{s,xx} & \succeq \Theta \Sigma_s \Theta^\top + W - 2 \eta \langle \bar{V}_{k_s-1}^{-1}, \Sigma_s \rangle I - 2 \nu T^{-1} (1 + S^2 + 2 \eta (n + m) \lambda^{-1}) I \\
        & \succeq \Theta \Sigma_s \Theta^\top + W - 2 \eta \lambda^{-1} \nu I - 2 \nu (1 + S^2 + 2 (n + m) ) I \\
        & \succeq \Theta \Sigma_s \Theta^\top + W - (\sigma/2) I,
    \end{split}
    \end{equation}
    where we use the fact that $T \geq 1, \lambda \geq \eta$ in the second line, and then, the fact that $\lambda \geq \frac{8 \eta \nu}{\sigma}$, $h \geq \frac{8 \nu (1 + S^2 + 2 (n+m))}{\sigma}$ in the third line.
    In the case of $s \in [\tau_2-1]$, \eqref{eq:approx_stat} still holds due to Lemma \ref{lem:phase_0}.
    Thus, we have shown that condition \#3 in Lemma \ref{lem:strong_stab} is satisfied with $\bar{\eta} = \sigma/2$ for all $s \in [\tau_{k+1}-1]$.

    Finally, it holds for $s \in [2,\tau_{k+1}-1]$,
    $$
        \| \Sigma_{s} - \Sigma_{s-1} \| = \zeta \| \bar{\Sigma}_{k_s} - \Sigma_s \| \leq 2 \nu \zeta \leq \frac{\sigma/2}{2 \nu^2} = \frac{\sigma - \bar{\eta}}{2 \nu^2},
    $$
    where we use that $\| \Sigma_s \| \leq \nu$ and $\| \Sigbar_k \| \leq \nu$ as we have shown, and additionally, the assumption on $\zeta$.
    Thus, condition \#4 of Lemma \ref{lem:strong_stab} has been satisfied.
\end{proof}

Then, the next lemma shows that if the closed-loop system is stable, then the state is bounded.

\begin{lemma}
    \label{lem:z_bound}
    Suppose that $\Enois$ holds and $\| x_1 \| \leq R_1$.
    Then, if $(K_s)_{s=1}^{\tau_{k+1}-1}$ is $(\bar{\kappa}, \bar{\gamma})$ strongly stable, and $\tr(\Sigma_{s,uu}) \leq \nu$ for all $s \in [\tau_{k+1}-1]$, it follows that
    $$\| z_s \| \leq (1 + \bar{\kappa})\bar{\kappa} R_1 + 4 (1 + \bar{\kappa}) \bar{\kappa} \bar{\gamma}^{-1} \left(\sqrt{\bar{w}} + S \sqrt{\nu} \right) \sqrt{2 n \log(4 n T /\omega_2)} + 2 (1 + \bar{\kappa}) \bar{\kappa} \bar{\gamma}^{-1} S c$$ 
    for all $s \in [\tau_{k+1}-1]$.
\end{lemma}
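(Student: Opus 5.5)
We unroll the closed-loop recursion and bound the forced and driven responses separately, using the sequential strong stability of $(K_s)$. Write $A_s := A_\star + B_\star K_s$ and $e_s := x_{s+1} - A_s x_s$, so that $x_{s+1} = A_s x_s + e_s$. Iterating from $x_1$ gives
\begin{equation*}
x_s = \Big(\prod_{i=s-1}^{1} A_i\Big) x_1 + \sum_{r=1}^{s-1} \Big(\prod_{i=s-1}^{r+1} A_i\Big) e_r .
\end{equation*}

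First we bound the transition products. The argument of \eqref{eq:a_prod}, under Definition~\ref{def:seq_str_stab}, gives $\|\prod_{i=s-1}^{r+1} A_i\| \leq \bar{\kappa}(1-\bar{\gamma}/2)^{s-r-1}$. This requires $\|H_{i+1}^{-1}H_i\| \le 1+\bar\gamma/2$, $\|L_i\|\le 1-\bar\gamma$ and $(1+\bar\gamma/2)(1-\bar\gamma)\le 1-\bar\gamma/2$. For $s\le\tau_0+\rho$ we have $K_s=0$, and there we use the strong stability of $A_\star$ from Assumption~\ref{ass:null_policy}. This is consistent after taking constants at least as large as $(\kappa,\gamma)$, which I would check or absorb via the $\max$ in $R_z$.

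Next we bound the disturbance terms. Under $\Enois$, each $\|e_r\| \le 2(\sqrt{\bar w}+S\sqrt{\tr(\Sigma_{r,uu})})\sqrt{2n\log(4nT/\omega_2)} + Sc$. Applying $\tr(\Sigma_{r,uu})\le\nu$ bounds every $e_r$ by a common constant $M$. The geometric sum then gives
\begin{equation*}
\|x_s\| \le \bar\kappa R_1 + \bar\kappa M \sum_{j\ge0}(1-\bar\gamma/2)^j = \bar\kappa R_1 + 2\bar\kappa\bar\gamma^{-1}M .
\end{equation*}

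Finally we pass from $x_s$ to $z_s$. We have $z_s = [x_s;\,K_s x_s + v_s]$. The deterministic part is bounded via $\|[I;K_s]\|\le 1+\|K_s\|\le 1+\bar\kappa$, which multiplies the bound on $x_s$ and yields the factor $(1+\bar\kappa)$. The input noise $v_s$, and $u_s$ during exploration, must also be accounted for. I expect the main obstacle to be this noise contribution in $u_s$, since $\Enois$ only controls $Bv_s$ and not $v_s$ itself. My first attempt is to absorb it into the slack of the stated bound, the factor $4$ versus $2$ in front of the noise term. This would use a tail bound on $v_s$ of the same form, valid because $\|U_s\|\le\tr(\Sigma_{s,uu})\le\nu$, together with $\|u_s\|\le c$ during exploration. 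If that fails, I would enlarge $\Enois$ to include $\|v_s\|$ directly. Collecting the three bounds then gives the claimed estimate for all $s\in[\tau_{k+1}-1]$.
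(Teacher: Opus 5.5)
Your bound on $x_s$ is exactly the paper's proof. You unroll the closed loop, control the transition products by \eqref{eq:a_prod}, and bound each $\|x_{r+1}-(A_\star+B_\star K_r)x_r\|$ on $\Enois$ using $\tr(\Sigma_{r,uu})\le\nu$. You then sum the geometric series and multiply by $\|[I;\,K_s]\|\le 1+\bar\kappa$. (The detour through $(\kappa,\gamma)$ for $s\le\tau_0+\rho$ is unnecessary: the hypothesis already gives $(\bar\kappa,\bar\gamma)$ sequential strong stability of the whole sequence $(K_s)_{s=1}^{\tau_{k+1}-1}$. Splitting the product at the junction would also cost a factor $\kappa\bar\kappa$, not a $\max$.)

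The paper then finishes with $\|z_t\|\le\|[I;\,K_t]x_t\|$, which silently drops $v_t$ and the exploration input. So you have found the real issue. Your first remedy, however, fails for two reasons.

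First, there is no ``$4$ versus $2$'' slack. Your own display gives $\|x_s\|\le B_x:=\bar\kappa R_1+2\bar\kappa\bar\gamma^{-1}M$. Its noise part is exactly $4\bar\kappa\bar\gamma^{-1}(\sqrt{\bar w}+S\sqrt\nu)\sqrt{2n\log(4nT/\omega_2)}$, so $(1+\bar\kappa)B_x$ is precisely the stated bound with nothing to spare.

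Second, and more fundamentally, $\Enois$ constrains only the combination $B_\star v_t+w_t$. On $\Enois$ the input noise $v_s$ can be arbitrarily large, for example along $\ker B_\star$, or with $B_\star v_s$ cancelled by $w_s$. So no tail bound on $v_s$ is available from the stated hypotheses, and the claimed inequality for $\|z_s\|$ does not follow from them. Neither your first attempt nor the paper's argument proves the lemma as written.

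Your fallback (enlarging $\Enois$) is the correct repair, and it can be done without changing the displayed constant.

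The usable slack is in the lift: $\|[I;\,K_s]\|=\sqrt{1+\|K_s\|^2}\le\sqrt{1+\bar\kappa^2}$, so $\|z_s\|\le\sqrt{1+\bar\kappa^2}\,B_x+\|v_s\|$. Since $\bar\kappa\ge1$, we have $1+\bar\kappa-\sqrt{1+\bar\kappa^2}\ge\bar\kappa/(1+\bar\kappa)\ge 1/2$, so it suffices to show $\|v_s\|\le B_x/2$.

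Apply Gaussian Lipschitz concentration conditionally on the past, using $\|U_s\|\le\tr(U_s)\le\tr(\Sigma_{s,uu})\le\nu$. On an extra event of probability at least $1-\omega'$, this gives $\|v_s\|\le\sqrt{\nu}+\sqrt{2\nu\log(T/\omega')}$ for all $s$. The bound is dimension-free; a coordinatewise bound would cost a factor $\sqrt m$. For $\omega'\ge\omega_2/(4n)$ it is at most $2\sqrt{\nu}\sqrt{2\log(4nT/\omega_2)}$, which is at most $B_x/2$ because $\bar\kappa,\bar\gamma^{-1},S\ge1$.

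During exploration, $K_s=0$ and $\|u_s\|=c\le B_x$, so $\|z_s\|\le B_x+c\le(1+\bar\kappa)B_x$ directly.

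The cost is that the lemma must assume this extra event. One way is to fold it into $\Enois$: the noise-bound proof has a spare factor $2$, so splitting $\omega_2$ three ways leaves the displayed form intact. The union bound in Lemma~\ref{lem:bounded_state} must then include it.
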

\begin{proof}
    First, note that,
    \begin{align*}
        x_t & = (A + B K_{t-1}) x_{t-1} + B v_{t-1} + w_{t-1}  \\
        & = \left(\Pi_{i=t-1}^1 (A + B K_{i}) \right) x_1 + \sum_{s=1}^t \left(\Pi_{i=t-1}^s (A + B K_{i})\right) (B v_{s} + w_{s}).
    \end{align*}
    Then, using the definition of sequential strong stability, it holds for all $t \geq \tau$ that,
    \begin{align*}
        \| \Pi_{i=t}^{\tau} (A + B K_{t-1}) \| & \leq \bar{\kappa} \left(1 - \bar{\gamma}/2 \right)^{t - \tau}.
    \end{align*}
    Therefore, under $\Enois$,
    \begin{align*}
        \| x_t\|  & \leq \left\| \Pi_{i=t-1}^1 (A + B K_{i}) \right\| R_1 + \sum_{s=1}^t \left\| \Pi_{i=t-1}^s (A + B K_{i})\right\| \| B v_{s} + w_{s} \|\\
        & \leq \bar{\kappa} \left(1 - \bar{\gamma}/2 \right)^{t} + \bar{\kappa} \sum_{s=1}^t  \left( 2 \sqrt{2 n \log(4 n T /\omega_2)}  \left(\sqrt{\bar{w}} R_1 + S \sqrt{\tr(\Sigma_{t,uu})} \right) + S c \right) \left(1 - \bar{\gamma}/2 \right)^{s - t} \\
        & \leq \bar{\kappa}  R_1 + 4 \bar{\kappa} \bar{\gamma}^{-1} \left(\sqrt{\bar{w}} + S \sqrt{\nu} \right) \sqrt{2 n \log(4 n T /\omega_2)} + 2 \bar{\kappa} \bar{\gamma}^{-1} S c,
    \end{align*}
    where the last line uses the bound condition that $\tr(\Sigma_{t,uu}) \leq \tr(\Sigma_t) \leq \nu$.
    It follows that,
    $$
        \| z_t \| \leq \left\| \begin{bmatrix} I\\ K_t \end{bmatrix} x_t \right\| \leq (1 + \bar{\kappa})\bar{\kappa} R_1 + 4 (1 + \bar{\kappa}) \bar{\kappa} \bar{\gamma}^{-1} \left(\sqrt{\bar{w}} + S \sqrt{\nu} \right) \sqrt{2 n \log(4 n T /\omega_2)} + 2 (1 + \bar{\kappa}) \bar{\kappa} \bar{\gamma}^{-1} S c
    $$
\end{proof}

Next, we show that if the state is bounded, then the estimation error is bounded.

\begin{lemma}
    \label{lem:conf_set}
    Suppose that the following hold:
    \begin{enumerate}
        \item $\Econf$ holds,
        \item $\| \hat{\Theta}_0 - \Theta_\star \|_F \leq \lambda^{-1/2}$
        \item $\lambda \geq 4 \kappa^2 \gamma^{-2} \left( 4 \bar{w} d^2 \log\left( \omega_1^{-1}\left(d + \lambda^{-1} T R_z^2 \right) \right) + 2 \right)$
        \item $\eta \geq (4 \bar{w} d^2 \log\left( \omega_1^{-1}\left(d + \lambda^{-1} T R_z^2 \right) \right) + 2) (1  + S \sqrt{\lambda + T R_z^2 })$
        \item $\| \Theta_\star \| \leq S$
    \end{enumerate}
    Then, if $\| z_s \| \leq R_z$ for all $s \in [\tau_{k+1}-1]$, then it follows that,
    \begin{align*}
        & \| \Theta_\star - \hat{\Theta}_{k+1} \| \leq \frac{\gamma}{2 \kappa},\\
        & \| \Theta_\star \Sigma \Theta_\star - \hat{\Theta}_{k+1} \Sigma \hat{\Theta}_{k+1} \| \leq \eta \ip{\bar{V}^{-1}_{k+1}, \Sigma} \ \forall \Sigma \succeq 0
    \end{align*}
\end{lemma}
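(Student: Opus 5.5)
We need to prove Lemma \ref{lem:conf_set}: under $\Econf$, a good initial estimate, and the stated choices of $\lambda$ and $\eta$, boundedness of $z_s$ up to $\tau_{k+1}-1$ yields both the operator-norm bound and the $\eta$-weighted approximation bound for $\hat{\Theta}_{k+1}$.

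\emph{Step 1: a uniform confidence bound.} Since $\hat{\Theta}_{k+1}$ only uses data up to $\tau_{k+1}-\rho \le \tau_{k+1}-1$, the assumption $\|z_s\| \le R_z$ gives $\sum_{s=1}^{\tau_{k+1}-\rho}\|z_s\|^2 \le T R_z^2$. Plugging this into \eqref{eq:est_bound}, together with $\Delta = \lambda^{-1/2}$ (so $2\lambda\Delta^2 = 2$), gives
\[
\tr\big((\Theta_\star - \hat{\Theta}_{k+1})\bar{V}_{k+1}(\Theta_\star - \hat{\Theta}_{k+1})^\top\big) \le r',
\]
where $r' := 4\bar{w}d^2\log(\omega_1^{-1}(d+\lambda^{-1}TR_z^2)) + 2$. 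Set $\Delta_{k+1} := \Theta_\star - \hat{\Theta}_{k+1}$.

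\emph{Step 2: the operator-norm bound.} Because $\bar{V}_{k+1} \succeq \lambda I$, we get $\lambda\|\Delta_{k+1}\|_F^2 \le r'$. Hence
\[
\|\Delta_{k+1}\| \le \sqrt{r'/\lambda} \le \frac{\gamma}{2\kappa},
\]
where the last inequality is exactly assumption 3, $\lambda \ge 4\kappa^2\gamma^{-2} r'$.

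\emph{Step 3: the weighted bound.} Write
\[
\Theta_\star\Sigma\Theta_\star^\top - \hat{\Theta}\Sigma\hat{\Theta}^\top = \Delta\Sigma\Theta_\star^\top + \Theta_\star\Sigma\Delta^\top - \Delta\Sigma\Delta^\top,
\]
and handle the three terms separately. For the quadratic term,
\[
\|\Delta\Sigma\Delta^\top\| \le \tr(\Delta\Sigma\Delta^\top) = \tr\big(\Delta\bar{V}^{1/2}\,\bar{V}^{-1/2}\Sigma\bar{V}^{-1/2}\,\bar{V}^{1/2}\Delta^\top\big) \le \ip{\bar{V}^{-1},\Sigma}\, r'.
\]
This uses $\bar{V}^{-1/2}\Sigma\bar{V}^{-1/2} \preceq \tr(\bar{V}^{-1}\Sigma) I$ together with Step 1. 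For the cross terms, Cauchy--Schwarz gives
\[
\|\Delta\Sigma\Theta_\star^\top\| \le \|\Delta\Sigma^{1/2}\|\,\|\Sigma^{1/2}\Theta_\star^\top\| \le \sqrt{r'\ip{\bar{V}^{-1},\Sigma}}\; S\|\Sigma\|^{1/2}.
\]

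\emph{Main obstacle: converting the cross terms into a linear bound.} The cross terms scale like $\sqrt{\ip{\bar{V}^{-1},\Sigma}}$, whereas the target is linear in $\ip{\bar{V}^{-1},\Sigma}$. I will use $\bar{V}_{k+1} \preceq (\lambda + TR_z^2) I$, which gives $\|\Sigma\| \le \tr(\Sigma) \le (\lambda + TR_z^2)\ip{\bar{V}^{-1},\Sigma}$. Therefore
\[
\|\Sigma\|^{1/2}\sqrt{\ip{\bar{V}^{-1},\Sigma}} \le \sqrt{\lambda+TR_z^2}\,\ip{\bar{V}^{-1},\Sigma}.
\]
Combining the terms yields
\[
\big\|\Theta_\star\Sigma\Theta_\star^\top - \hat{\Theta}\Sigma\hat{\Theta}^\top\big\| \le \big(r' + 2\sqrt{r'}\,S\sqrt{\lambda+TR_z^2}\big)\ip{\bar{V}^{-1},\Sigma}.
\]
Since $r' \ge 2$, we have $2\sqrt{r'} \le 2r'$, so this is at most $r'(1 + 2S\sqrt{\lambda+TR_z^2})\ip{\bar{V}^{-1},\Sigma}$.

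\emph{Closing the factor of two.} Assumption 4 gives only $\eta \ge r'(1 + S\sqrt{\lambda+TR_z^2})$, so the bound above overshoots by roughly a factor of two. I would first try to tighten the cross-term estimate. The decomposition $\Theta_\star\Sigma\Theta_\star^\top - \hat{\Theta}\Sigma\hat{\Theta}^\top = \Delta\Sigma\hat{\Theta}^\top + \Theta_\star\Sigma\Delta^\top$ has only two terms, but it requires bounding $\|\hat{\Theta}\|$. If no such tightening works, I would absorb the constant by enlarging $\eta$ by a factor of $2$; this is harmless because $\eta \asymp \sqrt{T}$ in every downstream use.
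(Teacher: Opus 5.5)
Steps 1 and 2 are correct and match the paper. Your term-by-term estimate in Step 3 is also correct: it gives $\|\Theta_\star\Sigma\Theta_\star^\top-\hat{\Theta}_{k+1}\Sigma\hat{\Theta}_{k+1}^\top\|\le\big(r'+2\sqrt{r'}\,S\sqrt{\lambda+TR_z^2}\big)\langle\bar{V}_{k+1}^{-1},\Sigma\rangle$.

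The gap is in how you finish. You weaken $2\sqrt{r'}\le 2r'$, conclude that you overshoot assumption 4 by a factor of two, and then propose either tightening the cross terms or enlarging $\eta$. Neither works:
\begin{itemize}
\item Enlarging $\eta$ changes the hypothesis, so it proves a different lemma.
\item No generic tightening exists, because the factor $2$ on the cross terms is sharp. Take scalars $X=\Delta=V=\Sigma=1$, so that $\Delta^\top\Delta\preceq V^{-1}$. Then $|X\Sigma X^\top-(X+\Delta)\Sigma(X+\Delta)^\top|=3=(1+2\|X\|\|V\|^{1/2})\langle V^{-1},\Sigma\rangle$.
\end{itemize}
The missing observation is that $r'\ge 4$. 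Since $\bar{w}\ge 1$, $d=n+m\ge 2$ and $\omega_1\le 1$, the logarithm is at least $\log 2$, so $r'\ge 2+16\log 2>4$. Hence $2\sqrt{r'}\le r'$. Your bound is then already at most $r'\big(1+S\sqrt{\lambda+TR_z^2}\big)\langle\bar{V}_{k+1}^{-1},\Sigma\rangle\le\eta\,\langle\bar{V}_{k+1}^{-1},\Sigma\rangle$, using assumption 4 exactly as stated.

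With that fix, your route is a self-contained alternative to the paper's. The paper does not expand the difference by hand. It invokes Lemma 14 of \cite{cohen2019learning} with $X=\Theta_\star$, $V=\bar{V}_{k+1}/r$ and $\mu=1+Sr^{-1/2}\sqrt{\lambda+TR_z^2}$. It checks $\Delta^\top\Delta\preceq V^{-1}$ and $\mu\ge 1+\|X\|\|V\|^{1/2}$, then closes with $\mu r\le r\big(1+S\sqrt{\lambda+TR_z^2}\big)$ using only $r\ge 1$. Your computation shows the cross terms actually require $\mu\ge 1+2\|X\|\|V\|^{1/2}$; the scalar example above violates the inequality with a single $\|X\|\|V\|^{1/2}$. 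So the paper's last step also really rests on $r\ge 4$ rather than $r\ge 1$. The lemma's conclusion is unaffected, since $r\ge 4$ holds.
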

\begin{proof}
    Due to $\Econf$ and the boundedness of the states, it holds for all
    \begin{align*}
        \tr \left( (\Theta_\star - \hat{\Theta}_{k+1}) \bar{V}_{k+1} (\Theta_\star - \hat{\Theta}_{k+1})^\top \right) & \leq 4 \bar{w} d^2 \log\left( \omega_1^{-1}\left(d + \lambda^{-1} \sum_{s=1}^{\tau_{k+1} - \rho} \| z_s \|^2 \right) \right) + 2 \lambda \Delta^2\\
        & \leq 4 \bar{w} d^2 \log\left( \omega_1^{-1}\left(d + \lambda^{-1} (\tau_{k+1} - \rho) R_z^2 \right) \right) + 2 \lambda \Delta^2\\
        & \leq 4 \bar{w} d^2 \log\left( \omega_1^{-1}\left(d + \lambda^{-1} T R_z^2 \right) \right) + 2 \lambda \Delta^2 \\
        & \leq \underbrace{4 \bar{w} d^2 \log\left( \omega_1^{-1}\left(d + \lambda^{-1} T R_z^2 \right) \right) + 2}_{r}.
    \end{align*}
    Therefore, it holds that,
    \begin{equation}
    \label{eq:fk_cond1}
    \begin{split}
        \| \Theta_\star - \hat{\Theta}_{k+1} \| & \leq  \| \bar{V}_{k+1}^{-1/2} \bar{V}_{k+1}^{1/2} (\Theta_\star - \hat{\Theta}) \|\\
        & \leq \lambda^{-1/2} \| \bar{V}_{k+1}^{1/2} (\Theta_\star - \hat{\Theta}) \|\\
        & \leq \lambda^{-1/2} \| \bar{V}_{k+1}^{1/2} (\Theta_\star - \hat{\Theta}) \|_F \\
        & \leq \lambda^{-1/2}  r^{1/2} \\
        & \leq \frac{\gamma}{2 \kappa}
    \end{split}
    \end{equation}
    Then, we apply Lemma 14 in \cite{cohen2019learning} with the following assignments $X = \Theta_\star, \Delta = \hat{\Theta} - \Theta_\star, V = \bar{V}_{k+1}/r, \mu = 1 + S r^{-1/2} \sqrt{\lambda + T R_z^2 }$.
    Then, we can verify the assumptions of Lemma 14 in \cite{cohen2019learning}.
    First, $\Delta^\top \Delta \preceq V^{-1}$ because the estimation error bound implies,
    $$
        \tr(\bar{V}_{k+1}^{1/2} \Delta^\top \Delta \bar{V}_{k+1}^{1/2}) = \tr(\Delta \bar{V}_{k+1} \Delta^\top) \leq r \ \implies \ \bar{V}_{k+1}^{1/2} \Delta^\top \Delta \bar{V}_{k+1}^{1/2} \preceq r I \ \implies \ \Delta^\top \Delta \preceq r \bar{V}_{k+1}^{-1} = V^{-1}.
    $$
    Then, $\mu \geq 1 + \| X \| \| V \|^{1/2}$ as,
    \begin{align*}
         1 + \| X \| \| V \|^{1/2} & = 1 + r^{-1/2} \| \Theta_\star \| \| \bar{V}_{k+1} \|^{1/2}\\
         & \leq 1 + S r^{-1/2} \sqrt{\lambda + \sum_{s=1}^{\tau_{k+1} - \rho} \| z_s \|^2 }\\
         & \leq 1 + S r^{-1/2} \sqrt{\lambda + (\tau_{k+1} - \rho) R_z^2 }\\
         & \leq 1 + S r^{-1/2} \sqrt{\lambda + T R_z^2 } = \mu.
    \end{align*}
    Therefore, Lemma 14 in \cite{cohen2019learning} tells us that, for all $\Sigma \succeq 0$,
    \begin{align}
        \label{eq:fk_cond2}
        \| \Theta_\star \Sigma \Theta_\star - \hat{\Theta}_{k+1} \Sigma \hat{\Theta}_{k+1} \| & \leq \mu \ip{V^{-1}, \Sigma}\\
        & = \mu r \ip{\bar{V}_{k+1}^{-1}, \Sigma}\\
        & = (r  + S r^{1/2} \sqrt{\lambda + T R_z^2 }) \ip{\bar{V}_{k+1}^{-1}, \Sigma}\\
        & \leq \underbrace{r (1  + S \sqrt{\lambda + T R_z^2 })}_{\eta} \ip{\bar{V}_{k+1}^{-1}, \Sigma},
    \end{align}
    where we use the fact that $r \geq 1$.
\end{proof}

Finally, putting all of these pieces together, we give the proof of Lemma \ref{lem:bounded_state}.

\begin{proof}[Proof of Lemma \ref{lem:bounded_state}]
    First, define the event $E_{\mathrm{init}} = \{ \| \hat{\Theta}_0 - \Theta_\star \|_F \leq \lambda^{-1/2} \}$.
    Then, note that $E = F_N$. Thus, we prove the claim by showing that $F_k$ holds for all $k \in [N]$ by induction, when $\Enois$, $\Econf$ and $E_{\mathrm{init}}$ hold. Therefore, applying the bounds on the probabilities of $\Enois$, $\Econf$ and $E_{\mathrm{init}}$ completes the proof.

    \textit{Base case:} Since $A_\star$ is assumed to be $(\kappa,\gamma)$ strongly-stable, it holds that $K_s = \bzero$ is $(\kappa,\gamma)$ strongly-stable for $s \in [\tau_1-1]$. Also, $\Sigma_{t,uu} = \Sigma_{0,uu}^\safe = \bzero$ and therefore $\tr(\Sigma_{t,uu}) \leq \nu$.
    Since $(K_s)_{s=1}^{\tau_1-1}$ is strongly-stable and $\tr(\Sigma_{t,uu}) \leq \nu$, as well as $\| x_1 \| \leq R_1$ and $\Enois$ by assumption, we apply Lemma \ref{lem:z_bound} to get that $\| z_s \| \leq R_z$ for all $s \in [\tau_1-1]$.
    Then, due to the choice of $\lambda$ and $\eta$, and that we are under event $E_{\mathrm{init}}$, we can apply Lemma \ref{lem:conf_set} to get that $F_1$ holds.

    \textit{Induction step:}
    Suppose that $F_k$ holds.
    Then, from Lemma \ref{lem:cond_s}, it follows that,  $(K_s)_{s=1}^{\tau_{k+1-1}}$ is $(\frac{2\nu}{\sigma},\frac{\sigma}{4 \nu})$ strongly-stable and $\tr(\Sigma_{s,uu}) \leq \nu$ for all $s \in [\tau_{k+1}-1]$.
    Therefore, Lemma \ref{lem:z_bound} tells us that $\| z_s \| \leq R_z$ for all $s \in [\tau_{k+1}-1]$.
    Thus, Lemma \ref{lem:conf_set} tells us that,
    \begin{align*}
        & \| \Theta_\star - \hat{\Theta}_{k+1} \| \leq \frac{\gamma}{2 \kappa},\\
        & \| \Theta_\star \Sigma \Theta_\star - \hat{\Theta}_{k+1} \Sigma \hat{\Theta}_{k+1} \| \leq \eta \ip{\bar{V}_{k+1}, \Sigma} \ \forall \Sigma \succeq 0.
    \end{align*}
    Combined with $F_k$, this implies that $F_{k+1}$ holds.

    \textit{Completing the proof:} We have shown that $E$ holds if $\Enois$ and $\Econf$ holds.
    Taking $\omega_1 = \omega_2 = \omega/3$, ensures that $\Enois$, $\Econf$ and $E_{\mathrm{init}}$ jointly hold with probability at least $1 - \omega$.
\end{proof}

\subsection{Proof of Lemma \ref{lem:quant_form}}
\label{sec:quant_form}

\begin{proof}
    Notice that the distribution of $\alpha_j^\top z_t$ conditioned on $\Fc_\tau$ is of the form $\Nc(\alpha_j^\top m_{t|\tau}, \alpha_j^\top S_{t | \tau} \alpha_j)$.
    Therefore, with $\Phi(\cdot)$ as the CDF of the standard normal and $\Phi^{-1}(\cdot)$ as its inverse (the quantile function),
    \begin{align*}
        & \Pb( \alpha_j^\top z_t \leq \beta | \Fc_\tau) \geq 1 - \delta\\
        & \iff \quad \Phi\left( \frac{\beta - \alpha_j^\top m_{t|\tau}}{\sqrt{\alpha_j^\top S_{t | \tau} \alpha_j}} \right) \geq 1 - \delta\\
        & \iff \quad \frac{\beta - \alpha_j^\top m_{t|\tau}}{\sqrt{\alpha_j^\top S_{t | \tau} \alpha_j}} \geq \Phi^{-1}(1 - \delta)\\
        & \iff \quad \alpha_j^\top m_{t|\tau} + \sqrt{\alpha_j^\top S_{t | \tau} \alpha_j} \Phi^{-1}(1 - \delta) \leq \beta \\
        & \iff \quad \alpha_j^\top m_{t|\tau} + \sqrt{\ip{\balpha_j, S_{t | \tau}}} \Phi^{-1}(1 - \delta) \leq \beta,
    \end{align*}
    where we use the fact that,
    $$
        \alpha_j^\top S_{t | \tau} \alpha_j = \tr(\alpha_j^\top S_{t | \tau} \alpha_j) = \tr(\alpha_j \alpha_j^\top S_{t | \tau}) = \ip{\balpha_j, S_{t | \tau}}
    $$
\end{proof}

\subsection{Proof of Lemma \ref{lem:ell_pot}}
\label{sec:ell_pot}

In this section, we prove Lemma \ref{lem:ell_pot}, which bounds the sum of the Gram-weighted target covariances $\ip{\bar{V}_{k_t-1}^{-1}, \Sigma_t}$.
To do so, we first show how this term can be related to the conditional expectation of the Gram-weighted norm of the states $\Eb_{t-\rho} [z_t^\top V_t^{-1} z_t]$ in Lemma \ref{lem:ell1}.
Then, we bound this in terms of the realized Gram-weighted norm of the states $z_t^\top V_t^{-1} z_t$ in Lemma \ref{lem:ell2} via Bernstein's inequality.
This ultimately results in a bound over the total over all time steps via the elliptic potential lemma (Lemma \ref{lem:ellipt}).
Overall, this analysis is inspired by \cite{cassel2022efficient}, where they show that the Gram-weighted truncated state can be bound via the elliptic potential lemma using the fact that the disturbance is positive definite.

\begin{lemma}
    \label{lem:ell1}
    Assume the conditions of Lemma \ref{lem:main_cov}, and that $\lambda \geq R_z^2$.
    Then, if under $E$, it holds for all $t \in [\tau_1,T]$,
    \begin{align*}
        \ip{\bar{V}_{k_t-1}^{-1}, \Sigma_t} & \leq 12 \max\left( \frac{\nu}{\sigma}, 1 \right) \Eb_{t-\rho} [z_t^\top V_t^{-1} z_t]
    \end{align*}
\end{lemma}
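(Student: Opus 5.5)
The plan is to compare both sides through two matrix orderings. The first replaces the random Gram matrix $V_t$ by the $\Fc_{t-\rho}$-measurable matrix $\bar{V}_{k_t-1}$. The second is a lower bound on the conditional covariance $S_{t|t-\rho}$ in terms of the target covariance $\Sigma_t$.

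\emph{Step 1 (Gram ordering).} Under $E$ we have $\|z_s\|\le R_z$ for all $s$, and $\lambda\ge\max(R_z^2\rho/\log 2,R_z^2)$ holds by the assumptions of Lemma \ref{lem:main_cov} together with $\lambda\ge R_z^2$. So Lemma \ref{lem:phase_upd} applies, and chaining its items 3, 1 and 2 gives
\[
V_t \preceq 2V_{t-\rho} \preceq 4\bar{V}_{k_t} \preceq 12\bar{V}_{k_t-1},
\qquad\text{hence}\qquad
V_t^{-1}\succeq \tfrac{1}{12}\bar{V}_{k_t-1}^{-1}=:\tfrac1{12}M_t .
\]
Here $M_t$ is $\Fc_{t-\rho}$-measurable, because $\bar V_{k_t-1}$ uses only data up to $\tau_{k_t-1}-\rho\le t-\rho$. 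Taking conditional expectations then gives
\[
\Eb_{t-\rho}[z_t^\top V_t^{-1}z_t]\ \ge\ \tfrac1{12}\ip{M_t,\Eb_{t-\rho}[z_tz_t^\top]}\ \ge\ \tfrac1{12}\ip{M_t,S_{t|t-\rho}},
\]
where the last inequality uses $\Eb_{t-\rho}[z_tz_t^\top]=S_{t|t-\rho}+m_{t|t-\rho}m_{t|t-\rho}^\top\succeq S_{t|t-\rho}$ and $M_t\succeq0$.

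\emph{Step 2 (covariance lower bound).} Since $K_t,U_t$ are $\Fc_{t-\rho}$-measurable, we can write
\[
S_{t|t-\rho}=\begin{bmatrix}I\\K_t\end{bmatrix}S^{xx}_{t|t-\rho}\begin{bmatrix}I\\K_t\end{bmatrix}^\top+\begin{bmatrix}0&0\\0&U_t\end{bmatrix}.
\]
The same Schur-complement decomposition, used in the proof of Lemma \ref{lem:cov_approx}, gives the identical form for $\Sigma_t$ with $\Sigma_{t,xx}$ in place of $S^{xx}_{t|t-\rho}$. Because $\rho\ge1$, $x_t$ contains the fresh disturbance $w_{t-1}$, which is independent of $\Fc_{t-\rho}$. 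Hence by Assumption \ref{ass:noise_cov},
\[
S^{xx}_{t|t-\rho}\succeq W\succeq\sigma I\succeq(\sigma/\nu)\Sigma_{t,xx},
\]
where the last step uses $\Sigma_{t,xx}\preceq\tr(\Sigma_t) I\preceq\nu I$ from Lemma \ref{lem:nu}. The $U_t$ blocks coincide in the two decompositions. Therefore $S_{t|t-\rho}\succeq\min(\sigma/\nu,1)\,\Sigma_t$, and so
\[
\ip{M_t,\Sigma_t}\le\max(\nu/\sigma,1)\,\ip{M_t,S_{t|t-\rho}}.
\]
Combining this with Step 1 yields the claimed constant $12\max(\nu/\sigma,1)$.

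\emph{Main obstacle.} The delicate point is Step 1. The ordering $V_t\preceq 2V_{t-\rho}$ relies on the bound $\|z_s\|\le R_z$ for $s\in(t-\rho,t]$, and this is not $\Fc_{t-\rho}$-measurable. The event $E$ only holds pathwise, whereas we take an expectation conditional on $\Fc_{t-\rho}$. I would handle this in one of two ways.
\begin{itemize}
\item Read $\Eb_{t-\rho}[z_t^\top V_t^{-1}z_t]$ as the conditional expectation of a version of $V_t$ whose increments $z_sz_s^\top$ are clipped at $R_z^2$. This version agrees with $V_t$ on $E$ and satisfies the ordering surely, so Step 1 goes through.
\item Alternatively, bypass item 3 entirely. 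Use $V_t\preceq V_{t-\rho}+\sum_{s=t-\rho+1}^{t}z_sz_s^\top$, and observe that the contribution of the recent terms only makes $V_t^{-1}$ smaller. This loss can be controlled via $z_t^\top V_t^{-1}z_t\le1$, which is the form needed later when Lemma \ref{lem:ellipt} is applied.
\end{itemize}
I would try the clipping first, since it keeps the constant $12$ unchanged.
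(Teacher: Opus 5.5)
You use the same two ingredients as the paper. The first is the chain $V_t\preceq 2V_{t-\rho}\preceq 4\bar V_{k_t}\preceq 12\bar V_{k_t-1}$. The second is the comparison $S^{xx}_{t|t-\rho}\succeq W\succeq\sigma I\succeq(\sigma/\nu)\Sigma_{t,xx}$, pushed through the common Schur-complement form with the same $U_t$ block. The difference is that you apply them in the opposite order: you pass to the $\Fc_{t-\rho}$-measurable $M_t=\bar V_{k_t-1}^{-1}$ before anything leaves $\Eb_{t-\rho}$. That order is the better one, because the paper's step \eqref{eq:ell:e} moves the random $V_t$ through $\Eb_{t-\rho}$ as if it were $\Fc_{t-\rho}$-measurable. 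Your Step 2 therefore gives a fully rigorous bound $\ip{M_t,\Sigma_t}\le\max(\nu/\sigma,1)\,\Eb_{t-\rho}[z_t^\top M_t z_t]$.

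What remains is $\Eb_{t-\rho}[z_t^\top M_t z_t]\le 12\,\Eb_{t-\rho}[z_t^\top V_t^{-1}z_t]$. This is a genuine gap, exactly the one you flag, and the paper's proof does not close it either. The ordering $V_t\preceq 2V_{t-\rho}$ needs $\|z_s\|\le R_z$ for $s\in(t-\rho,t]$. That event is not $\Fc_{t-\rho}$-measurable, and $\Eb_{t-\rho}$ also averages over paths outside $E$, so a pathwise ordering on $E$ cannot be integrated.

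Neither of your repairs closes this gap.
\begin{itemize}
\item Clipping gives $\tilde V_t\preceq V_t$, hence $z_t^\top\tilde V_t^{-1}z_t\ge z_t^\top V_t^{-1}z_t$. A lower bound on $\Eb_{t-\rho}[z_t^\top\tilde V_t^{-1}z_t]$ is therefore the wrong direction to say anything about the stated right-hand side. The fact that the clipped matrix ``agrees with $V_t$ on $E$'' is pathwise and does not survive conditioning.
\item The clipped quantity is also still unbounded, since the $z_t$ in the quadratic form is unclipped. If you clip that $z_t$ too, you lose $\Eb_{t-\rho}[z_tz_t^\top]\succeq S_{t|t-\rho}$. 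Either way a tail term appears.
\item Your second alternative appeals to $z_t^\top V_t^{-1}z_t\le 1$, which is an upper bound and cannot supply the lower bound you need.
\end{itemize}

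What works is to accept an additive tail term. There are two ways to do this.
\begin{itemize}
\item Restrict to $B_t=\{\|z_s\|\le R_z,\ s\in(t-\rho,t]\}$ and use nonnegativity off $B_t$. This gives $12\,\Eb_{t-\rho}[z_t^\top V_t^{-1}z_t]\ge\Eb_{t-\rho}[z_t^\top M_tz_t]-\lambda^{-1}\Eb_{t-\rho}[\|z_t\|^2\Ib_{B_t^c}]$.
\item More cleanly, stop at your rigorous Step 2 bound and run Bernstein on $z_t^\top M_t z_t\Ib_{\{\|z_t\|\le R_z\}}\in[0,R_z^2/\lambda]$. The remainder $\lambda^{-1}\Eb_{t-\rho}[\|z_t\|^2\Ib_{\{\|z_t\|>R_z\}}]$ is then controlled by the conditional Gaussian law of $z_t$, with mean and covariance as in Lemma \ref{lem:main_cov}. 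Only afterwards, pathwise on $E$, do you use $z_t^\top M_tz_t\le 12\,z_t^\top V_t^{-1}z_t$ and the elliptic potential.
\end{itemize}
Either route yields Lemma \ref{lem:ell_pot}, which is the only place this statement is used. Neither yields the displayed inequality exactly as stated, without an error term.
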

\begin{proof}
    First, we have that,
    \begin{subequations}
    \label{eq:ell}
    \begin{align}
        \ip{\bar{V}_{k_t - 1}^{-1}, \Sigma_t} & \leq
        12 \ip{V_t^{-1}, \Sigma_t} \label{eq:ell:aa}\\
        & = 12\, \tr(V_t^{-1/2} \Sigma_t V_t^{-1/2}) \\
        & = 12\, \tr\left(V_t^{-1/2} \begin{bmatrix} I\\ K_t \end{bmatrix} \Sigma_t^{xx} \begin{bmatrix} I\\ K_t \end{bmatrix}^\top V_t^{-1/2}\right) + 12\, \tr\left(V_t^{-1/2} \begin{bmatrix} 0 & 0 \\ 0 & U_t \end{bmatrix} V_t^{-1/2}\right) \label{eq:ell:a}\\
        & \leq 12\, \frac{\nu}{\sigma} \tr\left(V_t^{-1/2} \begin{bmatrix} I\\ K_t \end{bmatrix} S_{t|t-\rho}^{xx} \begin{bmatrix} I\\ K_t \end{bmatrix}^\top V_t^{-1/2}\right) + 12\, \tr\left(V_t^{-1/2} \begin{bmatrix} 0 & 0 \\ 0 & U_t \end{bmatrix} V_t^{-1/2}\right) \label{eq:ell:b}\\
        & \leq 12 \max\left( \frac{\nu}{\sigma}, 1 \right)  \tr\left(V_t^{-1/2} S_{t|t-\rho} V_t^{-1/2}\right) \label{eq:ell:c}\\
        & \leq 12 \max\left( \frac{\nu}{\sigma}, 1 \right)  \tr\left(V_t^{-1/2} \Eb_{t-\rho} [z_t^\top V_t^{-1} z_t] V_t^{-1/2}\right) \label{eq:ell:d}\\
        & \leq 12 \max\left( \frac{\nu}{\sigma}, 1 \right) \Eb_{t-\rho} [z_t^\top V_t^{-1} z_t]\label{eq:ell:e}
    \end{align}
    \end{subequations}
    where each step is justified in the following:
    \begin{itemize}
        \item[\eqref{eq:ell:aa}] First, note that under the assumptions of the lemma and event $E$, it holds that $\|z_s \| \leq R_z$ for all $t$ and $\lambda \geq \max(R_z^2, \frac{R_z^2 \rho}{\log(2)})$.
        Therefore, the conditions of Lemma \ref{lem:rho_back} and \ref{lem:phase_det} hold.
        It follows that,
        $$
            V_t \preceq 2 V_{t - \rho} \preceq 2 \frac{\det(V_{t - \rho})}{\det(\bar{V}_{k_t})} \bar{V}_{k_t} \preceq 4 \bar{V}_{k_t} \preceq 12 \bar{V}_{k_t-1},
        $$
        where the first $\preceq$ uses Lemma \ref{lem:rho_back}, the second $\preceq$ uses Lemma \ref{lem:det_lown}, the third $\preceq$ uses the phase transition condition, and the fourth $\preceq$ uses Lemma \ref{lem:phase_det}.
        Then, because $\Sigma_t \succeq 0$, it holds that $\ip{\bar{V}_{k_t - 1}^{-1}, \Sigma_t} \leq 12 \ip{V_t^{-1}, \Sigma_t}$.

        \item[\eqref{eq:ell:a}] Note that the approximate covariance can be written as,
        $$
            \Sigma_t = \begin{bmatrix} I\\ K_t \end{bmatrix} \Sigma_t^{xx} \begin{bmatrix} I\\ K_t \end{bmatrix}^\top + \begin{bmatrix} 0 & 0 \\ 0 & U_t \end{bmatrix}
        $$
        \item[\eqref{eq:ell:b}] From the assumptions that $\| \Sigma_t^{xx} \| \leq \nu$ and $W \succeq \sigma I$,
        $$
            \Sigma_t^{xx} \preceq \nu I = \frac{\nu}{\sigma} \sigma I \preceq \frac{\nu}{\sigma} W \preceq \frac{\nu}{\sigma}( \Theta S_{t-1|t-\rho} \Theta^\top + W) = \frac{\nu}{\sigma} S_{t|t-\rho}^{xx}
        $$
        \item[\eqref{eq:ell:c}] Note that the conditional covariance can be written as,
        $$
            S_{t|t-\rho} = \begin{bmatrix} I\\ K_t \end{bmatrix} S_{t|t-\rho}^{xx} \begin{bmatrix} I\\ K_t \end{bmatrix}^\top + \begin{bmatrix} 0 & 0 \\ 0 & U_t \end{bmatrix}.
        $$
        \item[\eqref{eq:ell:d}] Using the definition of covariance,
        \begin{align*}
            S_{t|t-\rho} = \Eb_{t-\rho}[z_t z_t^\top] - m_{t|t-\rho} m_{t|t-\rho}^\top \preceq \Eb_{t-\rho}[z_t z_t^\top]
        \end{align*}
        \item[\eqref{eq:ell:e}] It holds that,
        $$
            \tr(V_t^{-1/2} \Eb_{t-\rho}[z_t z_t^\top] V_t^{-1/2}) = \Eb_{t-\rho} \tr(V_t^{-1/2} z_t z_t^\top V_t^{-1/2}) = \Eb_{t-\rho} z_t^\top V_t^{-1} z_t
        $$
    \end{itemize}
\end{proof}

Next, we bound the conditional mean in terms of the realized quantity via Bernstein's inequality.

\begin{lemma}
    \label{lem:ell2}
    If $\Pb(E) \geq 1- \omega$, it holds with probability at least $1 - 2\omega$ that,
    $$
        \sum_{t=\tau_1}^T \Eb_{t-\rho} [z_t^\top V_t^{-1} z_t] \leq 2 \sum_{t=\tau_1}^{T} z_t^\top V_t^{-1} z_t +  \lambda^{-1} R_z \rho \log(2 T / \omega )
    $$
\end{lemma}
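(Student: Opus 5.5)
The plan is to treat $X_t := z_t^\top V_t^{-1} z_t$ as a bounded nonnegative sequence and compare it with its $\rho$-step-delayed conditional expectation via a Bernstein/Freedman-type inequality for nonnegative variables. Because the conditioning is $\rho$ steps back, $(X_t - \Eb_{t-\rho}[X_t])$ is not a martingale difference sequence in $t$. We therefore split the time indices into $\rho$ interleaved subsequences $\{t : t \equiv r \bmod \rho\}$, $r = 0,\dots,\rho-1$. Along each subsequence $t_1 < t_2 < \cdots$ with $t_{i+1} = t_i + \rho$, we have $\Fc_{t_i} \subseteq \Fc_{t_{i+1}-\rho}$, so $X_{t_i}$ is measurable with respect to the filtration $\Fc_{t_{i+1}-\rho}$ used at the next step. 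Hence $\Eb_{t_i-\rho}[X_{t_i}] - X_{t_i}$ is a martingale difference sequence with respect to $\Gc_i := \Fc_{t_i}$, with conditional expectation $\Eb_{t_i-\rho}$. One subtlety is that $V_t$ is $\Fc_{t-1}$-measurable, not $\Fc_{t-\rho}$-measurable. This is harmless: $X_t$ is still $\Fc_t$-measurable, and only the adaptedness of $X_{t_i}$ to $\Gc_i$ and the conditioning on $\Gc_{i-1}\subseteq\Fc_{t_i-\rho}$ are used.

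Boundedness requires care. Under $E$ we have $\|z_t\|\le R_z$, so $X_t \le R_z^2/\lambda$. Since $E$ only holds with probability $1-\omega$, we apply the inequality to the truncated variables $\tilde X_t := X_t \Ib\{\|z_t\|\le R_z\}$ (or $\min(X_t, R_z^2/\lambda)$). These satisfy $0\le \tilde X_t \le b := R_z^2/\lambda$ almost surely, which is at most $1$ given $\lambda\ge R_z^2$. For nonnegative bounded variables, the standard multiplicative Freedman bound (e.g.\ Lemma 9 of \cite{cassel2022efficient}-type: $\sum \Eb_{i-1} Y_i \le 2\sum Y_i + 4b\log(1/\omega')$) gives, for each residue class $r$ and with probability $1-\omega/\rho$,
\[
\sum_{i} \Eb_{t_i-\rho}[\tilde X_{t_i}] \le 2\sum_i \tilde X_{t_i} + c\, b \log(\rho/\omega).
\]
A union bound over the $\rho$ classes and summation give $\sum_t \Eb_{t-\rho}[\tilde X_t]\le 2\sum_t \tilde X_t + c\,\rho\, b\log(\rho/\omega)$ with probability $1-\omega$. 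The stated form $\lambda^{-1}R_z\rho\log(2T/\omega)$ follows after adjusting constants, using $\rho\le T$ and $\log(\rho/\omega)\le\log(2T/\omega)$; depending on whether $R_z\ge1$, one may need $R_z$ versus $R_z^2$ absorbed into the constant.

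Finally, we remove the truncation. On $E$ we have $\tilde X_t = X_t$ for all $t$. The conditional expectations, however, differ: $\Eb_{t-\rho}[X_t] - \Eb_{t-\rho}[\tilde X_t] = \Eb_{t-\rho}[X_t\Ib\{\|z_t\|>R_z\}]$. I expect this to be the main obstacle, since the statement compares untruncated conditional expectations. Two routes are available. The first is to define the truncation so that the conditional expectations are what is actually used downstream: Lemma \ref{lem:ell1} only needs $\Eb_{t-\rho}[X_t]$ as an upper bound, and the tail contribution $\Eb_{t-\rho}[X_t\Ib\{\|z_t\|>R_z\}]$ is tiny because $X_t\le\|z_t\|^2/\lambda$ and $z_t$ is conditionally Gaussian with bounded covariance, so its Gaussian tail beyond $R_z$, which carries a $\log(T/\omega)$ factor, is $O(\omega/T)$ per step and is absorbed into the additive term. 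The second is to read the claim as holding on the event $E\cap\{\text{Freedman event}\}$, which has probability $\ge 1-2\omega$ by a union bound with $\Pb(E)\ge1-\omega$. I would pursue the second for the union bound and use the first (the Gaussian tail estimate) to control the truncation gap.
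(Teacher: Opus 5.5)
Your argument matches the paper's in every step but one. The paper also writes $\sum_t \Eb_{t-\rho}[Z_t]$, with $Z_t = z_t^\top V_t^{-1} z_t$, as a truncated part plus a remainder. It splits the truncated part into $\rho$ interleaved subsequences, along which $\Eb_{t-\rho}$ becomes a one-step conditional expectation. It then applies the multiplicative Bernstein bound $\sum \Eb[\cdot] \le 2\sum(\cdot) + 4b\log(\cdot)$ (Lemma D.4 of \cite{rosenberg2020near}) with $b = R_z^2/\lambda$, and union-bounds over the $\rho$ classes and with $E$.

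The divergence is the remainder $\sum_t \Eb_{t-\rho}[Z_t \Ib_{\{Z_t > R_z^2/\lambda\}}]$, and there you are right and the paper is not. The paper argues that on $E$ the realized $Z_t \Ib_{\{Z_t > R_z^2/\lambda\}}$ vanishes, ``and thus'' so does its conditional expectation. That inference fails: $E$ is not $\Fc_{t-\rho}$-measurable, and $\Eb_{t-\rho}$ averages over paths outside $E$. With Gaussian disturbances and $\lambda > R_z^2$, the remainder is strictly positive, because $Z_t \to 1$ as $\|z_t\| \to \infty$. So your Gaussian-tail estimate is the step actually needed, not an optional refinement. It is legitimate for the right reason: on $E$ you control the $\Fc_{t-\rho}$-measurable parameters $m_{t|t-\rho}, S_{t|t-\rho}$ of the conditional Gaussian law, and these determine $\Pb(\|z_t\| > R_z \mid \Fc_{t-\rho})$ pointwise.

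There are three points to tighten.

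First, $V_t$ contains $z_t z_t^\top$, so it is $\Fc_t$-measurable, not $\Fc_{t-1}$-measurable. Sherman--Morrison then gives $Z_t < 1$ deterministically. Hence the remainder is at most $\sum_t \Pb(\|z_t\| > R_z \mid \Fc_{t-\rho})$, and no second-moment tail is needed.

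Second, bound that probability using the conditional covariance obtained directly by propagating the noise through the strongly stable products, as in Lemma~\ref{lem:z_bound}. This gives $\tr(S_{t|t-\rho}) \lesssim (1+\tilde{\kappa})^2\tilde{\kappa}^2\tilde{\gamma}^{-1}(\bar{w} + S^2\nu)$, and the conditional mean is negligible by Lemma~\ref{lem:mean}. The resulting ratio $R_z^2/\tr(S_{t|t-\rho}) \gtrsim n\log(T/\omega)$ makes the per-step tail $\ll \omega/T$. Do not use Lemma~\ref{lem:main_cov} here. Its error term $2(1+\tilde{\kappa})^2\tilde{\kappa}^2\rho\eta\langle \bar{V}_{k_t-1}^{-1},\Sigma_t\rangle$ is controlled pointwise only by a quantity of order $\rho\,\eta\nu/\lambda \asymp \log T$. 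That is comparable to $R_z^2$, so the tail would not decay with $T$.

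Third, your constant bookkeeping does not go through. Since $R_z \ge 1$, the additive term $4\lambda^{-1}R_z^2\rho\log(\cdot)$ that both you and the paper's proof obtain cannot be absorbed into the stated $\lambda^{-1}R_z\rho\log(2T/\omega)$. The statement, and $C_2$, appear to carry a typo. This is harmless downstream, since only $C_2 \lesssim 1$ is used.
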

\begin{proof}
    We use the notation $Z_t = z_t^\top V_t^{-1} z_t$.
    Then,
    \begin{equation}
        \label{eq:z_cond_bound}
        \sum_{t=\tau_1}^{T} \Eb_{t-\rho} [Z_t] = \sum_{t=\tau_1}^{T} \Eb_{t-\rho} \left[Z_t \Ib_{\{Z_t \leq R_z^2/\lambda\}} \right] + \sum_{t=\tau_1}^{T} \Eb_{t-\rho} \left[Z_t \Ib_{\{Z_t > R_z^2/\lambda\}} \right]
    \end{equation}
    We start by bounding the first term in \eqref{eq:z_cond_bound}.
    To do so, we partition the time steps $[T]$ in to $\rho$ sets that start at $i \in [\tau_1, \tau_1+\rho]$ and contain every $\rho$th time step.
    Consider the following definitions: $Y_t = Z_t \Ib_{\{Z_t \leq R_z^2/\lambda\}}$, $\bar{Y}_{i,j} = Y_{i + \rho j}$, $\bar{\Fc}_{i,j} = \Fc_{i + \rho j}$, $J_i = \lfloor (T - i)/\rho \rfloor$.
    Then, it holds that, for each $i$, $(\bar{Y}_{i,j})_{j \in [0,J_i]}$ is adapted to $(\bar{\Fc}_{i,j})_{j \in [0,J_i]}$.
    Furthermore, $\bar{Y}_{i + \rho j} \in [0,\lambda^{-1} R_z^2]$ almost surely.
    Then, we apply Bernstein's inequality (see \cite{rosenberg2020near} Lemma D.4) to get that, with probability at least $1 - \omega/\rho$,
    $$  
        \sum_{j = 0}^{J_i} \Eb[\bar{Y}_{i,j} | \bar{\Fc}_{i,j-1}] \leq 2 \sum_{j=0}^{J_i} \bar{Y}_{i,j} + 4 \lambda^{-1} R_z^2 \log(4 \rho J_i / \omega )
    $$
    Then, taking the union bound over $i$, it holds with probability at least $1 - \omega$,
    \begin{equation}
        \sum_{t=\tau_1}^{T} \Eb_{t-\rho} [Z_t \Ib_{\{Z_t \leq R_z^2/\lambda\}}] = \sum_{i = \tau_1}^{\tau_1+\rho} \sum_{j = 0}^{J_i} \Eb[\bar{Y}_{i,j} | \bar{\Fc}_{i,j-1}] \leq 2 \sum_{t=\tau_1}^{T} z_t^\top V_t^{-1} z_t + 4 \lambda^{-1} R_z^2 \rho \log(2 T / \omega ) \label{eq:bern}
    \end{equation}

    We then bound the second term in \eqref{eq:z_cond_bound}.
    First, note that under $E$, it holds that $z_t^\top V_t^{-1} z_t \leq R_z^2/\lambda$ and therefore $Z_t \Ib_{\{Z_t > R_z^2/\lambda\}} = 0$, and thus,
    \begin{equation}
        \label{eq:two_bound}
        \sum_{t=\tau_1}^{T} \Eb_{t-\rho} \left[Z_t \Ib_{\{Z_t > R_z^2/\lambda\}} \right] = 0 
    \end{equation}
\end{proof}

Next, we give the well-known elliptic potential lemma.

\begin{lemma}[Lemma 11 in \cite{abbasi2011improved}]
    \label{lem:ellipt}
    If $\| z_t \| \leq R_z$ for all $t \in [T]$, and $\lambda \geq \max(1, R_z^2)$ then,
    $$
        \sum_{t=1}^T z_t^\top V_t^{-1} z_t \leq 2 d \log\left( 1 + \frac{T}{\lambda d} \right)
    $$
\end{lemma}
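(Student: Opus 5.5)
The plan is the standard determinant-potential argument for the elliptic potential lemma, using that $V_t = \sum_{s=1}^{t} z_s z_s^\top + \lambda I$ already contains $z_t$. Set $V_0 := \lambda I$, so that $V_t = V_{t-1} + z_t z_t^\top$.

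\emph{Step 1 (per-step term via the determinant).} By the matrix determinant lemma,
\[
\det(V_{t-1}) = \det(V_t - z_t z_t^\top) = \det(V_t)\bigl(1 - z_t^\top V_t^{-1} z_t\bigr).
\]
In particular $z_t^\top V_t^{-1} z_t \in [0,1)$. Using $-\log(1-x) \geq x$ for $x \in [0,1)$, this gives
\[
z_t^\top V_t^{-1} z_t \leq \log\det V_t - \log\det V_{t-1}.
\]

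\emph{Step 2 (telescope).} Summing over $t \in [T]$ gives
\[
\sum_{t=1}^T z_t^\top V_t^{-1} z_t \leq \log\frac{\det V_T}{\det(\lambda I)}.
\]
The factor $2$ in the statement is slack at this point. It is what one would need if the quantity were measured with $V_{t-1}$, via $\min(1,x) \leq 2\log(1+x)$, and that is where the hypothesis $\lambda \geq R_z^2$ would enter to force $z_t^\top V_{t-1}^{-1} z_t \leq 1$. I will note this so that the lemma also covers that variant.

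\emph{Step 3 (AM--GM on the determinant).} Here $d = n+m$ is the dimension of $z_t$. By AM--GM on the eigenvalues,
\[
\det V_T \leq \Bigl(\tfrac{1}{d}\tr V_T\Bigr)^d = \Bigl(\lambda + \tfrac{1}{d}\textstyle\sum_t \|z_t\|^2\Bigr)^d.
\]
Hence the sum is at most $d\log\bigl(1 + \sum_t \|z_t\|^2/(\lambda d)\bigr)$. This is the same computation already carried out in the proof of the dynamics estimation bound.

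\emph{Step 4 (main obstacle: matching the stated normalization).} To land exactly on $2d\log\bigl(1+\tfrac{T}{\lambda d}\bigr)$, I need $\sum_t \|z_t\|^2 \leq T$, that is, per-step norms at most $1$. That is the normalization used in Lemma 11 of \cite{abbasi2011improved}, which the statement cites.

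With only $\|z_t\| \leq R_z$, the argument yields $d\log\bigl(1+\tfrac{TR_z^2}{\lambda d}\bigr)$. This coincides with the stated bound, with the factor 2 to spare, exactly when $R_z \leq 1$. The hypothesis $\lambda \geq \max(1, R_z^2)$ does not by itself convert $R_z^2$ into $1$ inside the logarithm. It only gives $TR_z^2/(\lambda d) \leq T/d$.

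So I would first try to verify the normalization $\|z_t\| \leq 1$. If that fails, I would record the $R_z$-dependent form $2d\log\bigl(1+\tfrac{TR_z^2}{\lambda d}\bigr)$ as what the argument actually proves. Nothing downstream changes: in Lemma~\ref{lem:ell_pot} this term only enters $C_2$ up to logarithmic factors, with $R_z \lesssim 1$, so the $\Octil(1)$ conclusion is unaffected.
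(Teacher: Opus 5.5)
Your argument is correct, and the concern in your Step 4 is well founded. The displayed bound is not merely unproved once $R_z$ is moderately large; it is false. Take $T=1$, $z_1 = R_z e_1$ and $\lambda = R_z^2$. Then $z_1^\top V_1^{-1} z_1 = R_z^2/(\lambda + R_z^2) = 1/2$, whereas $2d\log(1 + 1/(\lambda d)) \le 2/R_z^2 < 1/2$ as soon as $R_z^2 > 4$.

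With $V = \lambda I$ and $\|X_t\| \le L$, Lemma 11 of \cite{abbasi2011improved} gives $2d\log(1 + TL^2/(\lambda d))$; the paper dropped the $L^2 = R_z^2$ when quoting it. So your fallback $d\log(1 + TR_z^2/(\lambda d))$ is the right statement. The normalization branch cannot rescue the original either, since $R_z \ge (\kappa+\kappa^2)R_1 \ge 2$ by \eqref{eq:rz}. As you note, nothing downstream breaks: $\lambda \ge R_z^2$ gives $\log(1 + TR_z^2/(\lambda d)) \le \log(1 + T/d)$, so $C_2$ only picks up a $\log T$ factor and remains polylogarithmic.

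As for the route, the paper gives no proof and defers to \cite{abbasi2011improved}. That argument works with the lagged matrix $V_{t-1}$ and uses $\min\{1,x\} \le 2\log(1+x)$. It needs $\lambda \ge \max(1, R_z^2)$ to force $z_t^\top V_{t-1}^{-1} z_t \le 1$, which is the source of both the factor $2$ and the hypothesis on $\lambda$.

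Because the paper's $V_t$ already contains $z_t$, your exact identity $\det V_{t-1} = \det V_t\,(1 - z_t^\top V_t^{-1} z_t)$ is the more natural tool. It needs no condition on $\lambda$ and saves the factor $2$. The cited version controls the larger quantity $z_t^\top V_{t-1}^{-1} z_t$, which matters when the Gram matrix lags the data. For the quantity in this lemma, your direct argument suffices and is cleaner.
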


Finally, putting all of these together gives the proof of Lemma \ref{lem:ell_pot}.

\begin{proof}[Proof of Lemma \ref{lem:ell_pot}]
    Combining Lemmas \ref{lem:ell1}, \ref{lem:ell2} and \ref{lem:ellipt} gives the claim.
\end{proof}

\subsection{Proof of Lemma \ref{lem:phase_upd}}
\label{sec:phase_upd}

In this section, we prove Lemma \ref{lem:phase_upd}, which gives important properties of the phase update scheme.
All of these properties are based on the following lemma, which bounds the difference in log-det of Gram matries at different time steps.

\begin{lemma}
    \label{lem:logdet}
    For any $t,t' \in [T]$ such that $t \geq t'$, it holds that,
    $$
        \log \left( \frac{\det(V_t)}{\det(V_{t'})} \right) \leq \frac{1}{\lambda}  \sum_{s=t' + 1}^{t} \| z_s \|^2
    $$
\end{lemma}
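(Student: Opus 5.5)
The plan is to use the telescoping product structure of the Gram matrices. Since $V_s = V_{s-1} + z_s z_s^\top$ for every $s$, applying the matrix determinant lemma gives
\[
\det(V_s) = \det(V_{s-1})\bigl(1 + z_s^\top V_{s-1}^{-1} z_s\bigr).
\]
Taking logarithms and summing over $s = t'+1,\dots,t$ then yields
\[
\log\frac{\det(V_t)}{\det(V_{t'})} = \sum_{s=t'+1}^{t} \log\bigl(1 + z_s^\top V_{s-1}^{-1} z_s\bigr).
\]

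Each summand is then bounded in turn. First, $\log(1+x)\le x$ holds for $x\ge 0$. Second, $V_{s-1}\succeq \lambda I$ by construction, since it equals $\lambda I$ plus a sum of psd outer products, so $V_{s-1}^{-1}\preceq \lambda^{-1}I$. This gives $z_s^\top V_{s-1}^{-1} z_s \le \|z_s\|^2/\lambda$. Summing these bounds produces exactly the claimed inequality.

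As an alternative that avoids indexing conventions, one can write $V_t = V_{t'} + \sum_{s=t'+1}^t z_s z_s^\top$ and factor out $V_{t'}$:
\[
\det(V_t)/\det(V_{t'}) = \det\bigl(I + V_{t'}^{-1/2} M V_{t'}^{-1/2}\bigr), \qquad M = \sum_{s=t'+1}^t z_s z_s^\top .
\]
Using $\log\det(I+X)\le \tr(X)$ for psd $X$, this is at most $\tr(V_{t'}^{-1}M)\le \lambda^{-1}\tr(M) = \lambda^{-1}\sum_{s=t'+1}^t\|z_s\|^2$.

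There is no real obstacle here. The only point needing care is the indexing convention: $V_t$ includes $z_1,\dots,z_t$, so the increment from $V_{t'}$ to $V_t$ consists of exactly $z_{t'+1},\dots,z_t$. I would present the second argument because it is cleanest.
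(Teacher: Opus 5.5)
Your proposal is correct, and the second argument you plan to present is essentially the paper's proof. The paper also writes the ratio as $\det(V_{t'}^{-1/2} V_t V_{t'}^{-1/2})$ and bounds its trace by $d + \lambda^{-1}\tr(M)$ using $V_{t'} \succeq \lambda I$. It then applies AM--GM followed by $d\log(1 + x/d) \le x$, which is an expanded form of your inequality $\log\det(I+X)\le\tr(X)$; your telescoping argument via the matrix determinant lemma is also valid but not needed.
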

\begin{proof}
    First, note that since $t \geq t'$, it holds that $V_t \succeq V_{t'} \succ 0$ and therefore, $V_{t'}^{-1/2} V_t V_{t'}^{-1/2} \succeq 0$.
    Thus,
    \begin{equation}
    \label{eq:v_det}
    \begin{split}
        \frac{\det(V_t)}{\det(V_{t'})} & = \det\left( V_{t'}^{-1/2} V_t V_{t'}^{-1/2} \right)\\
        & \leq \left( \frac{1}{d} \tr \left( V_{t'}^{-1/2} V_t V_{t'}^{-1/2} \right) \right)^d,
    \end{split}
    \end{equation}
    where the last line uses the AM-GM inequality, given that $V_{t'}^{-1/2} V_t V_{t'}^{-1/2} \succeq 0$.
    Then, let,
    $$
        M := V_t - V_{t'} = \sum_{s=t' + 1}^{t } z_s z_s^\top \succeq 0.
    $$
    It follows that,
    \begin{equation}
        \label{eq:v_tr}
    \begin{split}
        \tr\left(V_{t'}^{-1/2} V_t V_{t'}^{-1/2}\right) & = \tr\left(V_{t'}^{-1/2} \left( M  + V_{t'}\right) V_{t'}^{-1/2}\right)\\
        & = \tr\left(V_{t'}^{-1/2} M V_{t'}^{-1/2} + I\right)\\
        & = \tr\left(M^{1/2} V_{t'}^{-1}  M^{1/2}\right) + d \\
        & \leq \lambda^{-1} \tr\left(M\right) + d \\
        & = \lambda^{-1} \sum_{s=t' + 1}^{t} \tr\left(z_s z_s^\top\right) + d \\
        & = \lambda^{-1} \sum_{s=t' + 1}^{t} \| z_s \|^2 + d,
    \end{split}    
    \end{equation}
    where the inequality uses the fact that $V_{t'} \succeq \lambda I$.
    Therefore, combining \eqref{eq:v_det} and \eqref{eq:v_tr} gives,
    $$
        \frac{\det(V_t)}{\det(V_{t'})} \leq \left( \frac{1}{\lambda d} \sum_{s=t' + 1}^{t} \| z_s \|^2 + 1 \right)^d
    $$
    Taking the log of both sides and using the inequality $\log(x+1) \leq x$, yields,
    $$
        \log \left( \frac{\det(V_t)}{\det(V_{t'})} \right) \leq d \log\left( \frac{1}{\lambda d} \sum_{s=t' + 1}^{t} \| z_s \|^2 + 1 \right) \leq \frac{1}{\lambda}  \sum_{s=t' + 1}^{t} \| z_s \|^2
    $$
\end{proof}

We also give a well-known lemma regarding the Loewner order of p.d. matrices.

\begin{lemma}[Lemma 27 in \cite{cohen2019learning}]
    \label{lem:det_lown}
    For matrices $N \succeq M \succ 0$, it holds that $N \preceq \frac{\det(N)}{\det(M)} M$.
\end{lemma}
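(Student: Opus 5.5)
The plan is to reduce the claim to a statement about the eigenvalues of a single positive semidefinite matrix by congruence with $M^{-1/2}$. Since $M \succ 0$, the matrix $M^{-1/2}$ exists and is invertible, so I set
$$
X := M^{-1/2} N M^{-1/2}.
$$
Congruence by an invertible matrix preserves the Loewner order. Therefore $N \succeq M$ gives $X \succeq I$, and the claimed inequality $N \preceq \frac{\det(N)}{\det(M)} M$ is equivalent to $X \preceq \frac{\det(N)}{\det(M)} I$. Moreover, multiplicativity of the determinant gives
$$
\det(X) = \det(M)^{-1/2}\det(N)\det(M)^{-1/2} = \frac{\det(N)}{\det(M)}.
$$
So it suffices to show that $X \preceq \det(X)\, I$ whenever $X$ is symmetric with $X \succeq I$.

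For this step, let $\lambda_1 \geq \cdots \geq \lambda_d$ be the eigenvalues of $X$. The condition $X \succeq I$ means every $\lambda_i \geq 1$. Hence
$$
\lambda_1 \leq \lambda_1 \prod_{i \geq 2} \lambda_i = \det(X),
$$
because each omitted factor $\lambda_i$ is at least $1$. Since $X$ is symmetric, $X \preceq \lambda_1 I \preceq \det(X)\, I$. Conjugating back by $M^{1/2}$ then yields $N \preceq \det(X)\, M = \frac{\det(N)}{\det(M)} M$, as claimed.

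There is no real obstacle in this argument. The only points that need care are, first, that congruence by the invertible matrix $M^{1/2}$ preserves $\preceq$ in both directions, and second, that the bound $\lambda_{\max} \leq \det$ relies essentially on all eigenvalues being at least $1$. That second fact is exactly where the hypothesis $N \succeq M$ is used.
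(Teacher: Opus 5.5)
Your proof is correct and complete. Congruence with $M^{-1/2}$ reduces the claim to $X \preceq \det(X)\, I$ for symmetric $X \succeq I$, and this holds because every eigenvalue of $X$ is at least $1$, so $\lambda_{\max}(X) \le \det(X)$. The paper gives no proof of its own and simply cites Lemma 27 of \cite{cohen2019learning}; your whitening-plus-eigenvalue argument is the standard proof of this fact, so there is nothing to add.
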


Next, we show that the duration of each phase is bounded below proportional to $\lambda$.

\begin{lemma}
    \label{lem:phase_dur}
    Suppose that $\| z_t \| \leq R_z$ for all $t \in [\tau_k - \rho, \tau_{k+1} - 1 - \rho]$.
    For all $k$, the duration of phase $k$ satisfies,
    $$
        \tau_{k+1} - \tau_k \geq \frac{\log(2)}{R_z^2} \lambda
    $$
\end{lemma}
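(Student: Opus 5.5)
The proof will combine the phase-start rule in line~\ref{lne:tstep_start} of Algorithm~\ref{alg:main_alg} with the log-determinant bound of Lemma~\ref{lem:logdet}. Phase $k+1$ begins at the first time $t > \tau_k$ at which $\det(V_{t-\rho}) \geq 2 \det(\bar{V}_k)$, where $\bar{V}_k = V_{\tau_k - \rho}$. Taking $t = \tau_{k+1}$ therefore gives
\begin{equation*}
\log(2) \leq \log\left( \frac{\det(V_{\tau_{k+1}-\rho})}{\det(V_{\tau_k - \rho})} \right).
\end{equation*}

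Next I apply Lemma~\ref{lem:logdet} with $t = \tau_{k+1} - \rho$ and $t' = \tau_k - \rho$. This requires $t \geq t'$, which holds because $\tau_{k+1} > \tau_k$. The lemma bounds the right-hand side by
\begin{equation*}
\frac{1}{\lambda} \sum_{s = \tau_k - \rho + 1}^{\tau_{k+1} - \rho} \| z_s \|^2 .
\end{equation*}

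The hypothesis gives $\| z_s \| \leq R_z$ on $[\tau_k - \rho, \tau_{k+1} - 1 - \rho]$. The sum above, however, runs up to $s = \tau_{k+1} - \rho$, which is one index beyond that range. I will handle this by the Gram-matrix indexing convention: $V_{t-\rho}$ uses $z_s$ for $s \leq t - \rho$, and when $\tau_{k+1}$ is computed only the data up to index $\tau_{k+1} - \rho$ enter. If needed, I will use the last time the doubling test failed, namely $t = \tau_{k+1} - 1$, to shift the window so that exactly the indices $[\tau_k - \rho + 1, \tau_{k+1} - 1 - \rho]$, or a set of the same cardinality, appear.

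Either way the window contains at most $\tau_{k+1} - \tau_k$ indices, so
\begin{equation*}
\log 2 \leq \frac{(\tau_{k+1} - \tau_k) R_z^2}{\lambda},
\end{equation*}
and rearranging gives the claim. This off-by-one index alignment is the only delicate point; everything else is immediate from Lemma~\ref{lem:logdet}.
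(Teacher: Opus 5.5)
Your main line is the paper's argument: the doubling rule at $t=\tau_{k+1}$ gives $\log 2 \le \log\bigl(\det V_{\tau_{k+1}-\rho}/\det V_{\tau_k-\rho}\bigr)$, Lemma~\ref{lem:logdet} bounds this by $\lambda^{-1}\sum_s \|z_s\|^2$, and each term is at most $R_z^2$. Your concern about the indices is also correct. Lemma~\ref{lem:logdet} with $t'=\tau_k-\rho$ and $t=\tau_{k+1}-\rho$ produces $s\in[\tau_k-\rho+1,\tau_{k+1}-\rho]$, while the hypothesis only covers $[\tau_k-\rho,\tau_{k+1}-1-\rho]$. The paper's proof writes the sum over the latter range, which shifts the index by one without justification.

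The gap is your proposed repair. The test that failed at $t=\tau_{k+1}-1$ says $\det V_{\tau_{k+1}-1-\rho}<2\det\bar V_k$. That is an upper bound on determinant growth, so it cannot give the lower bound $\log 2\le\cdots$ you need. Matching the window's cardinality does not help either: what matters is that the specific vector $z_{\tau_{k+1}-\rho}$ is bounded, and the hypothesis does not give that.

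In fact no repair exists under the literal hypothesis. Suppose $\|z_{\tau_k-\rho}\|\le R_z$ but $z_{\tau_k-\rho+1}^\top\bar V_k^{-1}z_{\tau_k-\rho+1}\ge 1$. Then $\det V_{\tau_k-\rho+1}=\det\bar V_k\,\bigl(1+z_{\tau_k-\rho+1}^\top\bar V_k^{-1}z_{\tau_k-\rho+1}\bigr)\ge 2\det\bar V_k$, so $\tau_{k+1}=\tau_k+1$. The hypothesis then constrains only $z_{\tau_k-\rho}$, and the conclusion $1\ge \lambda\log 2/R_z^2$ fails whenever $\lambda>R_z^2/\log 2$, which is exactly the regime the paper uses.

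The correct fix is to assume $\|z_s\|\le R_z$ for $s\in[\tau_k-\rho+1,\tau_{k+1}-\rho]$. Your argument then goes through verbatim. Every invocation of the lemma already supplies this:
\begin{itemize}
\item in Lemmas~\ref{lem:phase_upd} and~\ref{lem:main_cov}, all states are bounded under $E$;
\item in Lemma~\ref{lem:real_stat}, bounding phase $k-1$ under $F_k$ only needs $z_s$ for $s\le\tau_k-\rho\le\tau_k-1$.
\end{itemize}
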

\begin{proof}
    Applying the phase update criteria and Lemma \ref{lem:logdet} gives,
    $$
        \log(2) \leq  \log \left( \frac{\det(\bar{V}_{k+1})}{\det(\bar{V}_{k})} \right) = \log \left( \frac{\det(V_{\tau_{k+1} - \rho})}{\det(V_{\tau_{k} - \rho})} \right) \leq \frac{1}{\lambda}  \sum_{s=\tau_k - \rho}^{\tau_{k+1} - 1 - \rho} \| z_s \|^2 \leq \frac{R_z^2}{\lambda} (\tau_{k+1} - \tau_k)
    $$
    Rearranging gives the claim.
\end{proof}

Next, we can bound the Gram matrix at a current time step in terms of the delayed Gram matrix.

\begin{lemma}
    \label{lem:rho_back}
    Given $t \geq \rho + 1$, suppose that $\| z_s \| \leq R_z$ for all $s \in [t-\rho,t]$, and $\lambda \geq \frac{R_z^2 \rho}{\log(2)}$.
    Then, for all $t \geq \rho + 1$,
    $$
        V_t \preceq 2 V_{t - \rho}
    $$
\end{lemma}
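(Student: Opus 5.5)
The plan is to reduce the claim to Lemma~\ref{lem:logdet} together with Lemma~\ref{lem:det_lown}, applied to the window $[t-\rho, t]$. Since $V_t = V_{t-\rho} + \sum_{s=t-\rho+1}^{t} z_s z_s^\top$, we have $V_t \succeq V_{t-\rho} \succ 0$ (the regularization $\lambda I$ gives positive definiteness). This places us in the setting of Lemma~\ref{lem:det_lown} with $N = V_t$ and $M = V_{t-\rho}$, which yields
\begin{equation*}
V_t \preceq \frac{\det(V_t)}{\det(V_{t-\rho})} V_{t-\rho}.
\end{equation*}
It therefore suffices to show that the determinant ratio is at most $2$.

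To bound the ratio, I will invoke Lemma~\ref{lem:logdet} with $t' = t-\rho$. Together with the hypothesis $\|z_s\| \le R_z$ on the window, this gives
\begin{equation*}
\log\frac{\det(V_t)}{\det(V_{t-\rho})} \le \frac{1}{\lambda}\sum_{s=t-\rho+1}^{t}\|z_s\|^2 \le \frac{\rho R_z^2}{\lambda}.
\end{equation*}
The sum contains exactly $\rho$ terms, all indexed inside $[t-\rho,t]$, so the bound on $z_s$ applies to each of them. The assumption $\lambda \ge R_z^2\rho/\log(2)$ then gives $\rho R_z^2/\lambda \le \log 2$. Exponentiating, the determinant ratio is at most $2$, and substituting this into the Loewner bound gives $V_t \preceq 2V_{t-\rho}$.

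No step here is really an obstacle. The only points to check are the indexing, namely that the summation range in Lemma~\ref{lem:logdet} lies inside the window where $z_s$ is bounded, and that $t \ge \rho+1$ ensures $V_{t-\rho}$ is defined. Both hold directly under the stated hypotheses.
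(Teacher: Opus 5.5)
Your proof is correct and follows the paper's own argument: you bound the log-determinant ratio over the window via Lemma~\ref{lem:logdet} and the condition on $\lambda$, exponentiate to get a ratio of at most $2$, and conclude with Lemma~\ref{lem:det_lown}. Your check that the summation range $[t-\rho+1,t]$ lies inside the window where $\|z_s\|\le R_z$ is the only detail needed, and it holds.
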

\begin{proof}
    Applying Lemma \ref{lem:logdet} and the condition on $\lambda$ gives that,
    $$
        \log \left( \frac{\det(V_t)}{\det(V_{t - \rho})} \right) \leq \frac{R_z^2}{\lambda} \rho \leq \log(2).
    $$
    Then, taking the $exp$ of both sides,
    $$
        \frac{\det(V_t)}{\det(V_{t - \rho})} \leq 2.
    $$
    Next, applying Lemma \ref{lem:det_lown},
    $$
        V_t \preceq \frac{\det(V_t)}{\det(V_{t - \rho})} V_{t - \rho} \preceq 2 V_{t - \rho}.
    $$
\end{proof}

Next, we can relate the Gram matrices between sequential phases.

\begin{lemma}
    \label{lem:phase_det}
    If $\| z_t \| \leq R_z$ for all $t \in [T]$, and $\lambda \geq \max(R_z^2, \frac{R_z^2 \rho}{\log(2)})$, then it holds for $k \in [1,N]$ that,
    $$
        \bar{V}_k \preceq 3 \bar{V}_{k-1}
    $$
\end{lemma}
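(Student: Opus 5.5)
The plan is to compare $\bar V_k$ with the Gram matrix one step \emph{before} the phase switch, where the doubling criterion has not yet fired, and to absorb the single extra rank-one term directly in the Loewner order. I will not pass through a determinant bound on that last step, because that would cost a second factor of $2$ and only give the constant $4$.

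\emph{Setup and trivial cases.} Fix $k \in [1,N]$. For $k = 1$, the phase starts at $t = \tau_1$ by the rule ``$t=\tau_1$'' in line~\ref{lne:tstep_start}. Both $\bar V_0$ and $\bar V_1$ are then $V_{\tau_1-\rho}$, so the claim is immediate. For $k \ge 2$, recall $\bar V_k = V_{\tau_k - \rho}$ and $\bar V_{k-1} = V_{\tau_{k-1}-\rho}$. Since $\tau_k$ is the \emph{first} time after $\tau_{k-1}$ at which $\det(V_{t-\rho}) \ge 2\det(\bar V_{k-1})$, the time $t = \tau_k - 1 \ge \tau_{k-1}$ satisfies
$$\det(V_{\tau_k-1-\rho}) < 2 \det(\bar V_{k-1}).$$
(If $\tau_k - 1 = \tau_{k-1}$, then $V_{\tau_k-1-\rho} = \bar V_{k-1}$ and the same conclusion holds trivially.)

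\emph{Step 1: the pre-switch Gram matrix.} Because $V_{\tau_k-1-\rho} \succeq \bar V_{k-1} \succ 0$, Lemma~\ref{lem:det_lown} gives
$$V_{\tau_k-1-\rho} \preceq \frac{\det(V_{\tau_k-1-\rho})}{\det(\bar V_{k-1})}\, \bar V_{k-1} \preceq 2\bar V_{k-1}.$$

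\emph{Step 2: the last rank-one term.} By definition of the Gram matrices, $\bar V_k = V_{\tau_k-1-\rho} + z z^\top$ with $z = z_{\tau_k-\rho}$. By hypothesis $\|z\|^2 \le R_z^2 \le \lambda$, so
$$z z^\top \preceq \|z\|^2 I \preceq \lambda I \preceq \bar V_{k-1},$$
where the last inequality holds because every Gram matrix contains the regularizer $\lambda I$.

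\emph{Conclusion.} Adding the two bounds gives $\bar V_k \preceq 2\bar V_{k-1} + \bar V_{k-1} = 3\bar V_{k-1}$. Only the condition $\lambda \ge R_z^2$ is used; the condition $\lambda \ge R_z^2\rho/\log 2$ is not needed for this item.

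\emph{Main obstacle.} The only delicate point is getting the constant $3$ rather than $4$. This is exactly why the last increment is handled additively through $zz^\top \preceq \lambda I \preceq \bar V_{k-1}$, not through a determinant-ratio bound.
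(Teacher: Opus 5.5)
Your argument for $k\ge 2$ is the same as the paper's: bound the pre-switch Gram matrix via Lemma~\ref{lem:det_lown} and the doubling test that has not yet fired, giving $V_{\tau_k-\rho-1}\preceq 2\bar V_{k-1}$, then absorb the last term additively using $z_{\tau_k-\rho}z_{\tau_k-\rho}^\top\preceq R_z^2 I\preceq\lambda I\preceq\bar V_{k-1}$. The only difference is at $k=1$. You use the initialization in Algorithm~\ref{alg:main_alg}, where $\bar V_0=V_{\tau_1-\rho}=\bar V_1$. The paper's proof instead takes $\bar V_0=V_{\tau_0-\rho}$ and invokes Lemma~\ref{lem:rho_back}, which is where $\lambda\ge R_z^2\rho/\log 2$ enters, so your remark that this condition is unnecessary holds only under the algorithm's literal convention.
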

\begin{proof}
    For the case of $k = 1$, it holds that,
    $$
        \bar{V}_1 = V_{\tau_1 - \rho} \preceq 2 V_{\tau_1 - 2 \rho} = 2 V_{\tau_0 - \rho} = 2 \bar{V}_0 \preceq 3 \bar{V}_0,
    $$
    where we use Lemma \ref{lem:rho_back}, and the fact that $\tau_1 = \tau_0 + \rho$.

    For $k \geq 2$, it holds that,
    \begin{align*}
        \bar{V}_k & = V_{\tau_k - \rho}\\
        & = V_{\tau_k - \rho - 1} + z_{\tau_k - \rho} z_{\tau_k - \rho}^\top \\
        & \preceq \frac{\det(V_{\tau_k - \rho - 1})}{\det(\bar{V}_{k-1})} \bar{V}_{k-1} + z_{\tau_k - \rho} z_{\tau_k - \rho}^\top \tag{a} \label{eq:lown_last:a}\\
        & \preceq 2 \bar{V}_{k-1} + z_{\tau_k - \rho} z_{\tau_k - \rho}^\top \tag{b} \label{eq:lown_last:b} \\
        & \preceq 2 \bar{V}_{k-1} + R_z^2 I \tag{c} \label{eq:lown_last:c} \\
        & \preceq 2 \bar{V}_{k-1} + \lambda I \tag{d} \label{eq:lown_last:d} \\
        & \preceq 3 \bar{V}_{k-1},
    \end{align*}
    where \eqref{eq:lown_last:a} uses Lemma \ref{lem:det_lown}, \eqref{eq:lown_last:b} uses the fact that $\tau_k - 1$ is in phase $k-1$ and $k-1 \geq 1$ which implies that $\det(V_{\tau_k - \rho - 1}) \leq 2 \det(\bar{V}_{k-1})$, \eqref{eq:lown_last:c} uses the assumed bound $\| z_{\tau_k - \rho} \| \leq R_z$, \eqref{eq:lown_last:d} uses the assumption that $\lambda  \geq R_z^2$. 
\end{proof}

Then, we give the well-known bound on the number of phases.

\begin{lemma}
\label{lem:phase_bound}
    If $\| z_t \| \leq R_z$ for all $t \in [T]$, it holds that $N \leq 1 + 2 d \log \left( 1  + \frac{R_z^2 T}{d \lambda} \right)$.
\end{lemma}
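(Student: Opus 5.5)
We bound the number of phases with the standard determinant-doubling argument. Each new phase $k\ge 2$ is started only when $\det(V_{t-\rho}) \geq 2\det(\bar{V}_{k-1})$, and then we set $\bar{V}_k = V_{t-\rho}$. Hence for every $k \ge 2$ we have $\det(\bar{V}_k) \geq 2 \det(\bar{V}_{k-1})$. The only exception is the forced start at $t=\tau_1$, which accounts for the additive $1$ in the bound.

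Iterating this over phases $2,\dots,N$ gives
\[
\det(\bar{V}_N) \;\geq\; 2^{N-1} \det(\bar{V}_1) \;\geq\; 2^{N-1}\lambda^d,
\]
using $\bar{V}_1 \succeq \lambda I$.

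For the upper side, note $\bar{V}_N = V_{\tau_N-\rho} \preceq V_T$. Applying AM--GM to the eigenvalues, exactly as in Lemma \ref{lem:logdet}, gives
\[
\det(V_T) \;\leq\; \Big(\tfrac{1}{d}\tr(V_T)\Big)^d \;=\; \Big(\lambda + \tfrac{1}{d}\textstyle\sum_{s=1}^T \|z_s\|^2\Big)^d \;\leq\; \big(\lambda + R_z^2 T/d\big)^d,
\]
where the last step uses the assumption $\|z_t\|\le R_z$.

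Combining the two bounds yields
\[
(N-1)\log 2 \;\leq\; d\log\!\Big(1+\frac{R_z^2T}{d\lambda}\Big).
\]
Since $1/\log 2 \le 2$, this gives $N \leq 1 + 2d\log\big(1+\frac{R_z^2 T}{d\lambda}\big)$.

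The only point needing care is the indexing of the first phase. Phase $1$ is started by the condition $t=\tau_1$ rather than by doubling, so there is no comparison of $\bar{V}_1$ to $\bar{V}_0$. That is why the chain starts at $\bar{V}_1$, and $\bar{V}_1 \succeq \lambda I$ is the only lower bound needed there.
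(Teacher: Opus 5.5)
Your proof is correct and follows the same argument as the paper. It uses determinant doubling from $\bar{V}_1 \succeq \lambda I$ to get $\det(\bar{V}_N) \geq 2^{N-1}\lambda^d$, compares with the AM--GM bound on $\det(V_T) \geq \det(\bar{V}_N)$, and finishes with $1/\log 2 \leq 2$; your handling of the forced first phase at $t=\tau_1$ also matches the paper's indexing.
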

\begin{proof}
    From the phase update criteria, it holds that $\det(\bar{V}_{k+1}) \geq 2 \det(\bar{V}_k)$ for all $k \in [1,N-1]$.
    Therefore,
    $$
        \det(V_T) \geq \det(\bar{V}_N) \geq 2^{N-1} \det(\bar{V}_1) \geq 2^{N-1} \det(\lambda I) \quad \implies \quad \log\left( \frac{\det(V_T)}{\det(\lambda I)} \right) \geq (N - 1) \log(2)
    $$
    Also, using the AM-GM inequality,
    \begin{align*}
        \frac{\det(V_T)}{\det(\lambda I)} & = \lambda^{-d} \det(V_T)\\
        & \leq \lambda^{-d} \left( \frac{1}{d} \tr(V_T) \right)^d \\
        & = \left( 1  + \frac{1}{d \lambda} \sum_{s=1}^T \| z_s \|^2 \right)^d\\
        & \leq \left( 1  + \frac{R_z^2 T}{d \lambda} \right)^d
    \end{align*}
    Therefore,
    $$
        N \leq 1 + \frac{d}{\log(2)} \log \left( 1  + \frac{R_z^2 T}{d \lambda} \right) \leq 1 + 2 d \log \left( 1  + \frac{R_z^2 T}{d \lambda} \right),
    $$
    where we use that $1/\log(2) \leq 2$.
\end{proof}

Finally, we can put everything together to get the proof of Lemma \ref{lem:phase_upd}.

\begin{proof}[Proof of Lemma \ref{lem:phase_upd}]
    We prove each of the list items in the lemma.
    Item 1 holds since the phase update criteria ensures that $\det(V_{t-\rho}) \leq 2 \det(\bar{V_{k_t}})$ and $V_{t-\rho} \succeq \bar{V}_{k_t}$, and therefore Lemma \ref{lem:det_lown} gives the bound.
    Item 2 is Lemma \ref{lem:phase_det}.
    Item 3 is Lemma \ref{lem:rho_back}.
    Item 4 is Lemma \ref{lem:phase_bound}.
    Item 5 is Lemma \ref{lem:phase_dur}.
\end{proof}

\subsection{Proof of Lemma \ref{lem:safe_margin}}

\label{sec:safe_margin_proof}

In this section, we prove Lemma \ref{lem:safe_margin}, which ensures that the estimate of the zero policy covariance $\Sigma_k^\safe$ strictly satisfies the linear matrix constraint.
To do so, we first bound the estimation error of the covariance of the zero policy (Lemma \ref{lem:safe_dist}) and then show that the true covariance of the zero policy strictly satisfies the constraint (Lemma \ref{lem:safe_dist}).
Provided that the estimation error is sufficiently small, this ensures that the estimate of zero policy covariance strictly satisfies the constraint.

\begin{lemma}
    \label{lem:safe_dist}
    Assume that:
    \begin{enumerate}
        \item event $F_k$ holds
        \item $A_\star$ is $(\kappa, \gamma)$-strongly stable
        \item $\nu \geq \frac{2 \kappa^2 \tr(W)}{\gamma}$
    \end{enumerate}
    Let $\Sigma_{\star,xx}^{\safe}$ be the true covariance of the state under zero input, i.e. $\Sigma_{\star,xx}^{\safe} = A_\star \Sigma_{\star,xx}^{\safe} A_\star^\top + W$.
    Then, it holds that,
    $$
        \| \Sigma_{\star,xx}^{\safe} - \Sigma_{k,xx}^{\safe} \| \leq \frac{\kappa^2 \eta \nu}{\gamma \lambda}
    $$
\end{lemma}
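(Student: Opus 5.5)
The plan is to write both covariances as solutions of Lyapunov equations and subtract them. Under $F_k$, Lemma~\ref{lem:approx_stable} gives that $\hat{A}_k$ is $(\kappa,\gamma/2)$-strongly stable and that $\tr(\Sigma_{k}^{\safe}) \leq 2\kappa^2 \tr(W)/\gamma \leq \nu$. Since $\Sigma^{\safe}_k$ has zero input blocks, $\hat{\Theta}_k \Sigma_k^{\safe} \hat{\Theta}_k^\top = \hat{A}_k \Sigma_{k,xx}^{\safe} \hat{A}_k^\top$, and likewise for $\Theta_\star$. So the two fixed-point equations are $\Sigma_{\star,xx}^{\safe} = A_\star \Sigma_{\star,xx}^{\safe} A_\star^\top + W$ and $\Sigma_{k,xx}^{\safe} = \hat{A}_k \Sigma_{k,xx}^{\safe} \hat{A}_k^\top + W$.

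Let $\Delta := \Sigma_{\star,xx}^{\safe} - \Sigma_{k,xx}^{\safe}$. Subtracting the two equations gives
\begin{equation*}
\Delta = A_\star \Delta A_\star^\top + \big( \Theta_\star \Sigma_k^{\safe} \Theta_\star^\top - \hat{\Theta}_k \Sigma_k^{\safe} \hat{\Theta}_k^\top \big) =: A_\star \Delta A_\star^\top + E_k .
\end{equation*}
Here I put the true dynamics on the homogeneous part, so that I can use the $(\kappa,\gamma)$ stability of $A_\star$ from Assumption~\ref{ass:null_policy}. Unrolling the recursion, $\Delta = \sum_{s\ge 0} A_\star^s E_k (A_\star^s)^\top$, and therefore
\begin{equation*}
\|\Delta\| \leq \|E_k\| \sum_{s \geq 0} \kappa^2 (1-\gamma)^{2s} \leq \frac{\kappa^2}{\gamma}\|E_k\| .
\end{equation*}

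Next I bound $E_k$ using event $F_k$, which gives $\|E_k\| \leq \eta \ip{\bar{V}_k^{-1}, \Sigma_k^{\safe}}$. Because $\bar{V}_k \succeq \lambda I$ and $\Sigma_k^{\safe} \succeq 0$, this is at most $\eta \lambda^{-1} \tr(\Sigma_k^{\safe}) \leq \eta\nu/\lambda$. Combining with the previous display yields $\|\Delta\| \leq \kappa^2 \eta \nu / (\gamma\lambda)$, which is the claim.

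The only delicate point is the choice of which matrix multiplies $\Delta$ in the recursion. If the recursion were written with $\hat{A}_k$ instead, the geometric sum would only give $2\kappa^2/\gamma$, because $\hat{A}_k$ is merely $(\kappa,\gamma/2)$-stable. Using $A_\star$ avoids this loss, and the trace bound on $\Sigma_k^{\safe}$ from Lemma~\ref{lem:approx_stable} is exactly what controls the perturbation term $E_k$.
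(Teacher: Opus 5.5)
Your proposal is correct and follows essentially the same route as the paper. Both arguments subtract the two Lyapunov equations with $A_\star$ on the homogeneous part, unroll to $\sum_{s\ge 0} A_\star^s E_k (A_\star^s)^\top$, and bound $\|E_k\|$ using event $F_k$, $\bar V_k \succeq \lambda I$ and $\tr(\Sigma_k^{\safe}) \le \nu$. As in the paper, your bound $\sum_s \|A_\star^s\|^2 \le \kappa^2/\gamma$ relies on the convention $\|H\|\|H^{-1}\| \le \kappa$ from Cohen et al.\ (the one the appendix actually uses); the notation section's separate bounds $\|Q\|, \|Q^{-1}\| \le \kappa$ would only give $\kappa^4/\gamma$.
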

\begin{proof}
    Note that,
    \begin{align*}
        \Sigma_{\star,xx}^{\safe} - \Sigma_{k,xx}^{\safe} & = A_\star \Sigma_{\star,xx}^{\safe} A_\star^\top - \hat{A}_k \Sigma_{k,xx}^{\safe} \hat{A}_k^\top\\
        & = A_\star (\Sigma_{\star,xx}^{\safe} - \Sigma_{k,xx}^{\safe}) A_\star^\top + A_\star \Sigma_{k,xx}^{\safe} A_\star^\top - \hat{A}_k \Sigma_{k,xx}^{\safe} \hat{A}_k^\top,
    \end{align*}
    Therefore, applying this $n$ times and then taking $n \to \infty$,
    $$
        \Sigma_{\star,xx}^{\safe} - \Sigma_{k,xx}^{\safe} = \sum_{s=0}^{\infty} A_\star^s \left( A_\star \Sigma_{k,xx}^{\safe} A_\star^\top - \hat{A}_k \Sigma_{k,xx}^{\safe} \hat{A}_k^\top \right) (A_\star^s)^\top,
    $$
    where we use the fact that $A_\star$ is stable.
    Therefore, we have that,
    \begin{align*}
        \| \Sigma_{\star,xx}^{\safe} - \Sigma_{k,xx}^{\safe} \| & \leq \| A_\star \Sigma_{k,xx}^{\safe} A_\star^\top - \hat{A}_k \Sigma_{k,xx}^{\safe} \hat{A}_k^\top \| \sum_{s=0}^{\infty} \| A_\star^s \|^2\\
        & \leq \kappa^2 \gamma^{-1} \eta \ip{\bar{V}_k^{-1}, \Sigma_{k}^{\safe}} \\
        & \leq \kappa^2 \gamma^{-1} \eta \lambda^{-1} \nu
    \end{align*}
    where the second line uses event $F_k$ and the fact that $A_\star$ is strongly-stable, and the last line uses the bound on $\Sigma_k^{\safe}$ from Lemma \ref{lem:nu}.
\end{proof}

Next, we provide conditions under which the true covariance of the zero policy strictly satisfies the constraint.

\begin{lemma}
    \label{lem:sig_safe}
    Assume that:
    \begin{enumerate}
        \item $\| \alpha_j \| \leq D$,
        \item $\delta < 0.5$
        \item $\| x_1 \| \leq R_1$
        \item $\tr(W) \leq \bar{w}$
        \item the policy $K = \bzero$ is $(\kappa,\gamma)$-strongly stable and ensures that $\Pb(\alpha_j^\top z_t^{K,\bw} \leq \beta - \epsilon) \geq 1 - \delta$ for all $j \in [J], t \in [T]$,
        \item $\rho \geq 2 \tilde{\gamma}^{-1} \log \left( \frac{\max(\beta,\epsilon/4)}{(1+\tilde{\kappa}) \tilde{\kappa} R_z D} \right)$
        \item $T \geq \gamma^{-1} \log \left( \frac{\max(\beta,\epsilon/4)}{(1+\kappa) \kappa R_1 D} \right)$
        \item $\omega \leq \delta/2$
        \item $\xi = \left( \frac{\beta - D (1 + \tilde{\kappa}) \tilde{\kappa} \left(1 - \tilde{\gamma}/2\right)^{\rho} R_z}{\Phi^{-1}(1 - \delta + \omega)} \right)^2 - D^2 C_1$
        \item $\epsilon_2 > 0$, where,
        \begin{align*}
            \epsilon_2 & := \frac{\epsilon^2}{4 \Phi^{-1}(1 - \delta)^2}  - D^2 (1 + \kappa)^2 \kappa^2 \gamma^{-1} (1 - \gamma)^{2 T} - \frac{2 \Phi^{-1}(1-\delta/2)  \beta^2 \omega}{\phi(\Phi^{-1}(1-\delta/2)) \Phi^{-1}(1-\delta)^4} - D^2 C_1
        \end{align*}
    \end{enumerate}
    Then, it holds that,
    $$
        \ip{\balpha_j, \Sigma_\star^\safe} \leq \xi - \epsilon_2
    $$
\end{lemma}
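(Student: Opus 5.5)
The plan is to turn the strict chance constraint of the zero policy (Assumption~\ref{ass:null_policy}) into a strict bound on the steady-state quantity $\ip{\balpha_j, \Sigma_\star^\safe}$. We then compare that bound with $\xi$ using the same quantile-sensitivity machinery as in Lemma~\ref{lem:maxdist}.

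\emph{Step 1 (from the chance constraint to a finite-time covariance bound).} Under $u_t = \bzero$ we have $z_t \sim \Nc(m_t, S_t)$, where $m_t = [A_\star^{t-1}x_1; 0]$ and $S_{t,xx} = \sum_{s=0}^{t-2} A_\star^s W (A_\star^s)^\top$. We apply Lemma~\ref{lem:quant_form} with threshold $\beta - \epsilon$; the unconditional case is covered by taking the trivial $\sigma$-algebra. This gives $\alpha_j^\top m_t + \sqrt{\ip{\balpha_j, S_t}}\,\Phi^{-1}(1-\delta) \le \beta - \epsilon$. Since $\delta < 0.5$, the quantile $\Phi^{-1}(1-\delta)$ is positive, so $\beta - \epsilon - \alpha_j^\top m_t \ge 0$ and we may square. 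We take $t = T$. By strong stability, $|\alpha_j^\top m_T| \le D(1+\kappa)\kappa(1-\gamma)^T R_1$, and the assumption on $T$ makes this at most $\epsilon/4$. Hence
\[
\ip{\balpha_j, S_T} \le \Big(\tfrac{\beta - 3\epsilon/4}{\Phi^{-1}(1-\delta)}\Big)^2 .
\]

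\emph{Step 2 (from finite time to steady state).} As in the proof of Lemma~\ref{lem:E_a}, the tail $\Sigma_\star^\safe - S_T = \sum_{s \ge T-1} A_\star^s W (A_\star^s)^\top$ has norm at most $\kappa^2 \bar{w}\gamma^{-1}(1-\gamma)^{2T}$ (up to an index shift). Therefore
\[
\ip{\balpha_j, \Sigma_\star^\safe} \le \Big(\tfrac{\beta - 3\epsilon/4}{\Phi^{-1}(1-\delta)}\Big)^2 + D^2(1+\kappa)^2\kappa^2\gamma^{-1}(1-\gamma)^{2T}.
\]

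\emph{Step 3 (lower bound on $\xi$).} By the assumption on $\rho$, $D(1+\tilde\kappa)\tilde\kappa(1-\tilde\gamma/2)^\rho R_z \le \epsilon/4$. Hence
\[
\xi \ge \Big(\tfrac{\beta - \epsilon/4}{\Phi^{-1}(1-\delta+\omega)}\Big)^2 - D^2C_1 .
\]
Lemma~\ref{lem:quant_sens} applies with $x = 1-\delta$, $y = 1-\delta+\omega$ and $\bar y = 1-\delta/2$, which is valid since $\omega \le \delta/2$. It replaces $\Phi^{-1}(1-\delta+\omega)$ by $\Phi^{-1}(1-\delta)$ at a cost of at most $\frac{2\Phi^{-1}(1-\delta/2)\beta^2\omega}{\phi(\Phi^{-1}(1-\delta/2))\Phi^{-1}(1-\delta)^4}$, using $(\beta-\epsilon/4)^2 \le \beta^2$.

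\emph{Step 4 (combine).} The gap between the two squared numerators satisfies $(\beta-\epsilon/4)^2 - (\beta - 3\epsilon/4)^2 = \epsilon(\beta - \epsilon/2) \ge \epsilon^2/4$. The last inequality needs $\beta \ge 3\epsilon/4$, which holds because Assumption~\ref{ass:null_policy} with $\delta < 0.5$ forces $\beta - \epsilon \ge \alpha_j^\top m_t$; that quantity tends to $0$, so $\beta \ge \epsilon$. The $\epsilon^2/(4\Phi^{-1}(1-\delta)^2)$ margin minus the three error terms is exactly $\epsilon_2$, giving $\ip{\balpha_j,\Sigma_\star^\safe} \le \xi - \epsilon_2$.

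The main obstacle is Step 4: arranging the constant bookkeeping so that the margin matches $\epsilon_2$ exactly. In particular this requires the mean terms to be absorbed as fractions of $\epsilon$ via the conditions on $\rho$ and $T$, where the $\max(\beta, \epsilon/4)$ inside the logarithms is what guarantees the $\epsilon/4$ bound. It also requires $\beta \ge \epsilon$ in order to lower bound the cross term.
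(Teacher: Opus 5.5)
Your proof is correct and follows essentially the same route as the paper: Lemma~\ref{lem:quant_form} for the zero policy at $t=T$, squaring via $\Phi^{-1}(1-\delta)>0$, the strong-stability tail bound from $S_T$ to $\Sigma_\star^{\mathrm{safe}}$, Lemma~\ref{lem:quant_sens} with $(x,y,\bar y)=(1-\delta,\,1-\delta+\omega,\,1-\delta/2)$, and an $\epsilon^2/4$ margin extracted from the squared numerators. The only difference is bookkeeping. Write $a:=D(1+\kappa)\kappa(1-\gamma)^TR_1$ and $b:=D(1+\tilde\kappa)\tilde\kappa(1-\tilde\gamma/2)^\rho R_z$. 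The paper keeps $\xi$'s numerator $(\beta-b)^2$ and uses $(u-v)^2\le u^2-v^2$ with $u=\beta-b$ and $v=\epsilon-a-b\ge\epsilon/2$. You instead lower-bound $\xi$ via $b\le\epsilon/4$ and use $(\beta-\epsilon/4)^2-(\beta-3\epsilon/4)^2\ge\epsilon^2/4$.

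One small fix: take $\beta\ge 3\epsilon/4$ from feasibility at $t=T$ together with $|\alpha_j^\top m_T|\le\epsilon/4$, not from $t\to\infty$. The assumption only covers $t\in[T]$.

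Two further points are defects of the lemma's statement that the paper's proof shares, not flaws specific to your argument. First, the bounds $a,b\le\epsilon/4$ do not literally follow from the stated conditions on $\rho$ and $T$; the ratio inside the logarithms must be inverted, e.g.\ $\log\frac{(1+\kappa)\kappa R_1D}{\min(\beta,\epsilon/4)}$. Second, the factor $\bar w$ in the tail bound is dropped to match $\epsilon_2$.
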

\begin{proof}
    We use the notation $S_T = \begin{bmatrix} I \\ 0 \end{bmatrix} S_{T,xx} \begin{bmatrix} I \\ 0 \end{bmatrix}^\top$ with $S_{xx} = \sum_{s=0}^{T-1} A_\star^s W (A_\star^s)^\top$, and $m_t = \begin{bmatrix} I \\ 0 \end{bmatrix} A_\star^{T-1} x_1 $.
    Then,
    \begin{align*}
        \ip{\balpha_j, \Sigma^K} & \leq \ip{\balpha_j, S_T^K} + D^2 \| \Sigma_\star^\safe - S_T^K \|\\
        & \leq \left( \frac{\beta + D (1 + \kappa) \kappa \left(1 - \gamma\right)^{T} R_1 - \epsilon}{ \Phi^{-1}(1 - \delta)} \right)^2 + D^2 (1 + \kappa)^2 \kappa^2 \gamma^{-1} (1 - \gamma)^{2 T} \tag{a} \label{eq:sig_eps:a}\\
        & \leq \left( \frac{\beta - D (1 + \tilde{\kappa}) \tilde{\kappa} \left(1 - \tilde{\gamma}/2\right)^{\rho} R_z}{\Phi^{-1}(1 - \delta)} \right)^2 - \frac{\epsilon^2}{4 \Phi^{-1}(1 - \delta)^2} + D^2 (1 + \kappa)^2 \kappa^2 \gamma^{-1} (1 - \gamma)^{2 T} \tag{b} \label{eq:sig_eps:b}\\
        & \leq \left( \frac{\beta - D (1 + \tilde{\kappa}) \tilde{\kappa} \left(1 - \tilde{\gamma}/2\right)^{\rho} R_z}{\Phi^{-1}(1 - \delta + \omega)} \right)^2 \tag{c} \label{eq:sig_eps:c}\\
        & \qquad + \frac{2 \Phi^{-1}(1-\delta/2)}{\phi(\Phi^{-1}(1-\delta/2)) \Phi^{-1}(1-\delta)^4} \left( \beta - D (1 + \tilde{\kappa}) \tilde{\kappa} \left(1 - \tilde{\gamma}/2\right)^{\rho} R_z \right)^2 \omega \\
        & \qquad - \frac{\epsilon^2}{4 \Phi^{-1}(1 - \delta)^2} + D^2 (1 + \kappa)^2 \kappa^2 \gamma^{-1} (1 - \gamma)^{2 T}\\
        & \leq \underbrace{\left( \frac{\beta - D (1 + \tilde{\kappa}) \tilde{\kappa} \left(1 - \tilde{\gamma}/2\right)^{\rho} R_z}{\Phi^{-1}(1 - \delta + \omega)} \right)^2 - D^2 C_1}_{\xi} - \epsilon_2 
    \end{align*}
    where each line is described in the following:
    \begin{itemize}
        \item[\eqref{eq:sig_eps:a}] Lemma \ref{lem:quant_form} tells us that,
        \begin{equation}
            \label{eq:gauss_form2}
            \alpha_j^\top m_{T} + \sqrt{\ip{\balpha_j, S_{T}}} \Phi^{-1}(1 - \delta) \leq \beta - \epsilon \quad \forall j \in [J],
        \end{equation}
        where $S_T = \begin{bmatrix} I \\ 0 \end{bmatrix} S_{T,xx} \begin{bmatrix} I \\ 0 \end{bmatrix}^\top$ with $S_{T,xx} = \sum_{s=0}^{T-1} A_\star^s W (A_\star^s)^\top$, and $m_t = \begin{bmatrix} I \\ 0 \end{bmatrix} A_\star^{T-1} x_1 \begin{bmatrix} I \\ 0 \end{bmatrix}^\top$.
        First, since $1 - \delta \geq 0.5$ and $\balpha_j, S_T \succeq 0$, it must be that $\beta - \epsilon - \alpha_j^\top m_{T} \geq 0$.
        Therefore, we can equivalently write \eqref{eq:gauss_form2} as,
        $$
            \ip{\balpha_j, S_{T}} \leq \left( \frac{\beta - \alpha_j^\top m_{T}}{ \Phi^{-1}(1 - \delta)} \right)^2 \quad \forall j \in [J],
        $$
        Also, from strong-stability,
        $$
            \| \Sigma_\star^\safe - S_{T} \| = \left\| \begin{bmatrix} I \\ 0 \end{bmatrix} \sum_{s=T}^{\infty} A_\star^s W ((A_\star)^s)^\top \begin{bmatrix} I \\ 0 \end{bmatrix}^\top \right\| \leq (1 + \kappa)^2 \kappa^2 \bar{w} \gamma^{-1} (1 - \gamma)^{2 T},
        $$
        and, furthermore,
        $$
            \alpha_j^\top m_{T} \leq D (1 + \kappa) \kappa \left(1 - \gamma\right)^{T} R_1 \leq \beta,
        $$
        where the second inequality uses the assumed lower bound on $T$.
        \item[\eqref{eq:sig_eps:b}]     It holds that,
        \begin{equation}
            \label{eq:beta_var}
            \begin{split}
            & \left( \beta + D (1 + \kappa) \kappa \left(1 - \gamma\right)^{T} R_1 - \epsilon \right)^2\\
            &  = \left( \beta - D (1 + \tilde{\kappa}) \tilde{\kappa} \left(1 - \tilde{\gamma}/2\right)^{\rho} R_z +  D (1 + \tilde{\kappa}) \tilde{\kappa} \left(1 - \tilde{\gamma}/2\right)^{\rho} R_z + D (1 + \kappa) \kappa \left(1 - \gamma\right)^{T} R_1 - \epsilon \right)^2\\
            & \leq (\beta - D (1 + \tilde{\kappa}) \tilde{\kappa} \left(1 - \tilde{\gamma}/2\right)^{\rho} R_z)^2 - \left(\epsilon - D (1 + \tilde{\kappa}) \tilde{\kappa} \left(1 - \tilde{\gamma}/2\right)^{\rho} R_z - D (1 + \kappa) \kappa \left(1 - \gamma\right)^{T} R_1\right)^2\\
            & \leq (\beta - D (1 + \tilde{\kappa}) \tilde{\kappa} \left(1 - \tilde{\gamma}/2\right)^{\rho} R_z)^2 - \frac{1}{4} \epsilon^2,
        \end{split}
        \end{equation}
        where we use the fact that for $x \geq y \geq 0$, it holds that $(x - y)^2 = x^2 - 2xy + y^2 \leq x^2 - y^2$. We also use that $\beta \geq D (1 + \tilde{\kappa}) \tilde{\kappa} \left(1 - \tilde{\gamma}/2\right)^{\rho} R_z$ from our assumption on $\rho$ and our assumption that,
        \begin{align*}
            & \rho \geq 2 \tilde{\gamma}^{-1} \log \left( \frac{\max(\beta,\epsilon/4)}{(1+\tilde{\kappa}) \tilde{\kappa} R_z D} \right), T \geq \gamma^{-1} \log \left( \frac{\max(\beta,\epsilon/4)}{(1+\kappa) \kappa R_1 D} \right)\\
            & \implies \ D (1 + \tilde{\kappa}) \tilde{\kappa} \left(1 - \tilde{\gamma}/2\right)^{\rho} R_z + D (1 + \kappa) \kappa \left(1 - \gamma\right)^{T} R_1 \leq \frac{1}{2} \epsilon\\
            & \implies \ \epsilon - D (1 + \tilde{\kappa}) \tilde{\kappa} \left(1 - \tilde{\gamma}/2\right)^{\rho} R_z - D (1 + \kappa) \kappa \left(1 - \gamma\right)^{T} R_1 \geq \epsilon/2 > 0
        \end{align*}
        \item[\eqref{eq:sig_eps:c}] Uses Lemma \ref{lem:quant_sens} with $x = 1 - \delta$, $y = 1 - \delta + \omega$ and $\bar{y} = 1 - \delta/2$ given that $\omega \leq \delta/2$.
    \end{itemize}
\end{proof}

Then, with this, we give the proof of Lemma \ref{lem:safe_margin}.

\begin{proof}[Proof of Lemma \ref{lem:safe_margin}]
    First, note that the assumptions of Lemma \ref{lem:sig_safe} are satisfied since $\epsilon_2 \geq \epsilon_1 > 0$.
    Then, we apply Lemma \ref{lem:sig_safe}, to get that,
    $$
        \ip{\balpha_j, \Sigma_\star^\safe} \leq \xi - \epsilon_2.
    $$
    Then, under event $E$ and the assumption that $A_\star$ is strongly-stable,
    \begin{align*}
        \ip{\balpha_j, \Sigma_k^{\safe}} + \mu \ip{\bar{V}_{k-1}^{-1}, \Sigma_{k}^{\safe}} & = \ip{\balpha_j, \Sigma_{\star}^{\safe}} + \mu \ip{\bar{V}_{k-1}^{-1}, \Sigma_{k}^{\safe}} + \ip{\balpha_j, \Sigma_{k}^{\safe} - \Sigma_{\star}^{\safe}}\\
        & \leq \ip{\balpha_j, \Sigma_{\star}^{\safe}} + \mu \ip{\bar{V}_{k-1}^{-1}, \Sigma_{k}^{\safe}} + \| \Sigma_{k}^{\safe} - \Sigma_{\star}^{\safe} \| \tr(\balpha_j)\\
        & \leq \xi - \epsilon_2 + \mu \ip{\bar{V}_{k-1}^{-1}, \Sigma_{k}^{\safe}} + \| \Sigma_{k}^{\safe} - \Sigma_{\star}^{\safe} \| \tr(\balpha_j)\\
        & \leq \xi - \epsilon_2 + \lambda^{-1} \nu (\mu + \kappa^2 \gamma^{-1} \eta D^2),
    \end{align*}
    where the last line uses Lemma \ref{lem:safe_dist}, and the last line uses that $\tr(\Sigma_k^\safe) \leq 2 \kappa^2 \bar{w} \gamma^{-1} \leq \nu$ from Lemma \ref{lem:approx_stable}, and that $\tr(\Sigma_\star^\safe) \leq \kappa^2 \gamma^{-1} \tr(W) \leq \nu$ from Lemma 3.3 in \cite{cohen2018online}.
    We then take $\epsilon_1 = \epsilon_2 - \lambda^{-1} \nu (\mu + \kappa^2 \gamma^{-1} \eta D^2)$.
    Then, since $-\epsilon_1 \geq -\epsilon_2$, it holds that,
    $$
        \ip{\balpha_j, \Sigma_\star^\safe} \leq \xi - \epsilon_2 \leq \xi - \epsilon_1
    $$
\end{proof}

\subsection{Proof of Lemma \ref{lem:exploration}}

\label{sec:exploration_proof}

In this section, we give the proof of Lemma \ref{lem:exploration}, which establishes constraint during the initialization phase and bounds the estimation error of the system after the initialization.
First, we provide the constraint satisfiaction guarantees.
\begin{lemma}
\label{lem:init_const}
    Assume that the following hold:
    \begin{enumerate}
        \item $\| \alpha_j \| \leq D$ for all $j$
        \item $c = \gamma \epsilon D^{-1} \kappa^{-2} S^{-1}$
        \item $\| \Theta_\star \| \leq S$
        \item the policy $K = \bzero$ is $(\kappa,\gamma)$-strongly stable and ensures that $\Pb(\alpha_j^\top z_t^{K,\bw} \leq \beta - \epsilon) \geq 1 - \delta$ for all $j \in [J], t \in [T]$,
    \end{enumerate}
    Then, it holds that $\Pb(\alpha_j^\top z_t \leq \beta) \geq 1 - \delta$ for all $t \in [\tau_1-1]$.
\end{lemma}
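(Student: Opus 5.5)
The approach is a superposition (coupling) argument. During $t \in [\tau_1-1]$ the input is either uniform noise $u_t \sim \Uc(c\Sb)$ (for $t \le \tau_0-1$) or zero. So the state can be written as $x_t = x_t^{0} + e_t$. Here $x_t^{0}$ is the state that would have resulted under the zero policy with the same initial condition and disturbance realization, and
$$e_t = \sum_{s=1}^{\min(t,\tau_0)-1} A_\star^{t-1-s} B_\star u_s$$
is the contribution of the exploratory inputs. Likewise $z_t = z_t^{0} + \tilde e_t$, with $\tilde e_t = [e_t^\top\ u_t^\top]^\top$ (and $u_t = 0$ after $\tau_0-1$). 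By Assumption~\ref{ass:null_policy} (condition 4 of the lemma), $\Pb(\alpha_j^\top z_t^{0} \le \beta - \epsilon) \ge 1-\delta$.

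It therefore suffices to show the deterministic bound $|\alpha_j^\top \tilde e_t| \le \epsilon$ almost surely. Then
$$\{\alpha_j^\top z_t^{0} \le \beta - \epsilon\} \subseteq \{\alpha_j^\top z_t \le \beta\},$$
which gives the claim.

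To get the bound, use strong stability of $A_\star$: $\|A_\star^s\| \le \kappa^2(1-\gamma)^s$, together with $\|B_\star\| \le S$ and $\|u_s\| \le c$. Summing the geometric series gives
$$\|e_t\| \le \frac{\kappa^2 S c}{\gamma}.$$
With $c = \gamma\epsilon D^{-1}\kappa^{-2}S^{-1}$ this yields $\|e_t\| \le \epsilon/D$, and hence $|\alpha_j^\top [e_t^\top\ 0]^\top| \le \epsilon$.

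The main obstacle is the direct input term $u_t$ in $z_t$ when $t \le \tau_0 - 1$, which contributes an extra $\|\alpha_j\| c \le Dc$. I would handle it by tightening the series bound. The state part only needs $\kappa^2 S c \sum_{s\ge 0}(1-\gamma)^s$, so there is slack once one uses $\kappa \ge 1$, $S \ge 1$ and $\gamma \le 1$. Concretely, I would bound
$$\|\tilde e_t\| \le \|e_t\| + \|u_t\| \le \kappa^2 S c\,(1-\gamma)/\gamma + c \le \kappa^2 S c/\gamma,$$
using the fact that the sum over past inputs starts at lag one. Checking this inequality is the only delicate point. If it fails by a constant, I would absorb the constant into the choice of $c$, since only $c \asymp 1$ matters downstream.

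For $t \in [\tau_0, \tau_1-1]$ the same bound holds a fortiori, since no new inputs are added and $e_t$ only decays.

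A further technical point is that $z_t^{0}$ must have exactly the zero-policy distribution. This holds because the disturbances $w_t$ are independent of the exploratory inputs, so the decomposition is a valid coupling; no conditioning on $u_s$ is needed, because the bound on $\tilde e_t$ is almost sure.
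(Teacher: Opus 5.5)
\textbf{Same route, but your repair of the input term fails, and the stated lemma is in fact false.}

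Your coupling is the paper's argument: compare $z_t$ with the zero-input trajectory driven by the same disturbances, then bound the difference almost surely by $\epsilon$. You are right that the input block is the sticking point. The paper's proof drops it by asserting $\|z_t - z_t^{\safe}\| = \|x_t - x_t^{\safe}\|$ for $t \le \tau_0-1$. Your repair, however, rests on a false step. In your own formula $e_t=\sum_{s=1}^{\min(t,\tau_0)-1}A_\star^{t-1-s}B_\star u_s$, the term $s=t-1$ carries $A_\star^{0}=I$. The most recent input therefore enters the state undamped, and there is no ``lag one'' factor $(1-\gamma)$. The correct bound is $\|e_t\|\le Sc\,(1+\kappa^2(1-\gamma)/\gamma)\le\kappa^2Sc/\gamma$. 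Adding $\|u_t\|=c$ then overshoots $\kappa^2Sc/\gamma$ unless $\kappa^2S\ge S+1$.

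This is not a bookkeeping slip that a sharper estimate could fix. With $c$ exactly as in hypothesis 2, the conclusion itself can fail. Take the following instance.
\begin{itemize}
\item Let $n=m=1$, $A_\star=0$, $B_\star=1$. Then $S=1$ and the zero policy is $(1,\gamma)$-strongly stable for every $\gamma\in(0,1]$, so $\kappa=1$.
\item Let $x_1=0$, $\alpha_1=(1,1)/\sqrt2$, $D=1$, $W=\sigma$. Then $c=\gamma\epsilon$ and $\tilde e_t=(u_{t-1},u_t)$.
\item The event $u_{t-1}=u_t=c$ has probability $1/4$, since $\Uc(c\Sb)$ is uniform on $\{\pm c\}$. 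On it, $\alpha_1^\top\tilde e_t=\sqrt2\gamma\epsilon>\epsilon$ whenever $\gamma>1/\sqrt2$.
\item Set $\beta-\epsilon$ equal to the $(1-\delta)$-quantile of $w_{1}/\sqrt2$, so hypothesis 4 holds with equality.
\item As $\sigma\to0$, $\beta\to\epsilon$ and $\Pb(\alpha_1^\top z_2>\beta)\to 1/4$. This exceeds $\delta$ whenever $\delta<1/4$.
\end{itemize}

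So your fallback of shrinking $c$ is the actual repair, not an optional refinement. For example, with $c=\gamma\epsilon/(2D\kappa^2S)$ you get $|\alpha_j^\top\tilde e_t|\le D(\kappa^2Sc/\gamma+c)\le 2D\kappa^2Sc/\gamma=\epsilon$, using $\kappa,S\ge1\ge\gamma$. The rest of your argument, including the case $t\in[\tau_0,\tau_1-1]$ and the observation that the bound is almost sure, then goes through. The change is harmless downstream, since $c$ only enters $\tau_0$, $R_z$ and $\bar\Gamma$ polynomially. But it proves a modified lemma, not the statement as written.
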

\begin{proof}
    Consider the sequence $z_t^{\safe} = \begin{bmatrix} x_t^{\safe} \\ \bzero \end{bmatrix}$ where $x_{t+1}^{\safe} = A_\star x_t^{\safe} + w_t$ and $x_1^{\safe} = x_1$.
    Then,
    $$
        x_{t+1} - x_{t+1}^{\safe} = A_\star (x_t - x_t^\safe) + B_\star u_t = A_\star^t (x_1 - x_1^\safe) + \sum_{s=1}^t A_\star^{t-s} B_\star u_s.
    $$
    Therefore, for $t \in [\tau_1 - 1]$, due to strong stability and the fact that $\| u_s \| \leq c$,
    $$
        \| z_t - z_t^{\safe} \| = \| x_t - x_t^{\safe} \| \leq \kappa^2 \gamma^{-1} S c
    $$
    Then, it holds almost surely that,
    $$
        \alpha_j^\top z_t = \alpha_j^\top z_t^{\safe} + \alpha_j^\top (z_t - z_t^{\safe}) \leq \alpha_j^\top z_t^{\safe} + D \kappa^2 \gamma^{-1} S c = \alpha_j^\top z_t^{\safe} + \epsilon.
    $$
    Therefore, it holds that $\Pb(\alpha_j^\top z_t \leq \beta) \geq 1 - \delta$.
\end{proof}

Then, we give the estimation error bounds leveraging \cite{simchowitz2018learning}.

\begin{lemma}
\label{lem:init_exp}
    Assume that,
    \begin{enumerate}
        \item $W \succeq \sigma I$
        \item the policy $K = \bzero$ is $(\kappa,\gamma)$-strongly stable        \item $\| x_1 \| \leq R_1$
    \end{enumerate}
    If,
    $$
        \tau_0 \geq 1 + 134 d \log\left( \frac{34 R_1^2}{\omega \min(\sigma, c^2/m)} \right).
    $$
    Then, it holds that,
    $$
        \Pb\left( \| \hat{\Theta}_0 - \Theta_\star \|_F \leq 300 \bar{w} d \sqrt{\frac{n + d \log(306 \bar{\Gamma} \min(\sigma, c^2/m)^{-1} \omega^{-1})}{(\tau_0 - 1)\min(\sigma, c^2/m)}} \right) \geq 1 - \omega/3,
    $$
    where $\bar{\Gamma} = \left( \kappa^2 R_1 + \kappa^2 \gamma^{-1} (\sqrt{2 n \bar{w} \log(2 n T/\omega)} + S c) + c \right)^2$.
\end{lemma}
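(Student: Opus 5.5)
We plan to reduce Lemma~\ref{lem:init_exp} to the general least-squares identification bound of \cite{simchowitz2018learning} (the ``small-ball'' / block-martingale small-ball argument for linear systems driven by isotropic-like excitation). During the first $\tau_0-1$ steps the input is $u_t \sim \Uc(c\Sb)$, i.i.d. and independent of the past and of $w_t$. Hence the regressor $z_t = [x_t^\top\ u_t^\top]^\top$ receives fresh excitation at every step: $w_{t-1}$ excites $x_t$ with covariance $W \succeq \sigma I$, and $u_t$ has covariance $(c^2/m) I$ (uniform on the sphere of radius $c$ in $\Rb^m$). Consequently $\cov(z_t \mid \Fc_{t-1}) \succeq \min(\sigma, c^2/m) I$. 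This is the lower excitation level $\underline{\Gamma}$ appearing in the bound, and it explains why $\min(\sigma,c^2/m)$ appears in the denominator.

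First, we verify the $(k,\underline{\Gamma},p)$-block martingale small-ball (BMSB) condition with block length $k=1$. Conditionally on $\Fc_{t-1}$, the vector $z_t$ is a known shift plus a nondegenerate zero-mean noise $[w_{t-1}^\top\ u_t^\top]$-type term with covariance at least $\min(\sigma,c^2/m) I$. For any unit vector $v$, the Gaussian part gives a Paley--Zygmund-type lower bound on $\Pb(|v^\top z_t| \ge \sqrt{\underline{\Gamma}} \mid \Fc_{t-1})$ with a universal constant $p$. The uniform-sphere part is handled the same way, since it is bounded and has the stated second moment and its fourth moment is comparable to the square of its second. The shift can only help by symmetry (anti-concentration of a symmetric variable around any center). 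This step yields the constants $p$ and $\underline{\Gamma} = \min(\sigma,c^2/m)$.

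Second, we need an upper bound $\bar{\Gamma}$ on $\sum_t z_t z_t^\top$ in expectation/high probability, which is required by \cite{simchowitz2018learning} to control the covering term. Using strong stability of $A_\star$ (zero policy), we unroll $x_t = A_\star^{t-1}x_1 + \sum_s A_\star^{t-1-s}(B_\star u_s + w_s)$. We bound $\|w_s\| \le \sqrt{2n\bar{w}\log(2nT/\omega)}$ by a Gaussian tail bound plus a union bound, and $\|u_s\|\le c$. This gives $\|z_t\| \le \kappa^2 R_1 + \kappa^2\gamma^{-1}(\sqrt{2n\bar{w}\log(2nT/\omega)}+Sc) + c$, whose square is exactly $\bar{\Gamma}$, on an event of probability $1-O(\omega)$. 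Hence $\sum_t z_tz_t^\top \preceq (\tau_0-1)\bar{\Gamma} I$.

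Third, we plug into the main theorem of \cite{simchowitz2018learning}. With $\tau_0-1 \gtrsim d\log(\bar{\Gamma}/(\underline{\Gamma}\omega))$ samples, which is ensured by the assumed lower bound on $\tau_0$, the OLS error satisfies $\|\hat{\Theta}_0-\Theta_\star\|$ of order $\frac{\sqrt{\|W\|}}{p}\sqrt{\frac{n+d\log(\bar{\Gamma}/(\underline{\Gamma}\omega))}{(\tau_0-1)\underline{\Gamma}}}$. Converting operator norm to Frobenius norm costs at most $\sqrt{n}\le d$, and $\sqrt{\|W\|}\le\bar{w}$ since $\bar{w}\ge1$. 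We then absorb the universal constants into the displayed $300$ and $306$ and split the failure probability to total $\omega/3$. The main obstacle is the first step: making the BMSB verification rigorous for the non-Gaussian spherical input mixed with the Gaussian noise, including the nonzero conditional mean. We would handle it by conditioning on $u_t$ first and using only the Gaussian $w_{t-1}$ for the $x$-directions, then conditioning on $x_t$ and using the sphere distribution for the $u$-directions. Both directions together are covered by a union of events, each with constant probability.
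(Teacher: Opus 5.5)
Your plan is the paper's plan. You verify the block-martingale small-ball (BMSB) condition with $\Gamma_{sb}=\underline{\Gamma}I$, where $\underline{\Gamma}:=\min(\sigma,c^2/m)$. You bound $\sum_{s=1}^{\tau_0-1}z_sz_s^\top\preceq(\tau_0-1)\bar{\Gamma}I$ via strong stability of $A_\star$ and a Gaussian union bound; your $\bar{\Gamma}$ is exactly the paper's. Then you invoke Theorem 2.4 of \cite{simchowitz2018learning}. The displayed numerals are what that theorem returns for $p=0.3$ and failure level $\omega/9$: $300=90/p$, and $306=9\cdot 34$ with $34\approx 10/p$. The paper takes $k=2$, conditions on $\Fc_{t-2}$, and simply asserts that $Y_t\mid\Fc_{t-2}$ is Gaussian, so as to read $p=0.3$ off a $Z$-table. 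Since $u_t\sim\Uc(c\Sb)$ is not Gaussian, your decision to treat the spherical input separately is an improvement, as is your cleaner $k=1$ bookkeeping with $w_{t-1}$ independent of $\Fc_{t-1}$.

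As written, however, your argument does not prove the statement with its stated constants, and several steps need repair.

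(i) Paley--Zygmund, $\Pb(X^2\ge\theta\,\Eb X^2)\ge(1-\theta)^2(\Eb X^2)^2/\Eb X^4$, is vacuous at $\theta=1$. That is exactly the case $\mathrm{Var}(v^\top z_t\mid\Fc_{t-1})=\underline{\Gamma}$, e.g.\ $v$ in the input block when $c^2/m\le\sigma$. Apply it to the uncentered $X=v^\top z_t$, using $\Eb X^4\le 3(\Eb X^2)^2$; this also handles the shift, whereas ``the shift can only help'' is false for $m\le 2$, where the spherical marginal is not unimodal. You then get BMSB only with $\Gamma_{sb}=\theta\underline{\Gamma}I$ and $p=(1-\theta)^2/3$ for some $\theta<1$. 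The resulting constants ($90/p$, an extra $\theta^{-1/2}$ in front and $\theta^{-1}$ inside the logarithm) exceed $300$ and $306$ for every $\theta$. Fixed numerals cannot absorb larger constants. What you actually prove is the lemma with different universal constants, which is harmless downstream once the constant in $\tau_0$ is enlarged. Recovering $p=0.3$ at the full threshold would need a direct distributional computation, which the paper does not supply either.

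(ii) With $k=1$, the condition at $j=0$ concerns $z_1=[x_1^\top\ u_1^\top]^\top$ with $x_1$ deterministic. It therefore fails for $v=[v_x^\top\ 0]^\top$ with $v_x\perp x_1$. Take $k=2$ as the paper does, at the cost of halving $p$.

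(iii) Theorem 2.4 requires $\tau_0-1\ge\frac{10k}{p^2}\big(\log(9/\omega)+2d\log(10/p)+d\log(\bar{\Gamma}/\underline{\Gamma})\big)$, which involves $\bar{\Gamma}\ge\kappa^4R_1^2$ and $\log T$. The hypothesis $\tau_0\ge1+134d\log\big(34R_1^2/(\omega\underline{\Gamma})\big)$ does not imply it. So your claim that the burn-in is ensured by the assumed bound on $\tau_0$ is unjustified. The paper skips this check too; in Theorem~\ref{thm:main_apx} the choice $\tau_0\asymp\sqrt{T}$ makes it moot.

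Similarly, with $\bar{\Gamma}$ built from $\log(2nT/\omega)$, a union bound over all $t\le T$ fails with probability $\omega$, more than the $\omega/9$ the theorem tolerates. Restrict it to $t\le\tau_0-1$, which suffices when $\tau_0-1\le T/9$.
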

\begin{proof}
    We apply Theorem 2.4 of \cite{simchowitz2018learning}.
    We verify each of the requirements for this result in the following.
    Note that $z_t$ is $\Fc_{t}$-measurable.
    
    First, we show that the sequence $(z_t)_{t=1}^{\tau_1-1}$ is $(k,\Gamma_{sb},p)=(2,\min(\sigma,c^2/m)I,0.3)$-block martingale small ball (BMSB).
    To do so, we show that $\Pb(| v^\top z_t| \geq \sqrt{v^\top \Gamma_{sb} v} | \Fc_{t-2}) \geq p$ for all $v \in \Sb$.
    Note that,
    $$
        z_t = \begin{bmatrix}
            A x_{t-1} + B u_{t-1} + w_{t-1}\\
            u_{t}
        \end{bmatrix},
    $$
    We use the random variable $Y_t = v^\top z_t - v^\top \begin{bmatrix}
            A x_{t-1}\\ \bzero
    \end{bmatrix}$ and note that $Y_t | \Fc_{t-2}$ is Gaussian with mean $\Eb[Y_t | \Fc_{t-2}] = 0$ and variance,
    \begin{align*}
        \Eb[Y_t^2 | \Fc_{t-2}] & = v^\top \Eb \left[\left(z_t - \begin{bmatrix}
            A x_{t-1}\\ \bzero
        \end{bmatrix} \right) \left(z_t - \begin{bmatrix}
            A x_{t-1}\\ \bzero
        \end{bmatrix} \right)^\top | \Fc_{t-2} \right] v\\
        & = v^\top \Eb \left[ \begin{bmatrix}
            B u_{t-1} + w_{t-1}\\ u_t
        \end{bmatrix} \begin{bmatrix}
            B u_{t-1} + w_{t-1}\\ u_t
        \end{bmatrix} ^\top | \Fc_{t-2} \right] v\\
        & = v^\top \begin{bmatrix}
            \Eb [B u_{t-1} u_{t-1}^\top B^\top ] + \Eb [w_{t-1} w_{t-1}^\top] & 0 \\ 0 & \Eb[u_t u_t^\top]
        \end{bmatrix} v\\
        & \geq \min(\sigma, c^2/m),
    \end{align*}
    where the third line uses the fact that $w_{t-1}, u_{t-1}, u_t$ are independent of eachother and $\Fc_{t-2}$.
    Then, we note a property of Gaussian random variables (similar to eq (3.12) in \cite{simchowitz2018learning}), for all $t \in \Rb$,
    $$
        \Pb_{X \sim \Nc(0,\sigma')} (| X + t| \geq \sigma') \geq \Pb_{X \sim \Nc(0,\sigma')} (| X| \geq \sigma') = \Pb_{Y \sim \Nc(0,1)} (| Y| \geq 1) = 2 \Pb(Y \geq 1) \geq 0.3,
    $$
    where the last inequality can be verified with a Z-table.
    Therefore, 
    \begin{align*}
        \sum_{i=1}^2 \Pb(| v_t^\top z_t | \geq \sqrt{v^\top \Gamma_{sb} v} | \Fc_{t-i} ) & \geq  \Pb(| v_t^\top z_t | \geq \sqrt{v^\top \Gamma_{sb} v} | \Fc_{t-2} )\\
        & = \Pb(| v_t^\top z_t | \geq \sqrt{\min(\sigma, c^2/m)} | \Fc_{t-2})\\
        & \geq \Pb(| v_t^\top z_t | \geq \Eb[Y_t^2 | \Fc_{t-2}] | \Fc_{t-2})\\
        & \geq \Pb(| Y_t | \geq \Eb[Y_t^2 | \Fc_{t-2}] | \Fc_{t-2})\\
        & \geq 0.3.
    \end{align*}
    Therefore, the $(k,\Gamma_{sb},p)$-BMSB property holds.

    Then, we show that $\Pb(\sum_{t=1}^{\tau_0-1} z_t z_t^\top \preceq \bar{\Gamma} (\tau_0-1) I) \geq 1 - \omega$ for some $\bar{\Gamma}$.
    First, since $[w_t]_i$ is $\bar{w}$-subgaussian for all $i \in [n]$, applying the union bound over $i \in [n]$ and $t \in [T]$ yields,
    $$
        \Pb(\| w_t \| \leq \sqrt{2 n \bar{w} \log(2 n T/\omega)}, \forall t \in [T]) \geq 1 - \omega.
    $$
    Then, for $t \in [\tau_0-1]$, it holds 
    $$
        \| x_t \| = \| A_\star^{t-1} x_1 + \sum_{s=1}^{t-1} A_\star^{t-1-s} (B u_s + w_s) \| \leq \kappa^2 (1 - \gamma)^{t-1} R_1 + \kappa^2 \gamma^{-1} (\max_{t \in [\tau_0]} \| w_t \| + S c)
    $$
    Therefore, with probability at least $1 - \omega$,
    $$
        \left\| \sum_{s=1}^{\tau_0-1} z_t z_t^\top \right\| \leq \sum_{s=1}^{\tau_0-1} \| z_t \|^2 \leq (\tau_0 - 1) \underbrace{\left( \kappa^2 R_1 + \kappa^2 \gamma^{-1} (\sqrt{2 n \bar{w} \log(2 n T/\omega)} + S c) + c \right)^2}_{\bar{\Gamma}},
    $$
    which satisfies the required condition.

    Lastly, we note that $w_t$ is $\bar{w}$-subgaussian. 
    
    Therefore we can apply the guarantee from Lemma 2.4 in \cite{simchowitz2018learning}.
\end{proof}

Lemma \ref{lem:exploration} follows directly from Lemma \ref{lem:init_const} and Lemma \ref{lem:init_exp}.

\subsection{Proof of Lemma \ref{lem:slow_var}}
\label{sec:slow_var_proof}

\begin{proof}[Proof of Lemma \ref{lem:slow_var}]
    For $k \geq 1$, it holds that $\Sigma_{t} - \bar{\Sigma}_k = (1 - \zeta)(\Sigma_{t-1} - \bar{\Sigma}_k)$ for all $t \in [\tau_k, \tau_{k+1}-1]$, and for $k = 0$, it holds that $\Sigma_{t} = \bar{\Sigma}_k$ for all $t \in [\tau_k, \tau_{k+1}-1]$. Thus,
    \begin{alignat}{2}
        \label{eq:sig_var1}
        \Sigma_{t} - \bar{\Sigma}_{k_t} & = (1 - \zeta)^{t - \tau_k + 1} (\Sigma_{\tau_{k_t} - 1} - \bar{\Sigma}_{k_t} ) \quad && t \in [\tau_1, T]\\
        \Sigma_{t} - \bar{\Sigma}_{k_t} & = 0 && t \in [\tau_0, \tau_{1}-1] \notag
    \end{alignat}
    Also, If $E$ holds and $\nu \geq \frac{2 \kappa^2 \tr(W)}{\gamma}$, we can apply Lemma \ref{lem:nu} to get that $\tr(\bar{\Sigma}_k) \leq \nu$ for all $k \in [N]$ and $\tr(\Sigma_{s}) \leq \nu$ for all $s \in [T]$, and therefore, for all $t \geq \tau_0$,
    \begin{equation}
        \label{eq:sig_var2}
        \| \Sigma_{t} - \bar{\Sigma}_{k_t} \| = (1 - \zeta)^{t - \tau_{k_t} + 1} \| \Sigma_{\tau_{k_t} - 1} - \bar{\Sigma}_{k_t} \| \leq 2 \nu (1 - \zeta)^{t - \tau_{k_t} + 1}
    \end{equation}
    With these, we prove each of the list items in the lemma.
    
    \begin{itemize}
        \item[\textit{\#1:}] An immediate consequence of \eqref{eq:sig_var1} is that, for all $t \geq \tau_1$,
        \begin{equation}
            \label{eq:conv1}
            \Sigma_t \in \conv\{\Sigma_{\tau_{k_t} - 1}, \bar{\Sigma}_{k_t} \}.
        \end{equation}
        This implies that, for $k \geq 2$, $\Sigma_{\tau_{k} - 1} \in \conv\{\Sigma_{\tau_{k-1} - 1}, \bar{\Sigma}_{k-1}\}$, and therefore,
        $$
            \conv\{\Sigma_{\tau_{k} - 1}, \bar{\Sigma}_{k}\} \subseteq \conv( \conv\{\Sigma_{\tau_{k-1} - 1}, \bar{\Sigma}_{k-1}\}\cup \{ \bar{\Sigma}_{k}\})\subseteq \conv\{\Sigma_{\tau_{k-1} - 1}, \bar{\Sigma}_{k-1}, \bar{\Sigma}_{k}\}.
        $$
        Applying this recursively, and using that $\Sigma_{\tau_1-1} = \bar{\Sigma}_{0}$, it holds for $k \geq 2$ that,
        \begin{equation}
            \label{eq:conv2}
            \conv\{\Sigma_{\tau_{k} - 1}, \bar{\Sigma}_{k}\} \subseteq \conv\{\Sigma_{\tau_{1} - 1},\bar{\Sigma}_{1},..., \bar{\Sigma}_{k}\} = \conv\{\bar{\Sigma}_{0},\bar{\Sigma}_{1},..., \bar{\Sigma}_{k}\}
        \end{equation}
        Finally, combining \eqref{eq:conv1} and \eqref{eq:conv2} yields,
        $$
            \Sigma_t \in \conv\{\Sigma_{\tau_{k_t} - 1}, \bar{\Sigma}_{k_t} \} \subseteq \conv\{\bar{\Sigma}_{0},\bar{\Sigma}_{1},..., \bar{\Sigma}_{k_t}\}
        $$
        
        \item[\textit{\#2:}] From \eqref{eq:sig_var2},
        $$
            \sum_{t=\tau_0}^T \| \Sigma_{t} - \bar{\Sigma}_{k_t} \| = \sum_{k = 1}^N \sum_{t = \tau_k}^{\tau_{k+1}-1} \| \Sigma_{t} - \bar{\Sigma}_{k_t} \| \leq 2 \nu N \sum_{s=1}^{\infty} (1 - \zeta)^s \leq 2 \nu \zeta^{-1} (N + 1)
        $$
        \item[\textit{\#3:}] Using that $\tr(\bar{\Sigma}_{k_t} ) \leq \nu$ and $\tr(\Sigma_{t-1}) \leq \nu$ under $E$ and $\nu \geq \frac{\kappa^2 \tr(W)}{\gamma}$, it holds for $t \geq \tau_0$ that,
        $$
            \| \Sigma_{t} - \Sigma_{t-1} \| \leq \zeta \| \bar{\Sigma}_{k_t} - \Sigma_{t-1} \| \leq 2 \nu \zeta
        $$
        \item[\textit{\#4:}] A direct application of \eqref{eq:sig_var2}.
    \end{itemize}
\end{proof}

\subsection{Proof of Lemma \ref{lem:term1a}}
\label{sec:term1a}

\begin{proof}
    Let $Z_t = z_t z_t^\top$.
    Then, it holds that,
    \begin{equation}
    \label{eq:az_split}
    \begin{split}
        \sum_{t=\tau_1}^T \ip{\mathbf{Q}, Z_t - \Eb_{t-\rho}[Z_t] } & = \sum_{t=\tau_1}^T \ip{\mathbf{Q}, Z_t\Ib_{\{\| Z_t \| \leq  R_z^2\}} - \Eb_{t-\rho}[Z_t\Ib_{\{\| Z_t \| \leq  R_z^2\}}] }\\
        & + \sum_{t=\tau_1}^T \ip{\mathbf{Q}, Z_t\Ib_{\{\| Z_t \| >  R_z^2\}} - \Eb_{t-\rho}[Z_t\Ib_{\{\| Z_t \| >  R_z^2\}}] }
    \end{split}    
    \end{equation}
    We start by bounding the first term in \eqref{eq:az_split}.
    Let $Y_t = \ip{\mathbf{Q}, Z_t\Ib_{\{\| Z_t \| \leq  R_z^2\}} - \Eb_{t-\rho}[Z_t\Ib_{\{\| Z_t \| \leq  R_z^2\}}] }$.
    Notice that, $Y_t$ is $\Fc_t$-measurable and, furthermore $\Eb_{t-\rho} Y_t = 0$.
    Additionally, $| Y_t | \leq 2 R_Q R_z^2$ almost surely.

    Then, let $\bar{Y}_{i,j} = Y_{i + \rho j}$ for $i \in [\tau_1,\tau_1 + \rho]$ and $j \in [0,\lfloor (T - \tau_1 -  i)/\rho \rfloor - 1]$.
    Also, let $\bar{\Fc}_{j}^i = \Fc_{i + \rho j}$.
    Therefore, $\bar{Y}_{i,j}$ is a martingale difference sequence w.r.t. $j$ given that $\bar{Y}_{i,j}$ is $\bar{\Fc}_{j}^i$-measurable, and $\Eb[\bar{Y}_{i,j} | \bar{\Fc}_{j-1}^i] = 0$.
    It follows from the Azuma inequality, that with probability at least $1 - \omega$,
    $$
    \sum_{j=0}^{\lfloor (T - \tau_1 -  i)/\rho \rfloor - 1} \bar{Y}_{i,j} \leq 2 R_Q R_z^2   \sqrt{2 \lfloor (T - \tau_1 -  i)/\rho \rfloor \log(1/\omega)}
    $$
    Taking the union bound over all $i$, it holds with probability at least $1 - \omega$ that,
    \begin{equation}
    \label{eq:sum_yt}
    \begin{split}
        \sum_{t=\tau_1}^{T} Y_t & = \sum_{i=\tau_1}^{\tau_1 + \rho} \sum_{j=0}^{\lfloor (T - \tau_1 -  i)/\rho \rfloor - 1} \bar{Y}_{i,j}\\
        & \leq  2 R_Q R_z^2 \sum_{i=\tau_1}^{\tau_1 + \rho} \sqrt{2  \lfloor (T - \tau_1 -  i)/\rho \rfloor \log(\rho/\omega)}\\
        & \leq 2 R_Q R_z^2 \sqrt{\rho 2  T \log(\rho/\omega)}.
    \end{split}
    \end{equation}

    Then, we bound the second term in \eqref{eq:az_split}.
    Under $E$, it holds that $\|Z_t\| \leq R_z^2$ for all $t \in [T]$ and therefore,
    $$
        \sum_{t=\tau_1}^T \ip{\mathbf{Q}, Z_t\Ib_{\{\| Z_t \| >  R_z^2\}} - \Eb_{t-\rho}[Z_t\Ib_{\{\| Z_t \| >  R_z^2\}}] } = 0.
    $$

    Also under $E$, it follows from Lemma \ref{lem:main_cov} that,
    \begin{align*}
        \mathrm{Term\ I.A} & = \sum_{t=\tau_1}^{T} \left( \ell(x_t, u_t) - \ip{\mathbf{Q}, S_{t|t-\rho}} \right)\\
        & = \sum_{t=\tau_1}^T \ip{\mathbf{Q}, z_t z_t^\top - \Eb_{t-\rho}[z_t z_t^\top] } + \sum_{t=\tau_1}^T \ip{\mathbf{Q}, m_{t-\rho} m_{t-\rho}^\top } \\
        & \leq \sum_{t=\tau_1}^T \ip{\mathbf{Q}, z_t z_t^\top - \Eb_{t-\rho}[z_t z_t^\top] } + R_Q (1 + \tilde{\kappa})^2 \tilde{\kappa}^2 R_z^2 \exp(-\tilde{\gamma} \rho/2) T.
    \end{align*}
    Taking the union bound over $E$ and the event in \eqref{eq:sum_yt} gives the claim.
\end{proof}

\subsection{Proofs from Section \ref{sec:ana}}
\label{sec:ana_proofs}

In this section, we prove the lemmas in Section \ref{sec:ana}.

\begin{proof}[Proof Lemma \ref{lem:cov_approx1}]
    Under the conditions of Lemma \ref{lem:cov_approx1}, the sequence of policies is sequential-strong stability (in the sense of Definition \ref{def:seq_str_stab}) due to Lemma \ref{lem:strong_stab}.
    Sequential strong stability along with the other conditions of  Lemma \ref{lem:cov_approx1} ensure that the conditions of Lemma \ref{lem:cov_approx} and \ref{lem:mean} are satisfied.
    Thus, Lemma \ref{lem:cov_approx} and \ref{lem:mean} yield the desired bound on $m_{t|t-\rho}$ and $S_{t|t-\rho}$.
\end{proof}

\begin{proof}[Proof of Lemma \ref{lem:approx2}]
    Refer to the proof of Theorem \ref{thm:main} in Appendix \ref{sec:comp_proof}, where it is shown that event $E$ holds with high probability.
    Thus, the bound in Lemma \ref{lem:main_cov} holds with high probability.
    Applying Lemma \ref{lem:main_cov} and using the choice of algorithm parameters in Theorem \ref{thm:main_apx} gives the result.
\end{proof}

\begin{proof}[Proof of Lemma \ref{lem:ellip_pot}]
    Refer to the proof of Theorem \ref{thm:main} in Appendix \ref{sec:comp_proof}, where it is shown that event $G_1$ holds with high probability.
    Using the definition of $G_1$ in \eqref{eq:g1} with the specified choice of algorithm parameters gives the result.
\end{proof}

\begin{proof}[Proof of Lemma \ref{lem:const_bound}]
    The result follows immediately from Lemma \ref{lem:cov_const} by taking $\omega'=\delta - \omega$.
\end{proof}




\end{document}